\numberwithin{equation}{section}
\theoremstyle{definition}
\newtheorem{definition}{Theorem}[section]
\newtheorem{thm}{Theorem}[section]
\newtheorem{lem}[thm]{Lemma}
\newtheorem{cor}[thm]{Corollary}
\newtheorem{prop}[thm]{Proposition}
\newtheorem{ex}[thm]{Example}
\newtheorem{DEF}[definition]{Definition}
\newtheorem{hyp}{Assumption}
\newtheorem{rem}[thm]{Remark}
\def\R{{\mathbb R}}
\def\ep{\varepsilon}
\def\e{\varepsilon}
\def\d{\textnormal{d}}
\def\union{\bigcup}
\def\dive{\text{div}}
\def\pal{\;\;\mathbb{P}\text{-a.s.}}
\def\bfb{\text{\bf{B}}}
\def\Ext{\text{Ext}}
\def\pot{\text{pot}}
\def\partialom{\partial^\omega}
\def\db{\text{D}}
\DeclareMathOperator*{\osc}{osc}
\begin{document}
\title{Quasistatic Droplet percolation}
\author{Nestor Guillen and Inwon Kim}
\address{Department of Mathematics, UCLA} 
\email{nestor@math.ucla.edu, ikim@math.ucla.edu}

    \begin{abstract}
         We consider the Hele-Shaw problem in a randomly perforated domain with zero Neumann boundary conditions. A homogenization limit is obtained as the characteristic scale of the domain goes to zero. Specifically, we prove that the solutions as well as their free boundaries converge uniformly to those corresponding to a homogeneous and anisotropic Hele-Shaw problem set in $\mathbb{R}^d$. The main challenge when deriving a limit lies in controlling the oscillations of the free boundary, this is overcome first by extending De Giorgi-Nash-Moser type estimates to perforated domains and second by proving the almost surely non-degenerate growth of the solution near its free boundary. 
    \end{abstract}

	\thanks{N. Guillen is supported by NSF DMS-1201413. I. Kim is supported by NSF DMS-0970072.}
	
	\keywords{droplet spreading, perforated domain, quasi-static phase transition, Hele-Shaw, internal DLA, percolation, stochastic homogenization, free boundary problem, elliptic regularity, obstacle problem, disordered media}

	\subjclass[2000]{
	%35J99, %PDE -other
	35B65, %PDE -Smoothness and regularity of Solutions
	35R35, %PDE -Free boundary problems
	%45J05, %PIDE
	%47G20, %integro-differential operators
	%49L25, %differential games
	%49N70, %optimal control viscosity solutions
	%60J75, %jump processes
	76D27, %Fluid Mechanics... -Hele-Shaw flows
	76M50, %Homogenization
	82B43, %Equilibrium Statistical Mechanics - Percolation
	%93E20, %optimal stoch. control
	}	
\maketitle
\markboth{N. Guillen and I. Kim}{Quasistatic Droplet Percolation}

%%%%%%%%%%%%%%%%%%%%%%%%%%%%%%%%%%%%%%%%%%%%%%%%%%%%%%%%%%%%%%%%%%%%%%%%%%%%%%%%%%%%%%%%%%%%%%%%%%%%%%%%%
%%%%%%%%%%%%%%%%%%%%%%%%%%%%%%%%%%%%%%%%%%%%%%%%%%%%%%%%%%%%%%%%%%%%%%%%%%%%%%%%%%%%%%%%%%%%%%%%%%%%%%%%%
%%%%%%%%%%%%%%%%%%%%%%%%%%%%%%%%%%%%%%%%%%%%%%%%%%%%%%%%%%%%%%%%%%%%%%%%%%%%%%%%%%%%%%%%%%%%%%%%%%%%%%%%%
%%%%%%%%%%%%%%%%%%%%%%%%%%%%%%%%%%%%%%%%%%%%%%%%%%%%%%%%%%%%%%%%%%%%%%%%%%%%%%%%%%%%%%%%%%%%%%%%%%%%%%%%%
%%%%%%%%%%%%%%%%%%%%%%%%%%%%%%%%%%%%%%%%%%%%%%%%%%%%%%%%%%%%%%%%%%%%%%%%%%%%%%%%%%%%%%%%%%%%%%%%%%%%%%%%%
%%%%%%%%%%%%%%%%%%%%%%%%%%%%%%%%%%%%%%%%%%%%%%%%%%%%%%%%%%%%%%%%%%%%%%%%%%%%%%%%%%%%%%%%%%%%%%%%%%%%%%%%%
%%%%%%%%%%%%%%%%%%%%%%%%%%%%%%%%%%%%%%%%%%%%%%%%%%%%%%%%%%%%%%%%%%%%%%%%%%%%%%%%%%%%%%%%%%%%%%%%%%%%%%%%%
%%%%%%%%%%%%%%%%%%%%%%%%%%%%%%%%%%%%%%%%%%%%%%%%%%%%%%%%%%%%%%%%%%%%%%%%%%%%%%%%%%%%%%%%%%%%%%%%%%%%%%%%%
\section{Introduction}

    For a stationary ergodic set $\mathcal{O}\subset \mathbb{R}^d$ and $\epsilon$ set $\mathcal{O}_\epsilon := \epsilon \mathcal{O}_\epsilon$ we consider a nonnegative random function
    \begin{equation*}
         u^\epsilon: \mathcal{O}_\epsilon \times \mathbb{R}_+ \to \mathbb{R},	
    \end{equation*}
    which has compact support for $t=0$ and solves the free boundary problem
    \begin{equation}\label{eqn: Hele-Shaw O_ep}
	     \left \{ \begin{array}{rll}
	          -\Delta u^\epsilon  & = 1 & \text{ in } \{u^\epsilon >0 \},\\
	          \partial_n u^\epsilon & = 0 & \text{ on } \partial \mathcal{O}_\epsilon,\\
	          u^\epsilon_t & = |\nabla u^\epsilon|^2 & \text{ on } \partialom \{ u^\epsilon>0\},
	     \end{array}\right.
    \end{equation}
    where $\partial_n$ denotes the outer normal derivative on $\partial \mathcal{O}_\epsilon$ and  $\partialom$ denotes the boundary operator which acts on $\{u^\epsilon>0\}$ relative to $\mathcal{O}_\epsilon(\omega)$, where $\omega$ is the sample variable. The first two equations say $u^\epsilon$ solves an elliptic problem for fixed $t$ and the third says  $\{u^\epsilon>0\}$ is expanding with normal velocity given by $|\nabla u^\epsilon|$.
 
    This is the well known one-phase Hele-Shaw problem, posed in a random domain $\mathcal{O}_\epsilon$ with Neumann boundary conditions. In this note, we shall prove a homogenization result which essentially says (see Theorem \ref{thm: main} for the exact statement) that almost surely $u^\epsilon$ and the free boundaries $\partialom \{ u^\epsilon > 0 \}$ converge uniformly to $u$ and $\partial \{ u>0\}$, where $u:\mathbb{R}^d \times \mathbb{R}_+ \to \mathbb{R}_+$ solves the (deterministic) homogeneous problem
    \begin{equation}\label{eqn: Hele-Shaw homogeneous}
	     \left \{ \begin{array}{rll}
	          -\dive (A \nabla u)  & = \mu & \text{ in } \{u >0 \},\\
	          u_t & = Q(\nabla u) & \text{ on } \partial \{ u>0\}.
	     \end{array}\right.
    \end{equation} 
    Here $A$, known as the effective diffusivity, is defined in terms of $\mathcal{O}$ (see Section 7, \eqref{eqn: definity of A}). The constant $\mu$ represents the fraction of volume occupied by $\mathcal{O}$ and $Q$ is the quadratic form $Q(p)= (Ap \cdot p)/\mu$. Note that $u^\epsilon$ is defined in a different domain of $\mathbb{R}^d$ for each different $\epsilon$, but the putative limit $u$ is defined everywhere (and in particular in $\mathcal{O}_\epsilon$ for each $\epsilon$). Therefore, we define uniform convergence of $u^\epsilon$ to $u$ as
    \begin{equation*}
         \lim \limits_{\epsilon \to 0_+} \|u(\cdot,t)-u^\epsilon(\cdot,t)\|_{L^\infty(\mathcal{O}_\epsilon)}=0	\pal \; \forall\;t>0,
    \end{equation*}
    and similarly for the uniform convergence of the free boundaries (see Section \ref{sec: fbp in stationary ergodic domains} for precise definitions).

    \subsection{Motivation} The Hele-Shaw flow (for $d=2$) describes movement of an incompressible, viscous fluid confined between two narrow plates, as first described in the seminal paper of Hele-Shaw \cite{HS1898}. In this context, $u$ represents the pressure inside the cell, and our model arises as the surface tension zero limit when the plates trap a layer of some granular medium $\mathcal{O}_\epsilon$. Also for $d=2$ one may see \eqref{eqn: Hele-Shaw O_ep} as the evolution of a growing droplet which is spreading in a two dimensional labyrinth with walls lying vertically on top of $\partial \mathcal{O}_\epsilon$. The height of the droplet  at time $t$ is given by $u^\epsilon(.,t)$, thus the base of the droplet would be the ``positive phase''  $\{ u^\epsilon>0\}$. Under this interpretation, the condition $\Delta u^\epsilon = -1$ is a linearization of the constant mean curvature condition for the droplet, and the zero Neumann condition of $u^\epsilon$ just says the droplet meets the walls of the labyrinth at a right angle.
    Then, our main theorem (Theorem \ref{thm: main}) states that in the limit $\epsilon \to 0$ the droplets behave as if they were spreading on a free surface except their shape and motion law are changed, which typically become anisotropic (that is, the effective diffusivity $A$ might not be diagonal, see Section \ref{sec: linear homogenization revisited}).

    Let us note however, that the main motivation for this work is not the justification of the physical validity of the homogenized limit for the droplet interpretation. Instead, the goal is to understand in a ``simple'' case,  the regularity properties of diffusive free boundary problems when the underlying geometry is strongly irregular and random. By ``strongly irregular''  we mean that the randomness does not take the form of a diffusion operator in $\mathbb{R}^d$ with random uniformly elliptic coefficients, but instead that we are dealing with a random domain $\mathcal{O}_\epsilon$. A by product of our approach are uniform regularity estimates and pointwise bounds for solutions of linear elliptic equations in randomly perforated domains, which are new and of independent interest. 

    \begin{rem} A model which would be more physically realistic (for the situation of the droplet) is the one where we require $-\Delta u^\epsilon = \lambda(t)$ at each time, with $\lambda(t)>0$ being picked so that the volume of the droplet is preserved over time. Our analysis could be extended to this case if we had a well-posedness theory for the problem for  fixed $\epsilon$ ( with some continuity estimate for $\lambda(t)$), this is however not available at the moment. For more on droplet models, see \cite{Gla2005,GK09,AlbDeS2011} and references therein.
	\end{rem}

    \subsection{Background and contributions of this work} The evolution problem \eqref{eqn: Hele-Shaw O_ep}, as well as the broad study of elliptic equations in $\mathcal{O}_\epsilon$ is of interest both in the context of homogenization of partial differential equations as well as in percolation theory and analysis of (even discrete) stochastic growth models. First we review some of the relevant PDE and probabilistic ideas and how they relate to each other, followed by a discussion of our main results, which will be stated in detail in Section \ref{subsec: Main results}.

    \medskip

    \emph{Linear homogenization in random media.} Recall that the random domain $\mathcal{O}_\epsilon$ can be thought of as resulting from a degenerate limit of linear and uniformly elliptic operators in $\mathbb{R}^d$. Heuristically, for fixed $\epsilon$ the evolution problem \eqref{eqn: Hele-Shaw O_ep} is obtained  as the limit as $\delta\to 0$ of a family of Hele-Shaw problems posed in $\mathbb{R}^d$ with a random diffusivity matrix
    \begin{equation*}
	     \left \{ \begin{array}{rll}
	          -\dive (A_{\delta} \nabla u_\delta)  & = 1 & \text{ in } \{u_\delta >0 \},\\
	          (u_\delta)_t & = (A_\delta\nabla u_\delta,\nabla u_\delta) & \text{ on } \partial \{ u_\delta>0\},\\
	     \end{array}\right.
    \end{equation*}
    where $A_\delta(x)$ is the matrix given by
    \begin{equation*}
         A_\delta(x) = \left \{ \begin{array}{rl}
              \text{I} & \text{ if } x\in\mathcal{O}_\epsilon,\\
              \delta \text{I} & \text{ if } x\not\in\mathcal{O}_\epsilon. 	
         \end{array}	\right.
    \end{equation*}
    so in some sense, we are dealing with a degenerate elliptic problem in $\mathbb{R}^d$ where the diffusivity matrix field has the form $A(x,\omega) = \tilde A(x,\omega)\mathbb{I}_{\mathcal{O}(\omega)}(x)$ for some uniformly elliptic matrix $\tilde A(x,\omega)$.

    Let us focus for a moment on the theory of homogenization for (uniformly) elliptic operators in divergence form. Namely, (fixed) boundary value problems of the form
    \begin{equation*}
         \left \{ \begin{array}{rl}
         	  -\dive(A(\tfrac{x}{\epsilon},\omega)\nabla v) & = f\;\;\text{ in }  D \subset \mathbb{R}^d,\\
              v & = g \;\;\text{ on } \partial D,
         \end{array} \right.	
    \end{equation*}
    for a uniformly elliptic matrix process $A(x,\omega)$, fixed domain $D \subset \mathbb{R}^d$ and functions $f,g$. Then linear homogenization consists in finding whether  the solutions to the above boundary value problems converge as $\epsilon \to 0$ to the solution of a boundary value problem for a deterministic homogeneous operator.

    The first homogenization result was obtained by Tartar \cite[Chapter 1, Section 3]{BenLioPap1978} in the case where $A(x,\omega)\equiv A(x)$, for a periodic and uniformly elliptic $A(x)$. Later, both Kozlov \cite{Koz1978} and Papanicoloau-Varadhan \cite{PapVar1979} proved independently that homogenization holds for a stationary ergodic matrix process $A(x,\omega)$ (still assumed uniformly elliptic). Much later, the case where $A$ is degenerate elliptic and periodic, that is $A(x,\omega)=\mathbb{I}_{\mathcal{O}}$ and $\mathcal{O}$ periodic (with $\mathbb{R}^d\setminus \mathcal{O}$ connected and with Lipschitz boundary) was studied by  Cioranescu-Paulin \cite{CiPa1979}  and Cioranescu-Donato \cite{CiDo1988}. The nonlinear theory in divergence form was done by Dal Maso - Modica \cite{DaMo1986}, see \cite{JikKozOle1994,DaMo1986} for more complete references on all these topics. Most relevant to the analysis of \eqref{eqn: Hele-Shaw O_ep} is the homogenization of the Neumann problem for the Laplacian in a stationary ergodic domain, the homogenization for this boundary value problem has been considered previously by Jikov \cite{Zhi1993}. 

    The works of Kozlov and Papanicoloau-Varadhan already provide the almost sure convergence to the homogenized limit in $L^2$, moreover, they show that the effective equation will be uniformly elliptic with the same ellipticity bounds as the heterogenous problems. In this case the matrix $A(x,\omega)$ is uniformly elliptic, so one can use De Giorgi-Nash-Moser theory to get uniform interior H\"older continuity of the solutions, guaranteeing  the convergence is not just in $L^2$ but in fact locally uniform. In contrast, estimates of De Giorgi-Nash-Moser type are not readily available in a random domain $\mathcal{O}_\epsilon$, and the question of (locally) uniform convergence has not been resolved yet in this case.

    \medskip 

    \emph{Homogenization of free boundary problems.} These problems take the previous theory as a starting point, as it provides at least the effective equation for the limiting free boundary problem \emph{away} from the moving interface. The challenge is to identify the homogenized free boundary condition, which is not only a nonlinear condition but is also imposed on a lower dimensional (and often singular) set. A uniformly elliptic version of \eqref{eqn: Hele-Shaw O_ep} was first achieved by Kim-Mellet in \cite{KM09}. They used the monotonicity of this one-phase problem as well as the uniqueness theory from \cite{K03} to show that the integral of a viscosity solution with respect to time solves a certain obstacle problem, something that was expected but had only been proved for classical solutions \cite{ElJa}. Using this correspondence, the homogenization of the original problem was reduced to the homogenization of an obstacle problem. Then, the homogenization of the obstacle problem can be handled since pointwise estimates (based on Harnack's inequality and barriers) control  the free boundaries uniformly as $\epsilon \to 0$.

    There are also some results for uniformly elliptic one phase problems when one does not have any divergence structure (in which case the approach above fails), however, they are restricted to the periodic setting \cite{K09}. These results do not rely on writing an auxiliary obstacle for the time integral, instead the approach is inspired by a different method introduced in \cite{CaSoWa2005} for stochastic homogenization of fully nonlinear elliptic equations. It is worth noting that this method also involves an auxiliary obstacle problem, although this is completely unrelated to the obstacle problem above.

    Related results in homogenization of free boundaries include singular limits for flame propagation problems, in particular recent work of Caffarelli, Lee and Mellet for the periodic case\cite{CafKiAMel04} and the one-dimensional random case\cite{CafKiAMel07}. A time independent problem for droplets was considered by Mellet and Nolen\cite{MelNol2012}, and homogenization of the one-phase Stefan problem was studied by the Kim and Mellet\cite{KimMel2010}.

    \begin{rem}
         Another example of stochastic homogenization of a free boundary problem is the obstacle problem with a random highly oscillatory obstacle \cite{CaMe09}. Although we also deal with an obstacle problem in the present work (as an auxiliary problem, see Section \ref{sec: weak fb results}), here we have a random domain with Neumann boundary conditions, a very different situation from the one treated in \cite{CaMe09}.
    \end{rem}

	\emph{Percolation and growth models.} From the probabilistic side, the Hele-Shaw problem in $\mathbb{R}^d$ is the continuum counterpart (and often shown to be the continuous scaling limit) of several growth models taking place in the lattice $\mathbb{Z}^d$ (such as the abelian sandpile) or internal diffusion limited aggregation (DLA) \cite{LeYu2010}. %This discrete model is used for $d=3$ as a model of industrial chemical processes such as electropolishing, corrosion, and etching.
Keeping with this probabilistic interpretation of Hele-Shaw, our problem in a random domain is then a continuous analogue of internal DLA in a percolation cluster in $\mathbb{Z}^d$.

    For a proper introduction to (discrete) percolation see Grimmet \cite{Gri1999}, see Barlow \cite{Bar2004} for a summary of many of the results and analytical issues related to random walks in percolation clusters in $\mathbb{Z}^d$. The survey by Biskup \cite{Bis2011} discusses many recent results and standing open questions.
 
    In the context of (continuous) random walks in infinite percolation clusters $\mathcal{C} \subset \mathbb{Z}^d$, the questions corresponding to homogenization are concerned with the long time behavior of the walk and the continuum limit when we let spatial scale of the lattice go to zero. For either case, obtaining pointwise bounds on the transition density of the random walk (i.e. the heat kernel for the percolation cluster) is essential, in other words,  Nash-Aronson type bounds for the heat kernel are required. Equivalently, one is interested in proving De Giorgi-Nash-Moser type estimates in the infinite percolation cluster. 
  
    However, it is well known that such pointwise estimates fail for supercritical percolation. In fact, in general, both continuum and discrete settings, a (uniform in $\omega$) Sobolev inequality is not expected to hold. Heuristically, the infinite percolation cluster will always have some (maybe far away) region that is  poorly connected, which could trap the random walk for a very long time, and this is reflected by the heat kernel. In geometric terms, the poor connectivity obstructs the validity of the isoperimetric inequality everywhere (and thus the Sobolev embedding and Poincar\'e's inequality). 

    This difficulty has been overcome, albeit only in the discrete case. The literature on such results is much too vast, we refer to the introduction in Barlow \cite{Bar2004} as well as Biskup's survey \cite{Bis2011} for a proper list of references. Also in \cite{Bar2004}, a proof of non uniform bounds (in $\omega$) is obtained by combining analytic and probabilistic ideas. Such bounds say that after waiting for some time  (for a random time $T_x$) at a given point $x \in \mathbb{Z}^d$ the transition densities become Gaussian. The estimates by Barlow \cite{Bar2004} go along the lines of the Fabes-Stroock proof of the Nash's inequalities \cite{FabStr1986} (for a deterministic equation). One may say that from  all the approaches to De Giorgi-Nash-Moser theory, the one that has been most adaptable to the setting of discrete percolation is that of Nash.

    \begin{rem}
         The validity of pointwise bounds on the Green's function (or heat kernel) for a domain $\mathcal{O}$ suggests that it enjoys some regularity properties. More precisely, if the Green's function of a given $\mathcal{O}$ is pointwise comparable with $|x-y|^{2-d}$, then not only does the Harnack inequality follows almost immediately, one can also prove a Sobolev inequality for functions in $H^1(\mathcal{O})$. Compare this with the result that Gaussian bounds for the heat kernel (for a uniformly elliptic operator in divergence form) hold if and only if we have a certain doubling property of the volume and the Poincar\'e inequality holds for all balls (see \cite{Gri1991,Sal1992} for Riemannian manifolds  and \cite{Del1999} for graphs, see also Appendix \ref{sec: Green's function estimates} ).
    \end{rem}

    This lack of uniform regularity estimates in $\mathcal{C}$ mirrors the lack of regularity estimates and uniform convergence for linear homogenization in stationary ergodic domains $\mathcal{O}_\epsilon$. This becomes a serious issue in the analysis of \eqref{eqn: Hele-Shaw O_ep}, since as hinted at earlier, one typically uses Harnack inequalities and regularity estimates to control the oscillations of the free boundary in the obstacle problem. If one is to carry out an approach similar to that of \cite{KM09}, then one needs to develop an elliptic regularity theory in $\mathcal{O}_\epsilon$. We manage to do obtain uniform estimates in the continuum case under the extra assumption that $\mathcal{O}_\epsilon$ is a randomly perforated domain (Assumption \ref{hyp: regularity}), at the expense of not dealing with the continuum analogues of supercritical percolation models (such as the Boolean model).

    \medskip

    \emph{Contributions of this work.} In this paper we consider randomly perforated domains $\mathcal{O}_\epsilon$, obtained by removing from $\mathbb{R}^d$ a countable number of random isolated domains with Lipschitz boundaries. These go beyond the periodic domains studied in the linear homogenization literature, but do not cover many important percolation models (see the examples in Section \ref{sec: fbp in stationary ergodic domains}). On the other hand, we only need stationary ergodicity and do not make any assumptions about independence of the random variables.

    The main result for such domains $\mathcal{O}_\epsilon$ concerns the free boundary problem \eqref{eqn: Hele-Shaw O_ep}, which we show homogenizes to \eqref{eqn: Hele-Shaw homogeneous}, the convergence being almost surely uniform both for $u^\epsilon$ and the free boundaries $\partialom\{u^\epsilon>0\}$ (see Theorem \ref{thm: main}). This result also covers the one in \cite{KM09}, giving a simpler proof of the uniform convergence of the free boundaries. Moreover, our proof of homogenization has as a by product the homogenization of the usual obstacle problems in $\mathcal{O}_\epsilon$.

    As a necessary step for the proof of this theorem we also prove uniform H\"older estimates and Harnack inequality for harmonic functions in subsets of $\mathcal{O}_\epsilon$ that have zero Neumann data on $\partial \mathcal{O}_\epsilon$ (see Theorem \ref{thm: elliptic estimates}), such estimates, as well as the main homogenization theorem, are new even for the periodic case.

    Finally, we give a new (and shorter) proof of the homogenization result due to Jikov (see \cite{Zhi1993} also \cite[Chapter 8]{JikKozOle1994}) that is closer to the proofs in the uniformly elliptic case, at the expense of assuming that $\mathcal{O}_\epsilon$ is a random perforated domain (see Assumption \ref{hyp: regularity}). In particular, we use the regularity of the domain to characterize the space of correctors in $\mathcal{O}_\epsilon$ (see Lemma \ref{lem: characterization of L^2_pot}). Moreover, we use the strong convergence granted by the uniform regularity estimates to prove that the effective diffusivity is strictly positive. 
    \begin{rem}
	    It can be argued that the assumption of a perforated domain is rather strong, however, it is hoped that the machinery developed here combined with adequate isoperimetric inequalities for subsets of $\mathcal{O}_\epsilon$ can be pushed to derive regularity bounds (Harnack, Nash-Aronson, etc) in continuum and discrete percolation using an approach closer to De Giorgi's (Nash's approach has been so far the most successful one in the discrete percolation literature). This will be explored in future work. On the other hand, as a payback for our stronger regularity assumptions, our elliptic estimates are uniform (in $\omega$) and we apply them to get pointwise bounds for the Green's function in $\mathcal{O}_\epsilon$ (see Appendix \ref{sec: Green's function estimates}).
    	
    \end{rem}

    \subsection{Outline of the paper} The precise statement of the main homogenization theorem, as well as an in depth discussion of the strategy of the proof shall be deferred to later in Section \ref{sec: fbp in stationary ergodic domains} after we review the notation and  basic definitions relevant to free boundary problems in random domains. 

    The assumptions on $\mathcal{O}_\epsilon$ and the main theorems are stated in Section \ref{subsec: Main results}. In Section \ref{subsec: examples} we discuss examples of random domains where they apply. In Section \ref{subsec: strategy} we discuss the overall plan of the proof. 

    Section \ref{sec: elliptic regularity} deals with the regularity of solutions to elliptic equations in $\mathcal{O}_\epsilon$ with zero Neumann data on $\partial \mathcal{O}_\epsilon$, including H\"older estimates and Harnack inequalities with bounds that are uniform in $\epsilon$.

    In Section \ref{sec: weak fb results} an important barrier is constructed, it is a non-negative function $\mathcal{O}_\epsilon$ with quadratic growth at infinity (it plays the role that paraboloids play in $\mathbb{R}^d$) which has zero Neumann data on $\partial \mathcal{O}_\epsilon$. This construction is non-trivial and relies on the results of previous section. This barrier is used to show several important properties of the free boundaries $\partial\{u^\epsilon>0\}$. In particular, we show they cannot develop too many thin tentacles as $\epsilon\to 0$ (i.e. in average they do not oscillate with large amplitude).

    In Sections \ref{sec: convergence obstacle problem} and \ref{sec: homogenization} we combine the control on the free boundary oscillations with tools from stochastic homogenization and viscosity solutions to prove that \eqref{eqn: Hele-Shaw O_ep} homogenizes.

    The new proof of homogenization of linear elliptic problems in random perforated domains is done in Section \ref{sec: linear homogenization revisited}. Finally, in the appendices we prove the existence of weak solutions to the auxiliary obstacle problem derived from \eqref{eqn: Hele-Shaw O_ep}, and obtain uniform bounds for the Green's function in $\mathcal{O}_\epsilon$.\\

    \textbf{Acknowledgements.} The authors would like to express their gratitude to Antoine Mellet for interesting discussions which prompted this research and to Marek Biskup for many helpful discussions on percolation.

%%%%%%%%%%%%%%%%%%%%%%%%%%%%%%%%%%%%%%%%%%%%%%%%%%%%%%%%%%%%%%%%%%%%%%%%%%%%%%%%%%%%%%%%%%%%%%%%%%%%%%%%%
%%%%%%%%%%%%%%%%%%%%%%%%%%%%%%%%%%%%%%%%%%%%%%%%%%%%%%%%%%%%%%%%%%%%%%%%%%%%%%%%%%%%%%%%%%%%%%%%%%%%%%%%%
%%%%%%%%%%%%%%%%%%%%%%%%%%%%%%%%%%%%%%%%%%%%%%%%%%%%%%%%%%%%%%%%%%%%%%%%%%%%%%%%%%%%%%%%%%%%%%%%%%%%%%%%%
%%%%%%%%%%%%%%%%%%%%%%%%%%%%%%%%%%%%%%%%%%%%%%%%%%%%%%%%%%%%%%%%%%%%%%%%%%%%%%%%%%%%%%%%%%%%%%%%%%%%%%%%%
%%%%%%%%%%%%%%%%%%%%%%%%%%%%%%%%%%%%%%%%%%%%%%%%%%%%%%%%%%%%%%%%%%%%%%%%%%%%%%%%%%%%%%%%%%%%%%%%%%%%%%%%%
%%%%%%%%%%%%%%%%%%%%%%%%%%%%%%%%%%%%%%%%%%%%%%%%%%%%%%%%%%%%%%%%%%%%%%%%%%%%%%%%%%%%%%%%%%%%%%%%%%%%%%%%%
%%%%%%%%%%%%%%%%%%%%%%%%%%%%%%%%%%%%%%%%%%%%%%%%%%%%%%%%%%%%%%%%%%%%%%%%%%%%%%%%%%%%%%%%%%%%%%%%%%%%%%%%%
%%%%%%%%%%%%%%%%%%%%%%%%%%%%%%%%%%%%%%%%%%%%%%%%%%%%%%%%%%%%%%%%%%%%%%%%%%%%%%%%%%%%%%%%%%%%%%%%%%%%%%%%%
\section{Free boundary problems in stationary ergodic domains}\label{sec: fbp in stationary ergodic domains}

    \subsection{Random perforated structures}\label{sec: random perforated structures} We review some basic concepts and notation from the theory of  stochastic homogenization for linear elliptic operators in divergence form \cite{JikKozOle1994}.

    \begin{DEF}\label{def: ergodic dynamical system}
         An \emph{Ergodic Dynamical System} is a pair where first we have a probability space $(\Omega,\mathcal{F},\mathbb{P})$ and second we have an ergodic action on $\Omega$ by $\mathbb{R}^d$. The latter means we are given maps $T_x : \Omega\to \Omega$, $x\in\mathbb{R}^d$ preserving the measure $\mathbb{P}$, satisfying the group property $T_{x+y} = T_x \circ T_y$, in a manner so that
         \begin{equation*}
              (x,\omega) \to T_x\omega	
         \end{equation*}
         is a measurable map from $\mathbb{R}^d\times \Omega$ (equipped with the product $\sigma$-algebra) to $\Omega$, and such that the only events $F \in \mathcal{F}$ which are preserved by all the maps $T_x$ are those with $\mathbb{P}(F)=0$ or $\mathbb{P}(F)=1$.
    \end{DEF}

    For the next definitions we take an ergodic dynamical system as given, we give some examples below.

    \begin{DEF}\label{def: stationary ergodic process}
    	 Given a probability space $(\Omega,\mathcal{F},\mathbb{P})$ a \emph{real valued process} is a measurable function $f: \mathbb{R}^d \times \Omega \to \mathbb{R}$,  we will say $f$ is \emph{stationary ergodic} if the underlying probability space is actually an ergodic dynamical system with an action $T$ and $f(x,\omega)$ is such that $f(x+y,\omega) = f(x,T_y\omega)$ for all $x,y$. In either case we may write $f(x)$ instead of $f(x,\omega)$ and think of $f(x)$ as a random function of $x$.
    \end{DEF}

    Naturally, if $f$ is now taking values in a finite dimensional space we will say it is stationary ergodic if all of its components in a given basis are stationary ergodic (in particular we may talk of stationary ergodic vector fields, matrix fields, and so on).
   
    \begin{rem} Note that a function $f$ is stationary ergodic if and only if there is some measurable function $\tilde f: \Omega \to \mathbb{R}$ (or $\to \mathbb{R}^d$ if $f$ is vector valued) such that $f(x,\omega) = \tilde f(T_x\omega)$.    	
    \end{rem}

    \begin{DEF}\label{def: stationary ergodic domain}
    	 A stationary ergodic domain is a map $\omega \to \mathcal{O}(\omega)\subset \mathbb{R}^d$, $\omega \in \Omega$ such that
         \begin{equation*}
              (x,\omega) \to \mathbb{I}_{\mathcal{O}(\omega)}(x)	
         \end{equation*}
         is a stationary ergodic function. Equivalently, $\mathcal{O}(\omega)$ is stationary ergodic if for some $O \in \mathcal{F}$ we have
         \begin{equation*}
              \mathcal{O}(\omega) = \{ x \mid T_x\omega \in O\}	
         \end{equation*}
    \end{DEF}

    The fundamental fact about ergodicity that we will repeatedly use is the ergodic theorem, see for instance \cite{AkKr1981} for a proof (in far greater generality that we use here).
    \begin{thm}[Ergodic Theorem]\label{thm: ergodic theorem}
         Given an ergodic dynamical system$\pal$ we have for any $\tilde f\in L^p(\Omega)$, $p \in [1,\infty)$, that if  $f^\epsilon(x,\omega):=\tilde f(T_{x/\epsilon}\omega)$ then $f^\epsilon(x)\rightharpoonup \mathbb{E}[\tilde f]$  in $L^p_{\text{loc}}(\mathbb{R}^d)$ as $\epsilon \to 0^+$.
    \end{thm}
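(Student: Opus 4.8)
The plan is to deduce the statement from the multiparameter pointwise (Birkhoff--Wiener) ergodic theorem, together with the standard criterion for weak $L^p$-convergence: a uniform bound on compact sets plus convergence tested against a dense family of functions. The one input I would take off the shelf --- this is the ``greater generality'' alluded to in the citation \cite{AkKr1981} --- is the following: for $g\in L^1(\Omega)$, for $\P$-a.e.\ $\omega$, and for \emph{any} fixed box $Q\subset\mathbb{R}^d$,
\[
  \lim_{\epsilon\to 0^+}\frac{1}{|\epsilon^{-1}Q|}\int_{\epsilon^{-1}Q}g(T_y\omega)\,dy \;=\; \E[g],
\]
and likewise with $Q$ replaced by a ball; note that $\epsilon^{-1}Q$ need not be centred at (or even contain) the origin. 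Since $\Omega$ is a probability space, $\tilde f\in L^p(\Omega)\subset L^1(\Omega)$ and $|\tilde f|^p\in L^1(\Omega)$, so this applies to the data at hand; moreover, by Fubini and measure-preservation, $\int_{B_R}|\tilde f(T_x\omega)|^p\,dx$ has expectation $|B_R|\,\E[|\tilde f|^p]<\infty$ and hence is finite for a.e.\ $\omega$, so that each $f^\epsilon(\cdot,\omega)$ indeed lies in $L^p_{\text{loc}}(\mathbb{R}^d)$ for a.e.\ $\omega$.

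Fix a compact set $K\subset B_R$. The change of variables $y=x/\epsilon$ gives, for every box $Q\subset B_R$,
\[
  \int_Q f^\epsilon(x,\omega)\,dx \;=\; |Q|\cdot\frac{1}{|\epsilon^{-1}Q|}\int_{\epsilon^{-1}Q}\tilde f(T_y\omega)\,dy,
  \qquad
  \int_K |f^\epsilon(x,\omega)|^p\,dx \;\le\; |B_R|\cdot\frac{1}{|\epsilon^{-1}B_R|}\int_{\epsilon^{-1}B_R}|\tilde f(T_y\omega)|^p\,dy .
\]
Applying the quoted ergodic theorem to $g=|\tilde f|^p$ (on the balls $\epsilon^{-1}B_R$) bounds the right-hand side of the second relation as $\epsilon\to 0^+$, whence $\sup_{0<\epsilon<1}\|f^\epsilon(\cdot,\omega)\|_{L^p(K)}<\infty$ for a.e.\ $\omega$; applying it to $g=\tilde f$ shows $\int_Q f^\epsilon\,dx\to|Q|\,\E[\tilde f]$. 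Now pick a countable family $\{Q_j\}$ of boxes with rational vertices whose indicators span a subspace dense in $L^{p'}(K)$, and let $N$ be the union of the countably many exceptional null sets from these applications (one for $|\tilde f|^p$, one for $\tilde f$ on each $Q_j$). Then $N$ is $\P$-null, and for $\omega\notin N$ the sequence $\{f^\epsilon(\cdot,\omega)\}$ is bounded in $L^p(K)$ with $\int_K f^\epsilon g\to\E[\tilde f]\int_K g$ for every $g$ in the linear span of $\{\mathbb{I}_{Q_j}\}$.

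For $p\in(1,\infty)$, $L^p(K)$ is reflexive, so boundedness together with convergence on a dense set upgrades (by a routine density argument using the uniform $L^p(K)$ bound) to $f^\epsilon(\cdot,\omega)\rightharpoonup\E[\tilde f]$ weakly in $L^p(K)$; as $K$ was arbitrary, $f^\epsilon\rightharpoonup\E[\tilde f]$ in $L^p_{\text{loc}}(\mathbb{R}^d)$. For $p=1$ I would first truncate: with $\tilde f_M=(\tilde f\wedge M)\vee(-M)$ and $f^\epsilon_M(x,\omega)=\tilde f_M(T_{x/\epsilon}\omega)$, applying the ergodic theorem to $|\tilde f-\tilde f_M|\in L^1(\Omega)$ gives $\limsup_{\epsilon\to 0^+}\|f^\epsilon-f^\epsilon_M\|_{L^1(K)}\le C_K\,\E[|\tilde f-\tilde f_M|]\to 0$ as $M\to\infty$; since each $f^\epsilon_M$ is bounded by $M$, the argument above (using that $L^1(K)$ is separable) gives $f^\epsilon_M\rightharpoonup\E[\tilde f_M]$ weakly-$*$ in $L^\infty(K)=(L^1(K))^*$, and letting $M\to\infty$ (with $\E[\tilde f_M]\to\E[\tilde f]$) yields $f^\epsilon\rightharpoonup\E[\tilde f]$ in $L^1_{\text{loc}}(\mathbb{R}^d)$, once the null sets for $|\tilde f-\tilde f_M|$ and $\tilde f_M$ on the $Q_j$ ($M\in\mathbb{N}$) are adjoined to $N$.

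The genuine obstacle is entirely contained in the quoted ergodic theorem, specifically in the fact that $\epsilon^{-1}Q$ is a translated-and-dilated box whose centre runs off to infinity as $\epsilon\to 0$: the families $\{\epsilon^{-1}Q\}_\epsilon$ are still admissible Følner families for which the Tempelman/Akcoglu--Krengel pointwise ergodic theorem applies, but this is the one point where real ergodic-theoretic input is needed rather than change of variables and functional analysis. The only other point demanding care is organising all the a.e.-$\omega$ statements into a single $\P$-null exceptional set valid simultaneously for all $\epsilon\to 0^+$ and all test functions, which is exactly what the uniform $L^p(K)$ bound permits by reducing the test functions to the countable dense family $\{\mathbb{I}_{Q_j}\}$.
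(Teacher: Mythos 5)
The paper does not give its own proof of this theorem; it is stated as a known result with a citation to Akcoglu--Krengel, so there is no internal argument to compare against. Your derivation from the multiparameter pointwise ergodic theorem is correct and is the standard route: a uniform $L^p$ bound on compact sets from the ergodic theorem applied to $|\tilde f|^p$, convergence tested against indicators of a countable family of boxes from the theorem applied to $\tilde f$, and then reflexivity (for $p>1$) or truncation (for $p=1$) to upgrade to weak $L^p_{\text{loc}}$ convergence off a single $\mathbb{P}$-null set.

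One remark: the step you single out as ``the one point where real ergodic-theoretic input is needed'' --- namely the validity of $\frac{1}{|\epsilon^{-1}Q|}\int_{\epsilon^{-1}Q}g(T_y\omega)\,dy\to\mathbb{E}[g]$ for boxes $Q$ not containing the origin, so that the window $\epsilon^{-1}Q$ runs off to infinity --- is less of an extra input than you make it sound. For such a $Q$, write $\mathbb{I}_Q$ by inclusion--exclusion on the coordinates as a finite signed combination $\sum_S \pm\,\mathbb{I}_{R_S}$ of indicators of boxes $R_S$ having a vertex at the origin. Each family $\{\epsilon^{-1}R_S\}_\epsilon$ is nested and Tempelman-regular, so the \emph{centred} Wiener--Tempelman pointwise ergodic theorem applies to it directly; and since the ratios $|\epsilon^{-1}R_S|/|\epsilon^{-1}Q|$ are $\epsilon$-independent, the cancellation in the inclusion--exclusion survives the limit, giving the moving-window statement with no additional exceptional sets beyond those already collected. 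So what you invoke as a stronger ergodic theorem in fact reduces to the standard centred one plus elementary bookkeeping, and the whole proof is then self-contained modulo that single classical input.
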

 
    A particular and important case is when $f$ is the characteristic function of a set $\text{O}$.

    \begin{DEF}\label{def: mu}
         Given a stationary ergodic domain $\mathcal{O}$  we have for any $p \in [1,\infty)$
         \begin{equation*}
              \mathbb{I}_{\mathcal{O}_\epsilon}(x) \rightharpoonup \mu:=\mathbb{P}(\text{O}) \;\text{ in } L^p_{\text{loc}}(\mathbb{R}^d) \;\pal
         \end{equation*}
    \end{DEF}

    We now introduce the function spaces we will be working with.
 
    \begin{DEF}\label{def: function spaces}
    	 For a given domain $D$ we will denote by $H(D)$ $(H_0(D))$ the space of functions $f\in L^2(D\times \Omega)$ such that $f(\cdot,\omega) \in H^1(D) \pal$ ($f(\cdot,\omega) \in H^1_0(D) \pal$). Further, if a stationary ergodic domain is given then for any $\epsilon>0$ we introduce the set (which $D$ and $\mathcal{O}$ are being used will be clear from context)
         \begin{equation*}
              \bfb_\epsilon := \{ (x,\omega) \in D\times \Omega \mid x\in\mathcal{O}_\epsilon(\omega):=\epsilon \mathcal{O}(\omega)\}	
         \end{equation*}
         The set $\bfb_\epsilon$ inherits the $\sigma$-algebra and the product measure from $D\times \Omega$, in particular we can talk about the spaces $L^p(\bfb_\epsilon)$. Moreover, we define $H(\bfb_\epsilon)$ as the subset of $L^2(\bfb_\epsilon)$ formed by functions $f$ such that$\pal$ we have $f(\cdot,\omega) \in H^1(D \cap \mathcal{O}_\epsilon)$, $H_0(\bfb_\epsilon)$ is defined as the closure of the set of $f \in H(\bfb_\epsilon)$ which$\pal$ vanish in a neighborhood of $\partial D$ (but note they may be non-zero on $\partial \mathcal{O}_\epsilon$).
    \end{DEF} 

    \begin{DEF} Given $\omega \in \Omega$ and $A \subset \mathcal{O}_\epsilon$, we define the boundary of $A$ with respect to $\mathcal{O}(\omega)$ as
	          \begin{equation*}
	                    \partialom A := \partial A \setminus \partial \mathcal{O}.	
	          \end{equation*}
    \end{DEF}    

    As they are defined in different sets, we must define the  convergence of functions $f^\epsilon \in L^p(\bfb_\epsilon)$.% as $\epsilon\to 0$.

    \begin{DEF}\label{def: notions of convergence}
         A sequence $f^\epsilon \in L^p(\bfb_\epsilon)$ is said to converge to $f \in L^p(D\times \Omega)$, $p\in[1,+\infty]$ if
         \begin{equation*}
              \lim \limits_{\epsilon\to 0^+}\|f^\epsilon-f\|_{L^p(\bfb_\epsilon)}=0.
         \end{equation*}
         Moreover, if $p<\infty$ we say $f^\epsilon$ converges weakly to $f$ (also denoted $f^\epsilon \rightharpoonup f$) if in the usual sense we have
         \begin{equation*}
              \mathbb{I}_{\bfb_\epsilon}f^\epsilon \rightharpoonup f \text{ in } L^p(D\times \Omega).	
         \end{equation*}
         It is also clear what we mean by convergence of functions $f_k \in L^p(\bfb_{\epsilon_k})$ for a subsequence $\epsilon_k \to 0$ as well as what we mean when we say we have convergence in the$\pal$ sense.
    \end{DEF}

    \begin{DEF}\label{def: weak H^1 solutions}
        Given $D\subset \mathbb{R}^d$, $\epsilon>0$ and $f \in L^2(\bfb_\epsilon)$  we will say that $v\in H(\bfb_\epsilon)$ is a weak solution of 
	          \begin{equation}\label{eqn: Poisson equation}
	               \left \{ \begin{array}{rll}
	         	   -\Delta v & = f & \text{ in } D\cap\mathcal{O}_\epsilon,\\
	               \partial_n v & = 0 & \text{ on } \partial \mathcal{O}_\epsilon,\\
	               \end{array}	\right.
	          \end{equation}    
	    if given any $\phi \in H_0(\bfb_\epsilon)$ we have
	    \begin{equation*}
	         \int_{\bfb_\epsilon} \nabla v\cdot \nabla \phi\;\d x\d\mathbb{P}(\omega) = \int_{\bfb_\epsilon} f \phi\;\d x\d\mathbb{P}.	
	    \end{equation*}
    \end{DEF}
    \begin{rem}\label{rem: perforated ball}
         Often, we will deal with \eqref{eqn: Poisson equation}  when $D$ is a ball, thus it will be convenient to write
         \begin{equation*}
              \mathcal{B}^\epsilon_r(x) := B_r(x) \cap \mathcal{O}_\epsilon.
         \end{equation*}    
    \end{rem}

    \begin{DEF}\label{def: Dirichlet problem}
    	 We will say $v$ solves the Dirichlet problem
		      \begin{equation}\label{eqn: Dirichlet problem in  D}
		             \left \{ \begin{array}{rll}
		                -\Delta v & = f & \text{ in } D\cap\mathcal{O}_\epsilon,\\
		                \partial_n v & = 0 & \text{ on } \partial \mathcal{O}_\epsilon,\\
		                v & = 0 & \text{ on } \partialom D,
		           \end{array}	\right.
		      \end{equation}
		 if in addition to solving \eqref{eqn: Poisson equation} in the weak sense we also have $v \in H_0(\bfb_\epsilon)$.        			
    \end{DEF}

    The mixed boundary problem \eqref{eqn: Dirichlet problem in  D} has a unique solution, which can be seen by applying the Lax-Milgram Theorem to a properly picked bilinear form in the Hilbert space $H_0(\bfb_\epsilon)$.

    \subsection{Viscosity solutions of the Free Boundary Problem}
    Given a (deterministic) domain $\mathcal{O} \subseteq\mathbb{R}^d$ and a function $u:\mathcal{O}
\times \mathbb{R}_+ \to \mathbb{R}_+$, we introduce the positive phase or ``droplet base'' of $u$ defined by
    \begin{align*}
         \db(u) & := \{ \; u>0 \; \}\;	\subset \mathcal{O}\times \mathbb{R}_+.
    \end{align*}
    Let us also define $\db_t(u)  := \{ u(\cdot,t) >0 \} \subset \mathcal{O}$. On the other hand, any bounded set $\db_0 \subset \mathcal{O}$ determines a function $u_0 \in H^1(\mathcal{O})$ by the conditions
	\begin{equation*}
		 -\Delta u_0 = 1 \text{ in } \db_0,\;\;\partial_n u_0 = 0 \text{ on } \partial \mathcal{O}, \;\; u_0 = 0 \text{ in } \mathcal{O}	\setminus \db_0.
	\end{equation*}
    Such $u_0$ gives the droplet profile associated to the base $\db_0$. Then, given such a $\db_0$ we consider the following Cauchy problem: to find a function $u: \mathcal{O}\times \mathbb{R} \to \mathbb{R}_+$ with $u(\cdot,0)=u_0$ which solves
 	\begin{equation}\label{eqn: deterministic Cauchy problem}
	     \left \{ \begin{array}{rll}
	          -\Delta u & = 1 & \text{ in }\{ u>0\}.\\ 
	          \partial_n u & = 0 & \text{ on } \partial \mathcal{O},\\
	          u_t  &= |\nabla u|^2 & \text{ on } \partialom \{ u>0\}.\\
	     \end{array}\right.
	\end{equation}
    It is well known that even for a smooth domain $\mathcal{O}$ and smooth initial data, solutions to \eqref{eqn: deterministic Cauchy problem} may become singular at some positive time, typically as the positive phase $\{u>0\}$  undergoes a topological change which induces a discontinuity in $u$. In this situation, some of the derivatives appearing \eqref{eqn: deterministic Cauchy problem} might cease to exist, and one must resort to a weak notion of solution in order to continue. Hence we now review the notion of viscosity solutions for \eqref{eqn: deterministic Cauchy problem}, as it was developed in \cite{K03}.

    \medskip

	We say $x_0\in\partial \mathcal{O}$ is a {\it regular} point if $\partial\mathcal{O}$ is differentiable at $x_0$. Note that $\mathcal{O}$ may not have a boundary (for example $\mathcal{O}=\mathbb{R}^d$). Also,  $E \subset \mathcal{O}\times \mathbb{R}_+$ will be called a cylindrical domain if it is of the form $A \times (a,b)$ for some open set $A$ in $\mathcal{O}$.

	\begin{DEF}\label{def: classical Subsolutions}
	     A  function $\varphi \in C(\bar{E})$ ($E$ a cylindrical domain) is a \emph{classical subsolution} of \eqref{eqn: deterministic Cauchy problem} in $E$ if 
	         \begin{itemize}
	              \item[(i)] $\varphi \in C_{x,t}^{2,1}(\overline{\{\varphi > 0\}}\cap E)$ and $\partial\{\varphi>0\}$ is locally $C^{1,1}_{x,t}$ in $E$.
	              \item[(ii)] $ -\Delta\varphi(x,t) \leq 1$ in $\{\varphi>0\}\cap E$.
	              \item[(iii)] $\varphi_n \leq 0$ on regular points of $\partial\mathcal{O} \cap \bar{E}$.
	              \item[(iv)] $\varphi_t \leq |\nabla \varphi|^2 $ on $( \partialom\{\varphi>0\}) \cap E$.
	         \end{itemize}
	         Here $\nabla \varphi(x,t)$ is taken to be the limit of $\nabla \varphi(y,s)$ as $(y,s) \to (x,t)$ from within the set $\{ \varphi>0\}$, and as before $n$ denotes the outward normal of $\mathcal{O}$. We say that $\varphi$ is a \emph{strict} classical subsolution if the inequalities in (iii) and (iv) are strict, lastly, \emph{classical supersolutions} are defined by reversing inequalities (ii),(iii) and (iv).
	\end{DEF}

	\begin{DEF}\label{def: viscosity solutions}
	     An upper (lower) semicontinuous function $u$ in $\mathcal{O}\times \mathbb{R}_+$ will be called a viscosity subsolution (supersolution) of Problem \eqref{eqn: deterministic Cauchy problem} with initial data $u_0$ if $u\leq u_0$ ($u\geq u_0$) for $t=0$ and if anytime we have a strict classical supersolution (subsolution) $\phi$ of \eqref{eqn: deterministic Cauchy problem} in some cylindrical domain $E$ with $u < \varphi$ ($u>\varphi$) on $\partial_P E$ then also $u < \varphi$ ($u>\varphi$) in $E$.
	\end{DEF}

	Given a locally bounded function $u$, we define its  lower and upper semi-continuous envelopes:
    \begin{equation}\label{eqn: semicontinuous envolopes}
         \begin{array}{rl}
		      \underline{u}(x,t) &  = \liminf \limits_{(y,s)\to (x,t)} u(y,s),\\
			  \overline{u}(x,t) & = \limsup \limits_{(y,s)\to (x,t)} u(y,s).
	     \end{array}	
    \end{equation}

	\begin{DEF}\label{def: discontinuous viscosity solutions}
	     A locally bounded function $u$ in $\mathcal{O}\times \mathbb{R}_+$ is a viscosity solution of \eqref{eqn: deterministic Cauchy problem} with initial data $u_0$ if $\overline{u}$ is a viscosity subsolution and $\underline{u}$ is a viscosity supersolution, and $u(.,0)=u_0$.
	\end{DEF}

	Now we can consider \eqref{eqn: deterministic Cauchy problem} for a (rescaled) stationary ergodic domain $\mathcal{O}_\epsilon$.

	\begin{DEF}\label{def: viscosity solution random domain}
	     Let us consider a stationary ergodic domain $\mathcal{O}(\omega)$, a compact subset $\db_0 \subset \mathbb{R}^d$ and $\epsilon>0$. A measurable function $u^\epsilon: \text{\bf{B}}_\epsilon \times \mathbb{R}_+ \to \mathbb{R}$ will be called a viscosity subsolution(supersolution, solution) of \eqref{eqn: Hele-Shaw O_ep} with initial data $u_0^\epsilon$ if $\mathbb{P}$-almost surely the function $u^\epsilon	(.,\omega)$ is a viscosity subsolution(supersolution, solution) of\eqref{eqn: deterministic Cauchy problem} with $\mathcal{O}=\mathcal{O}_\epsilon(\omega)$ and the corresponding initial data.
	\end{DEF}

	For the case when  $\mathcal{O}$ has a $C^3$ boundary we have a comparison principle, whose proof is parallel to the one in \cite{K03}. The regularity restriction on the boundary is to locally parametrize the Neumann boundary and use reflection argument to transform the flow in contact with the Neumann boundary into the flow without one (see section 3 of \cite{K05} where such parametrization is performed in a similar context). We suspect the comparison to hold with less regularity, but  we will not pursue this issue here.

%	For nonnegative functions $u, v:\mathcal{O}\to \R$, we denote $u\prec v$ if $u< v $ in $\{u>0\}$ and $\overline{\{u>0\}} \subset \{v>0\}$.

    \begin{DEF}\label{def: strict precedence}
	     For nonnegative functions $u, v:\mathcal{O}\to \R$, which have compact supports and are continuous in the interior of their support, we write $u\prec v$ if $u< v $ in $\{u>0\}$ and $\overline{\{u>0\}} \subset \{v>0\}$. Following \cite{K03}, we say $u$ and $v$ are \emph{strictly separated}.
         %Given $D\subseteq \mathcal{O}$ and two nonnegative functions $u,v$ in $D$, we will say that $v$ \emph{strictly precedes} $v$ \emph{within} $D$, denoted $u \prec v$, if $\overline u< \underline v$ in $\{ \overline u>0 \} \cap D$ and $\underline v>0$ in the closure of $\{\overline u>0\} \cap D$.
    \end{DEF}

	\begin{thm}\label{thm: comparison for Hele-Shaw C^3 boundary}
	     Let $u$ and $v$ be respectively viscosity sub- and supersolution of \eqref{eqn: deterministic Cauchy problem} with the domain $\mathcal{O}$ having $C^3$ boundary. Suppose $u(\cdot,0) \prec v(\cdot,0)$ in $\mathcal{O}$. Then $u(\cdot,t) \prec v(\cdot,t)$ for all $t\in [0,T)$. 
	\end{thm}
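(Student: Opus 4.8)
The plan is to mimic the comparison argument of \cite{K03} for the one-phase Hele-Shaw problem, adapting it to the presence of a Neumann boundary on $\partial\mathcal{O}$. The strategy is by contradiction via the usual sup-/inf-convolution (doubling of variables) together with a "first crossing time" argument: assuming $u(\cdot,0)\prec v(\cdot,0)$ but the conclusion fails, one finds a first time $t_0>0$ at which the free boundaries of an appropriately dilated subsolution and the supersolution touch, and then derives a contradiction by comparing the free-boundary velocities (and the elliptic equation in the bulk) at the touching point. Concretely, for $r<1$ close to $1$ and a small spatial dilation/translation, set
\begin{equation*}
    \tilde u(x,t) := r^2\, u\!\left(\tfrac{x}{r},\tfrac{t}{r}\right),
\end{equation*}
which is again a subsolution of \eqref{eqn: deterministic Cauchy problem} in the rescaled domain, and which is strictly separated from $v$ at $t=0$; one lets $r\uparrow 1$ at the end. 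The point of the dilation is that it creates the strict inequalities needed to run the viscosity comparison (a strict classical sub/supersolution can be inserted between $\tilde u$ and $v$ near a would-be touching point), so the crossing can only occur along the free boundary or along $\partial\mathcal{O}$, never in the interior of the positive set.

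First I would handle the bulk and the free boundary away from $\partial\mathcal{O}$ exactly as in \cite{K03}: at a first touching point $(x_0,t_0)$ with $x_0\in\partialom\{v>0\}$ lying in the interior of $\mathcal{O}$, build a classical barrier (a translating/shrinking ball solution of $-\Delta\varphi=1$ with the correct normal velocity $|\nabla\varphi|^2$) which touches $v$ from inside its positive phase and $\tilde u$ from outside; the ordering of the free-boundary speeds, combined with Hopf's lemma for the elliptic equation, forces a strict inequality contradicting minimality of $t_0$. The genuinely new case is when the touching point $x_0$ lies on (or the relevant barrier region abuts) the Neumann boundary $\partial\mathcal{O}$. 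Here I would use the $C^3$ regularity of $\partial\mathcal{O}$ to flatten it locally by a $C^2$ diffeomorphism and then perform an even reflection across the flattened boundary, exactly as in Section 3 of \cite{K05}. The Neumann condition $\partial_n u=0$ (and $\partial_n v=0$) is precisely what makes the reflected functions solve a Hele-Shaw-type problem with no boundary in a full neighborhood of the reflected point (the reflected coefficients pick up from the diffeomorphism, but remain uniformly elliptic and $C^1$), so the interior argument of \cite{K03} applies after reflection. One must check that the reflection is compatible with the notions of classical sub/supersolution in Definition \ref{def: classical Subsolutions} — in particular that condition (iii), $\varphi_n\le 0$ at regular points of $\partial\mathcal{O}$, survives reflection — and that $C^3$ smoothness of $\partial\mathcal{O}$ gives enough regularity ($C^2$ on the transformed domain, $C^{1,1}$ on the transformed free boundary) for the classical barriers to exist.

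The main obstacle I anticipate is precisely this boundary case: making the reflection argument rigorous at the level of \emph{viscosity} sub- and supersolutions, rather than classical ones, and ensuring that a touching point exactly on $\partial\mathcal{O}$ (where the free boundary meets the Neumann wall) does not create an additional degeneracy. The reflected problem is no longer the pure Laplacian but a divergence-form operator $-\mathrm{div}(\bar A\nabla\cdot)$ with $C^1$ coefficients $\bar A$, so one needs the comparison machinery of \cite{K03} in that slightly more general (still uniformly elliptic, smooth-coefficient) setting — this is routine but must be stated. I would also need to verify that strict separation $u(\cdot,0)\prec v(\cdot,0)$ is stable under the dilation $\tilde u$ above for $r$ close to $1$, and that passing to the limit $r\uparrow 1$ recovers $u(\cdot,t)\prec v(\cdot,t)$ (not merely $u\le v$); this uses the openness in Definition \ref{def: strict precedence} and the fact that the positive phases are expanding, so no new contact can be created in the limit. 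Since the paper explicitly says the proof is "parallel to the one in \cite{K03}" and defers the details, I would present the above as a sketch and refer to \cite{K03,K05} for the technical core.
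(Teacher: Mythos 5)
Your overall strategy matches what the paper indicates: argue by contradiction via a first--crossing--time argument as in \cite{K03}, and handle a touching point on the Neumann wall by flattening $\partial\mathcal{O}$ (using the $C^3$ hypothesis) and reflecting, following Section~3 of \cite{K05}. You also correctly flag that the reflected operator is a variable--coefficient divergence--form operator rather than the Laplacian, so the \cite{K03} machinery is needed in that slightly more general setting. The paper itself provides no proof beyond a one--sentence description of exactly this plan, so your outline is consistent with what the paper intends.

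There is, however, a concrete error in the perturbation step, and it is not a harmless typo because that step is the technical heart of the \cite{K03} argument. You set $\tilde u(x,t) := r^2\,u(x/r, t/r)$ with $r<1$. Then $\tilde u_t = r\,u_t(x/r,t/r)$ and $|\nabla\tilde u|^2 = r^2\,|\nabla u|^2(x/r,t/r)$, so from $u_t\le|\nabla u|^2$ one only obtains $\tilde u_t \le r^{-1}|\nabla\tilde u|^2$, which for $r<1$ is a \emph{weaker} inequality than the subsolution condition $\tilde u_t \le |\nabla\tilde u|^2$; your $\tilde u$ need not be a subsolution at all, let alone a strict one. Moreover $\tilde u$ lives in the rescaled domain $r\mathcal{O}$, not $\mathcal{O}$; since $\mathcal{O}$ is not dilation-invariant, $\tilde u$ and $v$ are no longer defined on the same domain and the Neumann condition is imposed on the wrong wall, so the comparison you want to run is ill-posed. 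The standard device in \cite{K03} is not a spatial dilation but a sup-convolution combined with a time delay, roughly $Z(x,t):=\sup_{|y-x|\le c(1-\eta t)} u\big(y,(1-\eta)t\big)$, which keeps the spatial domain fixed, enlarges the support by a shrinking ball, and produces a genuine strict subsolution. In the present setting this sup-convolution must be performed \emph{after} the local flattening/reflection near $\partial\mathcal{O}$ (so that the sup over a small ball does not leave the domain), which is exactly where the $C^3$ hypothesis and the construction of \cite{K05} are used. Replacing your dilation by this sup-convolution, carried out in the reflected coordinates near $\partial\mathcal{O}$ and directly in the interior, fixes the gap and the rest of your outline goes through.
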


	Let $v$ be as given in the above theorem. When the initial free boundary $\partial\{v(x,0)>0\}$ is $C^1$, then Theorem 4.1 of \cite{ChKi2006} applies to $v$ to yield that $v(\cdot,0) \prec v(\cdot,\e)$ for any $\e>0$. Using this observation and Theorem~\ref{thm: comparison for Hele-Shaw C^3 boundary} we deduce the following:

	\begin{thm}\label{thm: cp2}
	     Let $u$ and $v$ be as given above, and let $\partial\{v(x,0)>0\}$ be $C^1$. If $u(\cdot,0) \leq v(\cdot,0)$ then $u(\cdot,t) \leq v^*(\cdot,t)$ for all $t\in [0,T)$. 
	\end{thm}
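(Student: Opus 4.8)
The plan is to derive Theorem~\ref{thm: cp2} from Theorem~\ref{thm: comparison for Hele-Shaw C^3 boundary} by an approximation argument, perturbing the supersolution $v$ slightly in order to convert the non-strict ordering $u(\cdot,0)\le v(\cdot,0)$ into the strict separation $u(\cdot,0)\prec v_\e(\cdot,0)$ required to apply the comparison principle, and then passing to the limit. The key input is the observation recorded just before the statement: since $\partial\{v(x,0)>0\}$ is $C^1$, Theorem~4.1 of \cite{ChKi2006} gives $v(\cdot,0)\prec v(\cdot,\e)$ for every $\e>0$. This is precisely the mechanism that produces a time-shifted competitor which is strictly bigger than $v$ at the initial time.

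First I would fix $\e>0$ and consider the time-shifted function $v_\e(x,t):=v(x,t+\e)$. Since $v$ is a viscosity supersolution of \eqref{eqn: deterministic Cauchy problem} and the equation is autonomous in $t$, $v_\e$ is again a viscosity supersolution on $\mathcal{O}\times\mathbb{R}_+$, with initial data $v_\e(\cdot,0)=v(\cdot,\e)$. By the $C^1$ regularity of $\partial\{v(x,0)>0\}$ and the cited result from \cite{ChKi2006}, we have $v(\cdot,0)\prec v(\cdot,\e)=v_\e(\cdot,0)$. Combining this with the hypothesis $u(\cdot,0)\le v(\cdot,0)$, and using that $\prec$ only requires strict inequality on the positive set together with inclusion of closures, one gets $\overline{u}(\cdot,0)\prec v_\e(\cdot,0)$ (replacing $u$ by its upper semicontinuous envelope, which is still a viscosity subsolution by Definition~\ref{def: discontinuous viscosity solutions} and does not change the positive set at $t=0$ beyond closure). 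Then Theorem~\ref{thm: comparison for Hele-Shaw C^3 boundary} applies to the pair $\overline u$ and $v_\e$ and yields $\overline{u}(\cdot,t)\prec v_\e(\cdot,t)=v(\cdot,t+\e)$ for all $t\in[0,T)$; in particular $u(\cdot,t)\le \overline u(\cdot,t)\le v(\cdot,t+\e)$.

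Finally I would let $\e\to 0^+$. For each fixed $(x,t)$ with $t<T$ we have $u(x,t)\le v(x,t+\e)$, so $u(x,t)\le \limsup_{\e\to 0^+} v(x,t+\e)\le \limsup_{(y,s)\to(x,t)} v(y,s)=\overline v(x,t)=v^*(x,t)$, which is the claimed inequality. (Here one uses that $v^*$ in the statement denotes the upper semicontinuous envelope $\overline v$ from \eqref{eqn: semicontinuous envolopes}, and that $v$ being a viscosity supersolution is in particular locally bounded so these envelopes are well defined.) One should be slightly careful that the shift keeps us inside the time interval where $v$ is defined and a supersolution, but since $v$ is a global-in-time viscosity supersolution on $\mathbb{R}_+$ this is automatic for $t+\e<T$ after possibly shrinking the interval, and the conclusion for $t\in[0,T)$ follows by taking $\e$ small depending on $t$.

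The main obstacle I expect is bookkeeping around semicontinuity rather than anything deep: one must make sure that $u(\cdot,0)\le v(\cdot,0)$ (an inequality of functions, with no control on supports) genuinely upgrades to strict separation $\overline u(\cdot,0)\prec v_\e(\cdot,0)$, which needs the closure of $\{\overline u(\cdot,0)>0\}$ to sit inside the open set $\{v(\cdot,\e)>0\}$ — this is where $v(\cdot,0)\prec v(\cdot,\e)$ is used in an essential way, together with $\{u(\cdot,0)>0\}\subset\{v(\cdot,0)>0\}$ and compactness of supports. The other point requiring care is that the comparison principle is stated for $\prec$-ordered initial data and concludes $\prec$-ordering for positive times, so recovering the pointwise inequality for $u$ (not merely on its positive set) uses that $u\le \overline u$ everywhere and that outside $\{\overline u>0\}$ one has $u\le 0\le v^*$ trivially. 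None of this is hard, but it is the kind of step where a viscosity-solution argument can quietly go wrong.
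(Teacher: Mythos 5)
Your proof is correct and is precisely the approach the paper intends: time-shift $v$ by $\e$ so that the cited result from \cite{ChKi2006} upgrades the non-strict ordering $u(\cdot,0)\le v(\cdot,0)$ to the strict separation $u(\cdot,0)\prec v(\cdot,\e)=v_\e(\cdot,0)$, apply Theorem~\ref{thm: comparison for Hele-Shaw C^3 boundary} to the pair $(u,v_\e)$, and then let $\e\to 0^+$ to recover $u\le \overline v = v^*$. The paper gives only the one-line sketch preceding the statement, and your write-up fills in the details (in particular the verification that $\le$ together with $\prec$ composes to $\prec$, and that the $\e\to 0$ limit lands on the upper semicontinuous envelope) exactly as intended.
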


	We do not clarify the uniqueness of viscosity solutions for \eqref{eqn: Hele-Shaw O_ep} when $\partial \mathcal{O}_\e$ is less regular than $C^3$  or when $\partial\{v(x,0)>0\}$ is not $C^1$. In this case our result addresses the specific viscosity solution $u^\e$ given by the time derivative of the solution $p^\e$ of an auxiliary obstacle problem (see \eqref{eqn: epsilon obstacle problem} below).
	
    \subsection{Main results}\label{subsec: Main results}

	The main results require two assumptions on the sets $\mathcal{O}_\epsilon$, the second of which requires one further definition.

    \begin{DEF}\label{def: minimally smooth}
         An open set $P \subset \mathbb{R}^d$ is said to be \emph{minimally smooth} with constants $(\delta,N,M)$ if we may cover $\partial P$ by a countable sequence of open sets $\{U_i\}_i$ such that 
         \begin{enumerate}
              \item Each $x \in \mathbb{R}^d$ is contained in at most $N$ of the open sets $U_i$.
              \item For any $x\in\partial P$, the ball $B_\delta(x)$ is contained in at least one $U_i$.
              \item For any $i$, the portion of the boundary $\partial P$ inside $U_i$ agrees (in some cartesian system of coordinates) with the graph of a Lipschitz function whose Lipschitz semi-norm is at most $M$.	
         \end{enumerate}
    \end{DEF}

    We are finally ready to describe the two assumptions on $\mathcal{O}_\epsilon$.
 
	\begin{hyp}\label{hyp: Ergodic}
	     The domains $\mathcal{O}_\epsilon(\omega)$ are given by
	     \begin{equation*}
	          \mathcal{O}_\epsilon(\omega) := \{ \epsilon x \mid  x \in \mathcal{O}(\omega) \} = \{x \mid T_{\frac{x}{\epsilon}}\omega \in O \}	
	     \end{equation*}
	     where $\mathcal{O}(\omega)$ is a stationary ergodic domain and $T$ the associated action (see Definition \ref{def: stationary ergodic domain}).
	\end{hyp}
		
	\begin{hyp}\label{hyp: regularity}
	     There are constants $d_0,\delta,N,\text{ and }M$ (independent of $\omega)$ such that$\pal$ the complement of the set $\mathcal{O}(\omega)$ consists of a countable union of bounded sets $P_k(\omega)$ ($k\in\mathbb{N}$) such that we have
	     \begin{equation*}
	          d(P_k(\omega),P_j(\omega))\geq d_0 \text{ whenever } k\neq j.	
	     \end{equation*}
	     Here each set $P_k(\omega)$ is minimally smooth with constants $(\delta,N,M)$ and has diameter smaller than $d_0^{-1}$.
	\end{hyp}
	
	In light of Assumption \ref{hyp: regularity}, we make the following definition.

    \begin{DEF}\label{def: universal constants}
    	 A constant will be said to be \emph{universal} if it is determined by the constants $d_0,\delta,N,M$ in Assumption \ref{hyp: regularity} and the space dimension $d\geq 2$.
    \end{DEF}

	In a few words, Assumption \ref{hyp: regularity} says that $\mathcal{O}(\omega)$ is almost surely  obtained by populating $\mathbb{R}^d$ with many obstructions which are uniformly separated and sufficiently smooth (Lipschitz). In the homogenization literature and in the periodic case, this is known as a \emph{perforated domain}.
	
	\medskip

	\noindent \textbf{Initial Conditions.} We take an initial droplet base which is a bounded open set $\db_0 \subset \mathbb{R}^d$ and set $\db^\e_0:= \db_0 \cap \mathcal{O}_\e$,  we will assume that this set has no cracks, i.e. $\partialom \db^\epsilon_0=\partialom\overline{\db}^\epsilon_0$. Furthermore, we suppose that either $\db_0$ is star shaped or that $\partial\db_0$ is $C^1$. This condition guarantees the uniqueness of the solution for the limiting problem (see \cite{K03}).
	
	\medskip
	
	With this notation in hand, the first of our two main results is the following. %%%%%%%%%%%%%%%%%%%%% FIRST MAIN THEOREM %%%%%%%%%%%%%%%%%%%%%%%%%%%%%%%%%
	\begin{thm}\label{thm: main}
	     Let $\mathcal{O}_\epsilon$ satisfy Assumptions 1 and 2 and $\db_0^\epsilon$ be as above. Let $u^\epsilon$ solve \eqref{eqn: Hele-Shaw O_ep} with initial positive phase $\db_0^\epsilon$, and let $u$ solve \eqref{eqn: Hele-Shaw homogeneous} with initial positive phase $\db_0$ (both in the viscosity sense), where $\mu$ is as in Definition \ref{def: mu} and $A$ as in Definition \ref{def: A_0}. Then the following is true:
	     \begin{enumerate}
	          \item The free boundaries $\partialom \{ u^\epsilon>0\}$ converge uniformly to $\partial \{ u>0\}$ in the Hausdorff distance for $t>0$.
	          \item Locally uniformly in $t$ we have $u^\epsilon \to u\pal $ in $L^p\; \forall\; p\in[1,\infty)$  (see Definition \ref{def: notions of convergence}).
	          \item Moreover, we have the following pointwise limiting behavior
			       \begin{equation*}
			            \underline{u} \leq \liminf_{\epsilon\to 0} u^\e \leq \limsup_{\epsilon\to 0} u^\epsilon \leq \overline{u},
				   \end{equation*}
		      where $\underline{u}$ and $\overline{u}$ are as defined in \eqref{eqn: semicontinuous envolopes}.
		      \item In particular, if $u$ is continuous, then  $u^\epsilon \to u\pal $ in $L^\infty$ locally uniformly with respect to $t$.		
	     \end{enumerate}
	\end{thm}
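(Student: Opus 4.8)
\textbf{Proof strategy for Theorem \ref{thm: main}.}

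The plan is to follow the scheme pioneered in \cite{KM09} for the uniformly elliptic case, but adapted to the perforated domain using the two key tools developed in the body of the paper, namely the uniform De Giorgi--Nash--Moser estimates (Theorem \ref{thm: elliptic estimates}) and the quadratic barrier with zero Neumann data constructed in Section \ref{sec: weak fb results}. First I would introduce the auxiliary obstacle problem: following \cite{KM09, ElJa}, for each $\epsilon$ let $p^\epsilon(x,t) := \int_0^t u^\epsilon(x,s)\,\d s$ and observe that $p^\epsilon(\cdot,t)$ solves an obstacle problem in $\mathcal{B}^\epsilon$ whose obstacle is built from the initial profile $u_0^\epsilon$; the positivity set $\{p^\epsilon(\cdot,t)>0\}$ coincides with $\db_t(u^\epsilon)$. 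This reduces the study of \eqref{eqn: Hele-Shaw O_ep} to the homogenization of a family of obstacle problems, which is the content of Sections \ref{sec: convergence obstacle problem}--\ref{sec: homogenization}. The linear homogenization theory of Section \ref{sec: linear homogenization revisited} identifies the effective operator $-\dive(A\nabla\cdot)$ and the volume fraction $\mu$, giving the candidate limit $p := \int_0^t u\,\d s$ solving the homogenized obstacle problem, with $u$ the viscosity solution of \eqref{eqn: Hele-Shaw homogeneous}.

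Next I would establish the convergence of the obstacle problem solutions. By the uniform $L^\infty$ and Hölder bounds from Theorem \ref{thm: elliptic estimates} (applied to $p^\epsilon$, which satisfies a Neumann-type elliptic equation on its positivity set and vanishes off it), the family $p^\epsilon$ is precompact in $C_{\mathrm{loc}}$; combined with the weak $L^2$ convergence granted by stochastic homogenization, any subsequential limit must coincide with $p$, so $p^\epsilon \to p$ locally uniformly, almost surely. The serious point — and the main obstacle — is upgrading convergence of $p^\epsilon$ to Hausdorff convergence of the free boundaries $\partialom\{p^\epsilon(\cdot,t)>0\} = \partialom\db_t(u^\epsilon)$, equivalently controlling the oscillations of these free boundaries as $\epsilon\to 0$. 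One inclusion (the limit boundary cannot be crossed from outside) follows from upper semicontinuity of the supports under uniform convergence. The reverse inclusion requires a \emph{non-degeneracy} estimate: near a free boundary point, $p^\epsilon$ must grow at least quadratically, $\sup_{\mathcal{B}^\epsilon_r(x_0)} p^\epsilon \gtrsim r^2$, with a \emph{universal} constant independent of $\epsilon$ and $\omega$. This is exactly where the quadratic barrier of Section \ref{sec: weak fb results} enters: it plays the role paraboloids play in the classical obstacle problem, letting us rule out the ``thin tentacle'' scenario in which $\db_t(u^\epsilon)$ has positive but vanishingly thin fingers that survive in the limit. With uniform non-degeneracy in hand, the standard obstacle-problem argument (porosity of the free boundary, uniform density estimates for $\db_t(u^\epsilon)$ and its complement) yields the Hausdorff convergence, proving part (1).

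Finally I would deduce parts (2)--(4). Part (2), $L^p$ convergence of $u^\epsilon$ to $u$ locally uniformly in $t$, follows by differentiating the obstacle-problem convergence in $t$ (using monotonicity of $t\mapsto \db_t$ and the $L^2$ bounds, as in \cite{KM09}) together with dominated convergence on $\mathcal{B}^\epsilon$; the uniform growth estimates prevent concentration. For part (3), the pointwise inequalities $\underline u \le \liminf u^\epsilon \le \limsup u^\epsilon \le \overline u$, I would use the comparison principle (Theorems \ref{thm: comparison for Hele-Shaw C^3 boundary} and \ref{thm: cp2}) together with the viscosity-solution stability: constructing smooth strict sub/supersolutions of \eqref{eqn: Hele-Shaw homogeneous}, passing them through the $\epsilon$-problem via the barrier comparison, and sending $\epsilon\to 0$; the hypothesis that $\db_0$ is star-shaped or has $C^1$ boundary guarantees uniqueness of $u$ (via \cite{K03}) and is what makes $\underline u = \overline u$ except possibly at the (measure-zero) singular times. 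Part (4) is then immediate: if $u$ is continuous, $\underline u = \overline u = u$, so the sandwich in (3) forces $u^\epsilon \to u$ in $L^\infty$ locally uniformly in $t$. The crux throughout is that every estimate — Hölder, Harnack, non-degeneracy, the barrier — must be uniform in the sample $\omega$, which is guaranteed by Assumption \ref{hyp: regularity} and the notion of universal constant in Definition \ref{def: universal constants}.
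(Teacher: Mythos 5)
Your outline captures the right set of ingredients — the auxiliary obstacle problem, the uniform regularity theory, the barrier-based non-degeneracy, and the final sandwich with viscosity comparison — but the \emph{direction} in which you set up the $p^\e \leftrightarrow u^\e$ correspondence is exactly the pitfall that the paper goes out of its way to avoid, and reversing it is not cosmetic.

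You begin by \emph{defining} $p^\e(x,t):=\int_0^t u^\e(x,s)\,\d s$ and asserting that $p^\e$ solves the obstacle problem \eqref{eqn: epsilon obstacle problem}. For a classical, smooth $u^\e$ this is the Elliott--Janovsk\'y identity, but here $u^\e$ is only a viscosity solution: it may become discontinuous when the positivity set changes topology, and you cannot differentiate the free boundary condition to derive the obstacle problem that $\int_0^t u^\e$ should satisfy. As the paper notes explicitly after \eqref{eqn: definition of f^epsilon}, it is not at all clear how to show that $\int_0^t u^\e\,\d s$ is even a weak solution of \eqref{eqn: epsilon obstacle problem} without differentiating $u^\e$ directly. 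Moreover, the comparison theory for \eqref{eqn: Hele-Shaw O_ep} is only established here when $\partial\mathcal{O}_\e$ is $C^3$, so uniqueness of $u^\e$ itself is open in the general setting; the statement of Theorem \ref{thm: main} is silently interpreted as referring to a \emph{specific} viscosity solution. What the paper does is invert the construction (the move of \cite{KM09}): define $p^\e$ as a weak solution of \eqref{eqn: epsilon obstacle problem}--\eqref{eqn: definition of f^epsilon} in the variational sense of Definition \ref{def: weak solution p^epsilon} (existence by a monotone iteration, Appendix A), and then \emph{prove} that $u^\e := \partial_t^- p^\e$ is a viscosity solution of \eqref{eqn: Hele-Shaw O_ep} and that $p^\e=\int_0^t u^\e$. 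This is the content of Theorem \ref{thm: evolution}, whose proof (via Lemma \ref{lem: integrating a test function}) is a genuine piece of work — integrating strict classical supersolutions $\phi$ of the Hele-Shaw problem to obtain comparison functions $\Phi$ for $p^\e$. Your sketch assumes the conclusion of this theorem as a starting point.

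A second, subtler issue is the logical role of the non-degeneracy estimate (Theorem \ref{thm: nondegeneracy}). You present it as an ``upgrade'': first get $p^\e\to p$ by compactness plus linear homogenization, then use non-degeneracy to strengthen to Hausdorff convergence of the free boundaries. In fact the non-degeneracy is needed \emph{before} you can even identify the limit $p$. The right-hand side of the obstacle problem is $f^\e\,\mathbb{I}_{\{p^\e>0\}}$, a product of two weakly converging quantities, and linear homogenization alone gives no control over its limit. Lemma \ref{lem: free sets limit} (which is the non-degeneracy estimate at work) is what shows $\mathbb{I}_{\{p^\e(\cdot,t)>0\}}\to\mathbb{I}_{\{p(\cdot,t)>0\}}$ in $L^1$ and hence lets one pass to the limit in the right-hand side via Corollaries \ref{cor: L^1 convergence of the phases} and \ref{cor: tau limit}. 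Without this, a subsequential limit of $p^\e$ is not known to solve \eqref{eqn: homogeneous obstacle evolution problem}, only some equation with an unidentified right-hand side. The Hausdorff convergence of the free boundaries (Corollary \ref{cor: strong convergence of FB}) then drops out as a by-product, rather than requiring a separate porosity/density argument. Finally, for parts (3)--(4), what the paper actually does is introduce the half-relaxed limits $u^*,\,u_*$, show via the corrector functions of Section \ref{sec: linear homogenization revisited} that they are, respectively, a viscosity subsolution and supersolution of the homogenized \emph{elliptic} equation (Lemma \ref{lem: u_* supersol u^* subsol}), observe $\underline u\le u_*$ from $u_*\ge t^{-1}p$, obtain $\overline u\le u^*$ from elliptic comparison, and close the chain by showing $u^*$ is a viscosity subsolution of the full Hele-Shaw problem (Proposition \ref{prop: subsolution property for u^*}) so $u^*\le\overline u$; the key auxiliary fact that makes this last step work is the coincidence of free boundaries from \eqref{eqn: coincidence of free boundaries}. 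Your plan of ``constructing smooth strict sub/supersolutions of \eqref{eqn: Hele-Shaw homogeneous}, passing them through the $\epsilon$-problem via the barrier comparison'' gestures at the same idea but does not identify the half-relaxed limit framework that makes it rigorous when $u^\e$ is discontinuous.
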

	As explained earlier in the section, even for smooth initial data the solutions $u$ and $u^\epsilon$ may develop singularities, so classical solutions may not exist for all positive times and one must deal with viscosity solutions. We mention that topological changes for the positive phase of $u$ are ruled out for its support, for example, when $\db_0$ is star shaped (see Lemma \ref{lem: star shaped initial data}).  In any case, our result says that the oscillating free boundaries converge uniformly as $\epsilon\to 0$ even if topological changes and  discontinuities take place.
	
	\medskip
	
	The second result, which is needed for the proof of Theorem \ref{thm: main}, deals with the uniform continuity of solutions to the linear elliptic problem \eqref{eqn: Dirichlet problem in  D}. Since this is an interior regularity result we only need to state it for $D=B_r(x)$, and use the notation introduced in Remark \ref{rem: perforated ball}.
	
	\begin{thm}\label{thm: elliptic estimates} %%%%%%%%%%%%%%%%%%%%% SECOND MAIN THEOREM %%%%%%%%%%%%%%%%%%%%%%%%%%%%%%%%%
         Suppose that $\mathcal{O}$ satisfies Assumption \ref{hyp: regularity}. There are universal constants $C>0$ and $\alpha \in (0,1)$ such that if $v$ solves \eqref{eqn: Dirichlet problem in  D} in some ball $\mathcal{B}^\epsilon_r$, then 
         \begin{equation}\label{eqn: holder}
              \|v\|_{C^\alpha(\mathcal{B}^\epsilon_{r/2} )}	\leq C \left ( r^{-\alpha}\|v \|_{L^2(\mathcal{B}^\epsilon_r)}+r^2\|f\|_{L^\infty} \right ) \;\;\;\mathbb{P}\text{-a.s.}
         \end{equation}
         Moreover, if $w \geq 0$ in $\mathcal{B}^\epsilon_r$, we have a Harnack inequality,
         \begin{equation}\label{eqn: harnack}
              \sup \limits_{\mathcal{B}^\epsilon_{r/2}}	\;v  \leq C ( \;\inf \limits_{\mathcal{B}^\epsilon_{r/2}}\; v \;\;+\;\;r^2 \|f\|_{L^\infty}  ) \;\;\;\mathbb{P}\text{-a.s.}
         \end{equation}
    \end{thm}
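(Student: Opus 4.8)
The plan is to reproduce the De Giorgi iteration and Moser's Harnack argument, but carried out intrinsically on the perforated ball $\mathcal{B}^\epsilon_r = B_r(x) \cap \mathcal{O}_\epsilon$, using only integration by parts against test functions in $H(\bfb_\epsilon)$ that need not vanish on $\partial \mathcal{O}_\epsilon$ (so the Neumann data contributes no boundary term). The whole scheme hinges on two scale-invariant functional inequalities that must hold \emph{uniformly in $\epsilon$ and $\omega$} on every perforated ball: a Sobolev–Poincaré inequality of the form
\begin{equation*}
\Bigl( \fint_{\mathcal{B}^\epsilon_r} |g - \bar g|^{2^*}\,\dx \Bigr)^{1/2^*} \le C\, r \Bigl( \fint_{\mathcal{B}^\epsilon_r} |\nabla g|^2\,\dx \Bigr)^{1/2}
\end{equation*}
for $g \in H^1(\mathcal{B}^\epsilon_r)$ (with $2^* = 2d/(d-2)$ and $\bar g$ the average of $g$ over $\mathcal{B}^\epsilon_r$), together with the matching $L^1$ Poincaré inequality. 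By the scaling $x \mapsto x/\epsilon$ this reduces to the corresponding statement on unit-scale pieces of the fixed domain $\mathcal{O}(\omega)$, and here is exactly where Assumption \ref{hyp: regularity} is used: since the deleted obstacles $P_k(\omega)$ are uniformly separated (distance $\ge d_0$), have diameter $\le d_0^{-1}$, and are minimally smooth with fixed constants $(\delta, N, M)$, each $\mathcal{O}(\omega)$ is a $(\varepsilon_0, \infty)$-uniform (locally John) domain with constants depending only on the universal data. Thus the classical extension-operator construction (Jones / Chua) gives a bounded linear extension $E_\omega : H^1(\mathcal{O}(\omega)\cap B) \to H^1(B')$ on balls of any radius $\ge c\, d_0$, with operator norm bounded by a universal constant, and composing with the Sobolev inequality on $B'$ yields the two inequalities above. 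I would state this as a preliminary lemma; its proof is a quantitative reading of the perforated-domain hypothesis and is the genuine engine of the theorem.

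Granting the Sobolev–Poincaré inequality, the De Giorgi argument proceeds in the standard way. First, a Caccioppoli inequality: testing the weak equation against $\phi = \eta^2 (v-k)_+$ (a legitimate element of $H_0(\bfb_\epsilon)$ after localizing $\eta$ to a slightly smaller ball) gives
\begin{equation*}
\int_{\mathcal{B}^\epsilon_\rho} |\nabla (v-k)_+|^2\,\dx \le \frac{C}{(R-\rho)^2}\int_{\mathcal{B}^\epsilon_R} (v-k)_+^2\,\dx + C\|f\|_{L^\infty}^2\, |\{v>k\}\cap \mathcal{B}^\epsilon_R|\,\rho^2 ,
\end{equation*}
and similarly for $(v-k)_-$. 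Feeding this into the Sobolev inequality and iterating over a geometric sequence of levels $k_j$ and radii $r_j \downarrow r/2$ produces the local boundedness bound $\sup_{\mathcal{B}^\epsilon_{r/2}} |v| \le C(r^{-d/2}\|v\|_{L^2(\mathcal{B}^\epsilon_r)} + r^2\|f\|_{L^\infty})$. Next, the oscillation-decay step: using the $L^1$ Poincaré inequality together with the measure-theoretic lemma of De Giorgi (controlling the level set $\{k_1 < v < k_2\}$ by its boundary), one shows that if $v \le M$ on $\mathcal{B}^\epsilon_{2r}$ and $|\{v \le m\}\cap \mathcal{B}^\epsilon_r|$ is a fixed fraction of $|\mathcal{B}^\epsilon_r|$, then $\mathrm{osc}_{\mathcal{B}^\epsilon_{r/2}} v \le \gamma\, \mathrm{osc}_{\mathcal{B}^\epsilon_{2r}} v + C r^2\|f\|_\infty$ for some universal $\gamma<1$; iterating this on dyadic balls gives \eqref{eqn: holder} with a universal $\alpha$. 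The only subtlety relative to the Euclidean case is that the density of $\mathcal{O}_\epsilon$ in a ball is itself only comparable to $\mu$ up to universal constants (again by Assumption \ref{hyp: regularity}, since the perforations occupy a controlled fraction of any unit cube), so all the "fraction of the ball" statements must be phrased with $|\mathcal{B}^\epsilon_r|$ in place of $|B_r|$; this is harmless.

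For the Harnack inequality \eqref{eqn: harnack} for a nonnegative solution $w$, I would run Moser iteration on positive and negative powers of $w + r^2\|f\|_\infty =: \bar w$: testing against $\phi = \eta^2 \bar w^{2\beta-1}$ for $\beta > 1/2$, $\beta \ne 1$ gives reverse-Hölder estimates that, chained with the Sobolev inequality, bound $\sup_{\mathcal{B}^\epsilon_{r/2}} \bar w$ by $(\fint_{\mathcal{B}^\epsilon_{3r/4}} \bar w^{p_0})^{1/p_0}$ for small $p_0>0$, and symmetrically bound $(\fint \bar w^{-p_0})^{-1/p_0}$ below by $\inf_{\mathcal{B}^\epsilon_{r/2}} \bar w$; the two are linked by the John–Nirenberg inequality applied to $\log \bar w$, whose BMO bound comes from testing against $\eta^2 \bar w^{-1}$ and using the $L^1$ Poincaré inequality. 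Closing the John–Nirenberg step requires that $\mathcal{B}^\epsilon_r$ be a BMO-extension (doubling) domain with universal constants, which follows once more from the uniform-domain property furnished by Assumption \ref{hyp: regularity}. I expect the main obstacle to be precisely establishing the uniform (in $\omega$, in $\epsilon$, and across all scales down to the perforation scale) Sobolev and Poincaré inequalities on the perforated balls; once these are in hand, the rest is the classical De Giorgi–Nash–Moser machinery with only bookkeeping changes, and all constants produced are universal in the sense of Definition \ref{def: universal constants}.
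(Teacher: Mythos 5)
Your high-level strategy — prove uniform-in-$\epsilon,\omega$ Sobolev/Poincar\'e inequalities on perforated balls via an extension operator, then run the De Giorgi--Nash--Moser machinery — is the same as the paper's. The paper uses the Calder\'on--Stein extension for minimally smooth domains (Theorem \ref{thm: extension for minimally smooth domains}) rather than the Jones/Chua uniform-domain variant, but this is a cosmetic choice since Assumption \ref{hyp: regularity} is phrased directly in Stein's terms; and the paper handles the inhomogeneity $f$ by a separate Stampacchia maximum principle (Lemma \ref{lem: Stampacchia}) and a decomposition $v=v_0+v_1$ rather than by carrying $f$ through the iteration as you do. Both of those are legitimate alternatives.

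However there are two concrete gaps in your proposed route. First, for the oscillation-decay step you invoke ``the measure-theoretic lemma of De Giorgi (controlling the level set $\{k_1<v<k_2\}$ by its boundary)'' — this is precisely the De Giorgi isoperimetric inequality, and the paper \emph{explicitly states} (right before Lemma \ref{lem: Moser estimate}) that it was \emph{not} able to establish this inequality in perforated domains, because the extension-and-truncation argument destroys the crucial $|\{0<u<1\}|^{1/2}$ factor that the level-set iteration needs. Second, for Harnack you propose Moser iteration closed by John--Nirenberg, which requires the perforated ball to support a John--Nirenberg inequality with uniform constants — another statement the paper does not prove and does not need. The paper sidesteps both of these by proving a \emph{weaker} Moser-type Poincar\'e inequality (Proposition \ref{prop: general Moser estimate}: a function vanishing on a fixed fraction of the perforated ball has $L^2$ norm controlled by its gradient, obtained by extension + reflection), and then deriving the weak Harnack inequality (Lemma \ref{lem: weak Harnack}) by testing against $\eta^2(v+\gamma)^{-1}$, applying Proposition \ref{prop: general Moser estimate} to the truncated logarithm $\bar v = \max\{-\log((v+\gamma)/M),0\}$, and then hitting $\bar v$ (which is a subsolution) with the De Giorgi $L^\infty$ bound of Lemma \ref{lem: De Giorgi pointwise bound}. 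This avoids both the isoperimetric inequality and John--Nirenberg entirely, and the H\"older estimate then falls out of the full Harnack inequality (Corollary \ref{cor: oscillation}) rather than being proved first. If you replace your oscillation step and your John--Nirenberg closure with the weak-Harnack-via-log-and-$L^\infty$ argument, your proof matches the paper's; as written, those two steps rest on inequalities that are not known in this setting.
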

	
    \subsection{Examples}\label{subsec: examples} Let us describe some of the random structures $\mathcal{O}(\omega)$ covered by Theorem \ref{thm: main}, namely those for which Assumptions \ref{hyp: Ergodic}  and \ref{hyp: regularity} hold (these are explained in Section 2.4). We also discuss cases that fall outside the scope of our current method as Assumption \ref{hyp: regularity} fails (but ``almost'' holds) for them.

	\begin{ex}\label{ex: cubic lattice} \emph{Site percolation with isolated obstacles:}  Consider a family of independent Bernoulli trials associated to each $z \in \mathbb{Z}^d$, defining a random subset $\mathbb{G}(\omega) \subset \mathbb{Z}^d$ where each $z$ is included probability $1-p$ and excluded with probability $p$, independently of the others.	 Let $Q= [0,1]^n$ and $\mathcal{I} \subset \subset Q$ be a domain with a Lipschitz boundary, then define 
	    \begin{equation*}
	        \mathcal{O}(\omega) = \mathbb{R}^d \setminus \union \limits_{z \in \mathbb{G}(\omega)}	\left ( \mathcal{I}+z\right ).
	    \end{equation*}
	    The random set $\mathbb{G}(\omega)$ has a stationary distribution since the Bernoulli trials are identically distributed and independent, in fact $\mathbb{G}$ is ergodic so Assumption \eqref{hyp: Ergodic} is satisfied. 
	\end{ex}

	\begin{ex}\label{ex: triangular lattice} \emph{Other regular lattices} A seemingly different example would be that of replacing $\mathbb{Z}^2$ in the above example (when $n=2$) with the \emph{triangular or hexagonal lattice}, namely  $\mathbb{L} \subset \mathbb{R}^2$ the set generated by linear combinations over $\mathbb{Z}$ of the vectors $e_1=(1,0)$ and $e_2=(1/2,1/\sqrt{2})$. We can run independent and identically distributed Bernoulli trials at each $z \in \mathbb{L}$ to get a random set $\mathbb{G}(\omega)\subset \mathbb{L}$. Take a Lipschitz domain $\mathcal{I}$ that is compactly contained in the triangle defined by $0,e_1$ and $e_2$, then as before let
		\begin{equation*}
	        \mathcal{O}(\omega) = \mathbb{R}^d \setminus \union \limits_{z \in \mathbb{G}(\omega)}	\left ( \mathcal{I}+z\right ).
	    \end{equation*}	
	\end{ex}
    One could also consider a more complicated model where the domain $\mathcal{I}$ is itself random, as long as it is minimally smooth$\pal$ with respect to some constants $(\delta,N,M)$ and it stays inside a fixed compact set inside  the cell defined by $0,e_1$ and $e_2$. Then $\mathcal{O}$ may look as given in Figure \ref{fig: triangular lattice}  ($\mathcal{O}^c$ is given by the black region)
	
	    \begin{figure}[h]
		     \centering
		     \includegraphics[height=0.35\textwidth]{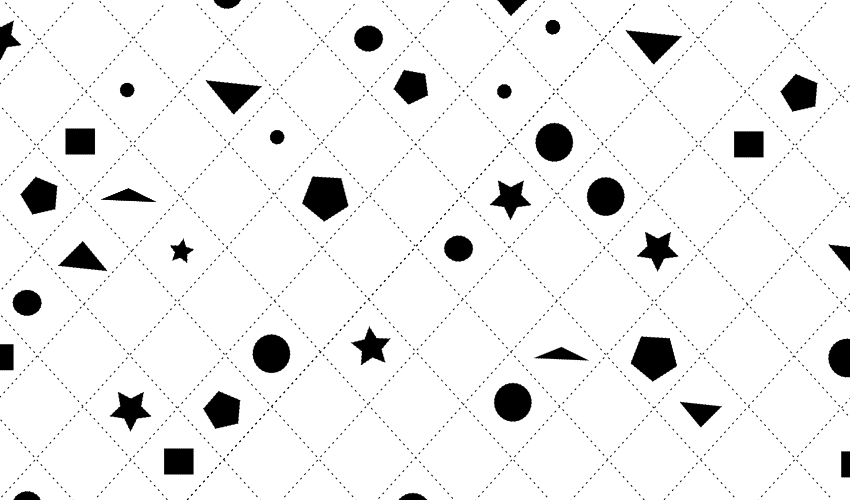}
		     \caption{A random domain $\mathcal{O}$ arising from percolation on a lattice.}\label{fig: triangular lattice}
	    \end{figure}

	\begin{ex}\label{ex: Chessboard} \emph{(Modified) Irregular chessboard}. Another example would be to consider an irregular chessboard with bounded cell sizes (compare with \cite[Example 3.4]{DaMo1986}), except now instead of coloring a cell completely, we only color a rescaled cell half its size. The domain $\mathcal{O}$ obtained in this manner does not have a lattice structure, but it does however comply with Assumptions \ref{hyp: Ergodic} and \ref{hyp: regularity} 
	\end{ex}

	\begin{ex}\label{ex: Percolation} \emph{General Site Percolation.} If in Example \ref{ex: cubic lattice} we do not assume that $\mathcal{I} \subset \subset Q$ then Assumption \ref{hyp: regularity} no longer holds and our method fails. This is because we have no means of proving a global Sobolev embedding and Poincar\'e inequalities if the perforations are allowed to be arbitrarily close to each other and are allowed to form large connected clusters. In fact, one does not expect the Sobolev and Poincar\'e inequalities to hold$\pal$ in the general case. The best one can expect is that they hold in a given ball with a probability that approaches $1$ very fast as $\epsilon \to 0$. This phenomenon is by now well understood in the context of discrete percolation, as discussed in Section 1.2.	
	\end{ex}

	\begin{ex}\label{ex: Poisson rain}\emph{Poisson Rain.} For similar reasons to Example \ref{ex: Percolation} we cannot handle the case where $\mathcal{O}(\omega)$ is built as the union of balls $B_{r_i}(x_i)$ where the points $x_i$ are given by a Poisson point process and the radii $r_i$ are independent and identically distributed. Again, it would be worthwhile to investigate the extent to which the current techniques can be pushed to handle this case. This model is also known as the Boolean model in the stochastic geometry literature \cite[Chapter 12]{Gri1999}.
	\end{ex}
	
    \begin{figure}[h]
	     \centering
	     \includegraphics[height=0.35\textwidth]{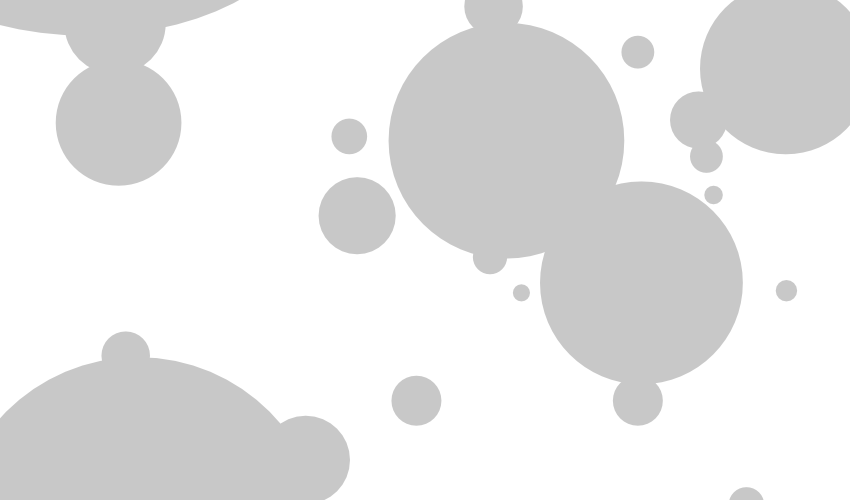}
	     \caption{An example where our assumptions do not hold is that of the Poisson rain.}\label{fig: Poisson rain}
    \end{figure}

	\subsection{Strategy}\label{subsec: strategy}		

	In the analysis of \eqref{eqn: Hele-Shaw O_ep} a constant challenge is the lower dimensional or singular nature of the free boundary condition. Namely, \eqref{eqn: Hele-Shaw O_ep} can be written (in the sense of distributions) as
	\begin{equation*}
	     \partial_t\mathbb{I}_{\{u^\epsilon>0\} }-\Delta u^\epsilon = \mathbb{I}_{\{u^\epsilon>0\} },
	\end{equation*}
    together with zero Neumann boundary conditions on $\partial \mathcal{O}_\epsilon$. The singular term is the time derivative of the characteristic function of the positivity set of $u^\epsilon$. One obvious way to get rid of the singular time derivative is integrating the equation with respect to time. In particular, we may consider the function 
    \begin{equation*}
         p^\epsilon(x,t,\omega):=\int_0^t u^\epsilon(x,s,\omega)\;ds
    \end{equation*}	
    and seek an evolution equation for $p^\epsilon$. This is particularly tractable in the one phase case, since $u^\epsilon$ is monotone in $t$. In fact, it was already known to Elliot-Janovsky \cite{ElJa} that whenever $u^\epsilon$ is smooth enough then $p^\epsilon: \mathcal{O}_\epsilon\times \mathbb{R}_+ \to \mathbb{R}$ is a non-negative function satisfying
    \begin{equation}\label{eqn: epsilon obstacle problem}	
    	 -\Delta p^\epsilon(x,t) = f^\epsilon(x,t)\mathbb{I}_{\{p^\epsilon>0\} }(x,t),
    \end{equation}
    and $\partial_n p^\epsilon = 0$ on $\partial \mathcal{O}_\epsilon$, where
    \begin{align}\label{eqn: definition of f^epsilon}
	     f^\epsilon(x,t) & =-\mathbb{I}_{(\db_0^\epsilon)^c}+\int_0^t\mathbb{I}_{\{p^\epsilon(\cdot,s)>0 \}}(x)\;ds.	
    \end{align}
%    \begin{equation}
%         \left \{ \begin{array}{rrll}
%              p^\epsilon & \geq & 0 & \text{ in } \mathcal{O}_\epsilon,\\
%              \Delta p^\epsilon & = & f^\epsilon & \text{ in }  \{ p^\epsilon>0\},\\ 
%              \partial_n p^\epsilon  & = & 0 & \text{ on }  \partial \mathcal{O}_\epsilon,\\ 
%              p^\epsilon=|\nabla p^\epsilon| & = & 0 & \text{ on } \partial \{ p^\epsilon >0 \},
%        \end{array}\right.
%    \end{equation}
    This says that $p^\epsilon$ solves an obstacle problem \cite{KiSt1980}. The reason studying $p^\epsilon$ might be useful is that (assuming smoothness) it is not hard showing that
    \begin{equation*}
    	 \{ p^\epsilon >0 \} = \{ u^\epsilon>0\}.    	
    \end{equation*}
    Thus (heuristically), one can study $\{ u^\epsilon>0\}$ using the methods available for the obstacle problem, which apply to $\{p^\epsilon>0\}$.

     In general, $u^\epsilon$ will not be smooth, and it may even become discontinuous at the onset of topological changes of $\{ u^\epsilon(\cdot,t)>0\}$, which may occur even if the initial data is smooth. This means we have to justify the above correspondence when $u^\epsilon$ is just a viscosity solution of \eqref{eqn: Hele-Shaw O_ep}. It is not clear how one may show that $p^\epsilon:= \int_0^tu^\epsilon(\cdot,s)\;ds$ is even a weak solution of \eqref{eqn: epsilon obstacle problem} without differentiating $u^\epsilon$ directly. 

    Following the method developed in \cite{KM09} we invert the previous construction. Instead of defining $p^\epsilon$ as the integral of $u^\epsilon$, we define it as a solution of the free boundary problem we expect it to solve (namely \eqref{eqn: epsilon obstacle problem} and \eqref{eqn: definition of f^epsilon}). Only later we show that it agrees with the time integral of $u^\epsilon$ (see Theorem \ref{thm: evolution}).

    \begin{DEF}\label{def: weak solution p^epsilon}
	     Let $\db_0^\epsilon \subset \mathcal{O}_\epsilon$ be as in Section \ref{subsec: Main results}. A measurable function $p^\epsilon: \bfb_\epsilon \times \mathbb{R}_+ \to \mathbb{R}$ will be called a weak solution of \eqref{eqn: epsilon obstacle problem} if$\pal$ in $\omega$ we have that $(x,t) \to p^\epsilon(x,\omega,t)$ belongs to the space
	     \begin{equation}\label{eqn: admissible functions epsilon obstacle problem}
	          \mathcal{A}:=\{ \phi: \mathcal{O}_\epsilon(\omega) \times \mathbb{R}_+ \to \mathbb{R} \mid \phi\geq 0,\;\;\phi(\cdot,t) \in H^1(\mathcal{O}_\epsilon(\omega))\;\forall\;t \}.	
	     \end{equation}
         Moreover, with $f^\epsilon$ given by \eqref{eqn: definition of f^epsilon} we have
	     \begin{equation*}
	          \int_{\mathcal{O}_\epsilon} \nabla (p^\epsilon-\phi)\cdot \nabla p^\epsilon -(p^\epsilon-\phi)f^\epsilon\;dx \geq 0
	     \end{equation*}
	     for any fixed time $t$ and any $\phi \in \mathcal{A}$.
    \end{DEF}

	The one to one correspondence of \eqref{eqn: Hele-Shaw O_ep} to an obstacle problem suggests that to identify the limit of $u^\epsilon$ we should first homogenize the problem solved by $p^\epsilon$. To do this, fix $t>0$ and let $p = \int_0^tu(\cdot,s)\;ds$, $u$ being the solution to the homogenized problem \eqref{eqn: Hele-Shaw homogeneous} with initial data $\db_0$. The linear homogenization theory (which we revisit in Section \ref{sec: linear homogenization revisited}) says that if we can show that
	\begin{equation*}
		 \mathbb{I}_{\mathcal{O}_\epsilon} f^\epsilon\mathbb{I}_{\{p^\epsilon>0 \}} \rightharpoonup \mu f \mathbb{I}_{\{p>0\}} \pal \text{ in } L^2(\mathbb{R}^d),
	\end{equation*}
    ($\mu$ as in Definition \ref{def: mu}) then $p^\epsilon$ converges $\mathbb{P}$-a.s. in $L^2_{\text{loc}}$ to a $p'$ defined in all of space and time, solving
    \begin{equation}\label{eqn: L^1 convergence of supports}
         -\dive(A\nabla p') = \mu \left (-\mathbb{I}_{\db_0^c }+\int_0^t\mathbb{I}_{\{p>0\} }\;ds \right ) \mathbb{I}_{\{p>0\} }.	
    \end{equation}
    Then, $p$ and $p'$ would solve the same elliptic equation with the same decay at infinity, and we would conclude that $p'=p$ and therefore that $p^\epsilon \to p$ for each fixed time.  

    Instead of proving \eqref{eqn: L^1 convergence of supports} directly, we will use the uniform continuity bounds for $p^\epsilon$ given by the theory in Section \ref{sec: elliptic regularity} and Section \ref{sec: weak fb results}. These estimates give us enough compactness to extract a locally uniformly converging subsequence (in space and time) from any sequence $p^{\epsilon_k}$ with $\epsilon_k \to 0$ (see Proposition \ref{prop: uniform convergence of p^epsilon}). Then, all we have to show is that if $\tilde p$ is the limit of such a subsequence, then it solves (for each fixed $t$)
    \begin{equation*}
         -\dive(A \nabla \tilde p) = \mu \left (-\mathbb{I}_{\db_0^c }+\int_0^t\mathbb{I}_{\{\tilde p>0\} }\;ds \right ) \mathbb{I}_{\{\tilde p>0\} }.
    \end{equation*} 
    \begin{rem}
	    It is worth emphasizing how this last equation is different from \eqref{eqn: L^1 convergence of supports}. In the previous equation we had a function $p'$ on the left and the positivity set of $p$ on the other side, while here we have the same function $\tilde p$ on both sides of the equation.    	
    \end{rem}
    In this case, since the problem has a unique solution for a fixed initial set $\db_0$, we would conclude that regardless of the initial sequence $\epsilon_k$ we have $\tilde p = p$, obtaining the homogenization for the obstacle problem \eqref{eqn: epsilon obstacle problem}. The key to obtaining the above identity is the stability of the free boundaries $\partialom\{p^\epsilon>0\}$ as $\epsilon \to 0$ (Lemma \ref{lem: free sets limit}), the proof of which relies on a uniform non-degenerate growth estimate for $p^\epsilon$ (Theorem \ref{thm: nondegeneracy}). It is mostly for this single convergence result that we develop the elliptic theory in Section \ref{sec: elliptic regularity} and construct the special barrier in Section \ref{sec: weak fb results}.

    Later in Section \ref{sec: homogenization} we will show that any $p^\epsilon$ solving \eqref{eqn: Hele-Shaw O_ep} gives rise to a function $u^\epsilon$ which solves \eqref{eqn: Hele-Shaw O_ep} in the viscosity sense (see Theorem \ref{thm: evolution}), and such that $p^\epsilon = \int_0^tu^\epsilon(\cdot,s)\;ds$. Using this Theorem and the strong convergence obtained for $\{ p^\epsilon>0 \}$ we are able to prove that $u^\epsilon$ converges to the solution of the homogenized problem, proving Theorem \ref{thm: main}.
		
%%%%%%%%%%%%%%%%%%%%%%%%%%%%%%%%%%%%%%%%%%%%%%%%%%%%%%%%%%%%%%%%%%%%%%%%%%%%%%%%%%%%%%%%%%%%%%%%%%%%%%%%%
%%%%%%%%%%%%%%%%%%%%%%%%%%%%%%%%%%%%%%%%%%%%%%%%%%%%%%%%%%%%%%%%%%%%%%%%%%%%%%%%%%%%%%%%%%%%%%%%%%%%%%%%%
%%%%%%%%%%%%%%%%%%%%%%%%%%%%%%%%%%%%%%%%%%%%%%%%%%%%%%%%%%%%%%%%%%%%%%%%%%%%%%%%%%%%%%%%%%%%%%%%%%%%%%%%%
%%%%%%%%%%%%%%%%%%%%%%%%%%%%%%%%%%%%%%%%%%%%%%%%%%%%%%%%%%%%%%%%%%%%%%%%%%%%%%%%%%%%%%%%%%%%%%%%%%%%%%%%%
%%%%%%%%%%%%%%%%%%%%%%%%%%%%%%%%%%%%%%%%%%%%%%%%%%%%%%%%%%%%%%%%%%%%%%%%%%%%%%%%%%%%%%%%%%%%%%%%%%%%%%%%%
%%%%%%%%%%%%%%%%%%%%%%%%%%%%%%%%%%%%%%%%%%%%%%%%%%%%%%%%%%%%%%%%%%%%%%%%%%%%%%%%%%%%%%%%%%%%%%%%%%%%%%%%%
%%%%%%%%%%%%%%%%%%%%%%%%%%%%%%%%%%%%%%%%%%%%%%%%%%%%%%%%%%%%%%%%%%%%%%%%%%%%%%%%%%%%%%%%%%%%%%%%%%%%%%%%%
%%%%%%%%%%%%%%%%%%%%%%%%%%%%%%%%%%%%%%%%%%%%%%%%%%%%%%%%%%%%%%%%%%%%%%%%%%%%%%%%%%%%%%%%%%%%%%%%%%%%%%%%%
\section{Elliptic Regularity in $\mathcal{O}_\epsilon$}\label{sec: elliptic regularity}

    \subsection{Sobolev and Poincar\'e inequalities}
    The goal of this section is showing that$\pal$ functions in $\mathcal{O}_\epsilon$ satisfy inequalities similar to the standard Sobolev and Poincar\'e inequalities in $\mathbb{R}^d$. This is where the strength of Assumption \ref{hyp: regularity} is most needed.

    We recall a classical result due to Calder\'on and Stein, concerning the extension of Sobolev functions defined in Lipschitz domains. For a proof see \cite[Chapter VI Theorem 5]{St1970}.

    \begin{thm}\label{thm: extension for minimally smooth domains}
    	 Let $A \subset \mathbb{R}^d$ be a minimally smooth domain with constants $(\delta,N,M)$, then there exists a bounded linear operator $E:H^1(A)\to H^1(\mathbb{R}^d)$ with its operator norm depending only on $\delta,n,M$.
    \end{thm}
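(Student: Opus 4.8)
The plan is to follow the classical localization-and-reflection scheme, which reduces the global statement to a uniform local construction near a single Lipschitz graph. First I would fix a partition of unity subordinate to the cover $\{U_i\}$ of $\partial A$ furnished by Definition \ref{def: minimally smooth}: adjoining to this cover the open set $U_0 := \{x : \operatorname{dist}(x,\partial A) > \delta/4\}$ gives an open cover of all of $\mathbb{R}^d$ with overlap bounded by $N+1$, and since by property (2) every boundary point sits in some $U_i$ together with a whole $\delta$-ball, one can choose smooth $\zeta_i \geq 0$ with $\sum_i \zeta_i \equiv 1$, $\operatorname{supp}\zeta_i \subset\subset U_i$, and $|\nabla \zeta_i| \leq C(d)/\delta$. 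It then suffices to extend each $\zeta_i f$ separately: the piece $\zeta_0 f$ is supported well inside $A$ and is extended by zero, while for $i \geq 1$ one is reduced, after a rigid motion, to extending an $H^1$ function supported in a bounded piece of a \emph{special Lipschitz domain} $\Omega_i = \{(x',x_d) : x_d > \varphi_i(x')\}$ with $\operatorname{Lip}(\varphi_i) \leq M$, the support staying away from $\partial U_i$ so that the local extension can afterwards be extended by zero.

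For the local extension across a Lipschitz graph there are two options. Since only $H^1$ is needed here, the simplest is the bi-Lipschitz reflection $\Phi_i(x',x_d) = (x', 2\varphi_i(x') - x_d)$, a measure-preserving involutive homeomorphism exchanging $\Omega_i$ and its complement with $\|D\Phi_i\|_{L^\infty} \leq C(M)$; setting $E_i g = g$ on $\Omega_i$ and $E_i g = g \circ \Phi_i$ outside, the change of variables formula gives $\|\nabla(E_i g)\|_{L^2} \leq C(M)\|\nabla g\|_{L^2(\Omega_i)}$, and since $\Phi_i$ fixes the graph pointwise the two pieces have matching traces, so $E_i g \in H^1$. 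For the full $W^{k,p}$ statement one instead uses Stein's operator $E_i g(x) = \int_1^\infty g(x', x_d + \lambda\, d^*_i(x))\,\psi(\lambda)\,d\lambda$ for $x \notin \overline{\Omega_i}$, where $d^*_i$ is a regularized distance with $d^*_i \asymp \operatorname{dist}(\cdot,\Omega_i)$ and $|\nabla d^*_i| \leq C(d)$, and $\psi$ is a fixed kernel with $\int_1^\infty \psi(\lambda)\,d\lambda = 1$, $\int_1^\infty \lambda^k \psi(\lambda)\,d\lambda = 0$ for all $k\geq 1$, and rapid decay; the moment conditions make all derivatives continuous across $\partial\Omega_i$, and change of variables plus differentiation under the integral bounds every derivative. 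In either case the operator norm of $E_i$ depends only on $d$ and $M$, uniformly in $i$.

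Finally one sets $Ef := \sum_{i\geq 0} E_i(\zeta_i f)$ (with $E_0$ the extension by zero). Local finiteness of the sum and the inequality $\big\|\sum_i h_i\big\|_{L^2}^2 \leq (N+1)\sum_i \|h_i\|_{L^2}^2$ for functions with $(N+1)$-bounded overlapping supports, combined with $\|E_i(\zeta_i f)\|_{H^1(\mathbb{R}^d)} \leq C(d,M)\|\zeta_i f\|_{H^1(A)}$ and $\sum_i \|\zeta_i f\|_{H^1(A)}^2 \leq C(d,N,\delta)\|f\|_{H^1(A)}^2$ (the last using $|\nabla\zeta_i| \leq C/\delta$ and the finite overlap), yield $\|Ef\|_{H^1(\mathbb{R}^d)} \leq C(d,\delta,N,M)\|f\|_{H^1(A)}$, and linearity of $E$ is evident. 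The main obstacle is making every estimate in the local step \emph{uniform} — independent of the index $i$ and of the particular graph $\varphi_i$, depending only on $M$ and the scale $\delta$ — since minimal smoothness provides merely abstract charts $U_i$ rather than balls of a fixed size; this is precisely where properties (1)–(3) of Definition \ref{def: minimally smooth} (uniform overlap, a uniform ball radius at every boundary point, and a uniform Lipschitz bound) enter, and it is what lets the final constant be controlled purely in terms of $\delta, N, M, d$.
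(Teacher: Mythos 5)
The paper gives no proof of this theorem; it cites Stein's \emph{Singular Integrals and Differentiability Properties of Functions}, Chapter VI, Theorem~5, and your proposal is essentially a faithful sketch of that argument: localize via a finite-overlap partition of unity, extend each localized piece across a single Lipschitz graph with constants depending only on $M$ (via bi-Lipschitz reflection, since only $H^1$ is needed), and resum using the overlap bound. The decomposition into the three properties of Definition~\ref{def: minimally smooth} is used exactly as you indicate. One step you pass over quickly: the claim that there is a partition of unity with $\operatorname{supp}\zeta_i\subset\subset U_i$ and $|\nabla\zeta_i|\leq C(d)/\delta$ cannot be read off from the $U_i$ alone, since minimal smoothness puts no lower bound on the width of $U_i$ itself (only that each boundary point has a $\delta$-ball inside \emph{some} $U_i$). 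The correct route, which is what Stein does, is to cover $\partial A$ by balls $B_{\delta/2}(x_j)$, extract a countable subcover with $C(d)$-bounded overlap, assign each $j$ to an index $i(j)$ with $B_\delta(x_j)\subset U_{i(j)}$, build bump functions on the balls with gradient $O(1/\delta)$, and only then group them by $i$ to form the $\zeta_i$. With that repair your sketch is complete and the operator norm is controlled by $d,\delta,N,M$ as stated.
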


    This extension theorem will allow us to exploit the desired inequalities from $\mathbb{R}^d$ and pass them to $\mathcal{O}_\epsilon$.

    \begin{prop}\label{prop: Poincare} Let $A \subset \mathbb{R}^d$ be a connected domain which is minimally smooth  with constants $(\delta,N,M)$ and diameter no larger than $d_0^{-1}$. There is a constant $C=C(\delta,N,M,d_0)$ such that for any $\phi \in H^1(A)$ we have the following Poincar\'e type inequality
         \begin{align*}
              \| \phi-(\phi)_A\|_{L^2(A)}\leq C\|\nabla \phi \|_{L^2(A)}\\
              \text{where } (\phi)_A:= |A|^{-1}\int_A \phi\;dx.	
         \end{align*}
    \end{prop}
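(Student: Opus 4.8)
The plan is to deduce the Poincaré inequality on the connected minimally smooth domain $A$ from the classical Poincaré inequality on balls in $\mathbb{R}^d$, using the Calderón--Stein extension theorem (Theorem \ref{thm: extension for minimally smooth domains}) as the bridge. First I would apply that theorem to obtain an extension operator $E:H^1(A)\to H^1(\mathbb{R}^d)$ with operator norm bounded by a constant $C_0=C_0(\delta,N,M)$. Given $\phi\in H^1(A)$, set $\Phi=E\phi$. Since $A$ has diameter at most $d_0^{-1}$, we may fix a point $x_0\in A$ and a ball $B=B_R(x_0)$ with $R=d_0^{-1}$ (or $2d_0^{-1}$, to be safe) so that $A\subset B$; crucially $R$ is a universal constant, so the constant in the Poincaré inequality on $B$ — call it $C_1=C_1(d,R)$ — is also universal. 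Then $\|\Phi-(\Phi)_B\|_{L^2(B)}\leq C_1\|\nabla\Phi\|_{L^2(B)}\leq C_1 C_0\|\phi\|_{H^1(A)}$, and restricting to $A$, $\|\phi-(\Phi)_B\|_{L^2(A)}\leq C_1C_0\|\phi\|_{H^1(A)}$.

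The remaining issue is to replace the constant $(\Phi)_B$ by the actual mean $(\phi)_A$ over $A$, and — more importantly — to remove the $\|\phi\|_{L^2(A)}$ term lurking on the right-hand side through $\|\phi\|_{H^1(A)}=\|\phi\|_{L^2(A)}+\|\nabla\phi\|_{L^2(A)}$ (or the corresponding squared-norm version). The first point is routine: for any constant $c$, $\|\phi-(\phi)_A\|_{L^2(A)}\leq 2\|\phi-c\|_{L^2(A)}$ because $(\phi)_A$ is the $L^2(A)$-projection of $\phi$ onto constants, so $\|\phi-(\phi)_A\|_{L^2(A)}\leq 2C_1C_0\|\phi\|_{H^1(A)}$. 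The second point is the genuine obstacle: the naive bound still has $\|\phi\|_{L^2(A)}$ on the right, which is useless. I would eliminate it by the standard compactness/contradiction argument. Suppose no universal constant $C$ works; then there is a sequence $\phi_j\in H^1(A)$ with $\|\phi_j-(\phi_j)_A\|_{L^2(A)}=1$ and $\|\nabla\phi_j\|_{L^2(A)}\to 0$. Replacing $\phi_j$ by $\phi_j-(\phi_j)_A$ we may assume $(\phi_j)_A=0$ and $\|\phi_j\|_{L^2(A)}=1$, so $\phi_j$ is bounded in $H^1(A)$; using the extension operator and Rellich's theorem on the fixed ball $B\supset A$ (or directly the compact embedding $H^1(A)\hookrightarrow L^2(A)$, valid since $A$ is a bounded extension domain), a subsequence converges in $L^2(A)$ to some $\phi$ with $\|\phi\|_{L^2(A)}=1$, $(\phi)_A=0$, and $\nabla\phi=0$; since $A$ is connected, $\phi$ is constant, hence $\phi=0$, contradicting $\|\phi\|_{L^2(A)}=1$.

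The only subtlety in making this contradiction argument yield a \emph{universal} constant — rather than one depending on $A$ — is that the family of admissible domains $A$ (arising as the connected components of $\mathcal{O}(\omega)$ intersected with balls, say, in the applications) is not a single fixed set. One should therefore phrase the compactness argument uniformly over the class of connected minimally smooth domains with constants $(\delta,N,M)$ and diameter $\leq d_0^{-1}$: if the constant were not uniform, take $A_j$ in this class and $\phi_j\in H^1(A_j)$ realizing the failure, extend each via $E_j$ with uniformly bounded norm to $\Phi_j$ supported near the common ball $B$ (translating each $A_j$ so that it meets a fixed point), extract a limit using Rellich on $B$, and derive the same contradiction, where connectedness of the limit's support is inherited in the limit (one may need to pass also to a Hausdorff limit of the $A_j$ and note it is still connected and the $L^2$ masses concentrate there). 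I expect this uniformity bookkeeping to be the main technical point; everything else is a direct transfer of the Euclidean Poincaré inequality through the extension operator.
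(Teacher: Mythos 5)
Your final argument---the compactness/contradiction argument, normalizing so that $(\psi_j)_{A_j}=0$ and $\|\psi_j\|_{L^2(A_j)}=1$, translating to put all $A_j$ inside a fixed ball, using the Calder\'on--Stein extension with a uniform constant, extracting an $L^2$-strong limit $\psi_\infty$, and passing to a Hausdorff limit $A_\infty$ of the domains that remains connected and has nonempty interior---is precisely the paper's proof. The opening attempt to transfer the ball Poincar\'e inequality directly through the extension operator is a false start that you correctly abandon, and your diagnosis of why it fails (the extension controls $\|\phi\|_{H^1(A)}$, not merely $\|\nabla\phi\|_{L^2(A)}$) is accurate; the rest matches the paper's argument step for step.
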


    \begin{proof}
	     We argue by contradiction, relying on compactness. Suppose that for some combination of $(\delta,N,M)$ and $d_0$ such a constant $C$ does not exist. Then for each $j \in\mathbb{N}$ we can find a connected, minimally smooth domain $A_j$ with respect to $(\delta,N,M)$, with diameter no larger than $d_0^{-1}$, and a function $\phi_j \in H^1(A_j)$ such that
         \begin{equation*}
              \|\phi_j-(\phi_j)_{A_j}\|_{L^2(A_j)} \geq 2^j\|\nabla \phi_j\|_{L^2(A_j)}>0. 	
         \end{equation*}
         Moreover, after a translation, we may assume that all the $A_j$'s contain the origin, so that $A_j \subset B_{2d_0^{-1}}(0)$. On the other hand, each $\phi_j$ is not identically equal to $(\phi)_{A_j}$ in $A_j$, so we may introduce normalized functions
         \begin{equation*}
              \psi_j := \|\phi_j-(\phi_j)_{A_j}\|_{L^2(A_j)}^{-1}(\phi_j-(\phi_j)_{A_j}),	
         \end{equation*}
         so that
         \begin{equation*}
              (\psi_j)_{A_j}=0\;,\;\;\|\psi_j\|_{L^2(A_j)}=1,\;\text{ and } \;\|\nabla \psi_j \|_{L^2(A_j)}\leq 2^{-j}.	
         \end{equation*}
         All the sets $A_j$ are minimally smooth with the same constants, then Theorem \ref{thm: extension for minimally smooth domains} says we can find constant $C_0=C_0(\delta,N,M)$ and functions $\tilde \psi_j \in H^1(\mathbb{R}^d)$ with $\tilde \psi_j = \psi_j$ in $A_j$, such that
         \begin{equation*}
              \|\tilde \psi_j\|_{H^1(\mathbb{R}^d)}\leq C_0,\;\;\|\psi_j\|_{H^1(A_j)}\leq C_0(1+2^{-j}).
         \end{equation*}
         Since $\tilde \psi_j$ extends $\psi_j$, we have $\|\tilde \psi_j\|_{L^2(\mathbb{R}^d)}\geq 1$ for each $j$. Moreover, given the gradient estimate we may assume, by passing to a subsequence, that $\tilde \psi_j$ converges strongly in $L^2(\mathbb{R}^d)$ and weakly in $H^1(\mathbb{R}^d)$ (and then in fact, strongly in $H^1(\mathbb{R}^d)$) to some $\psi_\infty \in H^1(\mathbb{R}^d)$. 

         On the other hand, the fact that $A_j \subset B_{2d_0^{-1}}$ for any $j$ and that is minimally smooth show that after passing to a subsequence $A_j$ converges in the Hausdorff topology to some set $A_\infty$. Furthermore, it is not hard to see that $A_\infty$ will be connected and minimally smooth with respect to $(\delta,N,M)$, and in particular it has a non-empty interior. In conclusion,  for every $j$ we have
         \begin{equation*}
              (\tilde \psi_j)_{A_j}=0,\;\; \|\tilde \psi_j\|_{L^2(A_j)}=1,\;\;\|\nabla \tilde \psi_j\|_{L^2(A_j)}\leq 2^{-j},	
         \end{equation*}
         from where it follows that $(\psi_\infty)_{A_\infty}=0$ and $\nabla \psi_\infty\equiv 0$ in $A_\infty$, and since $A_\infty$ is connected $\psi_\infty$ must be identically zero a.e. in $A_\infty$, but we also must have $\|\psi_\infty\|_{L^2(A_\infty)}=1$ which is impossible. This contradicts the existence of the original sequence $\phi_j$ and proves the proposition.
    \end{proof}

    Although Poincar\'e's inequality itself is used in proving elliptic estimates, here we need it only to prove the Sobolev inequality, which is what will we use in the proof of Theorem \ref{thm: elliptic estimates}. We also recall the definition of a universal constant given in Definition \ref{def: universal constants} which will be used extensively.

    \begin{prop}\label{prop: Sobolev} 
	     Assume $\mathcal{O}$ satisfies Assumption \ref{hyp: regularity}, then there is a universal $C$ such that for $d\geq 3$ 
         \begin{equation}\label{eqn: Sobolev in O_epsilon}
              \|\phi\|_{L^{2^*}( \mathcal{O}_\epsilon)} \leq C\|\nabla \phi\|_{L^2(\mathcal{O}_\epsilon)}, \;\;\;\forall\;\; \phi \in H^1(\mathcal{O}_\epsilon(\omega)),
         \end{equation}
         for $\mathbb{P}$-almost every $\omega$, here $2^*:= 2d/(d-2)$.
    \end{prop}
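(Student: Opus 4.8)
The plan is to derive the Sobolev inequality in $\mathcal{O}_\epsilon(\omega)$ by reducing it, via Assumption \ref{hyp: regularity} and a covering argument, to the Sobolev inequality on $\mathbb{R}^d$ together with the Calder\'on-Stein extension operator of Theorem \ref{thm: extension for minimally smooth domains} applied on a uniform family of Lipschitz ``cells.'' First I would set up a locally finite covering of $\mathbb{R}^d$ adapted to the perforations: since the obstacles $P_k(\omega)$ are $d_0$-separated and have diameter at most $d_0^{-1}$, one may fix a single universal length scale $\rho = \rho(d_0)$ and cover $\mathbb{R}^d$ by balls $\{B_\rho(x_i)\}_i$ (centers on a $\rho/2$-net, say) with bounded overlap (a universal bound depending only on $d$), such that each ball $B_\rho(x_i)$ meets at most a universal number of the $P_k$'s, and each $P_k$ is contained in some $B_\rho(x_i)$. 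The key geometric point, to be checked, is that $Q_i(\omega) := B_{2\rho}(x_i) \cap \mathcal{O}(\omega)$ is a connected domain which is minimally smooth with universal constants $(\delta', N', M')$ depending only on $(\delta,N,M,d_0,d)$, and of diameter at most a universal constant; this is where the separation hypothesis $d(P_k,P_j)\geq d_0$ and the uniform diameter bound are used, so that only boundedly many obstacles, each a bounded distance apart, intrude into any fixed ball.

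Granting that, the argument runs as follows. Fix $\phi \in H^1(\mathcal{O}(\omega))$ (it suffices to prove the inequality for $\mathcal{O}=\mathcal{O}(\omega)$ and then rescale to $\mathcal{O}_\epsilon$, since both sides of \eqref{eqn: Sobolev in O_epsilon} scale the same way under $x\mapsto x/\epsilon$). On each cell apply Theorem \ref{thm: extension for minimally smooth domains} to get $\tilde\phi_i = E_i\phi \in H^1(\mathbb{R}^d)$ with $\tilde\phi_i = \phi$ on $Q_i(\omega)$ and $\|\tilde\phi_i\|_{H^1(\mathbb{R}^d)} \leq C\|\phi\|_{H^1(Q_i)}$, the operator norm being universal. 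Localize with a partition of unity $\{\eta_i\}$ subordinate to $\{B_\rho(x_i)\}$ with $|\nabla\eta_i|\leq C\rho^{-1}$ universal, and set $g := \sum_i \eta_i \tilde\phi_i \in H^1(\mathbb{R}^d)$; by construction $g = \phi$ on $\mathcal{O}(\omega)$ (on $\mathcal{O}$, every $\tilde\phi_i$ that is ``active'' equals $\phi$, and $\sum\eta_i=1$). Now invoke the classical Sobolev inequality on $\mathbb{R}^d$: $\|\phi\|_{L^{2^*}(\mathcal{O})} \leq \|g\|_{L^{2^*}(\mathbb{R}^d)} \leq C_d \|\nabla g\|_{L^2(\mathbb{R}^d)}$.

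It remains to bound $\|\nabla g\|_{L^2(\mathbb{R}^d)}$ by $C\|\nabla\phi\|_{L^2(\mathcal{O})}$. Expanding $\nabla g = \sum_i (\eta_i \nabla\tilde\phi_i + \tilde\phi_i \nabla\eta_i)$ and using bounded overlap, $\|\nabla g\|_{L^2}^2 \leq C\sum_i \big(\|\nabla\tilde\phi_i\|_{L^2(B_\rho(x_i))}^2 + \rho^{-2}\|\tilde\phi_i\|_{L^2(B_\rho(x_i))}^2\big) \leq C\sum_i \|\phi\|_{H^1(Q_i)}^2$. The term $\sum_i \|\nabla\phi\|_{L^2(Q_i\cap\mathcal{O})}^2 \leq C\|\nabla\phi\|_{L^2(\mathcal{O})}^2$ is fine by bounded overlap, but the $\sum_i \|\phi\|_{L^2(Q_i\cap\mathcal{O})}^2$ term must be absorbed — here is where I would use Proposition \ref{prop: Poincare}: replace $\phi$ by $\phi - c_i$ with $c_i = (\phi)_{Q_i\cap\mathcal{O}}$ on each cell before extending (the extension and the final gluing are insensitive to constants only after patching, so one must be a little careful — the standard trick is to note $g - $ (a globally defined piecewise-constant function adjusted on overlaps) still equals $\phi$ up to a bounded error, or, more cleanly, to prove the inequality first with $\phi$ replaced by $\phi$ having zero mean on a reference cell and then run a chaining/Poincar\'e argument across adjacent cells). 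The cleanest route: since $\|\phi - c_i\|_{L^2(Q_i)} \leq C\|\nabla\phi\|_{L^2(Q_i)}$ by Proposition \ref{prop: Poincare}, set $\tilde\phi_i := E_i(\phi - c_i)$ and $g := \sum_i \eta_i \tilde\phi_i + \sum_i \eta_i c_i$; then $g=\phi$ on $\mathcal{O}$, $\nabla g = \sum_i \eta_i\nabla\tilde\phi_i + \sum_i \nabla\eta_i(\tilde\phi_i + c_i)$, and since $\sum_i\nabla\eta_i = 0$ we get $\sum_i \nabla\eta_i(\tilde\phi_i+c_i) = \sum_i \nabla\eta_i \tilde\phi_i$ plus a term $\sum_i \nabla\eta_i c_i$ which, by pairing overlapping cells and bounding $|c_i - c_j|$ across adjacent cells again by $C\|\nabla\phi\|_{L^2}$ (via Poincar\'e on the connected union $Q_i\cup Q_j$, also minimally smooth with universal constants), is controlled. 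Thus every surviving term is bounded by $C\|\nabla\phi\|_{L^2(\mathcal{O})}$ with universal $C$, proving \eqref{eqn: Sobolev in O_epsilon}.

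The main obstacle I anticipate is the geometric lemma: verifying that the local pieces $Q_i(\omega) = B_{2\rho}(x_i)\cap\mathcal{O}(\omega)$ (and unions of neighboring pieces) are connected and minimally smooth with constants depending only on the universal data. Connectedness is not automatic — in principle a ball could be ``pinched'' by several obstacles — but the $d_0$-separation of the $P_k$ together with the diameter bound and the Lipschitz bound $M$ on each $\partial P_k$ force only a bounded number of well-separated perforations in any $B_{2\rho}(x_i)$, and a suitable choice of $\rho$ (comparable to $d_0^{-1}$ but large enough, or small enough, to avoid pinching given the diameter bound) makes the geometry uniform. Making this precise — choosing $\rho$, counting the obstacles, and checking the minimal-smoothness constants of the intersection and of pairwise unions are universal — is the technical heart; everything downstream (extension, partition of unity, Poincar\'e chaining, Sobolev on $\mathbb{R}^d$) is routine bookkeeping with bounded-overlap sums.
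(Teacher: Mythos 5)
Your proposal is essentially correct but takes a genuinely different, and more elaborate, route than the paper. The paper does not cover $\mathbb{R}^d$ by balls and glue with a partition of unity; instead it exploits the structure of the perforations directly. Setting $\hat P_k$ to be the $d_0/4$-neighborhood of the obstacle $P_k$, the $\hat P_k$ are \emph{pairwise disjoint} (by the $d_0$-separation), and the Calder\'on--Stein extension is applied only on each annular region $\hat P_k\setminus P_k$, mapping into $\hat P_k$. After the constant-subtraction modification $\hat E_k\phi := E_k(\phi-(\phi)_k)+(\phi)_k$, the extended function automatically agrees with $\phi$ on $\hat P_k\setminus P_k\subset\mathcal O$, so the global extension $\hat E\phi$ (equal to $\phi$ on $\mathcal O$ and to $\hat E_k\phi$ inside each hole) is well-defined with \emph{no} partition of unity, \emph{no} chaining argument across adjacent cells, and \emph{no} need to bound differences of cell averages. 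Poincar\'e (Proposition \ref{prop: Poincare}) is invoked once per hole, on the annular set, to bound $\|\phi-(\phi)_k\|_{H^1}$ by $\|\nabla\phi\|_{L^2}$; disjointness of the $\hat P_k$ then makes the summation over $k$ trivial. This sidesteps the two hardest points you flagged: verifying uniform minimal smoothness and connectedness of ball-intersected-with-$\mathcal O$ cells $Q_i$ (in the paper, minimal smoothness of the annulus $\hat P_k\setminus P_k$ is inherited from the obstacle's own minimal smoothness, uniform diameter bound, and separation), and controlling $\sum_i\nabla\eta_i\,c_i$ across overlapping cells (simply absent from the paper's construction). Your chaining argument to bound $|c_i-c_j|$ via Poincar\'e on $Q_i\cup Q_j$ is sound in principle, and your version of the lemma does work, but the decomposition adapted to the perforations is the more economical choice; what your approach buys is generality (it does not rely on the holes being disjoint after thickening), but Assumption \ref{hyp: regularity} provides exactly this disjointness, so the paper can afford the simpler route.
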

    \begin{rem} As it is well known in the case $d=2$ and $\mathcal{O}=\mathbb{R}^2$ the above estimate with $L^\infty$ on the right hand side does not hold. Instead, the relevant estimate for $d=2$ says that $u$ is a function of bounded mean oscillation. Namely, there is a universal constant $C$ such that for any $x$ and $r>0$
	     \begin{equation*}
	     	  \int_{\mathcal{B}^\epsilon_r(x)}|u-(u)_{x,r}|\;dx\;\leq C\|\nabla u\|_{L^2(\mathcal{O}_\epsilon)}r^2,\;\;(u)_{x,r}:=\int_{\mathcal{B}^\epsilon_r(x)}u(y)\;dy.
	     \end{equation*}
         Although all of our estimates hold for $d=2$, as long as one uses the above inequality in place of the Sobolev embedding. For the sake of presentation we will only work our arguments explicitly when $d\geq 3$.
    \end{rem}

    \begin{proof}
         For any $k,\omega$, we denote by $\hat P_k(\omega)$ a  $d_0/4$-neighborhood of $P_k(\omega)$. By assumption, almost surely the sets $P_k(\omega)$ are minimally smooth (with constants independent of $k,\omega$). Then,  by Theorem \ref{thm: extension for minimally smooth domains} case we know that there is an \emph{extension operator} $E_k$ such that
         \begin{align*}
              E_k: H^1(\hat P_k(\omega) \setminus P_k(\omega) ) \to H^1(\hat P_k(\omega))
         \end{align*}
         where for some $C=C(\delta,N,M)$  (and thus independent of $k$), we have
         \begin{equation*}
              \|E_k\phi\|_{H^1(\hat P_k(\omega))}\leq C \|\phi\|_{H^1(\hat P_k(\omega) \setminus P_k(\omega))}.    	
         \end{equation*}
         We define new extensions $\hat E_k: H^1(\hat P_k(\omega)\setminus P_k(\omega))\to H^1(P_k(\omega))$ by
         \begin{equation*}
              \hat E_k \phi := E( \phi - (\phi)_k)+(\phi)_k	
         \end{equation*}
         and, putting them all together, an extension $\hat E: H^1(\mathcal{O}) \to H^1(\mathbb{R}^d)$ given by:
              \begin{equation*}
                   \hat E\phi(x) := \left \{ \begin{array}{rl}
                   \phi(x) & \text{ whenever } x \in \mathcal{O}\\
                   \hat E_k\phi(x) & \text { whenever } x \in \hat  P_k(\omega).	
              \end{array}	\right.
         \end{equation*}
         Now, in $\hat P_k(\omega) \setminus P_k(\omega)$ we have $\hat E_k \phi = (\phi-(\phi)_k)+(\phi)_k=\phi$, thus
         \begin{equation*}
              \|\nabla \hat E_k\phi\|_{L^2(P_k(\omega))}=\|\nabla E_k(\phi-(\phi)_k )\|_{L^2(P_k(\omega))}\leq C \|\phi-(\phi)_k\|_{H^1(\hat P_k(\omega))}.
         \end{equation*}
         Then, due to Assumption \ref{hyp: regularity} we may apply Proposition \ref{prop: Poincare} to $\hat P_k(\omega)$, so there is a universal $C$ such that
         \begin{equation*}
               \|\phi-(\phi)_k\|_{H^1(\hat P_k(\omega))}\leq C \|\nabla \phi\|_{L^2(\hat P_k(\omega))}		
         \end{equation*}
         since this holds for every $k$, we have proved that
         \begin{equation*}
              \int_{\left (\union \limits_k \hat P_k(\omega)\right )}|\nabla E\phi|^2\;dx \leq C^2\int_{\mathcal{O}}|\nabla \phi|^2\;dx.	
         \end{equation*}
         Therefore, 
         \begin{equation*}
              \int_{\mathbb{R}^d}|\nabla E\phi|^2\;dx \leq (1+C^2)\int_{\mathcal{O}}|\nabla \phi|^2\;dx.
         \end{equation*}
         Next, observe that, since $E\phi=\phi$ in $\mathcal{O}$ we get the bound
         \begin{equation}\label{eqn: extension bound}
              \|\phi\|_{L^{2^*}(\mathcal{O}_1)}\leq	\|E\phi\|_{L^{2^*}(\mathbb{R}^d)}.
         \end{equation}	
         Applying the standard Sobolev embedding we get
         \begin{equation*}
              \|\phi\|_{L^{2^*}(\mathcal{O}_1)}\leq	C_n\|\nabla E\phi\|_{L^{2}(\mathbb{R}^d)},
         \end{equation*}
         so, using the bound \eqref{eqn: extension bound}, we obtain for a universal $C$ %$C_n\sqrt{1+C^2}$
         \begin{equation*}
              \|\phi\|_{L^{2^*}(\mathcal{O}_1)}	\leq C\|\nabla \phi\|_{L^2(\mathcal{O}_1)}.	
         \end{equation*}
         This is the desired estimate when $\epsilon=1$. These norms scale in the same way, so we may use the change of variables $x\to \epsilon x$ to obtain the estimate for general $\epsilon>0$ by reducing it to the inequality above.
    \end{proof}

    Next, we prove an inequality similar to Poincar\'e's inequality, except now we assume the function vanishes in a fixed portion of the ball. It extends a corresponding inequality in $\mathbb{R}^d$ used by Moser in his original proof of the parabolic Harnack inequality \cite{Mo1960}. It seems weaker than De Giorgi's isoperimetric inequality for $H^1$ functions, which says that if $u\in H^1(B_1)$ and $0\leq u\leq 1$, then
    \begin{equation*}
         |\{u=0\}||\{u=1\}|^{2/n} \leq C_n\|\nabla u\|_{L^2(B_1)}|\{0<u<1\}|^{1/2}.	
    \end{equation*}
    This inequality was used by De Giorgi in his proof of the H\"older regularity for elliptic equations \cite{DeG1957}. We were not able to prove an analogue to De Giorgi's inequality due to the irregular geometry of the perforated domain. However, we observe that for the purposes of regularity and Harnack's inequality, our extension of Moser's estimate is enough. We recall the statement of Moser's estimate, see \cite[Lemma 2]{Mo1960} for a proof.

    \begin{lem}\label{lem: Moser estimate}
    	 Let $\phi \in H^1(B_r)$ and $d\geq 3$. Then for any $m \in (0,1)$ there is a positive constant $C$ determined by $d$ and $m$ such that
         \begin{equation*}
         	  \left ( \int_{B_r}|\phi|^{2^*}\;dx\right)^{\tfrac{2^*}{2}} \leq C\left ( r^2\int_{B_r}|\nabla \phi|^2\;dx+\int_{A}\phi^2\;dx \right ),
         \end{equation*} 
         where $A$ is \emph{any} subset of $B_r$ such that $|A|\geq m|B_r|$.
    \end{lem}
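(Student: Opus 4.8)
The plan is to reduce to the unit ball by scaling and then deduce the estimate from the two classical inequalities on $B_1$ that are available in the Euclidean setting: the Sobolev--Poincar\'e inequality $\|\psi-(\psi)_{B_1}\|_{L^{2^*}(B_1)}\le C_d\|\nabla\psi\|_{L^2(B_1)}$ and the ordinary Poincar\'e inequality $\|\psi-(\psi)_{B_1}\|_{L^2(B_1)}\le C_d\|\nabla\psi\|_{L^2(B_1)}$ (the latter being the special case of Proposition~\ref{prop: Poincare} in which the domain is a ball). First I would observe that, under the rescaling $x\mapsto rx$, all the norms appearing in the statement rescale with matching powers of $r$ and $A$ becomes a subset of $B_1$ of the same relative measure; hence it suffices to treat $r=1$.

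For $r=1$, I would write $\bar\phi:=(\phi)_{B_1}$ and split $\phi=(\phi-\bar\phi)+\bar\phi$. The triangle inequality in $L^{2^*}(B_1)$ together with the Sobolev--Poincar\'e inequality gives $\|\phi\|_{L^{2^*}(B_1)}\le C_d\|\nabla\phi\|_{L^2(B_1)}+|\bar\phi|\,|B_1|^{1/2^*}$, so the only remaining point is to control the number $|\bar\phi|$. This is exactly where the hypothesis $|A|\ge m|B_1|$ enters: from $m|B_1|\,|\bar\phi|^2\le |A|\,|\bar\phi|^2=\int_A|\bar\phi|^2\,dx$ I would get $|\bar\phi|^2\le 2(m|B_1|)^{-1}\big(\int_A|\phi-\bar\phi|^2\,dx+\int_A\phi^2\,dx\big)$, and then bound $\int_A|\phi-\bar\phi|^2\le\int_{B_1}|\phi-\bar\phi|^2\le C_d\|\nabla\phi\|_{L^2(B_1)}^2$ by Poincar\'e, to conclude $|\bar\phi|^2\le C(d,m)\big(\|\nabla\phi\|_{L^2(B_1)}^2+\int_A\phi^2\,dx\big)$. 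Substituting back and squaring yields the $r=1$ inequality with a constant depending only on $d$ and $m$, and undoing the scaling finishes the proof.

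I do not expect a genuine obstacle here; the one point worth keeping an eye on is that the constant degenerates as $m\to 0^+$, but this happens only through the elementary inequality $|\bar\phi|^2\le (m|B_1|)^{-1}\int_A|\bar\phi|^2$ and is precisely the $m$-dependence the statement allows. As an alternative that avoids tracking constants, I could argue by Rellich--Kondrachov compactness: if the inequality failed for every $C$, pick $\phi_j$ with $\|\phi_j\|_{L^{2^*}(B_1)}=1$ but $\|\nabla\phi_j\|_{L^2(B_1)}^2+\int_{A_j}\phi_j^2\to 0$; H\"older's inequality on the bounded set $B_1$ gives $\|\phi_j\|_{L^2(B_1)}\le |B_1|^{1/d}\|\phi_j\|_{L^{2^*}(B_1)}$, so $\{\phi_j\}$ is bounded in $H^1(B_1)$, and along a subsequence $\phi_j$ converges strongly in $H^1(B_1)$ to a constant $c$ (strong $L^2$ convergence by Rellich, $\nabla\phi_j\to 0$ in $L^2$ by hypothesis); then $\int_{A_j}\phi_j^2=c^2|A_j|+o(1)\ge c^2 m|B_1|+o(1)$ forces $c=0$, hence $\phi_j\to 0$ in $H^1(B_1)$ and, by the Sobolev embedding, in $L^{2^*}(B_1)$, contradicting $\|\phi_j\|_{L^{2^*}(B_1)}=1$. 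This route produces a constant $C(d,m)$ with no explicit formula, which is all that is asserted.
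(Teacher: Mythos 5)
The paper does not actually prove this lemma; it cites Moser \cite{Mo1960} and relies on that reference, so your argument is an independent, self-contained substitute. Your $r=1$ argument is correct: the decomposition $\phi=(\phi-\bar\phi)+\bar\phi$ together with Sobolev--Poincar\'e, the elementary bound $m|B_1|\,|\bar\phi|^2\le\int_A|\bar\phi|^2\le 2\int_A|\phi-\bar\phi|^2+2\int_A\phi^2$, and an ordinary Poincar\'e inequality for the first term gives exactly what you claim, and the Rellich--Kondrachov alternative is equally valid; both are shorter and more textbook than Moser's original segment-averaging argument. One thing to be more careful about: the exponent $2^*/2$ in the displayed statement is not degree-two homogeneous in $\phi$ and is evidently a misprint (it should be $2/2^*$, or, to match Moser's Lemma~2 literally, the left side should simply be $\int_{B_r}\phi^2$). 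Related to this, your sentence ``all the norms rescale with matching powers of $r$'' is not quite right for the $L^{2^*}$ version: $\|\phi\|_{L^{2^*}(B_r)}^2$ scales like $r^{d-2}$, whereas both $r^2\|\nabla\phi\|_{L^2(B_r)}^2$ and $\int_A\phi^2$ scale like $r^d$, so rescaling your $r=1$ inequality produces an extra $r^{-2}$ in front of $\int_A\phi^2$, not the form as printed. What does scale cleanly, and what Proposition~\ref{prop: general Moser estimate} actually uses, is the $L^2$ form $\int_{B_r}\phi^2\le C\bigl(r^2\int_{B_r}|\nabla\phi|^2+\int_A\phi^2\bigr)$, which follows from your $r=1$ estimate after one more application of H\"older on $B_1$. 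So the mathematics is sound; just fix the exponent and state explicitly which scaling-invariant inequality the argument yields.
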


    The extension of Lemma \ref{lem: Moser estimate} is as follows.
    \begin{prop}\label{prop: general Moser estimate}   
         Let $\phi \in H^1(\mathcal{B}^\epsilon_r(x_0))$ and $d\geq 3$.  Then for any $m \in (0,1)$ and $d\geq 3$ there are positive constants $C$ and $c_1$ determined by $\delta,N,M,d_0,d$ and $m$, such that$\pal$ the lower estimate
         \begin{equation*}
	          |\{x \in \mathcal{B}^\epsilon_r(x_0) \mid \phi = 0\}|\geq m |\mathcal{B}^\epsilon_r(x_0)|
         \end{equation*}
         guarantees the bound
         \begin{equation*}
              \|\phi\|_{L^2(\mathcal{B}^\epsilon_{c_1r}(x_0))}\leq Cr^2\|\nabla \phi\|_{L^2(\mathcal{B}^\epsilon_{r}(x_0))}.	
         \end{equation*}
    \end{prop}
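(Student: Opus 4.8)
The plan is to reduce Proposition~\ref{prop: general Moser estimate} to the Euclidean Moser estimate (Lemma~\ref{lem: Moser estimate}) via the same extension operator $\hat E$ constructed in the proof of Proposition~\ref{prop: Sobolev}. Fix $x_0$ and $r$; after the usual scaling $x \mapsto \epsilon x$ we may assume $\epsilon = 1$, so we work with $\phi \in H^1(\mathcal{B}^1_r(x_0)) = H^1(B_r(x_0)\cap\mathcal{O})$. Extend $\phi$ to $\hat E \phi \in H^1(B_r(x_0))$ using the perforation-by-perforation extension from Proposition~\ref{prop: Sobolev}; by the estimate proved there (applied on the relevant neighborhoods $\hat P_k$), we have
\begin{equation*}
    \int_{B_r(x_0)} |\nabla \hat E\phi|^2\;dx \leq C \int_{\mathcal{B}^1_r(x_0)} |\nabla \phi|^2\;dx
\end{equation*}
for a universal $C$. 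The point is that the gradient of the extension inside each perforation is controlled purely by the gradient of $\phi$ in a $d_0/4$-collar of that perforation, via Proposition~\ref{prop: Poincare}, and the collars have bounded overlap (Assumption~\ref{hyp: regularity}).

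Next I would transfer the measure hypothesis $|\{x\in\mathcal{B}^1_r(x_0)\mid \phi=0\}|\geq m|\mathcal{B}^1_r(x_0)|$ to a statement about $\hat E\phi$ on the full ball $B_r(x_0)$. The set $\{\hat E\phi = 0\}$ contains $\{\phi=0\}\cap\mathcal{O}$, which has measure at least $m|\mathcal{B}^1_r(x_0)|$. Since $\mathcal{O}$ occupies a uniformly positive fraction of every ball of radius $r\gtrsim d_0^{-1}$ — this is the volume-density fact underlying Definition~\ref{def: mu} together with the uniform separation/diameter bounds in Assumption~\ref{hyp: regularity}, which give $|\mathcal{B}^1_r(x_0)|\geq c_0|B_r(x_0)|$ for a universal $c_0$ once $r$ exceeds a universal threshold — we get $|\{\hat E\phi=0\}\cap B_r(x_0)|\geq m c_0 |B_r(x_0)| =: m' |B_r(x_0)|$ with $m'\in(0,1)$ universal. (For $r$ below the threshold one argues directly: a ball that small meets at most boundedly many perforations, and one reduces to a single Lipschitz obstacle, or simply absorbs this case into the constants after a covering argument.) Now apply Lemma~\ref{lem: Moser estimate} to $\hat E\phi$ on $B_r(x_0)$ with the set $A = \{\hat E\phi = 0\}\cap B_r(x_0)$ — on which $\hat E\phi$ vanishes, so the $\int_A \phi^2$ term is zero — yielding
\begin{equation*}
    \|\hat E\phi\|_{L^{2^*}(B_r(x_0))}^2 \leq C r^2 \int_{B_r(x_0)} |\nabla \hat E\phi|^2\;dx \leq C r^2 \int_{\mathcal{B}^1_r(x_0)} |\nabla\phi|^2\;dx.
\end{equation*}

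Finally, restrict back: since $\hat E\phi = \phi$ on $\mathcal{O}$, Hölder's inequality on $\mathcal{B}^1_{c_1 r}(x_0)$ gives $\|\phi\|_{L^2(\mathcal{B}^1_{c_1 r}(x_0))} \leq |\mathcal{B}^1_{c_1 r}(x_0)|^{1/d}\,\|\hat E\phi\|_{L^{2^*}(B_r(x_0))} \leq C r^2 \|\nabla\phi\|_{L^2(\mathcal{B}^1_r(x_0))}$, where the factor $|\mathcal{B}^1_{c_1 r}|^{1/d}\lesssim r$ is absorbed (note the stated conclusion has $r^2$, matching the $r^2$ from Moser plus the $r^{d/2\cdot 2/d}=r$ factor only if one is slightly generous with powers — more precisely Moser gives $\|\hat E\phi\|_{L^{2^*}}\le Cr\|\nabla\hat E\phi\|_{L^2}$ and Hölder contributes $r^{d(\frac12-\frac1{2^*})}=r$, so the total is $r^2$, as claimed). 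The constant $c_1$ can be taken to be any fixed fraction less than $1$; in fact $c_1=1$ works since all the above was on $B_r(x_0)$ itself, but keeping $c_1<1$ does no harm. The scaling back to general $\epsilon$ is immediate because both sides scale homogeneously under $x\mapsto\epsilon x$.

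The main obstacle is the second step: ensuring that the hypothesis $|\{\phi=0\}|\geq m|\mathcal{B}^\epsilon_r|$ — a lower bound relative to the \emph{perforated} ball — upgrades to a lower bound of the form $m'|B_r|$ relative to the \emph{full} ball, uniformly in $\omega$, $x_0$, $\epsilon$, and $r$. This requires the uniform volume-density estimate $|\mathcal{B}^\epsilon_r(x_0)| \geq c_0 |B_r(x_0)|$, which is true for $r$ large compared to $\epsilon d_0^{-1}$ by the ergodic theorem / Definition~\ref{def: mu}, but for small $r$ (comparable to a single perforation) one must argue more carefully using the uniform Lipschitz bound $M$ and the diameter bound $d_0^{-1}$ on individual obstacles. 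Handling this small-scale regime cleanly — possibly by choosing $c_1$ small enough that $\mathcal{B}^\epsilon_{c_1 r}$ avoids all but a controlled number of perforations, or by a direct covering argument — is where the real work lies; everything else is a repackaging of the extension construction already carried out for the Sobolev inequality.
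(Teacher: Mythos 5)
Your approach --- extend $\phi$ out of the perforations and invoke the Euclidean Moser estimate (Lemma \ref{lem: Moser estimate}), handling small radii separately --- is exactly the strategy the paper takes. But there is a genuine gap in the central gradient estimate. You claim
\begin{equation*}
\int_{B_r(x_0)}|\nabla \hat E\phi|^2\,dx \leq C\int_{\mathcal{B}^1_r(x_0)}|\nabla\phi|^2\,dx,
\end{equation*}
but this is false as written, and in fact the extension $\hat E\phi$ cannot even be defined on all of $B_r(x_0)$ from $\phi\vert_{\mathcal{B}^\epsilon_r(x_0)}$ alone. The value of $\hat E_k\phi$ inside $P_k$ is determined by $\phi$ on the collar $\hat P_k\setminus P_k$ (you say this yourself), and when $P_k$ meets $\partial B_r(x_0)$ --- or is not contained in $B_r(x_0)$ at all, which a perforation intersecting the boundary generally is not --- that collar extends a distance of order $d_0^{-1}+d_0/4$ beyond $B_r(x_0)$. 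Hence the only honest bound is $\|\nabla\hat E\phi\|_{L^2(B_r)}\leq C\|\nabla\phi\|_{L^2(\mathcal{B}^\epsilon_{C'r})}$ for a universal $C'>1$. This is precisely why the proposition's conclusion has $c_1<1$: the paper works on $\mathcal{F}=\mathcal{B}^1_r\cup\bigcup_{k\in I}\hat P_k\subset B_{Cr}$, obtains an estimate with radius $r$ on the left and $C''r$ on the right, and only then rescales. Your parenthetical remark that ``$c_1=1$ works since all the above was on $B_r(x_0)$ itself'' is therefore incorrect and is the clearest symptom of the gap.

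Two smaller points. First, the density bound $|\mathcal{B}^\epsilon_r(x_0)|\geq c_0|B_r(x_0)|$ (needed to upgrade the vanishing hypothesis to one for the full ball) does not come from the ergodic theorem or Definition \ref{def: mu}, which only give \emph{asymptotic} density as $r/\epsilon\to\infty$; it is a consequence of the deterministic geometry in Assumption \ref{hyp: regularity} (uniform diameter and separation of the obstacles), in the spirit of Proposition \ref{prop: barrier nodes}. Second, your treatment of the small-$r$ regime is left as a plan rather than an argument; the paper resolves it concretely by a local Lipschitz change of variables that flattens $\partial\mathcal{O}$ followed by a reflection, reducing again to Lemma \ref{lem: Moser estimate}, with a doubling step to bridge the remaining intermediate scales. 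Correcting the ball enlargement in the extension estimate and spelling out the small-$r$ reflection argument would close the gap.
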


    \begin{proof}
         As in the previous proposition, let us consider first the case $\epsilon=1$. Let $I \subset \mathbb{N}$ be the set of indices $k$ such that $\bar P^k(\omega ) \cap \bar{\mathcal{B}}^1_{r} \neq 0$, and let $\hat P_k(\omega)$ be as in the proof of Proposition \ref{prop: Sobolev}. Consider the domain 
         \begin{equation*}
    	      \mathcal{F}:= \mathcal{B}^1_r \cup \union\limits_{k \in I} \hat P_k(\omega).
         \end{equation*} 
         Arguing as before, there exists a bounded extension operator $E_0: H^1(\mathcal{F})\to H^1(\mathbb{R}^d)$ whose norm depends only on $\mathcal{O}(\omega)$. First suppose that $r\geq 1$ and observe that in this case $\mathcal{F} \subset B_{Cr}$ for some universal $C$. Since we have a function $E_0\phi$ which vanishes in a portion of $B_{Cr}$,  Lemma \ref{lem: Moser estimate} says that
         \begin{equation*}
         \|E_0\phi\|_{L^2(B_{Cr})}\leq C'r^2\|\nabla E_0\phi\|_{L^2(B_{Cr})},	
    \end{equation*}
    but now one can see that for larger (but still universal) constants $C'$ and $C''$, we have
    \begin{equation*}
         \|\nabla E_0\phi\|_{L^2(B_Cr)}\leq C''r^2\|\nabla \phi\|_{L^2(\mathcal{B}^1_{C''r})}.
    \end{equation*}
    Hence it follows that as long as $r\geq 1$ we have
    \begin{equation*}
         \|\phi\|_{L^2(\mathcal{B}^1_r)}\leq C''r^2\|\nabla	\phi\|_{L^2(\mathcal{B}^1_{C''r})}.
    \end{equation*}
    If we can prove the inequality for $r\leq c_0$ for some universal $c_0$ then we would have proved the proposition via a doubling argument. Let $c_0$ be small enough so that $r<c_0$ guarantees that $B_r(x_0 ) \cap \partial \Omega$ is the graph of a Lipschitz function in some Cartesian system of coordinates. In this case, a Lipschitz change of variables allows us to reduce the estimate to the case of a half ball. Here we may argue by a reflection argument to reduce it again to Lemma \ref{lem: Moser estimate}, and the proposition is proved.

    \end{proof}
    The proof of Theorem \ref{thm: elliptic estimates} has two stages, first for $f=0$ we prove Harnack's inequality using a combination of De Giorgi's and Moser's approaches (to get the $L^\infty$ estimate and oscillation estimate, respectively). 

    Once this is done, we prove a Stampacchia-type estimate to handle $f \neq 0$, with $f\in L^p$ ($p>d/2$). Combining this estimate with Harnack's inequality from the homogeneous case we get the general result. For a thorough presentation of similar ideas for the classical case of a uniformly elliptic operator with bounded measurable coefficients in $\mathbb{R}^d$, see \cite[Chapter II]{KiSt1980}. 

    \subsection{Harnack inequality (homogeneous problem)}

    As we start the proof of the regularity estimates, we recall the notion of a subsolution.

    \begin{DEF} A function $v$ is said to be a weak subsolution of \eqref{eqn: Dirichlet problem in  D} if for any positive test function $\phi$ whose support is compactly contained in $B_r$, we have
         \begin{equation*}
	          \int_{\mathcal{O}_\epsilon} \nabla v \cdot \nabla \phi\;dx \leq \int_{\mathcal{O}_\epsilon} f \phi\;dx
	     \end{equation*}
         We define a weak supersolution similarly by reversing the last inequality above. Note that a function which is both a subsolution and supersolution in this sense is a weak solution (see Definition \ref{def: weak H^1 solutions}).
    \end{DEF}

    \begin{rem}\label{rem: subsolutions_+} Given $w \in H^1$, the function $w_\lambda : = (w-\lambda)_+$ is also in $H^1$. It is well known and not hard to see that whenever $w$ is a weak solution to \eqref{eqn: Poisson equation} then $w_\lambda$ is a weak subsolution for any $\lambda\in\mathbb{R}$.
    \end{rem}

    We next recall the Cacciopoli or energy inequality for subsolutions.

    \begin{prop}\label{prop: energy inequality}
         Let $w$ be a weak subsolution with $f \equiv 0$. Then for any $\lambda \in \mathbb{R}$ and any Lipschitz function $\eta$ with compact support in $B_r$ we have the inequality
         \begin{equation}\label{eqn: energy inequality}
              \int_{\mathcal{O}_\epsilon} |\nabla (\eta w_\lambda)|^2dx \leq  \int_{\mathcal{O}_\epsilon} |\nabla \eta|^2 (w_\lambda)^2dx	
         \end{equation}	
    \end{prop}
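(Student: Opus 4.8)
The plan is to prove the Cacciopoli inequality \eqref{eqn: energy inequality} by testing the weak subsolution equation against an appropriate test function built from $\eta$ and $w_\lambda$, then using Cauchy--Schwarz and an absorption argument, exactly as one does in the classical uniformly elliptic case in $\mathbb{R}^d$; the point is that the Neumann condition on $\partial\mathcal{O}_\epsilon$ costs us nothing because the weak formulation only involves test functions in $H_0(\bfb_\epsilon)$, which are allowed to be nonzero on $\partial\mathcal{O}_\epsilon$. First I would fix $\lambda\in\mathbb{R}$ and a Lipschitz $\eta$ with compact support in $B_r$, set $w_\lambda=(w-\lambda)_+\in H^1(\mathcal{B}^\epsilon_r)$ (Remark \ref{rem: subsolutions_+}), and take $\phi:=\eta^2 w_\lambda$ as test function. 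This $\phi$ is nonnegative, lies in $H^1(\mathcal{O}_\epsilon)$, and has support compactly contained in $B_r$, so it is an admissible test function in the definition of weak subsolution; with $f\equiv 0$ this gives
\begin{equation*}
     \int_{\mathcal{O}_\epsilon} \nabla w \cdot \nabla(\eta^2 w_\lambda)\;dx \leq 0.
\end{equation*}

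Next I would expand the gradient on the set $\{w>\lambda\}$, where $\nabla w_\lambda=\nabla w$ and elsewhere $w_\lambda\equiv 0$, so that $\nabla w\cdot\nabla(\eta^2 w_\lambda)=\eta^2|\nabla w_\lambda|^2+2\eta w_\lambda\,\nabla\eta\cdot\nabla w_\lambda$ almost everywhere on $\mathcal{O}_\epsilon$. Substituting and rearranging yields
\begin{equation*}
     \int_{\mathcal{O}_\epsilon}\eta^2|\nabla w_\lambda|^2\;dx \leq -2\int_{\mathcal{O}_\epsilon}\eta w_\lambda\,\nabla\eta\cdot\nabla w_\lambda\;dx \leq 2\int_{\mathcal{O}_\epsilon}\eta\,|w_\lambda|\,|\nabla\eta|\,|\nabla w_\lambda|\;dx.
\end{equation*}
Then apply Cauchy--Schwarz pointwise followed by Young's inequality $2ab\leq \tfrac12 a^2+2b^2$ with $a=\eta|\nabla w_\lambda|$ and $b=|w_\lambda||\nabla\eta|$, giving $\int\eta^2|\nabla w_\lambda|^2\leq \tfrac12\int\eta^2|\nabla w_\lambda|^2+2\int|\nabla\eta|^2 w_\lambda^2$. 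Absorbing the first term on the left,
\begin{equation*}
     \int_{\mathcal{O}_\epsilon}\eta^2|\nabla w_\lambda|^2\;dx \leq 4\int_{\mathcal{O}_\epsilon}|\nabla\eta|^2 w_\lambda^2\;dx.
\end{equation*}
Finally, to recover \eqref{eqn: energy inequality} in the stated form with $\nabla(\eta w_\lambda)$ on the left, I would use $|\nabla(\eta w_\lambda)|^2\leq 2\eta^2|\nabla w_\lambda|^2+2|\nabla\eta|^2 w_\lambda^2$, combine with the bound just obtained, and adjust the constant; if the paper's intended statement has no constant (i.e.\ a clean ``$\leq$'' with coefficient one), then one instead picks Young's parameter so the absorbed term leaves precisely the right-hand side, or notes that $\int\nabla(\eta w_\lambda)\cdot\nabla(\eta w_\lambda) = \int\eta^2|\nabla w_\lambda|^2 + 2\int \eta w_\lambda\nabla\eta\cdot\nabla w_\lambda + \int|\nabla\eta|^2 w_\lambda^2$ and that testing against $\eta^2 w_\lambda$ kills the middle term up to sign, leaving the identity-free estimate directly.

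I do not expect a genuine obstacle here: the only subtlety worth checking is that $\phi=\eta^2 w_\lambda$ genuinely belongs to the admissible test-function class for the weak subsolution definition on $\mathcal{O}_\epsilon$ (it is a product of a compactly supported Lipschitz function with an $H^1$ function, hence $H^1$ with compact support, and its trace on $\partial\mathcal{O}_\epsilon$ is unconstrained, which is fine), and that the chain-rule manipulations for $w_\lambda=(w-\lambda)_+$ are valid in $H^1$ — both are standard. The slight care needed compared to the Euclidean case is purely bookkeeping: all integrals are over $\mathcal{O}_\epsilon$ (equivalently $\mathcal{B}^\epsilon_r$ since $\eta$ is supported in $B_r$), and one never integrates by parts against $\partial\mathcal{O}_\epsilon$, so the perforations play no role in this particular estimate.
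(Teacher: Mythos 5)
Your proposal is correct and, in its final observation, lands on precisely the paper's argument: test the weak subsolution with $\phi=\eta^2 w_\lambda$, expand $\int\nabla w\cdot\nabla(\eta^2 w_\lambda)\leq 0$ as $\int\eta^2|\nabla w_\lambda|^2+2\int\eta w_\lambda\nabla w_\lambda\cdot\nabla\eta\leq 0$, and add $\int|\nabla\eta|^2 w_\lambda^2$ to both sides to complete the square $|\nabla(\eta w_\lambda)|^2$. The Young/absorption detour in the body of your write-up only yields a constant (and in fact not $1$), so the parenthetical remark at the end is the route you actually need, and it matches the paper verbatim.
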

    \begin{proof}
         The function $w$ is a weak subsolution, so for any positive $\phi \in H^1_0(B_r)$ we have
         \begin{equation*}
              \int_{\mathcal{O}_\epsilon} \nabla w \cdot \nabla \phi\;dx \leq 0
         \end{equation*}	
         Putting $\phi = \eta^2  w_\lambda$, we get
         \begin{equation*}
              \int_{\mathcal{O}_\epsilon } \nabla w \cdot \left (2\eta w\lambda \nabla \eta + \eta^2 \nabla w_\lambda \right )\;dx \leq 0	
         \end{equation*}
         we ``complete the square'' on the left by adding $\eta^2|\nabla \eta|^2 w_\lambda^2$ to both sides, getting
         \begin{equation*}
              \int_{\mathcal{O}_\epsilon} \nabla w \cdot \left (2\eta w_\lambda \nabla \eta + \eta^2 \nabla w_\lambda \right ) +(w_\lambda)^2|\nabla \eta|^2\;dx \leq \int_{\mathcal{O}_\epsilon} |\nabla \eta|^2 (w_\lambda)^2\;dx.	
         \end{equation*}
         Indeed, since $\nabla (\eta w_\lambda) = \eta\nabla w_\lambda+w_\lambda \nabla \eta$ the integral on the left equals
         \begin{equation*}
         \int_{\mathcal{O}_\epsilon} |\nabla (\eta w_\lambda)|^2dx	
         \end{equation*}
         which gives the first inequality. 
    \end{proof}

    Before we prove the first $L^\infty$ estimate, we need the following elementary proposition.

    \begin{prop}\label{prop: sequence}
         Let $A_k$ be a sequence of positive real numbers and $\alpha,\beta,\delta>0$ numbers such that
         \begin{equation*}
              A_{k+1}\leq \alpha2^{\beta k}A_k^{1+\delta}	\;\;\text{ for each } k \in \mathbb{N}.
         \end{equation*}	
         Then $\lim \limits_{k \to +\infty} A_k = 0$ if 
         \begin{equation*}
              A_0 \leq 2^{-\beta/\delta^2}\alpha^{-1/\delta}.	
         \end{equation*}
    \end{prop}
     \begin{proof}
         We shall show that $A_k \leq 2^{-k\mu}A_0$ for some $\mu>0$, from where the proposition will follow. When $k=0$ this is trivially true, so suppose that for some $k_0$ we have
         \begin{equation*}
              A_{k_0} \leq 2^{-k_0\mu}A_0.
         \end{equation*}
         Then, by the hypothesis
         \begin{equation*}
              A_{k_0+1}\leq \alpha2^{\beta k_0}A_{k_0}^{1+\delta}\leq \alpha 2^{\beta k_0} 2^{-k_0\mu(1+\delta)}A_0^{1+\delta},
         \end{equation*}
         so we are done as long as
         \begin{equation*}
              \alpha2^{\beta k_0}2^{-k_0\mu(1+\delta)}A_0^\delta \leq 2^{-(k_0+1)\mu}.
         \end{equation*}
         If we set $\mu=\beta/\delta$, this last inequality is equivalent to
         \begin{equation*}
              A_0^\delta \leq 2^{-(k_0+1)\mu-\beta k_0+k_0\mu(1+\delta)}\alpha^{-1} = 2^{-\mu-\beta k_0+\mu\delta k_0}\alpha^{-1} = 2^{-\mu}\alpha^{-1}
         \end{equation*}
         so $A_0\leq 2^{-\beta/\delta^2}\alpha^{-1/\delta}$ guarantees that $A_k \to 0$ as we wanted. 
    \end{proof}

    We are now ready to prove that subsolutions are$\pal$ bounded from above, at least when $f=0$. The proof relies on the fact that if $v$ is a subsolution then $v_\lambda$ satisfies both the Sobolev and energy inequalities for all $\lambda \in \mathbb{R}$.
 
    \begin{lem}\label{lem: De Giorgi pointwise bound}
         Let $v$ be a weak subsolution of \eqref{eqn: Dirichlet problem in  D}. Then for some universal $C$ we have
         \begin{equation*}
              \sup \limits_{\mathcal{B}^\epsilon_{r/2}} v^2 \leq \frac{C}{r^d} \int_{\mathcal{B}^\epsilon_r} v^2\;dx\;\;\pal	
         \end{equation*}	
    \end{lem}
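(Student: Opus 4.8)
The plan is to run the standard De Giorgi iteration, but with the Sobolev inequality replaced by Proposition~\ref{prop: Sobolev} (valid $\mathbb{P}$-a.s. on $\mathcal{O}_\epsilon$) and Cacciopoli replaced by Proposition~\ref{prop: energy inequality}. Working at scale $\epsilon$, by the scaling of all the relevant norms it suffices to treat $\epsilon=1$ and $r=1$ (the general statement then follows by the change of variables $x\mapsto\epsilon x$ and a rescaling of the ball). So fix $v$ a weak subsolution of \eqref{eqn: Dirichlet problem in  D} with $f\equiv 0$ on $\mathcal{B}^1_1(x_0)$; we may assume $\int_{\mathcal{B}^1_1}v^2\,dx=:\sigma^2$ and we want $\sup_{\mathcal{B}^1_{1/2}}v\leq C\sigma$ (the bound for $v^2$ and the factor $r^{-d}$ are recovered by undoing the normalization).

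\medskip

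First I would set up the iteration. Choose radii $r_k=\tfrac12+2^{-k-1}$ decreasing from $1$ to $\tfrac12$, truncation levels $\lambda_k=M(1-2^{-k})$ increasing from $0$ to $M$ (where $M>0$ is a constant, a multiple of $\sigma$, to be fixed at the end), cutoffs $\eta_k$ with $\eta_k\equiv 1$ on $B_{r_{k+1}}$, $\mathrm{supp}\,\eta_k\subset B_{r_k}$, and $|\nabla\eta_k|\leq C2^{k}$. Define $A_k:=\int_{\mathcal{B}^1_{r_k}}(v-\lambda_k)_+^2\,dx$. By Remark~\ref{rem: subsolutions_+}, $(v-\lambda_k)_+$ is a weak subsolution, so the energy inequality \eqref{eqn: energy inequality} gives
\begin{equation*}
\int_{\mathcal{O}_1}|\nabla(\eta_k (v-\lambda_k)_+)|^2\,dx\leq \int_{\mathcal{O}_1}|\nabla\eta_k|^2 (v-\lambda_k)_+^2\,dx\leq C\,4^{k}A_k .
\end{equation*}
Applying the Sobolev inequality (Proposition~\ref{prop: Sobolev}) to $\eta_k(v-\lambda_k)_+\in H^1(\mathcal{O}_1(\omega))$ yields $\|\eta_k(v-\lambda_k)_+\|_{L^{2^*}(\mathcal{O}_1)}^2\leq C\,4^{k}A_k$. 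Then by Hölder on the set where $\eta_k(v-\lambda_k)_+\geq \lambda_{k+1}-\lambda_k=M2^{-k-1}$, which contains $\{v\geq\lambda_{k+1}\}\cap\mathcal{B}^1_{r_{k+1}}$, I would bound
\begin{equation*}
A_{k+1}\leq \|\eta_k(v-\lambda_k)_+\|_{L^{2^*}}^2\,\big|\{v\geq\lambda_{k+1}\}\cap\mathcal{B}^1_{r_{k+1}}\big|^{2/d},\qquad
\big|\{v\geq\lambda_{k+1}\}\cap\mathcal{B}^1_{r_{k+1}}\big|\leq \frac{A_k}{(M2^{-k-1})^2}.
\end{equation*}
Combining, $A_{k+1}\leq C\,4^{k}A_k\cdot (M^{-2}4^{k+1}A_k)^{2/d}=C M^{-4/d}\,8^{k}\, A_k^{1+2/d}$ (absorbing constants, including a harmless $4^{2(k+1)/d}\leq C8^k$). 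This is exactly the recursion $A_{k+1}\leq \alpha 2^{\beta k}A_k^{1+\delta}$ of Proposition~\ref{prop: sequence} with $\delta=2/d$, $\beta=3$, $\alpha=CM^{-4/d}$.

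\medskip

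Then I would invoke Proposition~\ref{prop: sequence}: $A_k\to 0$ provided $A_0\leq 2^{-\beta/\delta^2}\alpha^{-1/\delta}=c\,M^{2}$ for a universal $c$ (using $\alpha^{-1/\delta}=(CM^{-4/d})^{-d/2}=C^{-d/2}M^{2}$). Since $A_0=\int_{\mathcal{B}^1_1}(v)_+^2\,dx\leq \sigma^2$, it is enough to choose $M:=C_0\sigma$ with $C_0$ large enough (universal) so that $\sigma^2\leq c\,C_0^2\sigma^2$. With this choice $A_k\to 0$, i.e. $\int_{\mathcal{B}^1_{1/2}}(v-M)_+^2\,dx=0$, hence $v\leq M=C_0\sigma$ a.e.\ on $\mathcal{B}^1_{1/2}$. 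Squaring and unwinding the normalization (replace $\sigma^2$ by $r^{-d}\int_{\mathcal{B}^\epsilon_r}v^2$, which is the scale-invariant quantity) gives $\sup_{\mathcal{B}^\epsilon_{r/2}}v^2\leq \frac{C}{r^d}\int_{\mathcal{B}^\epsilon_r}v^2\,dx$, as claimed.

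\medskip

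The only place where the perforated-domain geometry enters is the Sobolev inequality, and that has already been established (Proposition~\ref{prop: Sobolev}); everything else is the classical De Giorgi machinery, so I do not anticipate a genuine obstacle. The one point requiring mild care is that the energy inequality as stated is for cutoffs supported in the full ball $B_r$ while the Sobolev inequality is for $H^1$ functions on $\mathcal{O}_\epsilon(\omega)$ — one must make sure $\eta_k(v-\lambda_k)_+$, extended by zero outside $\mathcal{B}^\epsilon_{r_k}$, is a legitimate element of $H^1(\mathcal{O}_\epsilon(\omega))$, which holds since $\eta_k$ vanishes near $\partial B_{r_k}$ and $v\in H(\bfb_\epsilon)$. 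A second minor point is bookkeeping of the powers of $2^k$ coming from $|\nabla\eta_k|^2$ and from the Chebyshev step so that they assemble into a single geometric factor $2^{\beta k}$; this is routine. For $d=2$ one uses the BMO estimate from the remark after Proposition~\ref{prop: Sobolev} in place of Sobolev, as indicated there.
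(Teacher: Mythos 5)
Your proposal is correct in substance and follows exactly the same route as the paper's proof: De Giorgi iteration driven by the energy inequality (Proposition~\ref{prop: energy inequality}), the Sobolev inequality on $\mathcal{O}_\epsilon$ (Proposition~\ref{prop: Sobolev}), a Chebyshev measure estimate, and the elementary iteration lemma (Proposition~\ref{prop: sequence}). One small arithmetic slip: the absorption $4^{k}\cdot 4^{2(k+1)/d}\leq C\,8^{k}$ is false for $d=3$ (the left side is $\sim 2^{10k/3}$ while $8^{k}=2^{3k}$); just take $\beta$ a bit larger (e.g.\ $\beta\geq 2+4/d$, the paper uses $\beta=6$), which changes nothing since Proposition~\ref{prop: sequence} holds for any $\beta$.
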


    \begin{proof}
         As it is usual in the energy method, for each $k \in \mathbb{N}$ we define
         \begin{align*}
              r_k & := r(1+2^{-k}),\\
              M_k & := M_0(1-2^{-k}),
         \end{align*}
         and subsolutions (see Remark \ref{rem: subsolutions_+})  $v_k  := (v-M_k)_+$. We also define cut-off functions $\eta_k \;\in\; \text{Lip}(B_r)$	where $0\leq \eta_k \leq 1$ everywhere, $\eta_k \equiv  0$ in $\mathbb{R}^d\setminus B_{r_{k-1}}$, $\eta_k \equiv 1$ in $B_{r_k}$, and $|\nabla \eta_k|\leq 2^k/r$. Now, let
         \begin{equation*}
         A_k := \int (\eta_kv_k)^2dx	
         \end{equation*}
         we shall see how large $M_0$ ought to be to guarantee that
         \begin{equation*}
              \lim \limits_{k \to +\infty} A_k = 0,	
         \end{equation*}
         in which case we may conclude that $u \leq M_0$ almost everywhere in $B_{r}$.\\

         Applying H\"older's inequality, we get
         \begin{equation*}
              A_k \leq \left (\int (\eta_kv_k)^{2^*}dx \right )^{\tfrac{2}{2^*}}\left | \left \{ \eta_kv_k >0 \right \} \right |^{1-\tfrac{2}{2^*}}.	
         \end{equation*}
         Then thanks to the Sobolev type inequality \eqref{eqn: Sobolev in O_epsilon} and the Energy inequality \eqref{eqn: energy inequality},
         \begin{equation*}
              A_k \leq C \int |\nabla (\eta_kv_k)|^2dx	\left | \left \{ \eta_kv_k >0 \right \} \right |^{\tfrac{2}{d}}\leq C \int |\nabla \eta_k|^2v_k^2dx	\left | \left \{ \eta_kv_k >0 \right \} \right |^{\tfrac{2}{d}}	
         \end{equation*}
         since $|\nabla \eta_k|\leq 2^k/r$, we get
         \begin{equation*}
              A_k \leq C2^{2k}r^{-2} \int_{\{\eta_k \neq 0\}} v_k^2dx	\left | \left \{ \eta_kv_k >0 \right \} \right |^{\tfrac{2}{d}}	
         \end{equation*}
         Also from the definition of $\eta_k$ we have that $\eta_{k-1}\equiv 1$ whenever  $\eta_k \neq 0$. Moreover, $w_k \neq 0$ means that $w>M_k$, and since $M_{k+1}=M_k+M_02^{-(k+1)}$, these two observations imply that $\eta_{k-1}v_{k-1}\geq M_02^{-(k+1)}$ in $\{ \eta_kv_k\neq 0\}$. This gives the measure estimate
         \begin{equation*}
              \left | \left \{ \eta_kv_k >0 \right \} \right | \leq 2^{2(k+1)}M_0^{-2}\int (\eta_{k-1}v_{k-1})^2\;dx 	
         \end{equation*}
         and we finally get
         \begin{equation*}
              A_k \leq C2^{2k}r^{-2}(2^{2(k+1)}M_0^{-2})^{\tfrac{2}{d}}A_{k-1}^{1+\tfrac{2}{d}}\leq Cr^{-2}(4M_0^{-1})^{\tfrac{4}{d}}2^{6k}A_{k-1}^{1+\tfrac{2}{d}}.
         \end{equation*}
         From here, applying Proposition \ref{prop: sequence} with $\alpha=C_Sr^{-2}(4M_0^{-1})^{\tfrac{4}{d}},\beta=6$ and $\delta=\tfrac{2}{d}$ we conclude that for $\lim \limits A_k > 0$ it is necessary that
         \begin{equation*}
              A_0 \leq 2^{-6/\delta^2} \left (C r^{-2}4^{2\delta}M_0^{-2\delta}\right )^{-\tfrac{1}{\delta}}=2^{-6/\delta^2}C^{-\tfrac{1}{\delta}}r^{\tfrac{2}{\delta}}4^2M_0^2.
         \end{equation*}
         In other words,
         \begin{equation*}
              M_0^2 \geq 2^{-6/\delta^2}C^{\frac{1}{\delta}}r^{-\frac{2}{\delta}}4^{-2}A_0.
         \end{equation*}
         This shows that
         \begin{equation*}
              \sup \limits_{\mathcal{B}^\epsilon_{r/2}}v^2 \leq \frac{C}{r^{d}}A_0^{1/2}\leq \frac{C}{r^d}\int_{\mathcal{B}^\epsilon_r} v^2\;dx.
         \end{equation*}
    \end{proof}

    Likewise, if $v$ is a supersolution, we conclude it is bounded below since $-v$ is a subsolution. This shows that weak solutions of \eqref{eqn: Dirichlet problem in  D} are in $L^\infty_{\text{loc}}$. Next, we prove a very weak version of the Harnack inequality.

    \begin{lem}\label{lem: weak Harnack}
         Suppose that $v$ is a supersolution of \eqref{eqn: Dirichlet problem in  D} in $\mathcal{B}^\epsilon_{r}$ and that $v\geq 0$. If $M>0$ is such that
         \begin{equation*}
              |\{ x \in \mathcal{B}^\epsilon_r \mid v(x)>M\}|\geq m|\mathcal{B}^\epsilon_r|,
         \end{equation*}
         then we have
         \begin{equation*}
              \inf \limits_{\mathcal{B}^\epsilon_{c_1r}} v \geq c_0 M.
         \end{equation*}
         where $c_0$ depends only on $\mathcal{O}$ and $m$ and $c_1$ is a  small universal constant.	
    \end{lem}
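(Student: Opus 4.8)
The plan is to follow Moser's original argument for the weak Harnack inequality, but using the perforated-domain Poincaré/Sobolev machinery (Proposition \ref{prop: general Moser estimate} above) in place of the classical Sobolev inequality on balls. The idea is to work with the function $\bar v := v + \eta$ for a small parameter $\eta>0$ (so that $\bar v$ is strictly positive and we may take negative powers), note that $\bar v$ is still a nonnegative supersolution, and study the behavior of its logarithm together with negative powers $\bar v^{-q}$, $q>0$. Since $-\log \bar v$ and the powers $\bar v^{-q}$ are subsolutions of the equation (up to harmless terms coming from the nonlinearity of these composites), we can run a De Giorgi / Moser iteration on them exactly as in Lemma \ref{lem: De Giorgi pointwise bound}, but now for the \emph{infimum} rather than the supremum.

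First I would establish a Caccioppoli-type estimate for $w := -\log \bar v$: testing the supersolution inequality with $\phi = \eta^2 \bar v^{-1}$ and integrating by parts (all in $\mathcal{O}_\epsilon$, with zero Neumann data absorbed since the cutoff $\eta$ is supported away from $\partial B_r$ but \emph{not} away from $\partial \mathcal{O}_\epsilon$ — which is fine, the Neumann condition is built into the weak formulation), one gets
\begin{equation*}
\int_{\mathcal{B}^\epsilon_r} \eta^2 |\nabla w|^2 \, dx \leq C \int_{\mathcal{B}^\epsilon_r} |\nabla \eta|^2 \, dx \leq C r^{d-2}.
\end{equation*}
Combined with the hypothesis $|\{v > M\}| \geq m|\mathcal{B}^\epsilon_r|$ — which says $w \leq \log(1/(M+\eta))$ on a set of proportional measure, i.e. the \emph{positive part} $(w - \log(1/(M+\eta)))_+$ vanishes on a fixed fraction of $\mathcal{B}^\epsilon_r$ — Proposition \ref{prop: general Moser estimate} gives an $L^2$ bound on this positive part on $\mathcal{B}^\epsilon_{c_1 r}$, and then a John–Nirenberg type lemma (or directly the gradient bound plus Poincaré) upgrades this to: there is a constant $A$ such that $|\{w > A + t\} \cap \mathcal{B}^\epsilon_{c_1 r}|$ decays exponentially in $t$, equivalently $\fint e^{\pm p_0 w} < \infty$ for some small $p_0$. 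In particular $\fint_{\mathcal{B}^\epsilon_{c_1 r}} \bar v^{p_0} \cdot \fint_{\mathcal{B}^\epsilon_{c_1 r}} \bar v^{-p_0} \leq C$.

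Second, I would run a Moser iteration on negative powers: for $q > 0$, the function $\bar v^{-q}$ is a subsolution (up to the good sign of the reaction term from convexity), it satisfies a Caccioppoli inequality, and feeding it through the Sobolev inequality \eqref{eqn: Sobolev in O_epsilon} in $\mathcal{O}_\epsilon$ exactly as in Lemma \ref{lem: De Giorgi pointwise bound} yields a reverse-Hölder chain that iterates to
\begin{equation*}
\sup_{\mathcal{B}^\epsilon_{c_1 r/2}} \bar v^{-p_0} \leq \frac{C}{r^d}\int_{\mathcal{B}^\epsilon_{c_1 r}} \bar v^{-p_0}\,dx,
\end{equation*}
i.e. $\inf_{\mathcal{B}^\epsilon_{c_1 r/2}} \bar v \geq c\, \big(\fint \bar v^{-p_0}\big)^{-1/p_0}$. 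Chaining this with the product bound from the previous paragraph and with the trivial lower bound $\fint_{\mathcal{B}^\epsilon_{c_1 r}} \bar v^{p_0} \geq c\, (M+\eta)^{p_0}\cdot m$ coming from the hypothesis, and finally letting $\eta \to 0$, gives $\inf_{\mathcal{B}^\epsilon_{c_1 r}} v \geq c_0 M$ after relabeling the radius and the constant $c_1$.

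The main obstacle I anticipate is the logarithmic (John–Nirenberg) step: in the classical proof this relies on a genuine Poincaré inequality on balls, whereas here we only have the weaker Moser-type replacement of Proposition \ref{prop: general Moser estimate} (and the BMO substitute noted in the remark after Proposition \ref{prop: Sobolev} for $d=2$). One must check that the exponential integrability of $w = -\log(v+\eta)$ can still be extracted — either by iterating the Moser-type inequality itself on truncations of $w$, or by a covering/chaining argument across overlapping perforated balls to compensate for the absence of a clean Poincaré inequality on a single ball. Care is also needed that all constants remain \emph{universal} (depending only on $\delta,N,M,d_0,d$ and $m$), which forces every use of the extension operator and of Proposition \ref{prop: general Moser estimate} to be at a fixed scale and then rescaled, just as in the proofs above.
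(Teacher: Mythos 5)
Your proposal starts the same way as the paper (Caccioppoli for $\log(v+\gamma)$ via the test function $\eta^2(v+\gamma)^{-1}$, then Proposition \ref{prop: general Moser estimate} applied to the truncation that vanishes on the proportional-measure set), but then diverges onto the full classical Moser route: John--Nirenberg exponential integrability for $\log(v+\gamma)$ followed by a Moser iteration on negative powers $\bar v^{-q}$. You correctly flag the John--Nirenberg step as the weak point --- Proposition \ref{prop: general Moser estimate} only yields a Poincar\'e-type inequality for functions vanishing on a fixed fraction of the ball, not the mean-zero Poincar\'e needed in the standard Calder\'on--Zygmund / chaining proof of John--Nirenberg, so the exponential integrability of $\log(v+\gamma)$ is genuinely in doubt without a covering argument you have not supplied.

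The paper sidesteps this entirely with an observation your argument does not exploit: the truncated function
\begin{equation*}
\bar v = \max\left\{-\log\!\left(\tfrac{v+\gamma}{M}\right),\,0\right\}
\end{equation*}
is itself a \emph{subsolution} of the homogeneous equation in $\mathcal{O}_\epsilon$ (convexity of $-\log$ together with $v$ being a supersolution, and taking the positive part preserves the subsolution property, cf. Remark \ref{rem: subsolutions_+}). Once the $L^2$ bound $\int_{\mathcal{B}^\epsilon_{c_1 r/2}}\bar v^2\,dx\leq C|\mathcal{B}^\epsilon_r|$ is in hand from Proposition \ref{prop: general Moser estimate}, one can apply the De Giorgi $L^\infty$ estimate of Lemma \ref{lem: De Giorgi pointwise bound} directly to the subsolution $\bar v$, obtaining $\sup_{\mathcal{B}^\epsilon_{c_1 r/4}}\bar v\leq C$ universally, i.e.\ $v+\gamma\geq e^{-C}M$; letting $\gamma\to 0$ finishes. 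This avoids both John--Nirenberg and the whole negative-power iteration. Your strategy, if it can be completed, is more general (it would yield the full weak Harnack inequality with an $L^p$ norm of $v$ on the right rather than just a level-set hypothesis), but for the statement as given the paper's shorter De Giorgi-flavored argument is the intended one and your version leaves a genuine unresolved gap.
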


    \begin{proof}
         We start by deriving an energy inequality similar to the one used in the upper bound. Fix $\gamma>0$ (later we will let it go to zero), taking $\phi=\eta^2(v+\gamma)^{-1}$ as a test function in the supersolution equation for $v$, we obtain
         \begin{align*}
              0\leq & \int \nabla v \cdot \left [  -(v+\gamma)^{-2}\eta^2 \nabla v + (v+\gamma)^{-1}2\eta\nabla \eta \right ]\;dx.
         \end{align*}
         Moving the first term to the left, this becomes
         \begin{equation*}
              \int_{\mathcal{B}^\epsilon_r}\eta^2(v+\gamma)^{-2}|\nabla v|^2\;dx\leq 2\int_{\mathcal{B}^\epsilon_r}\eta (v+\gamma)^{-1}\nabla v\cdot \nabla \eta\;dx.		
         \end{equation*}
         Then, using the Cauchy-Schwartz inequality on the the right hand side and rearranging, we obtain
         \begin{equation*}
              \int_{\mathcal{B}^\epsilon_r}\eta^2| (v+\gamma)^{-1}\nabla (v+\gamma)|^2\;dx\leq 4\int_{\mathcal{B}^\epsilon_r}|\nabla \eta|^2\;dx.	
         \end{equation*}
         We recognize on the left hand side the gradient of $\log(v+\gamma)$, so, taking $\eta$ to be a smooth function which is identically $1$ in $B_{r/2}$ and vanishes outside $B_{r}$ we arrive at the estimate
         \begin{equation*}
              \int_{\mathcal{B}^\epsilon_{r/2}}|\nabla \log(v+\gamma)|^2\;dx\;\leq C_dr^{-2}|\mathcal{B}^\epsilon_r|.	
         \end{equation*}  
         Next, consider the function $\bar v= \max \{-\log(\frac{v+\gamma}{M}),0\}$, and note that
         \begin{equation*}
              \int_{\mathcal{B}^\epsilon_{r/2}}|\nabla \bar v|^2\;dx \leq \int_{\mathcal{B}^\epsilon_{r/2}}|\nabla \log(x+\gamma)|^2\;dx.
         \end{equation*}
         Take $\gamma$ small enough so that  $v+\gamma>M$ in a set of measure greater than $2^{-1}m|\mathcal{B}^\epsilon_r|$, then we can apply Proposition \ref{prop: general Moser estimate} and conclude that
         \begin{equation*}
              \int_{\mathcal{B}^\epsilon_{c_1r/2}}\bar v^2\;dx\leq Cr^2\int_{\mathcal{B}^\epsilon_{r/2}}|\nabla \bar v|^2\;dx\leq C|\mathcal{B}^\epsilon_r|,\;\;\; C=C(d,m).		
         \end{equation*}
         On the other hand, it can be checked that $\bar v$ is a subsolution. Hence, Lemma \ref{lem: De Giorgi pointwise bound} yields that
         \begin{equation*}
              \sup \limits_{\mathcal{B}^\epsilon_{c_1r/4}}\bar v^2 \leq \frac{C}{r^n}C|\mathcal{B}^\epsilon_r|\leq C.	
         \end{equation*}
         Where $C$ is again a universal constant. Using the definition of $\bar v$ this says that
         \begin{equation*}
              -\log[(v+\gamma)/M]\leq CC \Rightarrow (v+\gamma)/M \geq e^{-C}.
         \end{equation*}
         Letting $\gamma \to 0$, we conclude that 
         \begin{equation*}
              \inf \limits_{\mathcal{B}^\epsilon_{c_1r/4}} \;v \geq e^{-C}M,	
         \end{equation*}
         which proves the lemma with $c_0 = e^{-C}$, so that $c_0$ is a universal constant.
    \end{proof}

    \begin{cor}\label{cor: Harnack}
         There are universal constants $C,c_1>0$ such that$\pal$ we have
         \begin{equation*}
              \sup \limits_{\mathcal{B}^\epsilon_{r/2}} v \leq C \inf \limits_{\mathcal{B}^\epsilon_{c_1r}} v
         \end{equation*}
         for any $v$ which is a nonnegative solution of \eqref{eqn: Dirichlet problem in  D} in $\mathcal{B}^\epsilon_{r}$.
    \end{cor}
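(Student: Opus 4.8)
The plan is to obtain the Harnack inequality along the classical De~Giorgi--Nash--Moser path, using as inputs the two results of this subsection: the local maximum principle for subsolutions (Lemma~\ref{lem: De Giorgi pointwise bound}) and the measure form of the weak Harnack inequality (Lemma~\ref{lem: weak Harnack}), both with universal constants. Since every quantity in sight is scale invariant --- the $L^2$ norms in Lemma~\ref{lem: De Giorgi pointwise bound} transform correctly under $x\mapsto\epsilon x$ --- it is enough to argue at one fixed radius $r$. We may also assume $\inf_{\mathcal{B}^\epsilon_{r/2}}v>0$, by applying the statement to $v+\delta$ (still a nonnegative solution of \eqref{eqn: Dirichlet problem in  D} with $f=0$) and letting $\delta\downarrow0$ at the end; and since $\inf_{\mathcal{B}^\epsilon_{c_1r}}v\ge\inf_{\mathcal{B}^\epsilon_{r/2}}v$ whenever $c_1\le\tfrac12$, it suffices to prove the stronger inequality $\sup_{\mathcal{B}^\epsilon_{r/2}}v\le C\inf_{\mathcal{B}^\epsilon_{r/2}}v$, with all the room taken inside $\mathcal{B}^\epsilon_r$.

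The first step is to promote Lemma~\ref{lem: weak Harnack}, which controls only a single superlevel set, to an integral weak Harnack inequality: for some universal $p_0>0$ and $C$,
\begin{equation*}
     \Big(\frac{1}{r^d}\int_{\mathcal{B}^\epsilon_{3r/4}}v^{p_0}\,dx\Big)^{1/p_0}\le C\,\inf_{\mathcal{B}^\epsilon_{r/2}}v .
\end{equation*}
The mechanism is the Krylov--Safonov/Calder\'on--Zygmund iteration of the measure estimate: subdividing Euclidean dyadic cubes and tracking the superlevel sets of $v$, one upgrades ``$v>M$ on a fixed fraction of a ball forces $v\ge c_0 M$ on a smaller ball'' to the distributional decay $|\{v>t\}\cap\mathcal{B}^\epsilon_{3r/4}|\le C(\inf_{\mathcal{B}^\epsilon_{r/2}}v)^{p_0}t^{-p_0}r^d$, which integrates to the display. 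This works because Lemma~\ref{lem: weak Harnack} holds with the \emph{same} constants for every perforated ball $\mathcal{B}^\epsilon_\rho(y)\subset\mathcal{B}^\epsilon_r$ (its proof invokes only the scale- and location-independent Proposition~\ref{prop: general Moser estimate} and Lemma~\ref{lem: De Giorgi pointwise bound}), and because the radius ratio $c_1$ there, which is too small for the cube combinatorics, can first be enlarged to a fixed fraction by a chaining argument: one joins the relevant points by a path inside the connected set $\mathcal{O}_\epsilon$, covers it by a \emph{bounded} number of perforated balls, and composes Lemma~\ref{lem: weak Harnack} along them, the number of links and hence the loss in the constant being universal by Assumption~\ref{hyp: regularity}.

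The second step matches the exponent on the other side, upgrading Lemma~\ref{lem: De Giorgi pointwise bound} from exponent $2$ to $p_0$:
\begin{equation*}
     \sup_{\mathcal{B}^\epsilon_{r/2}}v\le C\Big(\frac{1}{r^d}\int_{\mathcal{B}^\epsilon_{3r/4}}v^{p_0}\,dx\Big)^{1/p_0}.
\end{equation*}
This is the usual interpolation/absorption argument: one inserts $\int v^2\le(\sup v)^{2-p_0}\int v^{p_0}$ into Lemma~\ref{lem: De Giorgi pointwise bound} on a family of nested perforated balls, absorbs a fraction of the supremum by Young's inequality, and iterates over the radii. Chaining the two displays gives $\sup_{\mathcal{B}^\epsilon_{r/2}}v\le C\inf_{\mathcal{B}^\epsilon_{r/2}}v$; rescaling to a general radius, discarding the regularization $\delta$, and passing to $\inf_{\mathcal{B}^\epsilon_{c_1r}}v$ as in the first paragraph completes the proof.

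The main obstacle is not any single one of these estimates --- each is standard once its ingredients are in place --- but confirming that the covering/iteration machinery, which in $\mathbb{R}^d$ rests on balls being volume doubling, transfers to the perforated balls $\mathcal{B}^\epsilon_\rho$. This reduces to the uniform two-sided volume comparison $c_*\rho^d\le|\mathcal{B}^\epsilon_\rho(y)|\le|B_\rho|$ with $c_*$ universal, valid for every $y\in\mathcal{O}_\epsilon$, every $\rho$, and every $\epsilon$: the lower bound, hence the doubling property, is exactly where Assumption~\ref{hyp: regularity} --- the perforations are uniformly separated, of uniformly bounded diameter, and uniformly Lipschitz at a fixed scale --- enters, via an interior corkscrew argument at small scales and a ``moat'' volume count at scales comparable to or larger than the perforation size. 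It is precisely these points (along with Propositions~\ref{prop: Sobolev} and \ref{prop: general Moser estimate}, which already encode the needed Sobolev and Poincar\'e inequalities with universal constants) that are new relative to the periodic setting.
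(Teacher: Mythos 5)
Your route is correct in outline, but it is substantially longer than the paper's, which bypasses the entire $L^{p_0}$ weak Harnack machinery. The paper's proof is a two-line argument: set $M=\tfrac12\sup_{\mathcal{B}^\epsilon_r}v$ and apply Lemma~\ref{lem: De Giorgi pointwise bound} to $(v-M)_+$; since $\sup(v-M)_+=M$ and $(v-M)_+\le M\,\mathbb{I}_{\{v>M\}}$, the $L^\infty$ bound gives $M^2\lesssim \tfrac{1}{r^d}\int(v-M)_+^2\lesssim \tfrac{M^2}{r^d}|\{v>M\}\cap\mathcal{B}^\epsilon_r|$, i.e.\ $|\{v>M\}\cap\mathcal{B}^\epsilon_r|\ge c\,|\mathcal{B}^\epsilon_r|$. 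This is exactly the hypothesis of Lemma~\ref{lem: weak Harnack}, which then yields $\inf_{\mathcal{B}^\epsilon_{c_1r}}v\ge c_0M=\tfrac{c_0}{2}\sup_{\mathcal{B}^\epsilon_r}v$, and one is done. In other words, because Lemma~\ref{lem: weak Harnack} only needs a measure estimate at a single level, and because that measure estimate at the level $M=\tfrac12\sup v$ falls directly out of the $L^2\to L^\infty$ bound, there is no need for the Krylov--Safonov/Calder\'on--Zygmund iteration, the $L^{p_0}$ weak Harnack, the $L^2\to L^{p_0}$ interpolation, or the chaining argument that make up the bulk of your sketch. Your approach does have the advantage of being robust and of yielding the stronger statement $\sup_{\mathcal{B}^\epsilon_{r/2}}v\le C\inf_{\mathcal{B}^\epsilon_{r/2}}v$ (for $f\ne0$ one would need the extra $r^2\|f\|_\infty$ term, but here $f=0$), but it imports the part of DGNM theory that is heaviest to transplant to $\mathcal{O}_\epsilon$: the cube decomposition requires doubling and a geometry of dyadic cubes compatible with the perforations, which you correctly flag as needing proof but do not supply. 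The point worth internalizing is that the cube decomposition is a device for converting a \emph{small}-measure hypothesis into a pointwise bound; when, as here, the sup bound already furnishes a \emph{large}-measure hypothesis at the level $\tfrac12\sup v$, that device is unnecessary and the proof collapses to the paper's short argument.
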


    \begin{proof}
         Indeed, if $v$ is a solution, it is bounded from above, so let
         \begin{equation*}
              M=\tfrac{1}{2}\sup \limits_{\mathcal{B}^\epsilon_r} v.	
         \end{equation*}
         Applying Lemma \ref{lem: De Giorgi pointwise bound} to $(v-M)_+$,  there exists a constant $C$ such that
         \begin{equation*}
              M^2 r^d \leq C\int_{\mathcal{B}^\epsilon_r}(v-M)_+^2\;dx\leq  CM^2|\{x \in \mathcal{B}^\epsilon_r \mid v>M\}| 
         \end{equation*}
         and we conclude that 
         \begin{equation*}
              |\{ x \in \mathcal{B}^\epsilon_r \mid v \geq M\}| \geq C^{-1}|\mathcal{B}^\epsilon_r|. 	
         \end{equation*}
         This, together with the fact that $w\geq 0$, allows us to apply Lemma \ref{lem: weak Harnack}, which gives the bound
         \begin{align*}
              c_0M  \leq & \;\; \inf \limits_{\mathcal{B}^\epsilon_{c_1r}} v.
         \end{align*}
         Putting this together with the definition of $M$ the desired inequality is obtained.
    \end{proof}

    \begin{cor}\label{cor: oscillation}
         There are universal constants $\mu,c_1 \in (0,1)$ such that if $v$ is a solution in $\mathcal{B}^{\epsilon}_{2r}$, then 
         \begin{equation*}
              \osc \limits_{\mathcal{B}^\epsilon_{c_1r}}v \leq \mu \osc \limits_{\mathcal{B}^\epsilon_r}v\;\;\;\; \mathbb{P}\text{-a.s.}
         \end{equation*}
         In particular, there are $\alpha=\alpha(\mathcal{O})\in (0,1)$ and $C=C(\mathcal{O})>0$ such that
         \begin{equation*}
              \| v \|_{C^\alpha(\mathcal{B}^\epsilon_{c_1r})} \leq \tfrac{C}{r^\alpha}\|v\|_{L^\infty(\mathcal{B}^\epsilon_r)}\;\;\;\; \mathbb{P}\text{-a.s.}	
         \end{equation*}
    \end{cor}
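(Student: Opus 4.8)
The plan is to run the classical chain ``Harnack $\Rightarrow$ oscillation decay $\Rightarrow$ interior Hölder continuity,'' now that the Harnack inequality for nonnegative solutions is available as Corollary \ref{cor: Harnack}. First I would establish the oscillation decay. Fix $\mathcal{B}^\epsilon_{2r}=B_{2r}(x_0)\cap\mathcal{O}_\epsilon$ with $v$ solving \eqref{eqn: Dirichlet problem in  D} (with $f=0$) there, and set $M:=\sup_{\mathcal{B}^\epsilon_r}v$ and $m:=\inf_{\mathcal{B}^\epsilon_r}v$, both finite by Lemma \ref{lem: De Giorgi pointwise bound}. Then $v-m$ and $M-v$ are nonnegative solutions of \eqref{eqn: Dirichlet problem in  D} in $\mathcal{B}^\epsilon_r$, and since $c_1<1/2$ the ball $\mathcal{B}^\epsilon_{c_1r}$ sits inside $\mathcal{B}^\epsilon_{r/2}$, so Corollary \ref{cor: Harnack} applied on $\mathcal{B}^\epsilon_r$ gives a universal $C$ with
\begin{equation*}
    \sup_{\mathcal{B}^\epsilon_{c_1r}}(v-m)\leq C\inf_{\mathcal{B}^\epsilon_{c_1r}}(v-m),\qquad \sup_{\mathcal{B}^\epsilon_{c_1r}}(M-v)\leq C\inf_{\mathcal{B}^\epsilon_{c_1r}}(M-v).
\end{equation*}
Writing $M':=\sup_{\mathcal{B}^\epsilon_{c_1r}}v$ and $m':=\inf_{\mathcal{B}^\epsilon_{c_1r}}v$ and adding the two inequalities produces $(M-m)+(M'-m')\leq C\big((M-m)-(M'-m')\big)$, that is $\osc_{\mathcal{B}^\epsilon_{c_1r}}v\leq\mu\,\osc_{\mathcal{B}^\epsilon_r}v$ with $\mu:=(C-1)/(C+1)\in(0,1)$, which is the first assertion.

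Next I would iterate. The crucial point is that the oscillation decay just obtained holds for balls centered at any point, with a \emph{center-independent} universal constant (including balls meeting $\partial\mathcal{O}_\epsilon$ --- this is exactly why the uniform Harnack inequality of Corollary \ref{cor: Harnack} is needed). So, for any $x\in\mathcal{B}^\epsilon_{c_1r}$ and $\rho_k:=c_1^k r/2$, I would show by induction on $k$ --- at each step $v$ is still a solution on $\mathcal{B}^\epsilon_{2\rho_k}(x)=\mathcal{B}^\epsilon_{c_1^k r}(x)\subset\mathcal{B}^\epsilon_{2r}(x_0)$, using $1+c_1\le 2$ --- that
\begin{equation*}
    \osc_{\mathcal{B}^\epsilon_{\rho_k}(x)}v\;\leq\;\mu^k\,\osc_{\mathcal{B}^\epsilon_{r/2}(x)}v\;\leq\;2\mu^k\,\|v\|_{L^\infty(\mathcal{B}^\epsilon_r)},
\end{equation*}
where the last step uses $\mathcal{B}^\epsilon_{r/2}(x)\subset\mathcal{B}^\epsilon_r(x_0)$ (again from $c_1\le 1/2$). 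Interpolating across consecutive scales and setting $\alpha:=\log(1/\mu)/\log(1/c_1)\in(0,1)$ (so that $c_1^\alpha=\mu$), one gets a universal $C$ with
\begin{equation*}
    \osc_{\mathcal{B}^\epsilon_{\rho}(x)}v\;\leq\;C\,(\rho/r)^{\alpha}\,\|v\|_{L^\infty(\mathcal{B}^\epsilon_r)},\qquad x\in\mathcal{B}^\epsilon_{c_1r},\ 0<\rho\leq r/2.
\end{equation*}
For $x,y\in\mathcal{B}^\epsilon_{c_1r}$ with $\rho:=|x-y|\leq r/2$ we have $y\in\mathcal{B}^\epsilon_\rho(x)$, hence $|v(x)-v(y)|\leq C r^{-\alpha}|x-y|^\alpha\|v\|_{L^\infty(\mathcal{B}^\epsilon_r)}$; combining this seminorm bound with the trivial estimate $\|v\|_{L^\infty(\mathcal{B}^\epsilon_{c_1r})}\leq\|v\|_{L^\infty(\mathcal{B}^\epsilon_r)}$ yields the stated $C^\alpha$ bound (and in particular exhibits the Hölder continuous representative of $v$, which removes the a.e.\ ambiguity implicit in the statement).

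I do not expect a genuine obstacle in this corollary: all the analytic work --- the Sobolev and Moser inequalities on $\mathcal{O}_\epsilon$, the De Giorgi $L^\infty$ bound of Lemma \ref{lem: De Giorgi pointwise bound}, and the weak Harnack estimate of Lemma \ref{lem: weak Harnack} --- has already been carried out, and what remains is the standard iteration. The only things requiring a little care are bookkeeping: (i) that $c_1<1/2$, so the supremum and infimum in the Harnack chain may legitimately be taken over the \emph{same} ball $\mathcal{B}^\epsilon_{c_1r}$; and (ii) that every constant inherited from Corollary \ref{cor: Harnack} is truly universal and independent of the center of the ball, so the iteration can be run from an arbitrary base point $x\in\mathcal{B}^\epsilon_{c_1r}$ without incurring a loss that degenerates as $x$ approaches $\partial\mathcal{O}_\epsilon$.
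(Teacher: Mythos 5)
Your argument is correct and follows essentially the same route as the paper: the uniform Harnack inequality of Corollary \ref{cor: Harnack} yields oscillation decay, which then iterates across geometric scales to give interior H\"older continuity. The only implementation difference is in deriving the decay --- the paper normalizes to $v^*=(v-m)/(M-m)$ and splits into cases according to whether $C\inf v^*\geq \tfrac{1}{2}$, whereas you apply Harnack to both $v-m$ and $M-v$ and add, giving the slightly cleaner constant $\mu=(C-1)/(C+1)$ without a case analysis; both are standard, and the iteration to the $C^\alpha$ bound (which the paper leaves implicit) is carried out correctly, including the bookkeeping keeping $\mathcal{B}^\epsilon_{c_1^k r}(x)\subset\mathcal{B}^\epsilon_{2r}(x_0)$.
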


    \begin{proof}
         If $v$ is constant in $\mathcal{B}^\epsilon_{c_1r}$ there is nothing to prove, so let $M>m$ be given by
         \begin{equation*}
              M = \sup \limits_{\mathcal{B}^\epsilon_{c_1r}} v,\;\; m =\inf \limits_{\mathcal{B}^\epsilon_{c_1r}} v.	
         \end{equation*}
         Then $v^*= \frac{v-m}{M-m}$ is harmonic and positive in $B_{r}\cup \mathcal{O}_\epsilon$. If $c_1<1/2$ then Corollary \ref{cor: Harnack} yields
         \begin{equation*}
              \sup \limits_{\mathcal{B}^\epsilon_{c_1r}} v^* \leq \sup \limits_{\mathcal{B}^\epsilon_{r/2}} v^* \leq C\inf \limits_{\mathcal{B}^\epsilon_{c_1r}} v^*.	
         \end{equation*}
         Suppose first that $C \inf \limits_{\mathcal{B}^\epsilon_{c_1r}}v^*<\tfrac{1}{2}$, then (note that $\osc \limits_{\mathcal{B}^\epsilon_r} v^*=1$ by construction) 
         \begin{equation*}
              \osc \limits_{\mathcal{B}^\epsilon_{c_1r}} v^* \leq \sup \limits_{\mathcal{B}^\epsilon_{c_1r}} v^*< \tfrac{1}{2} = \tfrac{1}{2} \osc \limits_{\mathcal{B}^\epsilon_r} v^*.	
         \end{equation*}
         Otherwise, we have $\inf  \limits_{\mathcal{B}^\epsilon_{r/2}}v^* \geq \frac{1}{2}C^{-1}$, in which case
         \begin{equation*}
              \osc \limits_{\mathcal{B}^\epsilon_{c_1}} v^* \leq 1-\inf \limits_{\mathcal{B}^\epsilon_{c_1r}} v^*\leq 1-\tfrac{1}{2}C^{-1} = (1-\tfrac{1}{2}C^{-1}) \osc \limits_{\mathcal{B}^\epsilon_r} v^*	
         \end{equation*}
         In either case, with $\mu = \max \{\tfrac{1}{2},1-\tfrac{1}{2}C^{-1}\} \in (0,1)$ we have
         \begin{equation*}
              \osc \limits_{\mathcal{B}^\epsilon_{r/2}}v^* \leq \mu \osc \limits_{\mathcal{B}^\epsilon_r}v^*	
         \end{equation*}
         this inequality for $v^*$ immediately implies the desired inequality for $w$.
    \end{proof}

    \subsection{Stampacchia's Maximum Principle} The last tool we need in order to prove Theorem \ref{thm: elliptic estimates} is Stampacchia's maximum principle, which deals with $f\neq 0$ and zero Dirichlet boundary conditions.

    \begin{lem}\label{lem: Stampacchia}
         We have$\pal$ that for any $v \in H_0$ solving \eqref{eqn: Hele-Shaw homogeneous} in  $\mathcal{B}^\epsilon_r$ and  $p>d/2$ ($d\geq 3$),
         \begin{equation*}
              \|v\|_{L^\infty(\mathcal{B}^\epsilon_r)} \leq C_{\mathcal{O}}r^{2-\tfrac{d}{p}}\|f\|_{L^p(\mathcal{B}^\epsilon_r)}.
         \end{equation*}	
    \end{lem}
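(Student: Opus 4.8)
The plan is the classical De Giorgi--Stampacchia truncation argument, with two ingredients replacing their Euclidean counterparts: the Sobolev inequality of Proposition~\ref{prop: Sobolev} in the perforated domain, and the elementary iteration lemma, Proposition~\ref{prop: sequence}. Since $-v$ solves \eqref{eqn: Dirichlet problem in  D} with right-hand side $-f$, it suffices to bound $\sup_{\mathcal{B}^\epsilon_r}v$ from above; the two-sided bound then follows by applying this to $v$ and to $-v$. Throughout I would fix a realization $\omega$ (the estimate being $\mathbb{P}$-a.s.), so that all quantities refer to the deterministic domain $\mathcal{O}_\epsilon(\omega)$.

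First, for $k\geq 0$ set $A(k):=\{x\in\mathcal{B}^\epsilon_r:\ v(x)>k\}$. Because $v\in H_0(\bfb_\epsilon)$, the truncation $(v-k)_+$ still vanishes near $\partialom B_r$, hence lies in $H_0(\bfb_\epsilon)$ and, extended by zero outside $\mathcal{B}^\epsilon_r$, belongs to $H^1(\mathcal{O}_\epsilon(\omega))$; so it is admissible both as a test function for \eqref{eqn: Dirichlet problem in  D} and as an argument of Proposition~\ref{prop: Sobolev}. Testing the weak formulation against $(v-k)_+$ and using H\"older (legitimate since $p>d/2$ and $d\geq3$ force $\tfrac1p+\tfrac1{2^*}<1$) gives
\begin{equation*}
\int_{\mathcal{B}^\epsilon_r}|\nabla(v-k)_+|^2\,dx=\int_{A(k)}f\,(v-k)_+\,dx\leq \|f\|_{L^p(\mathcal{B}^\epsilon_r)}\,\|(v-k)_+\|_{L^{2^*}(\mathcal{B}^\epsilon_r)}\,|A(k)|^{1-\frac1p-\frac1{2^*}}.
\end{equation*}
Feeding this into $\|(v-k)_+\|_{L^{2^*}}^2\leq C\|\nabla(v-k)_+\|_{L^2}^2$ (Proposition~\ref{prop: Sobolev}) and cancelling one power of the $L^{2^*}$ norm yields $\|(v-k)_+\|_{L^{2^*}}\leq C\|f\|_{L^p}|A(k)|^{1-\frac1p-\frac1{2^*}}$ with $C$ universal; and since $(v-k)_+>h-k$ on $A(h)$ for $h>k$, this self-improves to the distribution-function inequality
\begin{equation*}
|A(h)|\leq \Big(\frac{C\|f\|_{L^p}}{h-k}\Big)^{2^*}|A(k)|^{\beta},\qquad \beta:=2^*\Big(1-\frac1p\Big)-1 .
\end{equation*}
A one-line computation with $2^*=2d/(d-2)$ shows $\beta>1$ exactly when $p>d/2$: this is the only point where the restriction on $p$ enters.

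Next I would iterate. Choose the levels $k_j:=L(1-2^{-j})$, $j\geq 0$, with $L>0$ to be fixed, and put $A_j:=|A(k_j)|$; the inequality above becomes $A_{j+1}\leq \alpha\,2^{2^*j}A_j^{1+\delta_0}$ with $\alpha=(2C\|f\|_{L^p}/L)^{2^*}$ and $\delta_0:=\beta-1>0$. By Proposition~\ref{prop: sequence}, $A_j\to0$ provided $A_0\leq 2^{-2^*/\delta_0^2}\alpha^{-1/\delta_0}$; and since $A_0\leq|\mathcal{B}^\epsilon_r|\leq |B_1|\,r^d$, this smallness condition is satisfied as soon as
\begin{equation*}
L\ \geq\ C\,\|f\|_{L^p(\mathcal{B}^\epsilon_r)}\,r^{\,d\delta_0/2^*}\ =\ C\,\|f\|_{L^p(\mathcal{B}^\epsilon_r)}\,r^{\,2-\frac dp},
\end{equation*}
the exponent identity $d\delta_0/2^*=2-d/p$ being a short computation from $2^*=2d/(d-2)$. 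For this value of $L$ we get $A_j\to0$, i.e.\ $|\{v\geq L\}\cap\mathcal{B}^\epsilon_r|=0$, hence $\sup_{\mathcal{B}^\epsilon_r}v\leq L$; applying the same to $-v$ and combining gives the claimed bound on $\|v\|_{L^\infty(\mathcal{B}^\epsilon_r)}$.

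I do not expect a genuine analytic obstacle here: the hard input — a Sobolev embedding in $\mathcal{O}_\epsilon$ with a constant that is universal and, crucially, independent of $\epsilon$ — has already been established in Proposition~\ref{prop: Sobolev}. The only steps requiring care are bookkeeping: (i) checking that the zero-extension of $(v-k)_+$ really lies in $H^1(\mathcal{O}_\epsilon)$ and in $H_0(\bfb_\epsilon)$ so that both the weak formulation and Proposition~\ref{prop: Sobolev} apply; and (ii) the two exponent identities $\beta>1\Leftrightarrow p>d/2$ and $d\delta_0/2^*=2-d/p$, which simultaneously pin down the admissible range of $p$ and the exact power of $r$. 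Alternatively, one could first rescale $x\mapsto rx$ and $v\mapsto v(r\,\cdot)$ to reduce to $r=1$ — permissible precisely because the Sobolev constant in Proposition~\ref{prop: Sobolev} does not see $\epsilon$ — after which the power $r^{2-d/p}$ reappears from the scaling of $\|f\|_{L^p}$.
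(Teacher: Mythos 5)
Your proposal is correct and follows essentially the same approach as the paper: a De Giorgi--Stampacchia truncation argument using the Sobolev inequality of Proposition~\ref{prop: Sobolev} in $\mathcal{O}_\epsilon$ together with the iteration lemma, Proposition~\ref{prop: sequence}, and you arrive at the same exponent $\delta_0 = 4/(d-2)-2^*/p$ as the paper's $\delta = 2^*\bigl(\tfrac{d+2}{2d}-\tfrac1p\bigr)-1$. The only difference is presentational: you pass through the classic distribution-function inequality $|A(h)|\leq \bigl(C\|f\|_p/(h-k)\bigr)^{2^*}|A(k)|^{\beta}$ before specializing levels, whereas the paper directly bounds $A_k$ in terms of $A_{k-1}$ by interleaving Sobolev, the energy identity, and H\"older in a single chain; the two computations are equivalent.
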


    \begin{proof}
         The proof is a variation on the theme seen in Lemma \ref{lem: De Giorgi pointwise bound}. Fix $M>0$ and define for each $k \in \mathbb{N}$
         \begin{align*}
              M_k & := M(1-2^{-k})\\
              v_k & := (v-M_k)_+\\
              A_k & := |\{v_k>0\}|=|\{v>M_k\}|.
         \end{align*}
         Since
         \begin{equation*}
              |\{ (v-M)_+>0\}|=\lim\limits_{k\to+\infty} A_k	
         \end{equation*}
         we will conclude that $v\leq M$ a.e. in $\mathcal{B}^\epsilon_r$ by showing $A_k$ goes to zero. As before, recall that anywhere where $w_k>0$ we also have $v_{k-1}>2^{-k}M$, so by dividing and multiplying $v_{k-1}$ by $2^{-k}M$ we get the estimate
         \begin{equation}\label{eqn: Stampacchia Sobolev}
              A_k \leq (2^kM^{-1})^{2^*}\int v_{k-1}^{2^*}\;dx\leq 2^{2^*k}M^{-2^*}C_S\|\nabla v_{k-1}\|_2^{2^*}.
         \end{equation}
         Here we used the Sobolev embedding \eqref{eqn: Sobolev in O_epsilon} for the last inequality. Now, since $v_{k-1}$ vanishes outside $\mathcal{B}^\epsilon_r$ we can use the equation and H\"older's inequality to get 
         \begin{equation*}
              \int_{\mathcal{B}^\epsilon_r} |\nabla v_{k-1}|^2\;dx=\int_{\{v_k>0\}}|v_{k}f|\;dx\leq \|v_{k-1}\|_{2^*}\|f\|_{L^{2d/(d+2)}(A_{k-1})}.
         \end{equation*}
         Using \eqref{eqn: Sobolev in O_epsilon} again and dividing by $\|\nabla v_{k-1}\|_2$ on both sides we have
         \begin{equation*}
              \|\nabla v_{k-1}\|_2\leq C_S\|f\|_{2d/(d+2)}.	
         \end{equation*}
         Since the integral for $g$ is over $A_{k-1}$ we can use H\"older's inequality again to get an extra term,
         \begin{equation}\label{eqn: Stampacchia Energy}
              \|\nabla v_{k-1}\|_2\leq C_S\|g\|_2	\leq C_S\|g\|_p |\{v_{k-1}>0\}|^{\tfrac{d+2}{2d}-\tfrac{1}{p}}=C_S\|f\|_pA_{k-1}^{\tfrac{d+2}{2d}-\tfrac{1}{p}}.
         \end{equation}
         Combining \eqref{eqn: Stampacchia Sobolev} with \eqref{eqn: Stampacchia Energy} we prove the relation
         \begin{equation*}
              A_k\leq 2^{2^*k}M^{-2^*}C_S^{2^*}\|g\|_p^{2^*}A_{k-1}^{2^*\left (\tfrac{d+2}{2d}-\tfrac{1}{p}\right )}	
         \end{equation*}
         Then, we can apply Proposition \ref{prop: sequence} to the sequence $A_k$ with
         \begin{align*}
              \delta=2^*\left (\tfrac{d+2}{2d}-\tfrac{1}{p}\right )-1=\tfrac{1}{d-2}(4-\tfrac{2d}{p})>0
         \end{align*}
         to conclude that $\lim\limits_{k\to+\infty}A_k=0$ whenever $M$ satisfies the inequality
         \begin{align*}
              A_0\leq C_{\mathcal{O}}\|f\|_p^{-2^*\delta^{-1}}.	
         \end{align*}
         As in Lemma \eqref{lem: De Giorgi pointwise bound} this shows  $v\leq M$  for some $M$ for which the \emph{reverse} inequality holds, namely
         \begin{align*}
              M_0^{2^*\delta^{-1}}\leq C_{\mathcal{O}}\|f\|_p^{2^*\delta^{-1}}A_0.
         \end{align*}
         Since $\delta/2^*=\tfrac{1}{d-2}(4-\tfrac{2d}{p})\tfrac{d-2}{2d}=\tfrac{1}{d}(2-\tfrac{d}{p})$, we conclude that 
         \begin{align*}
              M\leq C_{\mathcal{O}}\|g\|_pA_0^{\tfrac{1}{d}(2-\tfrac{d}{p})}
         \end{align*}
        Now, $|A_0|\leq |B_1|r^d$, therefore $A_0^{\delta/2^*}\leq C_n r^{d\delta/2^*}=C_dr^{2-\tfrac{p}{d}}$ and the lemma is proved.
    \end{proof}

    Finally, we put all the pieces together for the full elliptic estimate.

    \begin{proof}[Proof of Theorem \ref{thm: elliptic estimates}]
         For any $\rho<r/2$ we seek to bound
         \begin{equation*}
              \osc \limits_{B_\rho(x_0)} v.	
         \end{equation*}
         In order to do this, let us decompose $v$ as
         \begin{equation*}
              v = v_0+v_1
         \end{equation*}
         where $v_0 = v$ on $(\partial B_\rho(x_0)) \cap \mathcal{O}_\epsilon$ and solves \eqref{eqn: Poisson equation} with $f\equiv 0$. Then
         \begin{equation*}
              \osc \limits_{B_\rho(x_0) } v \leq \osc \limits_{B_\rho(x_0)} v_0+\osc \limits_{B_\rho(x_0)} v_1.	
         \end{equation*}
         Applying Lemma \ref{lem: Stampacchia} with $p=+\infty$
         \begin{equation*}
              \osc \limits_{B_\rho(x_0)} v_1 \leq 2 \sup \limits_{B_\rho(x_0)}\|v_1\|	\leq C\rho^2\|g\|_{_\infty}
         \end{equation*}
         and by Corollary \ref{cor: oscillation} and Lemma \ref{lem: De Giorgi pointwise bound},
         \begin{equation*}
              \osc \limits_{B_\rho(x_0)} v_0 \leq C\|v\|_{L^2}\rho^\alpha
         \end{equation*}
         Therefore
         \begin{equation*}
              \osc \limits_{B_\rho(x_0)} v \leq C\left (\|v\|_{L^2}+\rho^{2-\alpha} \|g\|_\infty \right )\rho^\alpha	
         \end{equation*}
         and the first part of the theorem is proved. Now suppose that $v\geq 0$ in $\mathcal{B}^\epsilon_r$, then if $v_0$ and $v_1$ are as before, it is standard to see in this case that $v_0\geq 0$ everywhere in $\mathcal{B}^\epsilon_r$, so by Corollary \ref{cor: Harnack} 
         \begin{equation*}
              \sup \limits_{\mathcal{B}^\epsilon_{r/2}} v_0\leq  C_0 \inf \limits_{\mathcal{B}^\epsilon_{r/2}}	v_0.
         \end{equation*}
         Therefore
         \begin{equation*}
              \sup \limits_{\mathcal{B}^\epsilon_{r/2}} v  \leq  C_0 \inf \limits_{\mathcal{B}^\epsilon_{r/2}}	v_0+\sup \limits_{\mathcal{B}^\epsilon_{r/2}} v_1	
         \end{equation*}
         Moreover, $\inf \limits_{\mathcal{B}^\epsilon_{r/2}}	v_0\leq \inf \limits_{\mathcal{B}^\epsilon_{r/2}}	v+\|v_1\|_{L^\infty(\mathcal{B}^\epsilon_{r/2})}$, and thus
         \begin{equation*}
              \sup \limits_{\mathcal{B}^\epsilon_{r/2}} v\leq C_0 \inf \limits_{\mathcal{B}^\epsilon_{r/2}}	v + 2\|v_1\|_{L^\infty(\mathcal{B}^\epsilon_{r/2})}	
         \end{equation*}
         and using Lemma \ref{lem: Stampacchia} again we finish the proof.
    \end{proof}

%%%%%%%%%%%%%%%%%%%%%%%%%%%%%%%%%%%%%%%%%%%%%%%%%%%%%%%%%%%%%%%%%%%%%%%%%%%%%%%%%%%%%%%%%%%%%%%%%%%%%%%%%
%%%%%%%%%%%%%%%%%%%%%%%%%%%%%%%%%%%%%%%%%%%%%%%%%%%%%%%%%%%%%%%%%%%%%%%%%%%%%%%%%%%%%%%%%%%%%%%%%%%%%%%%%
%%%%%%%%%%%%%%%%%%%%%%%%%%%%%%%%%%%%%%%%%%%%%%%%%%%%%%%%%%%%%%%%%%%%%%%%%%%%%%%%%%%%%%%%%%%%%%%%%%%%%%%%%
%%%%%%%%%%%%%%%%%%%%%%%%%%%%%%%%%%%%%%%%%%%%%%%%%%%%%%%%%%%%%%%%%%%%%%%%%%%%%%%%%%%%%%%%%%%%%%%%%%%%%%%%%
%%%%%%%%%%%%%%%%%%%%%%%%%%%%%%%%%%%%%%%%%%%%%%%%%%%%%%%%%%%%%%%%%%%%%%%%%%%%%%%%%%%%%%%%%%%%%%%%%%%%%%%%%
%%%%%%%%%%%%%%%%%%%%%%%%%%%%%%%%%%%%%%%%%%%%%%%%%%%%%%%%%%%%%%%%%%%%%%%%%%%%%%%%%%%%%%%%%%%%%%%%%%%%%%%%%
%%%%%%%%%%%%%%%%%%%%%%%%%%%%%%%%%%%%%%%%%%%%%%%%%%%%%%%%%%%%%%%%%%%%%%%%%%%%%%%%%%%%%%%%%%%%%%%%%%%%%%%%%
%%%%%%%%%%%%%%%%%%%%%%%%%%%%%%%%%%%%%%%%%%%%%%%%%%%%%%%%%%%%%%%%%%%%%%%%%%%%%%%%%%%%%%%%%%%%%%%%%%%%%%%%%
\section{Weak free boundary results}\label{sec: weak fb results}

    We are now ready to analyze the behavior of the free boundary $\partialom\{p^\epsilon>0\}$ for the auxiliary problem \eqref{eqn: epsilon obstacle problem} introduced in Section \ref{subsec: strategy}. Observe that if $p^\epsilon$ was uniformly $C^1$ in $\{p^\epsilon>0\}$ and had a non-zero derivative on $\partialom \{p^\epsilon>0\}$ then the implicit function theorem would say that $\partialom \{ p^\epsilon>0\}$ is a $C^1$ hypersurface. The problem is that this is never the case for the obstacle problem, since $p^\epsilon$ vanishes quadratically at its free boundary (see Lemma \ref{lem: quadratic upper bound p^ep}). 

    On the other hand, we shall show $p^\epsilon(\cdot,t)$ vanishes not faster than quadratically (Theorem \ref{thm: nondegeneracy}). More concretely, for each fixed $t$ the supremum of $p^\epsilon(\cdot,t)$ in a ball centered at a free boundary point is comparable to the square of the radius of that ball, and most importantly, this holds uniformly in $\epsilon$.  
 
    Although this is far from a $C^1$ estimate on the free boundary, this non-degeneracy estimate is enough to prevent the free boundary from developing many thin fingers as $\epsilon$ goes to zero. This will guarantee the stability of $\{p^\epsilon>0\}$ as $\epsilon\to 0$, that is, if $p^\epsilon$ converges along some subsequence to a function $p$, then $\{ p^\epsilon >0 \}$ converges to  $\{ p > 0 \}$ along that same subsequence (see Lemma \ref{lem: free sets limit}).

    \subsection{Estimates with respect to $x$} 
    First of all, we remark that solutions to the obstacle problem in $\mathcal{O}_\epsilon$ in the sense of Definition \ref{def: weak solution p^epsilon} are continuous in $x$ for each single $t$. This follows by extending a classical theorem in potential theory to $\mathcal{O}_\epsilon$. The proof does not require uniform estimates in $\epsilon$, but using directly the Harnack inequality and Stampacchia maximum principle from Section \ref{sec: elliptic regularity} one may adapt the proof from the Euclidean case. 
    \begin{lem}\label{lem: continuity of obstacle solutions}(see \cite[Theorem 1]{Ca1998})
    	 Let $\mathcal{O}$ be a deterministic domain satisfying Assumption \ref{hyp: regularity} and $p$ a nonnegative function in $H^1(\mathcal{O})$ such that for some $f \in L^\infty(\mathcal{O})$ we have
              \begin{equation*}
                   -\Delta p = f \text{ in } \{ p>0\},\;\;\partial_n p =  0 \text{ on } \overline{\{ p>0\}} \cap \partial\mathcal{O}
              \end{equation*}
         and $-\Delta p \geq f$ in $\mathcal{O}$, $\partial_n p \geq 0$ on $\partial \mathcal{O}$. Then $p$ is a continuous function in $\overline{\mathcal{O}}$.
    \end{lem}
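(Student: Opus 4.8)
The plan is to run the classical potential-theoretic argument of \cite{Ca1998}, replacing every use of the Euclidean Harnack inequality, interior H\"older estimate and Stampacchia $L^\infty$ bound by their perforated-domain analogues from Section \ref{sec: elliptic regularity} (Corollaries \ref{cor: Harnack} and \ref{cor: oscillation}, Lemma \ref{lem: De Giorgi pointwise bound}, Lemma \ref{lem: Stampacchia}); since $\mathcal{O}$ is deterministic and satisfies Assumption \ref{hyp: regularity}, those results apply verbatim (they are the case $\epsilon=1$). First I would dispatch the easy points. On the open set $\{p>0\}$ the function $p$ solves $-\Delta p=f$ with $\partial_n p=0$ on the portion of $\partial\mathcal{O}$ it meets, so $p$ is locally H\"older there by the interior estimate \eqref{eqn: holder}, applied on perforated balls contained in $\{p>0\}$ and splitting off the lower-order term with Lemma \ref{lem: Stampacchia} exactly as in the proof of Theorem \ref{thm: elliptic estimates}. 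On $\mathcal{O}\setminus\overline{\{p>0\}}$ one has $p\equiv 0$. Hence it suffices to show that at every $x_0\in\partial\{p>0\}\cap\overline{\mathcal{O}}$ one has $S(r):=\sup_{B_r(x_0)\cap\mathcal{O}}p\to 0$ as $r\to 0^+$; since $\partial\{p>0\}$ is nowhere dense, every ball $B_r(x_0)$ meets $\mathcal{O}\setminus\overline{\{p>0\}}$, that is, contains points where $p=0$.

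Fix such an $x_0$ and a small $r$, and write $\mathcal{B}_r:=B_r(x_0)\cap\mathcal{O}$. Introduce the solution $h$ of $-\Delta h=f$ in $\mathcal{B}_r$ with $\partial_n h=0$ on $\partial\mathcal{O}\cap B_r$ and $h=p$ on $\partial B_r(x_0)\cap\mathcal{O}$ (well posed by Lax--Milgram in the associated $H_0$ space, as in Section \ref{sec: fbp in stationary ergodic domains}), and let $h_0$ be its harmonic part. Then $h_0\ge 0$ by the maximum principle (its boundary data $p|_{\partial B_r}$ is nonnegative), while $h-h_0$ solves a Dirichlet problem with right-hand side $f$, so $\|h-h_0\|_{L^\infty(\mathcal{B}_r)}\le Cr^2\|f\|_{L^\infty}$ by Lemma \ref{lem: Stampacchia}. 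The hypotheses $-\Delta p\ge f=-\Delta h$, $\partial_n p\ge 0=\partial_n h$ and $p=h$ on $\partial B_r(x_0)\cap\mathcal{O}$ give $p\ge h$ in $\mathcal{B}_r$ by the mixed comparison principle; for the reverse inequality one tests the equation for $p-h$ against $w:=(p-h-Cr^2\|f\|_{L^\infty})_+$, which a short truncation argument shows is supported (modulo a capacity-null set) in $\{p>0\}$ where $p-h$ is harmonic, and which has zero trace on $\partial B_r(x_0)\cap\mathcal{O}$, hence $w\equiv 0$ and $p\le h+Cr^2\|f\|_{L^\infty}\le h_0+Cr^2\|f\|_{L^\infty}$. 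Altogether $|p-h_0|\le Cr^2\|f\|_{L^\infty}$ in $\mathcal{B}_r$.

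To finish, choose any $y_0\in B_{c_1 r}(x_0)\cap(\mathcal{O}\setminus\overline{\{p>0\}})$, with $c_1$ the universal constant of Corollary \ref{cor: Harnack}; such $y_0$ exists by the density remark, and $p(y_0)=0$, so $h_0(y_0)\le Cr^2\|f\|_{L^\infty}$ and therefore $\inf_{B_{c_1 r}(x_0)\cap\mathcal{O}}h_0\le Cr^2\|f\|_{L^\infty}$. Since $h_0\ge 0$ is harmonic with zero Neumann data, Corollary \ref{cor: Harnack} yields $\sup_{B_{r/2}(x_0)\cap\mathcal{O}}h_0\le C\inf_{B_{c_1 r}(x_0)\cap\mathcal{O}}h_0\le Cr^2\|f\|_{L^\infty}$, whence $S(r/2)\le\sup_{B_{r/2}(x_0)\cap\mathcal{O}}h_0+Cr^2\|f\|_{L^\infty}\le Cr^2\|f\|_{L^\infty}\to 0$ as $r\to 0$. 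Thus $p$ extends continuously (indeed with quadratic modulus) across $x_0$ with value $0$, which proves the lemma.

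I expect the main difficulty to be bookkeeping near the obstacle boundary $\partial\mathcal{O}$ rather than anything conceptual: the free boundary $\partial\{p>0\}$ may touch $\partial\mathcal{O}$, and the Neumann condition is imposed only on $\overline{\{p>0\}}\cap\partial\mathcal{O}$, so each comparison and maximum-principle step must be run for the mixed Dirichlet--Neumann problem in a perforated ball. This is done exactly as in Section \ref{sec: elliptic regularity}: for $r$ small, $B_r(x_0)$ meets at most one of the uniformly separated Lipschitz obstacles $P_k$ of Assumption \ref{hyp: regularity}, so after a bi-Lipschitz flattening and an even reflection the Neumann face disappears and one reduces to interior statements covered by \eqref{eqn: holder}, \eqref{eqn: harnack}, Corollary \ref{cor: oscillation} and Lemma \ref{lem: Stampacchia}; one has only to note that the weak maximum principle still propagates across the Neumann face since the relevant normal derivative is nonnegative there. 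The truncation step giving $p\le h_0+Cr^2\|f\|_{L^\infty}$ is the other place needing a little care, but it is routine once one uses that $h,h_0$ are quasi-continuous and that $\text{supp}(-\Delta p-f)\subset\{p=0\}$.
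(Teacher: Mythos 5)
Your plan --- split $p$ near a free boundary point into a nonnegative ``harmonic'' part $h_0$ plus an $O(r^2\|f\|_\infty)$ error via Lemma \ref{lem: Stampacchia}, sandwich $p$ between them by comparison and truncation, and then apply the perforated Harnack inequality (Corollary \ref{cor: Harnack}) --- is exactly the Caffarelli argument that the paper's terse citation of \cite{Ca1998} points at, and you have the perforated-domain substitutes for each Euclidean ingredient in hand, including the reflection bookkeeping near $\partial\mathcal{O}$. But there is a genuine gap at the one step where the real subtlety of \cite[Theorem 1]{Ca1998} lives.

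You assert that, since $\partial\{p>0\}$ is nowhere dense, every ball $B_r(x_0)$ meets $\mathcal{O}\setminus\overline{\{p>0\}}$, and then pick $y_0$ there with $p(y_0)=0$. That implication is false: nowhere density only says $\partial\{p>0\}$ has empty interior, which is vacuous here since $B_r(x_0)$ already meets $\{p>0\}$ by virtue of $x_0$ being a boundary point; it does not rule out $B_r(x_0)\cap\mathcal{O}\subset\overline{\{p>0\}}$, with the nonnegative measure $\mu:=-\Delta p-f$ charged on a nowhere-dense, measure-zero coincidence set near $x_0$. In that situation your $y_0$ simply does not exist and the Harnack step has nothing to grip. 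The obvious repair, taking $y_0=x_0$, cannot be made yet: assigning $p$ a pointwise value at $x_0$ is precisely what the lemma must deliver. In \cite{Ca1998} this circularity is broken by first observing that the quasi-continuous representative of $p$ is lower semicontinuous --- locally $p=h_0+G\ast f+G\ast\mu$ with $\mu\ge 0$, and $G\ast\mu\ge 0$ is a nondecreasing limit of continuous potentials, while $h_0$ and $G\ast f$ are continuous (by your own Theorem \ref{thm: elliptic estimates}). Once this is in place $\{p>0\}$ is genuinely open, $p(x_0)=0$ for every $x_0\in\partial\{p>0\}$, and your chain $h\le p$ together with $\|h-h_0\|_\infty\le Cr^2\|f\|_\infty$ gives $h_0(x_0)\le Cr^2\|f\|_\infty$ directly, after which the Harnack step closes the argument exactly as you wrote it (and in fact yields the quadratic modulus that the paper records separately as Lemma \ref{lem: quadratic upper bound p^ep}). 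Insert that preliminary semicontinuity observation and the proof stands; without it, it does not self-bootstrap.
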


    From this qualitative continuity lemma, we may use Theorem \ref{thm: elliptic estimates} (again) to show $p^\epsilon$ vanishes quadratically at its free boundary, arguing as in the standard obstacle problem \cite[Theorem 2]{Ca1998}. %From here we conclude that solutions to \eqref{eqn: epsilon obstacle problem} in the sense of Definition \ref{def: weak solution p^epsilon}, will solve a boundary value problem (for each $t$ fixed) in all of $\mathcal{O}_\epsilon$ with right hand side in $L^\infty$, giving us a H\"older modulus of continuity in space that is independent of $\epsilon$.

    \begin{lem}\label{lem: quadratic upper bound p^ep}
         Suppose  $p^\e$ is a solution in the sense of Definition \ref{def: weak solution p^epsilon}.	 Then, for any $\epsilon,t>0$ we have
         \begin{equation*}
              -\Delta p^\e = f^\e \mathbb{I}_{\{p^\e>0\}} \text{ in } \mathcal{O}_\epsilon\times \{t\},\;\;\partial_n p^\epsilon = 0\;\;\text{ on } \partial \mathcal{O}_\epsilon \times \{t\}.
         \end{equation*}
         In particular, $p^\epsilon(\cdot,t)$ is H\"older continuous uniformly in $\epsilon$, and for any $x \in \partialom \{p^\epsilon>0\}$ we have
         \begin{equation*}
              \sup \limits_{\mathcal{B}^\epsilon_r(x)\times \{t\}} p^\epsilon 	\leq \; C\|f_\epsilon(\cdot,t)\|_\infty\; r^{2}.
         \end{equation*}
         where $C$ is a universal constant.
    \end{lem}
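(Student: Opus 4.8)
The plan is to follow the classical argument for the obstacle problem (as in Caffarelli's lecture notes \cite{Ca1998}), adapting each step to the perforated domain $\mathcal{O}_\epsilon$ and using the uniform elliptic estimates of Theorem \ref{thm: elliptic estimates} in place of the usual Euclidean ones, so that all constants end up universal. Throughout, $t$ is fixed, so we write $p=p^\epsilon(\cdot,t)$, $f=f^\epsilon(\cdot,t)$, and we work in $\mathcal{O}_\epsilon$ with the Neumann condition on $\partial\mathcal{O}_\epsilon$.

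\textbf{Step 1: the PDE for $p^\epsilon(\cdot,t)$.} First I would show that $p$ solves $-\Delta p = f\,\mathbb{I}_{\{p>0\}}$ weakly in $\mathcal{O}_\epsilon$ with zero Neumann data on $\partial\mathcal{O}_\epsilon$, and that $-\Delta p \ge f\,\mathbb{I}_{\{p>0\}}$ (in fact $-\Delta p \ge f$ is not quite what we want, but $-\Delta p \ge 0$ on the contact set suffices since $f$ there is irrelevant — one states the variational inequality and tests with $\phi = p \pm \psi$ for $\psi\ge 0$ supported where $p>0$, and with $\phi = p + \psi$, $\psi \ge 0$, everywhere). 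This is the standard translation of the variational inequality in Definition \ref{def: weak solution p^epsilon} into a complementarity system; the perforated geometry plays no role here beyond the already-established existence of weak solutions. Combined with Lemma \ref{lem: continuity of obstacle solutions} (whose hypotheses are exactly the system just derived, with $\mathcal{O}=\mathcal{O}_\epsilon(\omega)$, a deterministic domain satisfying Assumption \ref{hyp: regularity}), this gives that $p^\epsilon(\cdot,t)$ is continuous on $\overline{\mathcal{O}_\epsilon}$.

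\textbf{Step 2: uniform H\"older continuity.} With the equation $-\Delta p = f\,\mathbb{I}_{\{p>0\}}$ in hand and $\|f\,\mathbb{I}_{\{p>0\}}\|_\infty \le \|f\|_\infty < \infty$ (bounded uniformly in $\epsilon$ by $1+T$ from \eqref{eqn: definition of f^epsilon}), I would apply Theorem \ref{thm: elliptic estimates}, in particular the interior estimate \eqref{eqn: holder}, on balls $\mathcal{B}^\epsilon_r$, to conclude that $p^\epsilon(\cdot,t) \in C^\alpha$ with a norm controlled by $\|p^\epsilon(\cdot,t)\|_{L^2} + \|f\|_\infty$ and exponent $\alpha$ universal; an a priori $L^\infty$ (hence $L^2$) bound on $p^\epsilon$ uniform in $\epsilon$ follows from comparison with a quadratically-growing barrier or simply from the boundedness of the support and $\|f\|_\infty$. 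This yields the claimed uniform-in-$\epsilon$ H\"older continuity.

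\textbf{Step 3: the quadratic growth bound at a free boundary point.} This is the crux. Fix $x_0 \in \partialom\{p^\epsilon>0\}$ and set $S(r) := \sup_{\mathcal{B}^\epsilon_r(x_0)} p^\epsilon(\cdot,t)$. I would show $S(r) \le C\|f\|_\infty r^2$ by a dichotomy/iteration argument. Decompose $p = h + w$ on $\mathcal{B}^\epsilon_{2r}(x_0)$, where $w$ solves $-\Delta w = f\,\mathbb{I}_{\{p>0\}}$ in $\mathcal{B}^\epsilon_{2r}(x_0)$ with zero Neumann data on $\partial\mathcal{O}_\epsilon$ and zero Dirichlet data on $\partialom\mathcal{B}^\epsilon_{2r}(x_0)$, so that $h$ is a nonnegative solution of the homogeneous problem \eqref{eqn: Dirichlet problem in  D}. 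By the Stampacchia estimate (Lemma \ref{lem: Stampacchia}, with $p=\infty$), $\|w\|_{L^\infty(\mathcal{B}^\epsilon_{2r})} \le C\|f\|_\infty r^2$. For $h$: since $x_0$ is a free boundary point, $p(x_0)=0$, hence $h(x_0) \le \|w\|_\infty \le C\|f\|_\infty r^2$; because $h\ge 0$ solves the homogeneous Neumann problem, the Harnack inequality (Corollary \ref{cor: Harnack}, or \eqref{eqn: harnack}) gives $\sup_{\mathcal{B}^\epsilon_{c_1 r}(x_0)} h \le C\,h(x_0) \le C\|f\|_\infty r^2$ — but one must be slightly careful that Harnack compares $\sup$ on $\mathcal{B}^\epsilon_{r/2}$ with $\inf$ on $\mathcal{B}^\epsilon_{c_1 r}$, not with the value at a point; to pass from $h(x_0)$ small to $\sup$ small one iterates on a dyadic sequence of radii or, alternatively, estimates $\sup_{\mathcal{B}^\epsilon_{r}} h$ in terms of $\fint_{\mathcal{B}^\epsilon_{2r}} h$ via Lemma \ref{lem: De Giorgi pointwise bound} and then bounds that average by using that $h$ agrees with $p$ on the boundary sphere and $p$ already vanishes to first order in a measure sense near $x_0$. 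Combining, $S(c_1 r) \le C\|f\|_\infty r^2$, and since $c_1$ is universal, a standard covering/rescaling gives the bound for all small $r$, and for large $r$ it is trivial after adjusting constants.

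\textbf{Main obstacle.} The delicate point is Step 3, specifically propagating the pointwise smallness $h(x_0) = O(\|f\|_\infty r^2)$ to a sup bound $\sup_{\mathcal{B}^\epsilon_{c_1 r}} h = O(\|f\|_\infty r^2)$ using only the interior Harnack inequality available in the perforated domain (which, unlike in $\mathbb{R}^d$, is the only regularity we have — no $C^{1,\alpha}$ estimates, no boundary Harnack). In the Euclidean obstacle problem one often invokes the mean value property or a clean boundary Harnack; here I would instead run the iteration purely through Corollary \ref{cor: Harnack} and Lemma \ref{lem: De Giorgi pointwise bound}, controlling $\fint_{\mathcal{B}^\epsilon_{2r}} p$ at each scale by the non-degeneracy of $p$ being violated would give a contradiction, so that the argument closes. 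Managing the dependence of the geometric constant $c_1$ through the iteration — ensuring it does not degenerate — is the technical heart, but since $c_1$ is universal it survives finitely many (indeed a geometrically bounded number of) steps, which is all that is needed to reach any fixed small radius.
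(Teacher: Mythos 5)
Your overall scheme --- decompose $p$ near a free boundary point into a Poisson piece controlled by Stampacchia's maximum principle and a harmonic piece controlled by Harnack --- is the right idea and is essentially the paper's own argument, but two points need fixing.

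First, the logical order is circular. Testing the variational inequality with $\phi=p\pm\psi$, as you describe, yields only the complementarity system $-\Delta p = f$ in $\{p>0\}$ and $-\Delta p\geq f$ on $\mathcal{O}_\epsilon$; these are precisely the hypotheses of Lemma \ref{lem: continuity of obstacle solutions}, so they do give continuity of $p$. They do \emph{not} give the full equation $-\Delta p = f\,\mathbb{I}_{\{p>0\}}$, because nothing in the variational inequality forbids $-\Delta p$ from carrying a nonnegative singular measure supported on $\partialom\{p>0\}$. Your Step 3 nonetheless solves $-\Delta w = f\,\mathbb{I}_{\{p>0\}}$ with zero Dirichlet data on $\partialom\mathcal{B}^\epsilon_{2r}(x_0)$ and asserts that $h=p-w$ is harmonic --- that is true only if the full PDE already holds, which the paper establishes \emph{after}, and because of, the quadratic growth. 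The paper's decomposition sidesteps the issue by using only the complementarity conditions: $p_1$ solves $-\Delta p_1=f^\epsilon$ (the whole right hand side, no indicator) in $\mathcal{B}^\epsilon_r(x)$ with boundary data $p^\epsilon$ on the sphere and zero Neumann data, so that $p_1$ is a genuine solution to which Stampacchia and Harnack apply after the shift $p_1 + C\|f^\epsilon\|_\infty r^2\geq 0$; the remainder $p_2=p^\epsilon-p_1$ is nonnegative (since $-\Delta p_2\geq 0$ by the variational inequality), vanishes on the sphere, and is harmonic inside $\{p^\epsilon>0\}$, so the maximum principle restricted to $\{p^\epsilon>0\}$ bounds $p_2$ by $\sup(-p_1)\leq C\|f^\epsilon\|_\infty r^2$ on $\partialom\{p^\epsilon>0\}$. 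That gives the quadratic growth from the complementarity system alone; the PDE, and hence the H\"older estimate you use in Step 2, follow only then.

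Second, the ``main obstacle'' you flag --- propagating $h(x_0)\leq C\|f\|_\infty r^2$ to a supremum bound via an iteration over dyadic scales --- does not exist. Both Corollary \ref{cor: Harnack} and \eqref{eqn: harnack} take the infimum over a ball centered at $x_0$; since $x_0$ lies in that ball one has $\inf_{\mathcal{B}^\epsilon_{c_1 r}(x_0)} h \leq h(x_0)$, and a single application of Harnack gives $\sup_{\mathcal{B}^\epsilon_{r/2}(x_0)} h \leq C\,h(x_0)\leq C\|f\|_\infty r^2$, no iteration required. The paper concludes in exactly this way: once the PDE is established it applies \eqref{eqn: harnack} to $p^\epsilon\geq 0$ and notes that $p^\epsilon(x)=0$ forces $\inf p^\epsilon=0$, from which the claimed bound for all $r>0$ follows at one stroke.
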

	
    \begin{proof}
	     Fix $t$ (and we will simply write $p^\epsilon$ instead of $p^\epsilon(\cdot,t)$), and let $x \in \partialom \{p^\epsilon>0\}$ and $r>0$ also be fixed. Recall that $-\Delta p^\epsilon = f^\epsilon$ in the interior of $\{ p^\epsilon >0 \}$ and that in general
	     \begin{equation*}
	          -\Delta p^\epsilon \geq f^\epsilon \; \text{ in } \mathcal{O}_\epsilon,\;\;\partial_n p^\epsilon \geq 0 \;\text{ on } \partial \mathcal{O}_\epsilon.
	     \end{equation*}
	     Since we already have the validity of the equation away $ \partialom \{ p^\epsilon(\cdot,t)>0\}$ we only need to focus on $x$ near $\partialom \{ p^\epsilon(\cdot,t)>0\}$. In particular, we may assume without without loss of generality that $f^\epsilon(\cdot,t)\leq 0$ in $\mathcal{B}^\epsilon_r(x)$, once we have obtained the equation in this case, the bound on $\sup p^\epsilon$ would follow for all other $r$. 
	
	     Then, we decompose $p^\epsilon$ as $p^\epsilon=p^\epsilon_1+p^\epsilon_2$, where 
	     \begin{align*}
              	-\Delta p^\epsilon_1=f^\epsilon & \text{ in } \mathcal{B}^\epsilon_r(x),\;\;\partial_n p^\epsilon_1 = 0 \;\text{ on } \partial \mathcal{O}_\epsilon,\;\; p^\epsilon_1 = p^\epsilon \;  \text{ on } \partialom \mathcal{B}^\epsilon_r(x), \text{ and }\\
              	-\Delta p^\epsilon_2\geq 0 & \text{ in } \mathcal{B}^\epsilon_r(x),\;\;\partial_n p^\epsilon_2 \geq 0\; \text{ on } \partial \mathcal{O}_\epsilon,\;\; p^\epsilon_2 = 0 \;\; \text{ on } \partialom \mathcal{B}^\epsilon_r(x).	     	  
 	     \end{align*}
	     By the maximum principle and the fact that $f^\epsilon\leq 0$ we see that $p^\epsilon_1 \leq p^\epsilon$ in $\mathcal{B}^\epsilon_r$. Moreover, since $p^\epsilon_1 \geq 0$ in $\partial B^\epsilon_r$ we may use Lemma \ref{lem: Stampacchia} to obtain an opposite bound,
	     \begin{equation*}
	         p^\epsilon_1 \geq -C\|f^\epsilon\|_\infty r^2
	     \end{equation*}
	     for some universal constant $C$. In this case, the function $v = p^\epsilon+C\|f^\epsilon\|_\infty r^2$ is nonnegative and solves
         \begin{equation*}
              -\Delta v = f^\epsilon \text{ in } \mathcal{B}^\epsilon_r(x),\;\;\partial_n v = 0 	\text{ on } \partialom \mathcal{B}^\epsilon_r(x).
         \end{equation*}
         Therefore, applying Harnack's inequality (Theorem \ref{thm: elliptic estimates}) we get
         \begin{equation*}
              \sup \limits_{\mathcal{B}^\epsilon_{r/2}(x)}v \leq C\left (v(x)+\|f^\epsilon\|_{\infty}r^2 \right ).	
         \end{equation*}
  	     From Lemma \ref{lem: continuity of obstacle solutions} we know $p^\epsilon$ is continuous and that $x \in \partialom \{p^\epsilon>0\}$, so we have $p^\epsilon_1(x)\leq p^\epsilon(x)=0$. Combining this with $v(x)= p^\epsilon_1(x)+C\|f^\epsilon\|_\infty r^2$ and the Harnack inequality above we obtain
         \begin{equation*}
         	  \sup \limits_{\mathcal{B}^\epsilon_{r/2}(x)}p^\epsilon_1 \leq C\|f^\epsilon\|r^2 
         \end{equation*}
	     for another (still universal) constant $C>0$. Since $p^\epsilon_2 \geq 0$ and it vanishes on $\partialom \mathcal{B}^\epsilon_r$ the maximum principle says that the supremum of $p^\epsilon_2$ must be achieved in the closure of the set $p^\epsilon=0$, and there we have $p^\epsilon_2\equiv-p^\epsilon_1$ so that $p^\epsilon_2 \leq Cr^2$.
		
         In conclusion, for every $x \in \partialom \{p^\epsilon>0\}$ and any $r>0$ we have 
         \begin{equation*}
              0\leq p^\epsilon \leq Cr^2 \text{ in } \mathcal{B}^\epsilon_r(x).
         \end{equation*}       
         In particular, $p^\epsilon$ vanishes quadratically on $\partialom \{p^\epsilon>0\}$, from where it follows (by a standard test function argument) that $\partial_n p^\epsilon = 0$ on $\partial \mathcal{O}_\epsilon \cap \partialom \{p^\epsilon>0\}$ and that $-\Delta p^\epsilon = \mathbb{I}_{\{p^\epsilon>0\} }f^\epsilon$ in $\mathcal{O}_\epsilon$. 

         In this case, the Harnack inequality from Theorem \ref{thm: elliptic estimates} applied to $p^\epsilon\geq 0$ yields
         \begin{equation*}
              \sup \limits_{\mathcal{B}^\epsilon_{r}(x)}\;p^\epsilon \leq C \left (\inf \limits_{\mathcal{B}^\epsilon_{r}(x)}\;p^\epsilon +r^2\|f^\epsilon(\cdot,t)\|_\infty \right ),	
         \end{equation*}
         and since $p^\epsilon(x,t)=0$, the infimum above must be zero, and the lemma follows.
    \end{proof}

    Now we focus on showing that $p^\epsilon(\cdot,t)$ does not vanish faster than quadratically. 
    \begin{thm}\label{thm: nondegeneracy}
         There is a universal $C_0>0$ such that if $x \in \overline{\{p^\epsilon>0\}}$ and $r\leq \tfrac{1}{2}d(x,\db^\epsilon_0)$, then
         \begin{equation*} 
              \sup \limits_{\mathcal{B}^\epsilon_r(x)}p^\epsilon(\cdot,t) \geq  e^{-C_0(t+1)}r^2	
         \end{equation*}	
    \end{thm}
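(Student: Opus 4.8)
The crux is that the right-hand side $f^\e$ is \emph{not} bounded below by a positive constant, so the classical nondegeneracy for the obstacle problem does not apply directly; the plan is to isolate a genuine positive lower bound for $-f^\e$ on a time-dependent collar and then propagate it by iterating in time, which is what will produce the factor $e^{-C_0(t+1)}$. Throughout write $\Omega_s:=\{p^\e(\cdot,s)>0\}$; since $f^\e(\cdot,s)$ is pointwise nondecreasing in $s$ and the obstacle problem of Definition~\ref{def: weak solution p^epsilon} is monotone in its datum, both $s\mapsto\Omega_s$ and $s\mapsto p^\e(\cdot,s)$ are nondecreasing. By Lemma~\ref{lem: quadratic upper bound p^ep}, for every $y\notin\db^\e_0$ and every $t$ we have $\Delta p^\e(y,t)=\big(1-\int_0^t\mathbb{I}_{\Omega_s}(y)\,ds\big)\mathbb{I}_{\Omega_t}(y)$, using $\mathbb{I}_{(\db^\e_0)^c}(y)=1$. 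Consequently, if $y\in\Omega_t$ but $y\notin\Omega_{t-h}$ for some $h\in(0,1)$, then $\int_0^t\mathbb{I}_{\Omega_s}(y)\,ds=\int_{t-h}^t\mathbb{I}_{\Omega_s}(y)\,ds\le h$, so $\Delta p^\e(y,t)\ge 1-h>0$. I fix $h=\tfrac12$.

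\medskip

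The first step would be a \emph{one-window} nondegeneracy lemma: there is a universal $c_*>0$ such that if $x\in\overline{\Omega_t}$, $\rho\le\tfrac12 d(x,\db^\e_0)$ and $\Delta p^\e(\cdot,t)\ge\tfrac12$ throughout $\mathcal{B}^\e_\rho(x)\cap\Omega_t$, then $\sup_{\mathcal{B}^\e_\rho(x)}p^\e(\cdot,t)\ge c_*\rho^2$. The plan here is to run Caffarelli's nondegeneracy argument for the obstacle problem in the perforated Neumann setting, using the nonnegative, quadratically growing barrier with vanishing Neumann data constructed earlier in this section (the substitute for the Euclidean paraboloid $\tfrac1{2d}|x-x_0|^2$) as the comparison function. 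After reducing to $p^\e(x,t)>0$ via the continuity in Lemma~\ref{lem: quadratic upper bound p^ep}, one subtracts from $p^\e(\cdot,t)$ the barrier centered at $x$, normalized so that its Laplacian equals $\tfrac12$; the difference is subharmonic on $\mathcal{B}^\e_\rho(x)\cap\Omega_t$, is $\le 0$ on $\partialom\Omega_t$, is positive at $x$, and has zero Neumann data on $\partial\mathcal{O}_\e$, so by the maximum principle and a Hopf-type boundary estimate --- removing $\partial\mathcal{O}_\e$ locally by the Lipschitz change of variables and reflection already used in Section~\ref{sec: elliptic regularity}, or invoking Theorem~\ref{thm: elliptic estimates} --- its maximum over the component of $\mathcal{B}^\e_\rho(x)\cap\Omega_t$ containing $x$ is attained on $\partialom\mathcal{B}^\e_\rho(x)$, and the quadratic \emph{lower} growth of the barrier there produces a point at distance $\rho$ from $x$ where $p^\e(\cdot,t)\ge c\rho^2$. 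Large scales are controlled by the barrier's growth at infinity; small scales reduce to the Euclidean half-ball (or the ordinary paraboloid away from $\partial\mathcal{O}_\e$) exactly as in Section~\ref{sec: elliptic regularity}.

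\medskip

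The second step is the iteration in time. Put $\Psi(t):=\inf\{\,r^{-2}\sup_{\mathcal{B}^\e_r(x)}p^\e(\cdot,t)\ :\ x\in\overline{\Omega_t},\ 0<r\le\tfrac12 d(x,\db^\e_0)\,\}$. For $t\le\tfrac12$, every $y\in\mathcal{B}^\e_r(x)\cap\Omega_t$ lies outside $\db^\e_0$, so $\int_0^t\mathbb{I}_{\Omega_s}(y)\,ds\le t\le\tfrac12$ forces $\Delta p^\e(\cdot,t)\ge\tfrac12$ there, and Step~1 with $\rho=r$ gives $\Psi(t)\ge c_*$. For a general $t$ and a competitor $(x,r)$, either $\mathcal{B}^\e_{r/2}(x)\cap\Omega_{t-1/2}=\emptyset$ --- in which case $\Delta p^\e(\cdot,t)\ge\tfrac12$ on $\mathcal{B}^\e_{r/2}(x)\cap\Omega_t$ and Step~1 gives $\sup_{\mathcal{B}^\e_{r/2}(x)}p^\e(\cdot,t)\ge c_*(r/2)^2$ --- or there is $y_0\in B_{r/2}(x)\cap\Omega_{t-1/2}\subset\overline{\Omega_{t-1/2}}$, and then $\tfrac r4\le\tfrac12 d(y_0,\db^\e_0)$ (since $d(y_0,\db^\e_0)\ge d(x,\db^\e_0)-\tfrac r2\ge\tfrac32 r$) and $\mathcal{B}^\e_{r/4}(y_0)\subset\mathcal{B}^\e_r(x)$, so the monotonicity $p^\e(\cdot,t)\ge p^\e(\cdot,t-\tfrac12)$ gives $\sup_{\mathcal{B}^\e_r(x)}p^\e(\cdot,t)\ge\Psi(t-\tfrac12)(r/4)^2$. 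Taking the worse of the two cases yields the recursion $\Psi(t)\ge\min\{\tfrac1{16}\Psi(t-\tfrac12),\ \tfrac14 c_*\}$, which, iterated down to the base $\Psi\ge c_*$ on $(0,\tfrac12]$, gives $\Psi(t)\ge c_*\,16^{-\lceil 2t\rceil}\ge e^{-C_0(t+1)}$ for a universal $C_0$; this is the claim.

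\medskip

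The main obstacle is precisely the lack of a positive lower bound on $f^\e$ near the free boundary: on $\Omega_t\setminus\db^\e_0$ all that is available is $\Delta p^\e(\cdot,t)=1-\int_0^t\mathbb{I}_{\Omega_s}(\cdot)\,ds$, which is bounded below by a positive constant only on the recently wetted collar $\Omega_t\setminus\Omega_{t-1/2}$ --- that is, only at scales and over time intervals of order one. Turning this local, short-time information into nondegeneracy at all scales and all times costs a fixed geometric factor per time window, and that is the origin of the factor $e^{-C_0(t+1)}$. The remaining technical difficulty, carrying out the Caffarelli comparison in a randomly perforated domain with Neumann boundary data, is what ties Step~1 to the special quadratic barrier of this section and to the maximum principle and Harnack inequality of Section~\ref{sec: elliptic regularity}.
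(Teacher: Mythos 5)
Your proposal is correct and follows essentially the same route as the paper: the same quadratic barrier with nonnegative Neumann data (Lemma~\ref{lem: barrier construction}) substitutes for the Euclidean paraboloid in Caffarelli's nondegeneracy comparison, and the same time iteration in half-unit windows — dichotomizing on whether a half-sized ball is entirely freshly wetted or meets the older positivity set — produces the exponential factor $e^{-C_0(t+1)}$. The only differences are presentational (you track the infimum $\Psi(t)$ directly and work with $p^\e$ itself rather than the increment $h=p^\e(\cdot,t)-p^\e(\cdot,t_{k-1})$, but on the freshly-wetted ball these coincide), plus two harmless imprecisions in Step 1: the barrier of Lemma~\ref{lem: barrier construction} satisfies $\partial_n v\ge 0$ rather than $=0$, and it is a supersolution with $\Delta v\le 1$ rather than a function whose Laplacian is exactly normalized; since $\partial_n p^\e=0$ this still yields $\partial_n(p^\e-\tfrac12 v)\le 0$ and $\Delta(p^\e-\tfrac12 v)\ge 0$, which is all the maximum principle requires.
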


    The argument in \cite{Ca1998}, which is later used in \cite{KM09} fails under the presence of the random domain $\mathcal{O}_\epsilon$, as it relies on the fact that the function $\tfrac{1}{2n}|x|^2$ is a supersolution and thus can be used as a barrier. This is no longer the case for our current problem case since $\tfrac{1}{2n}|x|^2$ does not always have the right Neumann boundary data on $\partial \mathcal{O}_\epsilon$.

    Therefore, we need to come up with a replacement, which one may think of as a corrected paraboloid for the domain $\mathcal{O}_\epsilon$. The  construction of such barrier is the the content of the following key lemma.

    \begin{lem}\label{lem: barrier construction}
         There is a universal constant $a$ such that $\pal$  for any $\epsilon$ and any $x_0 \in \mathcal{O}_\epsilon$ there is a continuous function $v: \mathcal{O}_\epsilon \to \mathbb{R}$, with $v \in H^1_{loc}(\mathcal{O}_\epsilon)$ and such that (in the $H^1$ sense)
         \begin{equation*}
              \left\{  \begin{array}{rll}
              -\Delta v & \geq -1 & \text{ in } \mathcal{O}_\epsilon(\omega)\\
              \partial_n v & \geq 0 & \text{ on } \partial \mathcal{O}_\epsilon(\omega)\\
              v & \geq ar^2	& \text{ in } \mathcal{O}_\epsilon \setminus \mathcal{B}^\epsilon_r(x_0) \text{ for any } r>0 \\
              v & = 0 & \text{ for } x=x_0.
              \end{array}	\right.
         \end{equation*}
    \end{lem}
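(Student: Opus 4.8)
By the scaling $x\mapsto x/\epsilon$ the problem for $\mathcal{O}_\epsilon=\epsilon\mathcal{O}$ at a point $x_0=\epsilon y_0$ is equivalent, via $v\mapsto \epsilon^2 v(\cdot/\epsilon)$, to the same problem for $\mathcal{O}=\mathcal{O}_1$ at $y_0$ with an \emph{unchanged} constant $a$ (each of the three inequalities $-\Delta v\ge -1$, $\partial_n v\ge 0$, $v\ge ar^2$ is scale invariant in this sense, and one checks readily that the reformulation ``$v\ge ar^2$ on $\mathcal{O}_\epsilon\setminus\mathcal{B}^\epsilon_r(x_0)$ for all $r$'' is the same as the pointwise bound $v(x)\ge a|x-x_0|^2$). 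So I would fix $\epsilon=1$, write $\mathcal{O}=\mathcal{O}_1$, and let $\{P_k\}$ be the bounded Lipschitz perforations of $\mathcal{O}^c$ given by Assumption \ref{hyp: regularity} (pairwise at distance $\ge d_0$, diameters $\le d_0^{-1}$). The barrier will be a \emph{corrected paraboloid}
\[
 v(x) := a_0|x-x_0|^2 + \sum_k h_k(x),
\]
with $a_0<\tfrac1{2d}$ a small universal constant and each $h_k$ a correction supported in a neighbourhood $N_k$ of $P_k$ of radius $\rho_k:=\tfrac14\min\{d_0,\,d(x_0,P_k)\}$. The point is that $g(x):=a_0|x-x_0|^2$ already satisfies $-\Delta g=-2d a_0>-1$, $g(x_0)=0$ and $g\ge a_0|x-x_0|^2$, so the only defect is the Neumann condition: $\partial_n g=2a_0(x-x_0)\cdot n$ is sign–indefinite on each $\partial P_k$, and $h_k$ is built only to repair this on $\partial P_k$.

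\textbf{The easy bookkeeping.} Since each $N_k$ has radius $\le\tfrac14 d(x_0,P_k)$ it never contains $x_0$, so every $h_k(x_0)=0$ and hence $v(x_0)=g(x_0)=0$; since the $N_k$ are pairwise disjoint and any fixed ball meets only finitely many of them (perforations $d_0$-separated, diameters $\le d_0^{-1}$), the sum $\sum_k h_k$ is locally finite, so $v\in H^1_{\mathrm{loc}}(\mathcal{O})$ and is continuous, and $-\Delta v$ and $\partial_n v$ may be computed term by term. The conormal data that $h_k$ must absorb on $\partial P_k$ has size $\le 2a_0\sup_{\partial P_k}|x-x_0|\lesssim a_0\,d(x_0,P_k)$ (and $\lesssim a_0 d_0^{-1}$, a universal constant, when $P_k$ is within $d_0$ of $x_0$). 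Solving for $h_k$ as a mixed boundary value problem on $N_k\setminus P_k$ and invoking, after rescaling $N_k$ to unit size, the uniform Hölder bound, the Harnack inequality and the Stampacchia estimate of Theorem \ref{thm: elliptic estimates} (and Lemma \ref{lem: Stampacchia}), one obtains $h_k$ with $\|h_k\|_{L^\infty}\lesssim a_0\bigl(1+d(x_0,P_k)\bigr)$, the bound being \emph{universal} and independent of $k$. Comparing with $g\sim a_0\,d(x_0,P_k)^2$ on $N_k$ then gives $v\ge g+\sum h_k\ge\tfrac12 g$ at every point lying beyond a universal distance from $x_0$, while at points closer to $x_0$ only the finitely many $h_k$ supported near $x_0$ contribute and each of these is small, so the bound $v\ge \tfrac12 a_0|x-x_0|^2$ persists down to $r\to0$; thus one takes $a:=\tfrac12 a_0$.

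\textbf{The main obstacle.} The genuinely delicate requirement is $-\Delta v\ge -1$ in the weak sense, because this forbids \emph{any} positive singular part of $\Delta v$, i.e.\ any convex kink of $v$. In particular $h_k$ \emph{cannot} be taken to be a nonnegative superharmonic bump truncated to $N_k$: such a function always produces a convex kink (a jump up in the conormal derivative) along $\partial N_k$, which would make $-\Delta v$ fail the bound. Instead $h_k$ has to be engineered so that it reaches $0$ along $\partial N_k$ \emph{from below}, so that the kink there is concave and contributes only a (harmless) negative singular part, while still carrying the prescribed bounded conormal data on $\partial P_k$; the available slack $1-2d a_0>0$ in the Laplacian is exactly what permits $\Delta h_k$ to be a bounded positive function inside $N_k$ rather than forcing $h_k$ to be superharmonic. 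Producing this $h_k$ — vanishing outside $N_k$ with a concave conormal jump on $\partial N_k$, prescribed sign-indefinite but bounded conormal data on $\partial P_k$, nonnegative Neumann sign for $g+h_k$ on $\partial P_k$, and the $\|h_k\|_\infty$ control above, all uniformly in $k$ — is where the elliptic theory of Section \ref{sec: elliptic regularity} is really used: one writes $h_k$ as the solution on $N_k\setminus P_k$ of a problem with a bounded right-hand side and mixed Dirichlet/Neumann data, bounds it by the Hölder and Harnack estimates after rescaling, and checks the sign of the conormal jump across $\partial N_k$ directly. Assembling the $h_k$, verifying term by term that $-\Delta v\ge -1$, $\partial_n v\ge 0$ on $\partial\mathcal{O}$, $v(x_0)=0$ and $v\ge a|x-x_0|^2$, and collecting the universal constants into a single $a$, completes the proof. (An alternative route, which I would fall back on if the direct gluing proves awkward, is to solve the Dirichlet problem \eqref{eqn: Dirichlet problem in  D} with $f\equiv -1$ and boundary data $a_0|x-x_0|^2$ on $\partialom B_R(x_0)$, use the uniform estimates of Theorem \ref{thm: elliptic estimates} — crucially independent of $R$ — to pass to the limit $R\to\infty$, and then verify the same four properties for the limit; here the main point again is controlling the behaviour near $x_0$.)
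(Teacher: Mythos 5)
Your proposal takes a genuinely different route from the paper's. The paper builds its barrier by solving a sequence of \emph{obstacle problems} in increasingly large balls $D_R$ with prescribed quadratic Dirichlet data on an interior lattice of small balls $B_{l/4}(y_z)$ (Proposition \ref{prop: barrier nodes}, Lemma \ref{lem: barrier construction stage 1}), then constructs a separate paraboloid patch near $x_0$ (Propositions \ref{prop: paraboloid is a supersolution in a small ball}, \ref{prop: ring extension}), and finally glues the two by taking a \emph{minimum of supersolutions}. You instead propose a globally-defined corrected paraboloid $v = a_0|x-x_0|^2 + \sum_k h_k$ with additive local corrections. The latter is conceptually more direct, but there are two concrete gaps.

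First, and most serious, is the point you yourself flag as ``the main obstacle'': ensuring that the conormal jump of $h_k$ across $\partial N_k$ is concave, i.e.\ that $\partial_\nu h_k \ge 0$ \emph{pointwise} on $\partial N_k$ from inside. You cannot prescribe this; it is determined by the mixed problem you solve for $h_k$. With Dirichlet data $h_k=0$ on $\partial N_k$ and Neumann data $-\partial_n g = -2a_0(x-x_0)\cdot n$ on $\partial P_k$, the divergence theorem only gives that the \emph{average} of $\partial_\nu h_k$ over $\partial N_k$ is nonnegative. Because $\partial_n g$ is sign-indefinite on $\partial P_k$ (the sign depends on whether the outward normal of $\mathcal{O}$ points toward or away from $x_0$), $h_k$ can become positive near part of $\partial N_k$, and then the Hopf lemma gives $\partial_\nu h_k < 0$ there, which puts a \emph{convex} kink into $v$ and breaks $-\Delta v \ge -1$. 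Choosing $\Delta h_k$ positive (within the slack $1-2da_0$) makes $h_k$ subharmonic but, with Neumann rather than Dirichlet data on $\partial P_k$, this does not force $h_k\le 0$; the maximum can sit on $\partial P_k$. The paper sidesteps this entirely by gluing with $\min$: the minimum of two supersolutions is always a supersolution, and one never has to check the sign of a kink produced by an additive correction. Your argument needs either a $\min$-type gluing or a genuinely new mechanism to enforce the kink sign; as written, the step ``checks the sign of the conormal jump directly'' is exactly where the proof would fail.

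Second, your control of $v$ near $x_0$ is not secured. You take $\rho_k=\tfrac14\min\{d_0,d(x_0,P_k)\}$, so when some perforation $P_{k_0}$ lies at small distance $\delta$ from $x_0$, the set $N_{k_0}\setminus P_{k_0}$ is a thin shell of width $\delta/4$ around an obstacle of fixed diameter $\lesssim d_0^{-1}$. The elliptic estimates of Section \ref{sec: elliptic regularity} are universal in balls $\mathcal{B}^\epsilon_r$, not in thin shells; rescaling $N_{k_0}$ to unit width blows up the obstacle, so the constants are not uniform as $\delta\to 0$. Consequently the claimed bound $\|h_{k_0}\|_\infty\lesssim a_0(1+d(x_0,P_{k_0}))$ and the inference ``the bound $v\ge \tfrac12 a_0|x-x_0|^2$ persists down to $r\to 0$'' are not justified, and the lower bound near $x_0$ (which must hold for \emph{every} $x_0\in\mathcal{O}_\epsilon$, including points arbitrarily close to $\partial\mathcal{O}_\epsilon$) is exactly where the quantifier ``universal constant $a$'' is at risk. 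The paper handles the near-$x_0$ regime with the separate paraboloid patch of Propositions \ref{prop: paraboloid is a supersolution in a small ball} and \ref{prop: ring extension}, pinned to $v(x_0)=0$ by construction, and this is the mechanism your sketch is missing. The same worry applies to your fallback route (Dirichlet problem in $B_R(x_0)$ and $R\to\infty$): without an obstacle constraint there is no reason for the limit to be nonnegative, to vanish at $x_0$, or to grow like $r^2$ near $x_0$, which is why the paper instead solves an \emph{obstacle} problem in $D_R$.
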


    \begin{rem} The construction of $v$ is not so straightforward and will take several preliminary lemmas. It is worth remarking that, one cannot simply invoke known homogenization results \cite{PapVar1979,Zhi1993} and use strong $L^2$ convergence for elliptic equations in perforated domains and borrow a barrier from the limiting equation. This approach will not suffice, since later arguments use strongly both that $v=0$ at $x_0$ and that it grows quadratically away from it, and such a pointwise condition is not stable under perturbations.
    \end{rem}

    The first thing we will need is a basic geometric property of $\mathcal{O}_\epsilon$, which follows from Assumption \ref{hyp: regularity}.

    \begin{prop}\label{prop: barrier nodes}
         There are constants $L>l>0$ such that$\pal$ we have
         \begin{equation*}
              \text{ there exists } y_z \in \mathcal{O}_1 \text{ s.t. } B_l(y_z) \subset \mathcal{O}_1 \cap Q_L(z)\;\;\forall\;z\in\mathbb{Z}^d,
         \end{equation*}
         where $Q_L(z)$ denotes the cube $\{ x \in \mathbb{R}^d \mid |x-Lz|_\infty \leq L\}$. 
    \end{prop}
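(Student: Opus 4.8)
The plan is to prove the slightly cleaner statement that, for suitable universal constants $l<L$, the cube $Q'_z:=\{x\mid |x-Lz|_\infty\le L/2\}$ (concentric with $Q_L(z)$ but of half the side length) contains a point $y_z$ with $d(y_z,\mathcal{O}_1^c)\ge l$, where $\mathcal{O}_1^c=\bigcup_k P_k(\omega)$ is the perforation set. Such a $y_z$ finishes the proof: $B_l(y_z)$ is then disjoint from every $P_k$, hence $B_l(y_z)\subset\mathcal{O}_1$, and since $y_z\in Q'_z$ and $l\le L/2$ one also has $B_l(y_z)\subset Q_L(z)$. So the entire content is the existence of one such ``node'' in each half-cube, and the only information about $\mathcal{O}_1$ that enters is the two quantitative parts of Assumption \ref{hyp: regularity}: the $P_k(\omega)$ are pairwise at distance $\ge d_0$ and each has diameter $<d_0^{-1}$; their minimal smoothness plays no role here.

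First I would fix constants. After replacing $d_0$ with $\min(d_0,1)$ if necessary we may assume $d_0\le 1$; then set $l:=d_0/4$ and $L:=2d_0^{-1}$, which depend only on $d_0$ and $d$ (hence are universal) and satisfy $L>l$ and $l\le L/2$.

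The core step is a proof by contradiction. Suppose that, for some $\omega$ in the probability-one event on which Assumption \ref{hyp: regularity} holds and some $z\in\mathbb{Z}^d$, every $x\in Q'_z$ satisfies $d(x,\mathcal{O}_1^c)<l$, i.e. $Q'_z\subset\bigcup_k P_k^{(l)}$ with $P_k^{(l)}:=\{x\mid d(x,P_k)<l\}$. Since $l<d_0/2$ and $d(P_k,P_j)\ge d_0$ for $k\ne j$, the open sets $P_k^{(l)}$ are pairwise disjoint (if $x\in P_k^{(l)}\cap P_j^{(l)}$ then $d_0\le d(P_k,P_j)\le d(x,P_k)+d(x,P_j)<2l=d_0/2$). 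Thus $\{Q'_z\cap P_k^{(l)}\}_k$ covers the convex, hence connected, cube $Q'_z$ by pairwise disjoint relatively open sets, so exactly one is nonempty: $Q'_z\subset P_{k_0}^{(l)}$ for a single $k_0$. This gives $L\sqrt d=\diam(Q'_z)\le\diam(P_{k_0})+2l<d_0^{-1}+d_0/2$, contradicting $L\sqrt d\ge L=2d_0^{-1}>d_0^{-1}+d_0/2$. Therefore the node $y_z$ exists, proving the proposition.

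I do not expect a genuine obstacle: the argument is elementary geometry plus the standard fact that a connected set is not the disjoint union of two nonempty relatively open pieces. Two small points need attention. First, a naive volume/packing estimate (bound the number of perforations meeting $Q'_z$, then compare the volume of their fattened union to that of the cube) is in fact too lossy to conclude — it fails precisely when the $P_k$ are much smaller than $d_0^{-1}$ — which is why the connectedness argument, using only separation and the diameter bound, is the right tool. Second, one should keep explicit that $L,l$ depend only on $(d_0,d)$, so they are universal in the sense of Definition \ref{def: universal constants}; and the identity $Q'_z\setminus P_{k_0}^{(l)}=\bigcup_{k\ne k_0}(Q'_z\cap P_k^{(l)})$, which makes this complement relatively open and lets the disconnection argument run, uses that the $P_k^{(l)}$ both cover $Q'_z$ and are mutually disjoint.
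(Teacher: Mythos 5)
Your proof is correct, and it takes a genuinely different route from the paper's. The paper argues by cases: if the concentric half-cube $Q_L(z)^*$ lies entirely in $\mathcal{O}(\omega)$ one takes $y_z=Lz$; otherwise one picks a boundary point $x\in\partial P_k(\omega)$ inside the half-cube and invokes minimal smoothness of $P_k$ to produce a ball of universal radius near $x$ lying in $\mathcal{O}(\omega)$. Your argument instead shows directly that the half-cube cannot be covered by the $l$-neighborhoods of the $P_k$: the pairwise separation $d(P_k,P_j)\ge d_0$ makes the fattened sets $P_k^{(l)}$ disjoint relatively open subsets of the connected cube, forcing a single $k_0$ to cover it, and the diameter bound $\diam(P_{k_0})<d_0^{-1}$ then contradicts $\diam(Q'_z)=L\sqrt d\ge 2d_0^{-1}$. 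Two observations on the comparison. First, your argument uses strictly less of Assumption \ref{hyp: regularity}: only the separation constant $d_0$ and the diameter bound, never the Lipschitz/minimally-smooth structure of the $P_k$ — which you note, and which is a genuine simplification; the paper's second case would also need a (straightforward but omitted) check that the ball produced near $x$ still lies inside $Q_L(z)$, whereas your choice $l\le L/2$ handles this automatically. Second, a cosmetic point: the paper takes $L$ to be the smallest \emph{integer} exceeding $2d_0^{-1}$ (presumably with downstream tiling in mind), while you take $L=2d_0^{-1}$; this is immaterial for the proposition itself, and your contradiction inequality $2d_0^{-1}>d_0^{-1}+d_0/2$ survives any enlargement of $L$, so rounding up is harmless if needed later.
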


    \begin{figure}[h]
	     \centering
	     \includegraphics[height=0.35\textwidth]{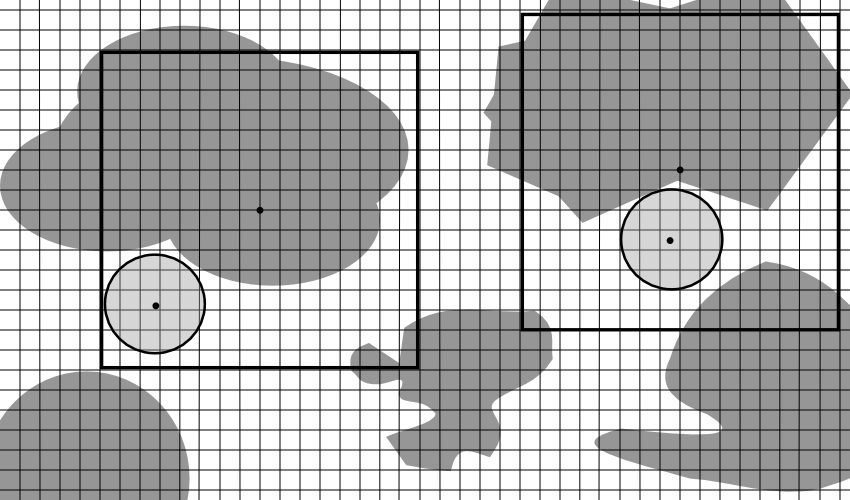}
	     \caption{The geometric configuration considered in Proposition \ref{prop: barrier nodes}.}\label{fig: barrier nodes}
    \end{figure}

    \begin{proof}
         Let $P_k(\omega)$ denote the perforations giving rise to $\mathcal{O}(\omega)$ and  $d_0,\delta,N,M$ the constants related to the regularity of $\mathcal{O}$, as described in Assumption \ref{hyp: regularity} (Section \ref{sec: fbp in stationary ergodic domains}), also let $L$ be the smallest positive integer larger than $2 d_0^{-1}$. For any cube $Q$, we will denote by  $Q^*$ the cube with same center and half the diameter.

         Let us find for each $z\in \mathbb{Z}^d$ Then, for each $z \in \mathbb{Z}^d$ the cube $Q_L(z)$ must be such that $Q_L(z)^*$ lies entirely in $\mathcal{O}(\omega)$. In that case we may take $y_z = Lz$ so that $B_L(y_z) \subset \mathcal{O}(\omega)$. If not, there is some point $x$ and some $k$ such that 
         \begin{equation*}
              x \in (\partial P_k(\omega) ) \cap \hat Q_L(z)	
         \end{equation*}
         In particular, since  $P_k(\omega)$ is a minimally smooth domain, there will be a ball of radius $r_0=r_0(\delta,N,M,d_0)$ in some neighborhood of $x$ which will lie entirely inside $\mathcal{O}(\omega)$, then in this case take $y_z$ as the center of this ball, and let $l=\min (r,L)$, and the proposition is proved.
    \end{proof}

    We shall now use the family of balls $\{ B_l(y_z)\}_{z\in\mathbb{Z}^d}$ from Proposition \ref{prop: barrier nodes}, and a standard Whitney extension \cite{St1970} to construct the barrier far away from $x_0$ (see Figure \ref{fig: barrier profile}).
 
    \begin{lem}\label{lem: barrier construction stage 1}
         There are universal constants $a_0,a_1$, with $a_0 \in (0,1)$ $a_0<a_1$ and $r_0>0$ such that$\pal$  for any $\epsilon$ and any $x_0 \in \mathcal{O}_\epsilon$ there is a continuous function $v: \mathcal{O}_\epsilon \to \mathbb{R}$, $v \in H^1_{loc}(\mathcal{O}_\epsilon)$ and such that
         \begin{equation*}
              \left\{  \begin{array}{rll}
              |\Delta v| & \leq 1 & \text{ in } \mathcal{O}_\epsilon(\omega) \setminus  \mathcal{B}^\epsilon_{r_0\epsilon }(x_0),\\
              \partial_n v & = 0 & \text{ on } \partial \mathcal{O}_\epsilon(\omega),\\
              v & \geq a_0r^2	& \text{ in } \mathcal{O}_\epsilon \setminus \mathcal{B}^\epsilon_r(x_0) \text{ for any } r\geq r_0\epsilon,\\
              v & \leq a_1r^2	& \text{ in } \mathcal{O}_\epsilon \setminus \mathcal{B}^\epsilon_r(x_0) \text{ for any } r\geq r_0\epsilon.
              \end{array}	\right.
         \end{equation*}
         The first two equations are understood in the $H^1$ sense.
    \end{lem}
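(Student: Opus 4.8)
The plan is to first remove $\epsilon$ by scaling: if $v$ on $\mathcal{O}_1$ satisfies the four conclusions with the Laplacian bound $|\Delta v|\le C_0$ for a \emph{universal} $C_0$ in place of $1$, and with some universal $r_0,a_0,a_1$, then $(x,\omega)\mapsto C_0^{-1}\epsilon^2 v(x/\epsilon)$ does the job on $\mathcal{O}_\epsilon$, since $\Delta$ and $\partial_n$ pick up factors $\epsilon^{-2}$ and $\epsilon^{-1}$ while the quadratic bounds pick up $\epsilon^{2}$. So from now on take $\epsilon=1$ and $x_0\in\mathcal{O}_1$.

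The backbone is Proposition \ref{prop: barrier nodes}: in each cube $Q_L(z)$ it produces a ball $B_l(y_z)\subset\mathcal{O}_1$, so $\{B_l(y_z)\}_{z\in\mathbb{Z}^d}$ is a periodic net of ``anchor balls'' of a \emph{fixed} universal size that avoid every perforation. On each $B_{l/2}(y_z)$ I prescribe the $1$-jet at $y_z$ of the model paraboloid $P(x):=|x-x_0|^2$, i.e. the affine function
\begin{equation*}
\ell_z(x):=|y_z-x_0|^2+2(y_z-x_0)\cdot(x-y_z)=P(x)-|x-y_z|^2 .
\end{equation*}
Since the $\ell_z$ are $1$-jets of the single smooth function $P$, Whitney's extension theorem (\cite[Ch.\ VI]{St1970}) yields $\Psi\in C^{1}(\mathbb{R}^d)$, $C^\infty$ off $F:=\bigcup_z\overline{B_{l/2}(y_z)}$, with $\Psi=\ell_z$ on $B_{l/2}(y_z)$. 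The decisive point is that the data on neighbouring anchor balls differ only by the \emph{second-order} oscillation of $P$ over the bounded distance $L$ between those balls --- $O(L^2)$ in value and $O(L)$ in gradient --- so Whitney's derivative estimates give $|D^2\Psi|\le C_0$ off $F$ (and $\Delta\Psi\equiv0$ on $F$) with $C_0$ universal, even though $|\nabla\Psi|$ itself is of order $|x-x_0|$. The identity $\ell_z(x)=P(x)-|x-y_z|^2$ with $|x-y_z|\le CL$ for $x$ in or near the anchor ball nearest to $x$ then yields $\tfrac12|x-x_0|^2\le \Psi(x)\le 2|x-x_0|^2$ once $|x-x_0|\ge r_0$, for a suitable universal $r_0$; this gives the two-sided quadratic comparison away from $B_{r_0}(x_0)$.

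The remaining, and main, task is to bend $\Psi$ near the perforations so that the zero Neumann condition holds without destroying $|\Delta\,\cdot\,|\le C_0$ (up to a universal factor) or the quadratic comparison. Fix a perforation $P_k$ and let $\widehat P_k$ be its $l/4$-collar; by Assumption \ref{hyp: regularity} and Proposition \ref{prop: barrier nodes} the collars $\{\widehat P_k\}_k$ are pairwise disjoint, disjoint from $F$, of uniformly bounded size and uniform Lipschitz geometry, and on $\widehat P_k$ one can write $\Psi=\ell_k+q_k$ with $\ell_k$ affine and $|\nabla q_k|+|\Delta q_k|\le C$. Inside $\widehat P_k\setminus P_k$ I replace $\Psi$ by the solution of the mixed Dirichlet--Neumann problem with datum $\Psi$ on $\partial\widehat P_k$ and $\partial_n(\cdot)=0$ on $\partial P_k$ --- equivalently, flatten $\partial P_k$ by a universal Lipschitz change of variables and reflect, turning the Neumann condition into an interior symmetry and the equation into $-\mathrm{div}(A_k\nabla\,\cdot\,)=$ bounded for a uniformly elliptic $A_k$, a setting in which the machinery of Section \ref{sec: elliptic regularity} and Theorem \ref{thm: elliptic estimates} applies with universal constants. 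The reason this can be done is that the large part of $\Psi$ near $P_k$ sits entirely in the harmonic affine term $\ell_k$, which is invisible to $\Delta$; one must carry out the correction so that only this affine part is modified and the leftover is a universal perturbation of $q_k$, and one must glue the pieces across $\partial\widehat P_k$ matching first-order data so that $\Delta v$ has no spurious surface part (the reflection picture makes this automatic). Performing the surgery simultaneously over all $k$ gives $v$ with $\partial_n v=0$ on $\partial\mathcal{O}_1$, $|\Delta v|\le C_1$ off $B_{r_0}(x_0)$, and the two-sided quadratic bounds off $B_{r_0}(x_0)$; dividing by $C_1$ finishes the proof.

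The hard part is precisely this last surgery. The naive choice $v=|x-x_0|^2$ has gradient of size $|x-x_0|$ near a perforation, so forcing it to meet $\partial P_k$ tangentially across a collar of bounded width a priori costs a Laplacian of size $|x-x_0|$, which is not bounded; any construction that keeps $|\Delta v|\le 1$ away from $x_0$ must therefore confine this large gradient to a harmonic (affine) piece living on a region of \emph{bounded} size, so that the correction enforcing the Neumann condition solves a problem of universal geometry with universally bounded Laplacian. This is exactly what the anchor balls of Proposition \ref{prop: barrier nodes} buy us over the bare paraboloid, and it is also why one cannot simply import a corrector from the homogenised equation: such a corrector is only sublinear, not affine-plus-localised, and cannot be pinned at $x_0$.
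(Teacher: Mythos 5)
Your construction diverges from the paper's at an essential point, and the step where it diverges contains a genuine gap. The paper does \emph{not} start from a global paraboloid and then correct its Neumann data; it defines $v_R$ directly as the solution of an obstacle problem in the punctured ball $D_R$ (obstacle $0$, right hand side $1$, Dirichlet data $|Lz-x_0|^2$ on $\partial B_{l/4}(y_z)$, zero Neumann on $\partial\mathcal{O}_1$). This builds the bounded Laplacian ($\Delta v_R=\mathbb{I}_{\{v_R>0\}}$) and the Neumann condition in simultaneously, by the variational formulation; the quadratic growth is then read off \emph{a posteriori} from the Harnack inequality. The only Whitney-type extension in the paper happens across the anchor spheres $\partial B_{l/4}(y_z)$, where $v_R$ is a classical solution with universally bounded $C^2$ norm, so no gradient jump is introduced.

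The gap in your surgery step: on $\partial P_k$ the Neumann datum of your $\Psi$ is of order $|\nabla\Psi|\sim|c_k-x_0|$, which grows linearly. Any correction $w_k$ cancelling that datum, supported in the collar $\widehat P_k\setminus P_k$, must therefore have $|\nabla w_k|\sim|c_k-x_0|$ near $\partial P_k$ while vanishing on $\partial\widehat P_k$. By boundary regularity (or simply Hopf) its normal derivative on $\partial\widehat P_k$ is again of order $|c_k-x_0|$. Gluing this corrected piece back to $\Psi$ across $\partial\widehat P_k$ then produces a jump of that size in $\partial_\nu v$, i.e.\ a surface measure of density $\sim|c_k-x_0|$ in $\Delta v$; smoothing the jump with a cutoff $\chi_k$ instead produces cross-terms $2\nabla\chi_k\cdot\nabla w_k$ of the same unbounded size. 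Either way $|\Delta v|\leq C$ fails for perforations far from $x_0$. Your remark that ``the reflection picture makes this automatic'' only addresses the Neumann condition at $\partial P_k$, not the $C^1$ match at $\partial\widehat P_k$, and the observation that the large part of $\Psi$ sits in the harmonic affine piece $\ell_k$ does not help: $\ell_k$ itself is what supplies the large Neumann datum, and any harmonic modification of it that kills that datum inside the collar must still be stitched to $\ell_k$ at $\partial\widehat P_k$, where the mismatch in slope is $\sim|c_k-x_0|$. This is precisely the difficulty the paper's obstacle-problem construction is designed to sidestep.
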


    \begin{proof}
         We start by noting that it is enough to build such a barrier for $\epsilon=1$ and an arbitrary point $\epsilon^{-1}x_0 \in \mathcal{O}_1$. For if $v$ satisfies all of the above properties  for $\epsilon=1$ and the point $\epsilon^{-1}x_0$, then the function $\epsilon^2v(x/\epsilon)$ satisfies all the properties listed in the statement of the lemma.

	     Let  $l,L$, $\{y_z\}_z$ and $Q_z :=Q_L(z)$ be as in Proposition \ref{prop: barrier nodes}. The non-overlapping cubes $\{Q_z\}_{z\in\mathbb{Z}^d}$ cover all of $\mathbb{R}^d$, and we have the inclusions
     	 \begin{equation*}
     	      B_l(y_z)\subset  \mathcal{O}_1 \cap  Q_z,\;\;\forall\;z\in\mathbb{Z}^d.	
     	 \end{equation*} 
     	 Given $x \in \mathbb{R}^d$ we let $y_{z_x}$ denote the closest point to $x$ among all the $\{y_z\}_z$.  Take $R>0$ and define $I_{R} := \{ z \in \mathbb{Z}^d  \mid B_l(y_z) \subset \mathcal{B}^1_R(x_0),\;\; |Lz-x_0|\geq 1\}$. Then, we introduce the domain
     	 \begin{align*}
     	      D_{R} & = \mathcal{B}^1_R(x_0) \setminus \union \limits_{z \in I_{R}} B_{l/4}(y_z).
     	 \end{align*}
         Note that $D_{R}$ is non-empty for $R$ large enough. We will  build approximate functions $v_R$ in $D_{R}$ with controlled growth in $R$ to extract a (locally) converging subsequence as $R \to +\infty$. For any large enough $R$ we define $v_R$ in $D_{R}$ as the solution to the following obstacle problem
     	 \begin{equation*}
	          \left\{\begin{array}{rll}
	          v_R & \geq 0 & \text{ in } D_R,\\
     	      \Delta v_R & = 1 & \text{ in } \{ v_R>0\},\\
	          \partial_n v_{R} & = 0 & \text{ on } \partial \mathcal{O}_1 \cup \partial B_R(x_0),\\
	          v_R = |\nabla v_R| & = 0 & \text{ on } \partial \{ v_R>0\},\\
	          v_R & = |Lz-x_0|^2	& \text{ on } \partial B_{l/4}(y_z),\;\; z \in I_{R}.
	          \end{array}	\right.
     	 \end{equation*}
         This is understood in a weak sense modifying Definition \ref{def: weak solution p^epsilon} accordingly (the existence of $v_R$ is done following Appendix \ref{sec: construction of the obstacle problem}). Then, arguing as in Lemma \ref{lem: quadratic upper bound p^ep} one can show that $v_R$ satisfies
         \begin{equation*}
	          \left\{\begin{array}{rll}
    	      \Delta v_R & = \mathbb{I}_{\{v_R>0\} } & \text{ in } D_R,\\
	          \partial_n v_{R} & = 0 & \text{ on } \partial \mathcal{O}_1 \cup \partial B_R(x_0).
	          \end{array}	\right.         	
         \end{equation*}
     	 On the other hand, using a barrier argument, one can show that
         \begin{equation*}
         	  v_R > 0 \text{ in } B_{c_0l}(y_z),\;\;\forall\;z\in I_R
         \end{equation*}
         for some small (universal) $c_0>0$. In this case, when  $z \in I_{R}$, since $v_R$ solves an elliptic equation in a smooth domain with smooth coefficients we have that $v_R$ is in $C^2(B_{c_0l/2}(y_z)\setminus B_{c_0l/4}(y_z))$ and with universally bounded second derivatives (that is, bounded independently of $R$)
	    \begin{figure}[h]
		     \centering
		     \includegraphics[height=0.45\textwidth]{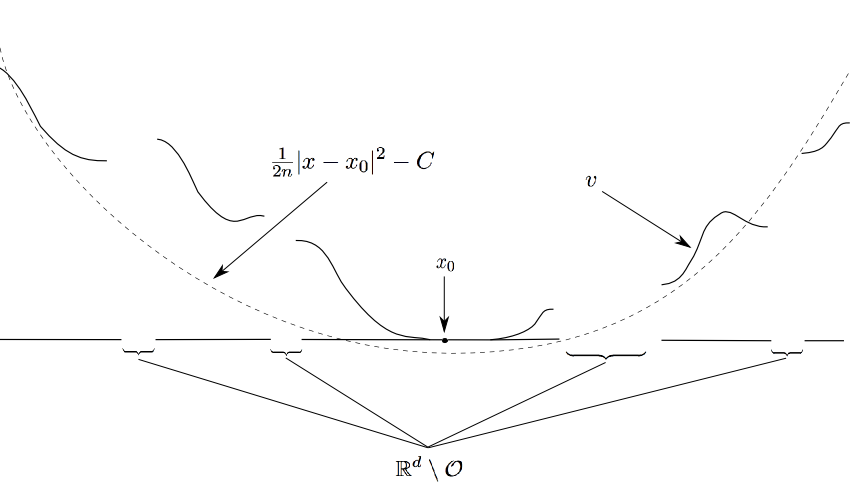}
		     \caption{A schematic description of the barrier $v$ defined in $\mathcal{O}$.}\label{fig: barrier profile}
	    \end{figure}
		Thus, invoking again an extension operator \cite{St1970} we extend $v_R$ it  to a $C^2$ function in all of $B_{l/2}(y_z)$ (for each $z \in I_R$), resulting in a function defined in all of $\mathcal{B}^1_{R}(x_0)$, resembling the one in the figure above. Let us keep calling this extension $v_R$. Then, for some universal $C_1>0$ we have 
     	 \begin{equation*}
     	      \left\{\begin{array}{rll}
     	      |\Delta v_R| & \leq C_1 & \text{ in } \mathcal{B}^1_R(x_0),\\
     	      \partial_n  v_{R} & = 0 & \text{ on } \partial \mathcal{O}_1 \cup \partial B_R(x_0),\\
     	      v_R & = |Lz-x_0|^2	& \text{ somewhere in } B_{l/4}(y_z),\;\;\forall\; z \in I_{R}.
     	      \end{array}\right.	
     	 \end{equation*}
     	 Next, note that $r_1:= \sqrt{n} L$ is such that if $x \in D_R$ and $\mathcal{B}^1_{r_1}(x)\subset D_R$ then we have
     	 \begin{equation*}
              \sup \limits_{\mathcal{B}^1_{r_1}(x)}v_R \geq |Lz_x-x_0|^2,
     	 \end{equation*}
     	 where $z_x \in \mathbb{Z}^d$ is defined as that for which $|Lz-x|$ is the smallest. From here, using the Harnack inequality from Theorem \ref{thm: elliptic estimates} and the definition of $z_x$, we can see that
     	 \begin{equation*}
     		  \inf \limits_{\mathcal{B}^1_{r_1}(x)} v_R \geq C^{-1}(\; \sup \limits_{\mathcal{B}^1_{r_1}(x)}v_R-r_1^2 \; ) \geq C^{-1} ( |Lz_x-x_0|^2-nL^2).
     	 \end{equation*}
     	 Moreover, since $z_x$ was picked so that $|Lz_x-x|\leq L$ we have (by the triangle inequality) that $|Lz_x-x_0|$ is no smaller than $|x-x_0|-L$. In this case,
     	 \begin{equation*}
     	      \inf \limits_{\mathcal{B}^1_{r_1}(x)} v_R  \geq C^{-1}\left ( (|x-x_0|-L)^2 -nL^2 \right )	
     	 \end{equation*}
     	 Thus, if we set $r_0 := 4nL$, then whenever $|x-x_0|\geq r_0$ we have that $(|x-x_0|-L)^2\geq \tfrac{1}{4}|x-x_0|^2$ and $\tfrac{1}{4}|x-x_0|^2-nL^2 \geq \tfrac{1}{16}|x-x_0|^2$. Combining these inequalities with the bound above, we have that
         \begin{equation*}
              v_R(x)  \geq C^{-1}|x-x_0|^2,\;\;\forall\; x\in \mathcal{O}\setminus \mathcal{B}^1_{r_0}(x_0),\;\;|x|\leq R-r_0.
         \end{equation*}
     	 Arguing in a similar manner we can obtain an upper bound on $|v_R|$ for all points in $D_R$, so that
     	 \begin{equation*}
     	      |v_R(x)|\leq C(1+|x-x_0|^2),\;\;\forall\; x \in D_R.
     	 \end{equation*}
     	 for a universal constant $C$. In particular, since this estimate independent of $R$ we have enough compactness to do a Cantor diagonalization argument as $R \to +\infty$. Therefore there is a sequence $R_k\to+\infty$ such that $v_{R_k}$ converges  uniformly on compact subsets of $\mathcal{O}_1$ to some $\tilde v: \mathcal{O}_1 \to\mathbb{R}$, which is continuous and belongs to $H^1_{loc}(\mathcal{O}_1)$. Moreover, for some universal constants $C,C_1,C_2$ all  $>1$  we have
     	 \begin{equation*}
     	      \left\{\begin{array}{rll}
     	      |\Delta \tilde v| & \leq C_1 & \text{ in } \mathcal{O}_1,\\
     	      \partial_n \tilde v  & = 0 & \text{ on } \partial \mathcal{O}_1,\\
     	      \tilde v & \geq C^{-1}|x-x_0|^2 & \text{ in } \mathcal{O} \setminus \mathcal{B}^1_{r_0}(x_0),\\
     	      \tilde v & \leq C_2|x-x_0|^2 & \text{ in } \mathcal{O} \setminus \mathcal{B}^1_{r_0}(x_0).
	          \end{array}\right.	
	     \end{equation*}
	     Setting $a_0:=\max\{1,(C_1C)^{-1}\}$, $a_1:=C_2C^{-1}$ we obtain the lemma with $v:=C_1^{-1}\tilde v$.
	\end{proof}

    The problem of building a barrier now reduces to defining it inside $\mathcal{B}^\epsilon_{\epsilon r_0}(x_0)$. To do this, we first observe there is a small ball around $x_0$ with universal radius where a standard parabola can be used.

    \begin{prop}\label{prop: paraboloid is a supersolution in a small ball} Under Assumption \ref{hyp: regularity} there exists a universal constant $\rho_0$ such that for any $x_0 \in \mathcal{O}$, the function
	     \begin{equation*}
	          v(x)=\tfrac{1}{2n}|x-x_0|^2	
	     \end{equation*}
    	 satisfies $\partial_n v \geq 0 \text{ on } (\partial \mathcal{O}) \cap B_{\rho_0}(x_0)$.
    \end{prop}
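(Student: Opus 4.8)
The plan is to make the statement local and then reduce it to a one‑line computation on a Lipschitz graph. First, fix a universal constant $\rho_0<\tfrac12\min\{d_0,\delta\}$. If $(\partial\mathcal{O})\cap B_{\rho_0}(x_0)$ is empty there is nothing to prove, so assume it is not. Because $d(P_k(\omega),P_j(\omega))\ge d_0$ for $k\ne j$, this set must lie on a single perforation, $(\partial\mathcal{O})\cap B_{\rho_0}(x_0)\subseteq\partial P_k(\omega)$, and since $B_{\rho_0}(x_0)$ is connected, contains $x_0\in\mathcal O$ and meets no other $P_j$, we even have $\mathcal{O}\cap B_{\rho_0}(x_0)=P_k(\omega)^c\cap B_{\rho_0}(x_0)$. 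Fix $x\in(\partial P_k)\cap B_{\rho_0}(x_0)$. By minimal smoothness of $P_k$ with constants $(\delta,N,M)$, in some Cartesian frame the portion of $\partial P_k$ inside $B_\delta(x)\supseteq B_{\rho_0}(x_0)$ is the graph $\{x_d=\phi(x')\}$ of a Lipschitz $\phi$ with $\|\nabla\phi\|_{L^\infty}\le M$, with $P_k=\{x_d<\phi(x')\}$ locally; hence $x=(x',\phi(x'))$ and, because $x_0\in\mathcal{O}$, $x_0=(x_0',x_{0,d})$ with $x_{0,d}>\phi(x_0')$.

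The outer unit normal of $\mathcal{O}$ at $x$ (it points into $P_k$) is $\nu(x)=(1+|\nabla\phi(x')|^2)^{-1/2}(\nabla\phi(x'),-1)$, while $\nabla v(x)=\tfrac1n(x-x_0)$, so
\[
n\sqrt{1+|\nabla\phi(x')|^2}\;\partial_n v(x)=(x'-x_0')\cdot\nabla\phi(x')+\big(x_{0,d}-\phi(x')\big).
\]
Writing $x_{0,d}-\phi(x')=\big(x_{0,d}-\phi(x_0')\big)+\big(\phi(x_0')-\phi(x')\big)$ and $\phi(x_0')-\phi(x')=-\int_0^1\nabla\phi\big(x_0'+t(x'-x_0')\big)\cdot(x'-x_0')\,dt$, the inequality $\partial_n v(x)\ge0$ reduces to
\[
x_{0,d}-\phi(x_0')\ \ge\ (x'-x_0')\cdot\int_0^1\Big(\nabla\phi\big(x_0'+t(x'-x_0')\big)-\nabla\phi(x')\Big)\,dt .
\]

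The main obstacle is precisely this last inequality, and it is the only genuinely nontrivial point: a priori the right‑hand side is controlled only by $2M|x'-x_0'|\le2M|x-x_0|$, so the positive but possibly tiny ``height'' $x_{0,d}-\phi(x_0')$ must be shown to dominate it, which is where the regularity in Assumption~\ref{hyp: regularity} has to be used quantitatively. I expect to handle it by splitting cases: if $d(x_0,\partial\mathcal{O})\ge\rho_0$ then $B_{\rho_0}(x_0)\cap\partial\mathcal{O}=\emptyset$ and the claim is vacuous; otherwise one exploits that the direction from $x_0$ to its nearest point of $\partial\mathcal{O}$ is, up to the slope bound $M$, the normal direction, to obtain a lower bound on $x_{0,d}-\phi(x_0')$ in terms of $|x-x_0|$ that beats the error term once $\rho_0$ is taken universally small (depending on $M$, and on $d_0,\delta,N,d$ only through the reductions above). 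Everything else — the reduction to one perforation, the passage to the graph, and the trivial rescaling $x\mapsto\epsilon x$ that transfers the statement to $\mathcal{O}_\epsilon$ — is routine bookkeeping.
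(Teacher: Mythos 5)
Your local reduction to a Lipschitz graph is fine, but you leave the decisive inequality unproved, and the strategy you sketch for it cannot succeed. You hope to bound $x_{0,d}-\phi(x_0')$ — which, up to a factor $\sqrt{1+M^2}$, is $d(x_0,\partial\mathcal{O})$ — from below in terms of $|x-x_0|$, the distance to a particular boundary point; but the former can be made arbitrarily smaller than the latter by pushing $x_0$ toward $\partial\mathcal{O}$ while keeping $|x-x_0|$ comparable to $\rho_0$. So the "quantitative" lower bound on the height that you need does not exist for $x_0$ near the boundary.

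The difficulty is not an artifact of your route: with a \emph{universal} $\rho_0$ the statement fails already for the nicest perforations, so no amount of cleverness on the same estimate will close the gap. Take $P_k=B_R(0)$ and $x_0=(R+\eta)e_1$ with $\eta>0$ small. For $x\in\partial P_k$ the outward normal of $\mathcal{O}$ is $\nu=-x/R$, and with $\theta$ the angle between $x$ and $e_1$ one computes
\begin{equation*}
(x-x_0)\cdot\nu=-R+(R+\eta)\cos\theta,\qquad |x-x_0|^2=R^2+(R+\eta)^2-2R(R+\eta)\cos\theta,
\end{equation*}
so $(x-x_0)\cdot\nu<0$ exactly when $|x-x_0|^2>\eta(2R+\eta)$. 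Hence for any fixed $\rho_0$, as soon as $\eta(2R+\eta)<\rho_0^2$ there are points of $(\partial\mathcal{O})\cap B_{\rho_0}(x_0)$ with $\partial_n v<0$: these are the points of $\partial P_k$ that $x_0$ cannot see across the perforation, and they enter $B_{\rho_0}(x_0)$ once $d(x_0,\partial\mathcal{O})\lesssim\rho_0^2$. The paper's own proof rests on the assertion that $\partial\mathcal{O}\cap B_r(x_0)$ is a radial Lipschitz graph with respect to $x_0$, i.e.\ that $\mathcal{O}$ is locally star-shaped about $x_0$; the same ball example shows this is false for $x_0$ near $\partial\mathcal{O}$, so that proof has the same hole. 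Some extra input is required — restricting to $x_0$ with $d(x_0,\partial\mathcal{O})\gtrsim\rho_0$, or replacing the Euclidean paraboloid by a barrier adapted to $\mathcal{O}$ — before either your argument or the paper's can go through.
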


    \begin{proof}
    	 We pick $r$ small enough so that for any $x \in \mathcal{O}(\omega)$ the portion of the hypersurface $\partial \mathcal{O}(\omega)$ lying in $B_r(x)$ is a radial Lipschitz graph with respect to $x$. That such an $r$ exists follows from Assumption \ref{hyp: regularity} and its size is determined by the constants in this assumption, in particular, $r$ can be chosen as a universal constant. 

         In this case, we see that for any $x' \in B_r(x) \cup \partial \mathcal{O}(\omega)$ $x'-x$ is pointing towards the exterior of $\mathcal{O}$ at $x'$, that is, $\partial_n v\geq 0$ on $B_r(x) \cap \partial \mathcal{O}(\omega)$.
     \end{proof}

     If we had $\rho_0>r_0$, where $r_0,\rho_0$ are as defined in the previous Lemma and Proposition, we could try to combing the function from Lemma \ref{lem: barrier construction stage 1} with the standard parabola. The question then is whether the resulting function is also a supersolution, that is, satisfies $-\Delta v\geq -1$ in $\mathcal{O}$ and $\partial_n v \geq 0$ on $\partial \mathcal{O}$. However, we expect $\rho_0$ to be much smaller than $r_0$, so first we must extend the parabola from Proposition \ref{prop: paraboloid is a supersolution in a small ball} to a barrier defined a ball of radius much larger than $r_0$.

    \begin{prop}\label{prop: ring extension}
    	 There are universal constants $C>1$, $R_0>r_0$ and $\alpha \in (0,1)$ such that given $\epsilon>0$, $x_0 \in \mathcal{O}$ there is a semicontinuous function $v\in H^1(\mathcal{B}^1_{R_0\epsilon}(x_0))$ such that
         \begin{equation*}
              \left \{ \begin{array}{rlll}
	               -\Delta v & \geq  -1 & \text{ in } \mathcal{B}^1_{R_0\epsilon}(x_0),\\
	               \partial_n v & \geq 0 & \text{ on } \partial \mathcal{O},\\
	               v & \geq \tfrac{\alpha}{2n}|x-x_0|^2 & \text{ in } \mathcal{B}^1_{R_0\epsilon}(x_0),\\
	               v & = 0 & \text{ at } x=x_0,
              \end{array}\right.	
         \end{equation*}
         where $r_0$ is as in Lemma \ref{lem: barrier construction stage 1}. Furthermore, $v$ satisfies the bound
         \begin{equation}\label{eqn: sup and inf barrier bound}
               \inf\; v-\sup\; v \geq C^{-1},
         \end{equation}
         The infimum being over $\mathcal{B}^1_{\epsilon R_0}(x_0)\setminus \mathcal{B}^1_{\epsilon R_0/2}(x_0)$ and the supremum over $\mathcal{B}^1_{\epsilon r_0}(x_0)$.
    \end{prop}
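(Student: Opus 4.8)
The plan is to obtain $v$ by gluing the standard paraboloid $\psi(x):=\tfrac1{2n}|x-x_0|^2$, which by Proposition~\ref{prop: paraboloid is a supersolution in a small ball} is already an admissible barrier on the small universal ball $B_{\rho_0}(x_0)$, to a second barrier $W$ that I would produce by solving an obstacle problem and which is under control on the large ball. As in the proof of Lemma~\ref{lem: barrier construction stage 1}, the rescaling $v(x)\mapsto\epsilon^2v(x/\epsilon)$ reduces matters to $\epsilon=1$; I write $B_r:=B_r(x_0)$ and $\mathcal{B}_r:=B_r\cap\mathcal{O}$. Fix a small universal $\alpha\in(0,1)$ and a large universal $R_0>r_0$, to be pinned down last, and set $\phi(x):=\tfrac{\alpha}{2n}|x-x_0|^2$.

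For $W$ I would take the minimizer of $\int_{\mathcal{B}_{R_0}}\big(\tfrac12|\nabla w|^2+w\big)\,dx$ over the closed convex set of $w\in H^1(\mathcal{B}_{R_0})$ with $w\geq\phi$ in $\mathcal{B}_{R_0}$ and $w=\phi$ on $\partialom B_{R_0}$, zero Neumann data being understood on $\partial\mathcal{O}\cap B_{R_0}$; existence and uniqueness follow from the Poincar\'e/Sobolev inequalities of Section~\ref{sec: elliptic regularity} together with strict convexity (cf. Appendix~\ref{sec: construction of the obstacle problem}), and $W$ is continuous as in Lemma~\ref{lem: continuity of obstacle solutions}. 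The key point of this functional is that testing the variational inequality with \emph{nonnegative} increments $\zeta\geq0$ --- always admissible since $W\geq\phi$ --- gives at once $-\Delta W\geq-1$ in $\mathcal{B}_{R_0}$ and $\partial_n W\geq0$ on $\partial\mathcal{O}\cap B_{R_0}$, both in the $H^1$ sense, \emph{regardless} of the obstacle; increments supported in $\{W>\phi\}$ give $-\Delta W=-1$ there. The obstacle itself supplies the lower bound $W\geq\phi=\tfrac{\alpha}{2n}|x-x_0|^2\geq0$ on all of $\mathcal{B}_{R_0}$. For the matching upper bound I would compare $W$ with a normalized shift $\bar W:=Kv'/C_1+K'$ of the barrier $v'$ of Lemma~\ref{lem: barrier construction stage 1} (here $C_1$ is the universal constant of that lemma, so $v'/C_1$ is a $(-1)$-supersolution with vanishing Neumann data, and $K,K'>0$ are small universal constants chosen so that $\bar W\geq\phi$ in $\mathcal{B}_{R_0}$ and $\bar W\geq\phi=W$ on $\partialom B_{R_0}$): the standard truncation argument --- test $W$'s variational inequality with $W-(W-\bar W)_+$, test $\bar W$'s supersolution inequality with $(W-\bar W)_+$, subtract, and use that $(W-\bar W)_+$ vanishes on $\partialom B_{R_0}$ --- gives $W\leq\bar W$. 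Feeding in the quadratic bounds $a_0|x-x_0|^2\leq v'\leq a_1|x-x_0|^2$ (for $|x-x_0|\geq r_0$) from Lemma~\ref{lem: barrier construction stage 1} and choosing $K$ small against $C_1/a_1$, then $K'$ of order $\alpha r_0^2$, then $\alpha$ small, I would arrive at
\begin{equation*}
\tfrac{\alpha}{2n}|x-x_0|^2\ \leq\ W(x)\ \leq\ \tfrac1{4n}|x-x_0|^2 \qquad\text{for }\ \tfrac{\rho_0}{2}\leq|x-x_0|\leq R_0 ,
\end{equation*}
while $W\geq\tfrac{\alpha}{2n}|x-x_0|^2\geq0$ throughout $\mathcal{B}_{R_0}$.

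I would then put $v:=\min(\psi,W)$ on $\mathcal{B}_{R_0}$ and verify the four properties. On $\mathcal{B}_{\rho_0/2}\subset\mathcal{B}_{\rho_0}$ both $\psi$ (Proposition~\ref{prop: paraboloid is a supersolution in a small ball}) and $W$ are $(-1)$-supersolutions with nonnegative Neumann data, hence so is their minimum; and for $|x-x_0|\geq\rho_0/2$ the upper bound above gives $W\leq\tfrac1{4n}|x-x_0|^2<\psi$, so $v=W$ there. Since the weak supersolution property is local, $v$ is a continuous $H^1$ function on $\mathcal{B}_{R_0}$ with $-\Delta v\geq-1$ and $\partial_n v\geq0$ on $\partial\mathcal{O}$. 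From $\psi\geq\phi$ and $W\geq\phi$ we get $v\geq\phi=\tfrac{\alpha}{2n}|x-x_0|^2$, and $v(x_0)=\min(\psi(x_0),W(x_0))=\min(0,W(x_0))=0$ because $W\geq0$. Finally $v\leq\psi$, so $\sup_{\mathcal{B}_{r_0}}v\leq\tfrac{r_0^2}{2n}=:s$, whereas on the ring $\{R_0/2\leq|x-x_0|\leq R_0\}$ one has $v\geq\phi\geq\tfrac{\alpha}{2n}(R_0/2)^2$; choosing $R_0$ universal and so large that $\tfrac{\alpha}{2n}(R_0/2)^2-s\geq C^{-1}$ (which, since $\alpha<1$, also forces $R_0>r_0$) yields \eqref{eqn: sup and inf barrier bound}.

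The single genuinely delicate ingredient is the construction of $W$: one needs \emph{at the same time} the right nonnegative Neumann sign on the whole irregular boundary $\partial\mathcal{O}\cap B_{R_0}$, a quadratic lower bound, and --- indispensable for the gluing --- a quadratic upper bound with coefficient strictly below that of $\psi$. The obstacle formulation delivers the Neumann sign automatically, and the obstacle delivers the lower bound; but the upper bound must come from comparison with the non-explicit barrier of Lemma~\ref{lem: barrier construction stage 1}, since no explicit paraboloid is available (it has the wrong Neumann sign on perforations far from $x_0$), and this comparison is what dictates the bookkeeping of the universal constants $\alpha,K,K',R_0$. Everything else --- the reduction to $\epsilon=1$, the stability of the barrier properties under $\min$, and the final choice of $R_0$ --- is routine.
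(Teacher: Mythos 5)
Your proof is correct in outline and arrives at the same building blocks (an obstacle problem to get the Neumann sign and $-\Delta\geq -1$ for free, then gluing with the paraboloid), but the mechanism you use to control the obstacle solution from above is genuinely different from the paper's. The paper takes the obstacle problem with right hand side $-2\alpha$ (not $-1$): with a small source and a small obstacle, Lemma~\ref{lem: Stampacchia} together with the fact that $\tilde v$ touches the obstacle somewhere gives the one-line bound $\sup \tilde v\leq C\alpha R_0^2$, and $\alpha$ is then chosen so small that $\tilde v$ automatically fits under $\tfrac1{2n}|x-x_0|^2$ on the overlap annulus. This makes the construction self-contained and the bookkeeping trivial. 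You instead keep the right hand side equal to $-1$, where Stampacchia alone gives only $\sup W\lesssim R_0^2$ (useless), and replace it by a comparison $W\leq\bar W:=Kv'/C_1+K'$ against the barrier of Lemma~\ref{lem: barrier construction stage 1}. The truncation argument you describe is valid, but to run it and to get the claimed bound $W\leq\tfrac1{4n}|x-x_0|^2$ on the inner annulus $\rho_0/2\leq|x-x_0|<r_0$ you need two facts about the Lemma~\ref{lem: barrier construction stage 1} barrier \emph{inside} $\mathcal{B}_{r_0}(x_0)$ that are \emph{not} in that lemma's statement: a universal upper bound (which is available in its proof through the estimate $|v_R|\leq C(1+|x-x_0|^2)$) and nonnegativity near $x_0$ (which holds because near $x_0$ the $v_R$ are obstacle-problem solutions with a nonnegative obstacle, before any Whitney extension is applied). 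Since the stated quadratic bounds $a_0|x-x_0|^2\leq v'\leq a_1|x-x_0|^2$ only apply for $|x-x_0|\geq r_0$, the comparison in the range $[\rho_0/2,r_0)$ cannot be closed from the lemma as quoted; you should cite the interior $L^\infty$ bound and nonnegativity explicitly, after which the constant chase ($K$ small against $C_1/a_1$ and against the interior sup of $v'$, $K'\sim\alpha r_0^2$, $\alpha$ small, $R_0$ large last) does go through. So: correct idea, correct gluing, but the upper bound is obtained by a more roundabout comparison that needs two unstated properties of the borrowed barrier; the paper's $-2\alpha$ trick sidesteps the comparison entirely.
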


    \begin{proof}
         As in the proof of Lemma \ref{lem: barrier construction stage 1}, by scaling we only need to construct $v$ for $\epsilon=1$. Let $R_0,\alpha>0$ be fixed and to be chosen later, and consider an obstacle problem in $\mathcal{B}^1_{R_0}(x_0)$ with obstacle given by $\tfrac{\alpha}{2n}|x-x_0|^2$ and right hand side given by $-2\alpha$. The solution, which we denote $\tilde v$, satisfies 	
         \begin{equation*}
              \left \{ \begin{array}{rlll}
	               -\Delta \tilde v & \geq  -2\alpha & \text{ in } \mathcal{B}^1_{R_0}(x_0),\\
	               \partial_n \tilde v & \geq 0 & \text{ on } \partial \mathcal{O},\\
	               \tilde v & \geq \tfrac{\alpha}{2n}|x-x_0|^2 & \text{ in } \mathcal{B}^1_{R_0}(x_0).
              \end{array}\right.	
         \end{equation*}
         Applying the theory from Section \ref{sec: elliptic regularity} leads to the bound
         \begin{equation*}
              \sup \limits_{\mathcal{B}^1_{R_0}(x_0)}\tilde v \leq C(\tfrac{\alpha}{2n}R^2_0+2\alpha R^2_0) \leq C\alpha R^2_0.	
         \end{equation*}
         Indeed, this is guaranteed by the Stampacchia maximum principle (Lemma \ref{lem: Stampacchia}) and the fact that $\tilde v$ must agree with the obstacle somewhere in $\mathcal{B}^1_{R_0}(x_0)$. In this case, pick $\alpha=\alpha(C,R_0,\rho_0)$ such that
         \begin{equation*}
              C\alpha R^2_0\leq \tfrac{1}{2n}(\rho_0^2/2).
         \end{equation*}
         Then, it follows that the function   %letting  $\alpha = \min\{1,\; (CR^2)^{-1}\tfrac{1}{2n}(\rho_0/2)^2,(CR^2)^{-1}a_0r_0^2\}$ (with $\rho_0$ as in Proposition \ref{prop: paraboloid is a supersolution in a small ball}) it follows that the function
         \begin{equation*}
              v(x):=\left \{ \begin{array}{rl}
              	   \min\{\tfrac{1}{2n}|x-x_0|^2,\tilde v(x)\} & \text{ in } \mathcal{B}^1_{\rho_0}(x_0),\\
                   \tilde v(x) & \text{ in } \mathcal{B}^1_{R_0}(x_0) \setminus \mathcal{B}^1_{\rho_0/2}(x_0),
              \end{array}\right.
         \end{equation*}
         is well defined. This is due to  $\min\{\tfrac{1}{2n}|x-x_0|^2,\tilde v(x)\}=v(x)$ in $ \mathcal{B}^1_{\rho_0}(x_0)\setminus \mathcal{B}^1_{\rho_0/2}(x_0)$, thanks to
         \begin{equation*}
              \tilde v(x) \leq \tfrac{1}{2n}(\rho_0/2)^2\leq \tfrac{1}{2n}|x-x_0|^2\; \text{ in } \mathcal{B}^1_{\rho_0}(x_0)\setminus \mathcal{B}^1_{\rho_0/2}(x_0).	
         \end{equation*}
         It is also clear that $-\Delta v\geq -1$ in $\mathcal{B}^1_{R_0}(x_0)$ and $\partial_n v\geq 0$ on $(\partial \mathcal{O}) \cap \overline{\mathcal{B}}^1_{R_0}(x_0)$. On the other hand,
         \begin{align*}
              v(x) \geq \tfrac{\alpha}{2n}|x-x_0|^2\;\;\; & \forall\;x\in\mathcal{B}^1_{R_0}(x_0).
         \end{align*}
         Finally, \eqref{eqn: sup and inf barrier bound} can be obtain with an elementary barrier argument.
    \end{proof}

    \begin{proof}[Proof of Lemma \ref{lem: barrier construction}]
    	 We use again the scale invariance to reduce the construction to the case $\epsilon=1$. We are going to ``glue'' the functions built previously by exploiting once again the fact that the minimum of two supersolutions is a supersolution. To this end, let $\beta>0$ to be chosen later, $v_1(x)$ the function constructed in Proposition \ref{prop: ring extension} and $v_2(x)$ the function
         \begin{align*}
              v_2(x) & := \beta\tilde v_2(x)+\sup\limits_{\mathcal{B}^1_{r_0}(x_0)}v_1,
         \end{align*}
         where $\tilde v_2$ is the function constructed in Lemma \ref{lem: barrier construction stage 1}. Observe that $v_1$ is a function defined in $\mathcal{B}^1_{R_0}(x_0)$ and $v_2$ is defined in $\mathcal{O} \setminus \mathcal{B}^1_{r_0}(x_0)$. Then, the barrier $v$ is defined in $\mathcal{O}$ by
         \begin{equation}\label{eqn: gluing the barrier}
              v(x) := \left \{ \begin{array}{ll}
                   v_1(x) & \text{ in } \mathcal{B}^1_{r_0}(x_0),\\
                   \min\{v_1(x),v_2(x)\} & \text{ in } \mathcal{B}^1_{R_0}(x_0)\setminus \mathcal{B}^1_{r_0/2},\\
                   v_2(x) & \text{ in } \mathcal{O} \setminus \mathcal{B}^1_{R_0/2}.
              \end{array}	\right.
         \end{equation}
         Let us see this $v$ is indeed well defined. First, note that
         \begin{equation*}
         	  \min\{v_1,v_2\}  = v_1 \text{ in } \mathcal{B}^1_{r_0}(x_0)\setminus \mathcal{B}^1_{r_0/2}(x_0),
         \end{equation*}
         which is guaranteed by the definition of $v_2$. On the other hand, due to \eqref{eqn: sup and inf barrier bound} there is a $\beta$ determined by $v_1,a_1$ and $R$ (and thus universal) such that
         \begin{equation*}
              \beta a_1 R^2+ \sup\limits_{\mathcal{B}^1_{r_0}(x_0)} v_1 \leq \inf \limits_{\mathcal{B}^1_{R}(x_0)\setminus \mathcal{B}^1_{R/2}(x_0)} v_1.
         \end{equation*}
         This guarantees that
         \begin{align*}
              \min\{v_2,v_1\} & = v_2 \text{ in } \mathcal{B}^1_{R}(x_0)\setminus \mathcal{B}^1_{R/2}(x_0),
         \end{align*}
         proving that $v$ given by \eqref{eqn: gluing the barrier} is a well defined function. Furthermore, as $v_1$ and $v_2$ are supersolutions of the elliptic equation in their domains of definition, the same holds for $v$, thus 
         \begin{equation*}
              -\Delta v \geq -1 \text{ in } \mathcal{O},\;\;\partial_n v\geq 0 \text{ on } \partial \mathcal{O}.	
         \end{equation*}
         Next, note that $v_1$ and $v_2$ satisfy the bound (in their respective domains)
         \begin{equation*}
              v_i(x) \geq a|x-x_0|^2,
         \end{equation*}
         for some small universal constant $a$, thus $v(x)\geq a|x-x_0|^2$ in $\mathcal{O}$. Finally, since $v_1(x_0)=0$ and $v_1=v$ near $x_0$ it follows that $v(x_0)=0$. This constructs the desired barrier when $\epsilon=1$, finishing the proof.
    \end{proof}

    With the barrier at our disposal, we can now proceed to prove the non-degeneracy estimate.

    \begin{proof}[Proof of Theorem \ref{thm: nondegeneracy}]
         We carry out the proof of the non-degeneracy estimate in two steps. Note that since the estimate will be uniform for $x$ in the support of $p^\epsilon$, we only need to prove it when
         \begin{equation*}
              x \in \{ p^\epsilon(\cdot,t)>0\}	
         \end{equation*}

         \emph{Step 1.} Suppose first that $(x,t)$ and $r$ are chosen such that
         \begin{equation}\label{eqn: nondeg}
              f^\epsilon(\cdot,t) \geq \tfrac{1}{2} \text{ in } \mathcal{B}^\epsilon_{r}(x).
         \end{equation}
         Let $v$ be the barrier given by Lemma \ref{lem: barrier construction} for the point $x$ and $\epsilon>0$. The function $w(\cdot,t)=p^\epsilon(\cdot,t)-\tfrac{1}{2}v(\cdot)$ is strictly positive at $x$, which is guaranteed by the fact that $p^\epsilon(x,t)>0$ and that $v(x)= 0$ by construction.

         Then, for any $r>0$ and with $D_r:= \mathcal{B}^\epsilon_{r}(x) \cap \{p^\epsilon>0\}$ we have 
         \begin{equation*}
              \sup\limits_{D_r} w > 0
         \end{equation*}
         Moreover, since $\Delta v \leq 1$ in $\mathcal{O}_\epsilon$ and $\partial_n v \geq 0$ on $\partial \mathcal{O}_\epsilon$,
         \begin{equation*}
              \Delta w \geq 0	\text{ in } \mathcal{B}^\epsilon_r(x),\;\;\partial_n w \leq 0 \text{ on } \partial \mathcal{O}_\epsilon. 
         \end{equation*}
         As a consequence of the maximum principle, we obtain
         \begin{equation*}
              \sup \limits_{\partialom D_r} w = \sup\limits_{D_r} \; w > 0.
         \end{equation*}
         Since $v\geq 0$ everywhere, the supremum on the left cannot be achieved on the portion of $\partial D_r$ lying within $\partialom \{ p^\epsilon(\cdot,t)>0\}$, since there we have $w=0-v\leq 0$. We conclude that the supremum over $\partialom D_r$ must be achieved at a point in $\partialom \mathcal{B}^\epsilon_r(x)$. In other words, 
         \begin{equation*}
              \sup \limits_{\partialom \mathcal{B}^\epsilon_r(x)} \left [p^\epsilon(\cdot,t)-\tfrac{1}{2}v \right ]>0.
         \end{equation*}
         Then, using that $v(x) \geq ar^2$ outside $\mathcal{B}^\epsilon_r(x)$, we get $\sup \limits_{\mathcal{B}^\epsilon_r(x)} p^\epsilon(\cdot,t) \geq \tfrac{1}{2}ar^2$.

         We have proved the following: for any $x,t$ and $r$ for which \eqref{eqn: nondeg} holds we have the estimate
         \begin{equation}\label{eqn: conditional nondeg}
              \sup \limits_{\mathcal{B}^\epsilon_r(x)} p^\epsilon(\cdot,t) \geq \tfrac{1}{2}ar^2.	         	
         \end{equation} 

         \emph{Step 2.} We are left to prove the theorem without assuming \eqref{eqn: nondeg}. For any $k \in \mathbb{N}$ write $t_k = \tfrac{1}{2}k$ and also $t_0=0$. Arguing by induction, we will show for any $k \in \mathbb{N}$ that if $t \in (t_{k-1},t_k)$ and $x,r$ are such that $r\leq d(x,\db^\epsilon_0)$, then
         \begin{equation}\label{eqn: nondeg induction}
              \sup \limits_{\mathcal{B}^\epsilon_r(x)} p^\epsilon(\cdot,t) \geq a4^{-(k+2)}r^2	         		
         \end{equation}
         The theorem follows from this since for  $C_0=\log_2(a)+10$ we have $e^{-C_0(t+1)}\leq a4^{-(k+2)}$ $\;\forall\;k,\; t \geq t_k$.

	     To prove this first for $k=1$, note that \eqref{eqn: nondeg} holds for any combination of $x,t,r$ for which $t \leq \tfrac{1}{2}$ and $r\leq d(x,\db^\epsilon_0)$, since in this case we have  $\mathcal{B}^\epsilon_{r}(x) \subset (\db^\epsilon_0)^c$, so
         \begin{equation*}
              f^\epsilon(\cdot,t) = \mathbb{I}_{(\db_0^{\epsilon})^c}-\int_0^{t}\mathbb{I}_{\{p^\epsilon>0\}}\;ds\geq \tfrac{1}{2},\;\;  \text{ in } \mathcal{B}^\epsilon_{r}(x).        	
         \end{equation*}
         Then applying the estimate from Step 1, we prove the base case of the induction argument. Next, suppose we have proved \eqref{eqn: nondeg induction} up to $k-1$. Then take any $t \in (t_{k-1},t_k)$ and consider the function $h(x):=p^\e(x,t)-p^\e(x,t_{k-1})$. Recall that $p^\epsilon(x,t)$ is increasing in $t$, therefore $h\geq 0$, moreover, $p^\e(\cdot,t)\geq h$.

         Assume first that $\mathcal{B}^\epsilon_{r/2}(x)$ lies entirely outside $\{ p^\epsilon(\cdot,t_{k-1})>0\}$, then $h$ solves
         \begin{equation*}
              \Delta h = 1=\int_{t_{k-1}}^t	\mathbb{I}_{\{p^\epsilon(\cdot,s)>0\}} ds \geq \tfrac{1}{2} \;\text{ in } \tilde D_r,
         \end{equation*}
         where $\tilde D_r :=  \left (\{p^\epsilon(\cdot,t)>0\} \setminus \{p^\epsilon(\cdot,t_{k-1})>0\}\right )\cap \mathcal{B}^\epsilon_{r/2}(x)$. Then taking $v$ as the barrier from Lemma \ref{lem: barrier construction} for the point $x$, we have again that $w=h-\tfrac{1}{2}v$ has a positive supremum in $\tilde D_r$, and that by the maximum principle it must be achieved on $\partial \tilde D_r$, and arguing again as in Step 1 we conclude
         \begin{equation*}
              \sup \limits_{\mathcal{B}^\epsilon_{r}(x)}\;h \geq \sup \limits_{\mathcal{B}^\epsilon_{r/2}(x)}\;h \geq \tfrac{1}{2}a(r/2)^2= \tfrac{1}{8}ar^2\geq a4^{-(k+2)}r^2. 	
         \end{equation*}
         and thus we get the same lower bound for the supremum of $p^\epsilon(\cdot,t)$ over $\mathcal{B}^\epsilon_{r}(x)$. Now, if it was not the case that $\mathcal{B}^\epsilon_{r/2}(x) \subset \{ p^\epsilon(\cdot,t_{k-1})>0\} $, we would know there is a point $x_0 \in \overline{\{ p^\epsilon(\cdot,t_{k-1})>0\}}$ such that
         \begin{equation*}
              \mathcal{B}^\epsilon_{r/2}(x_0) \subset \mathcal{B}^\epsilon_r(x)	
         \end{equation*}
         and by the inductive hypothesis we would have
         \begin{equation*}
              \sup \limits_{\mathcal{B}^\epsilon_{r/2}(x_0)} p^\epsilon(\cdot,t_{k-1}) \geq  a4^{-((k-1)+2)}(r/2)^2= a4^{-(k+2)}r^2.	
         \end{equation*}
         Since  $p^\epsilon(x,t)$ is nondecreasing in $t$ we get $\sup \limits_{\mathcal{B}^\epsilon_{r}(x)} p^\epsilon(\cdot,t_{k-1}) \geq \sup \limits_{\mathcal{B}^\epsilon_{r/2}(x_0)} p^\epsilon(\cdot,t_{k-1})$, so we arrive at
         \begin{equation*}
              \sup \limits_{\mathcal{B}^\epsilon_{r}(x)} p^\epsilon(\cdot,t_{k-1}) \geq a4^{-(k+2)}r^2.	
         \end{equation*}
         With this the induction argument is finished and the theorem is proved.
    \end{proof}

    \begin{rem}
         Observe that the non-degeneracy coefficient in Theorem \ref{thm: nondegeneracy} decays exponentially with time. While this is likely not an optimal estimate, it is enough for our current purposes.
    \end{rem}

    \subsection{Estimates with respect to time} Let us now discuss the regularity of $p^\e$ in the time variable. Since  $f^\epsilon(x,t)$ is non-decreasing in time it follows that $p^\e$  is non-decreasing as well.  Moreover, in \cite{KM09}, it was straightforward from the formulation of the problem that there is some universal constant $C$ independent of $\epsilon$ such that
    \begin{equation}\label{eqn: control_time}
         \sup \limits_{t_1\neq t_2}\frac{|p^\e(x,t_1)-p^\e(x,t_2)|}{|t_1-t_2|} \leq C
    \end{equation}
    This estimate still holds in our setting, despite the appearance of the extra term $\int_0^t\mathbb{I}_{\{p^\epsilon(\cdot,s)>0\}} \;ds$.

    \begin{prop}\label{prop: control_time}
         Almost surely there is a $C>0$ such that \eqref{eqn: control_time} holds for all $0 < \epsilon < 1$.		
    \end{prop}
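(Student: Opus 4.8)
The plan is to establish \eqref{eqn: control_time} as a two–sided estimate, the lower side being essentially free. Since $f^\epsilon(\cdot,t)$ is non-decreasing in $t$ and the obstacle problem of Definition \ref{def: weak solution p^epsilon} is monotone in its right–hand side, $p^\epsilon(x,t)$ is non-decreasing in $t$, so for $t_1<t_2$ the difference quotient in \eqref{eqn: control_time} is already $\geq 0$. The real content is the bound $p^\epsilon(\cdot,t_2)-p^\epsilon(\cdot,t_1)\leq C(t_2-t_1)$ with $C$ independent of $\epsilon$ (and of the times, as long as they stay in a bounded interval $[0,T]$).

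The idea is to turn the solution at time $t_1$ into a supersolution of the obstacle problem at time $t_2$. Fix $t_1<t_2\le T$ and a ball $B_\rho$ containing the support of $p^\epsilon(\cdot,t_2)$ — hence, by monotonicity of the positive set, also that of $p^\epsilon(\cdot,t_1)$. Let $\Psi^\epsilon_\rho$ be the solution of the Dirichlet problem of Definition \ref{def: Dirichlet problem} in $\mathcal{B}^\epsilon_\rho$ with right–hand side $\mathbb{I}_{\mathcal{O}_\epsilon}$; by the maximum principle $\Psi^\epsilon_\rho\geq 0$. Put $g:=p^\epsilon(\cdot,t_1)+(t_2-t_1)\Psi^\epsilon_\rho$. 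Using the variational inequality $-\Delta p^\epsilon(\cdot,t_1)\geq f^\epsilon(\cdot,t_1)$ together with \eqref{eqn: definition of f^epsilon}, one gets in $\mathcal{B}^\epsilon_\rho$
\[
-\Delta g\;\geq\; f^\epsilon(\cdot,t_1)+(t_2-t_1)\;\geq\; f^\epsilon(\cdot,t_1)+\int_{t_1}^{t_2}\mathbb{I}_{\{p^\epsilon(\cdot,s)>0\}}\,ds\;=\;f^\epsilon(\cdot,t_2),
\]
while $g\geq p^\epsilon(\cdot,t_1)\geq 0$, $\partial_n g\geq 0$ on $\partial\mathcal{O}_\epsilon$, and $g=0=p^\epsilon(\cdot,t_2)$ on $\partialom B_\rho$. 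Thus $g$ is an admissible supersolution of the obstacle problem with data $f^\epsilon(\cdot,t_2)$ and the correct boundary values, and the comparison principle for the obstacle problem gives $p^\epsilon(\cdot,t_2)\leq g$, i.e. $p^\epsilon(\cdot,t_2)-p^\epsilon(\cdot,t_1)\leq (t_2-t_1)\Psi^\epsilon_\rho$.

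To finish, I would bound $\Psi^\epsilon_\rho$ by Stampacchia's estimate: since $\|\mathbb{I}_{\mathcal{O}_\epsilon}\|_\infty=1$, Lemma \ref{lem: Stampacchia} with $p=+\infty$ gives $\|\Psi^\epsilon_\rho\|_{L^\infty}\leq C\rho^2$ with $C$ universal, whence $p^\epsilon(x,t_2)-p^\epsilon(x,t_1)\leq C\rho^2(t_2-t_1)$. Every constant here is universal in the sense of Definition \ref{def: universal constants}, hence deterministic, so the bound holds $\pal$ — in fact on the full–measure event on which Assumption \ref{hyp: regularity} and the elliptic estimates of Section \ref{sec: elliptic regularity} hold — with $C=C_{\mathrm{univ}}\,\rho(T)^2$.

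The one ingredient I expect to be the main obstacle is the choice of $\rho$: one needs the supports of $p^\epsilon(\cdot,t)$, $t\le T$, to lie in a common ball $B_{\rho(T)}$ with $\rho(T)$ independent of $\epsilon$. This is a finite–speed–of–propagation statement, automatic in the Euclidean case, but in the perforated setting I would derive it from Theorem \ref{thm: nondegeneracy}: a uniform volume bound $|\{p^\epsilon(\cdot,t)>0\}\cap\mathcal{O}_\epsilon|\leq C_T$ (obtained by integrating the equation and using the Neumann condition on $\partial\mathcal{O}_\epsilon$) together with the non-degenerate growth of $p^\epsilon$ along the connected positive set forces the support to stay within a universal distance of $\db_0$; were it to reach distance $R\gg\mathrm{diam}(\db_0)$, non-degeneracy would produce a definite amount of mass in each of $\sim R$ disjoint fixed–size balls along a connecting path, contradicting the volume bound for $R$ large. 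With $\rho(T)$ so obtained the proof is complete.
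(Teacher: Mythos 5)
Your construction reproduces, in explicit form, the ``standard barrier argument'' that the paper merely gestures at (citing Blank for the Euclidean version): you build $g=p^\epsilon(\cdot,t_1)+(t_2-t_1)\Psi^\epsilon_\rho$ with $\Psi^\epsilon_\rho$ the Stampacchia function for $\mathcal{B}^\epsilon_\rho$, check it is a supersolution with the right boundary data, and conclude by comparison and Lemma~\ref{lem: Stampacchia}. That computation, including the chain $-\Delta g\geq f^\epsilon(\cdot,t_1)+(t_2-t_1)\geq f^\epsilon(\cdot,t_2)$, is correct, and the lower bound in \eqref{eqn: control_time} is indeed free from monotonicity. So the core of your proof is the same as the paper's, just written out.

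The place you yourself flag as the obstacle --- a uniform-in-$\epsilon$ radius $\rho(T)$ containing $\{p^\epsilon(\cdot,t)>0\}$ for $t\le T$ --- is also where your sketch has a real gap, and the paper does not help you: it establishes finite speed of propagation (Corollary~\ref{cor: distance from the initial phase}) only \emph{after} and \emph{using} Proposition~\ref{prop: control_time}, so you cannot borrow it. In your non-degeneracy argument, two points are missing. First, Theorem~\ref{thm: nondegeneracy} gives a \emph{supremum} estimate $\sup_{\mathcal{B}^\epsilon_r(x)}p^\epsilon\gtrsim r^2$ at free boundary points, not a mass estimate; to get ``a definite amount of mass in each ball'' you must first promote it to a density estimate by combining it with the quadratic upper bound of Lemma~\ref{lem: quadratic upper bound p^ep} (the supremum point must then lie a distance $\gtrsim r$ from the free boundary, so a ball of radius $\gtrsim r$ lies in $\{p^\epsilon>0\}$). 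Second, the ``connecting path'' step tacitly assumes that $\{p^\epsilon(\cdot,t)>0\}$ is connected to $\db_0^\epsilon$; the obstacle formulation does not obviously forbid a component nucleating away from $\db_0^\epsilon$, and neither does the iteration in Appendix~\ref{sec: construction of the obstacle problem} without a separate argument. On the other hand, the volume bound is cleaner than you indicate: integrating $-\Delta p^\epsilon=f^\epsilon\mathbb{I}_{\{p^\epsilon>0\}}$ over $\mathcal{O}_\epsilon$ (Neumann data plus compact support kill the boundary terms) and applying Fubini to the time integral gives the exact identity $|A_t|=|\db_0^\epsilon|+\int_0^t|A_s|\,ds$, hence $|A_t|\leq|\db_0^\epsilon|e^t$, where $A_t=\{p^\epsilon(\cdot,t)>0\}$. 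Finally, note that $C$ in \eqref{eqn: control_time} then scales like $\rho(T)^2$ and so grows with $T$; the proposition should be read as giving a constant per bounded time interval, which is all that is used downstream. Because the paper's one-line proof (``standard barrier argument, as in \cite{B2001}'') quietly assumes a uniform support bound as well, this gap is shared, not introduced by you --- but your sketch of it needs the two repairs above before it is a proof.
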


    \begin{proof}
         Given $t_1$ and $t_2$ note that $p^\epsilon(.,t_i)$ each solve obstacle problems in $\mathcal{O}_\epsilon$ with zero data at infinity, zero obstacle and right hand sides given by $f^\epsilon(\cdot,t_i)$. Now, note that
         \begin{equation*}
              \left | f^\epsilon(x,t_1)-f^\epsilon(x,t_2)\right | = \left | \int_0^{t_1} \mathbb{I}_{\{p^\epsilon>0\}}\;ds-\int_0^{t_2}\mathbb{I}_{\{p^\epsilon>0\}}\;ds\right | \leq |t-s|,	
         \end{equation*}	
         then, using a standard barrier argument,  \eqref{eqn: control_time} follows as done for instance in \cite{B2001}.
    \end{proof}

    We can combine this proposition with Lemma 4.3 to yield the following estimate in time.
    \begin{lem}\label{lem: control_FB_time}
         The positivity set of $p^\epsilon$, that is $\db_t(p^\e) := \{ x \mid p^\epsilon(x,t)>0\}$ is evolving continuously in time. Namely, $\mathbb{P}\text{-a.s.}$ there is a universal $C>0$ such that if $s<t$ and $\epsilon \in (0,1)$ then 	
         \begin{equation*}
              \db_s(p^\e) \subset \db_t(p^\e) \subset \db_s(p^\e) + B_{C\sqrt{t-s}}.
         \end{equation*}
    \end{lem}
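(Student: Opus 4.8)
The plan is to prove the two inclusions separately. The first inclusion $\db_s(p^\e) \subset \db_t(p^\e)$ for $s<t$ is immediate from the monotonicity of $p^\epsilon$ in time (which follows from the monotonicity of $f^\epsilon(x,t)$ in $t$): if $p^\epsilon(x,s)>0$ then $p^\epsilon(x,t)\geq p^\epsilon(x,s)>0$. So all the content is in the second inclusion, which says the positive phase cannot expand too fast.

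\medskip

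For the second inclusion, fix $x_0$ with $p^\epsilon(x_0,t)>0$; I want to show $d(x_0,\db_s(p^\e)) \leq C\sqrt{t-s}$. Set $r:= d(x_0,\db_s(p^\e))$ and suppose for contradiction that $r > C\sqrt{t-s}$ for a large universal $C$ to be chosen. Then $\mathcal{B}^\epsilon_{r}(x_0)$ lies entirely outside $\{p^\epsilon(\cdot,s)>0\}$, so on this ball we have $p^\epsilon(\cdot,s)\equiv 0$, and by the time-Lipschitz bound from Proposition \ref{prop: control_time} we get $0\leq p^\epsilon(\cdot,t)\leq C_0(t-s)$ throughout $\mathcal{B}^\epsilon_{r}(x_0)$. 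On the other hand, the point $x_0$ belongs to $\overline{\{p^\epsilon(\cdot,t)>0\}}$, so I can pick a free boundary point $y\in \partialom\{p^\epsilon(\cdot,t)>0\}$ with $|y-x_0|$ small (or use $x_0$ itself if it is already in the interior and apply nondegeneracy at a nearby radius), and apply the nondegeneracy estimate Theorem \ref{thm: nondegeneracy} at scale $r/2$: since $r/2 \leq \tfrac12 d(y,\db^\epsilon_0)$ holds provided $x_0$ is far enough from the initial data (which may be arranged, or handled by a separate trivial case when $x_0$ is within a bounded distance of $\db_0$ using finite speed bounds already implicit in Proposition \ref{prop: control_time}), we obtain $\sup_{\mathcal{B}^\epsilon_{r/2}(y)} p^\epsilon(\cdot,t) \geq e^{-C_0(t+1)}(r/2)^2$. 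Since $\mathcal{B}^\epsilon_{r/2}(y)\subset \mathcal{B}^\epsilon_{r}(x_0)$ (for $y$ close enough to $x_0$), this forces
\begin{equation*}
    e^{-C_0(t+1)}(r/2)^2 \leq C_0(t-s),
\end{equation*}
i.e. $r^2 \leq 4 e^{C_0(t+1)} C_0 (t-s)$, which with $C := 2\sqrt{C_0}\, e^{C_0(t+1)/2}$ contradicts $r>C\sqrt{t-s}$.

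\medskip

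The main obstacle is making the reduction to a free boundary point clean: Theorem \ref{thm: nondegeneracy} is stated for $x\in\overline{\{p^\epsilon>0\}}$, so strictly one should apply it directly at $x_0$ but at the right radius. The cleanest route is: since $x_0\in\overline{\{p^\epsilon(\cdot,t)>0\}}$ and $\mathcal{B}^\epsilon_{r}(x_0)$ is disjoint from $\db_s(p^\e)$, apply Theorem \ref{thm: nondegeneracy} at $x_0$ with radius $r$ (using that $r\leq d(x_0,\db^\epsilon_0)$, which holds because the initial set is inside $\db_s(p^\e)$ hence at distance $\geq r$ from $x_0$), giving $\sup_{\mathcal{B}^\epsilon_{r}(x_0)} p^\epsilon(\cdot,t)\geq e^{-C_0(t+1)}r^2$, and combine with the upper bound $C_0(t-s)$ above to conclude. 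One subtlety is that the nondegeneracy constant depends on $t$, so the constant $C$ in the lemma is really $C=C(t)$ uniform on compact time intervals rather than truly universal; this matches the qualifier ``locally uniform in $t$'' used elsewhere and should be stated as such, or absorbed by noting $t$ ranges in a fixed bounded interval $[0,T)$. Finally, one must check the edge case where $x_0$ is close to $\db_0^\epsilon$, where the same argument runs with $r = \min\{d(x_0,\db_s(p^\e)), \tfrac12 d(x_0,\db_0^\epsilon)\}$ and a short separate observation (finite speed of propagation from the initial data) handles distances below a universal threshold.
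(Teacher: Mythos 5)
Your proof is correct and follows the same route as the paper: monotonicity of $p^\epsilon$ in $t$ gives the first inclusion, and for the second you combine the time-Lipschitz bound of Proposition \ref{prop: control_time} with the nondegeneracy estimate of Theorem \ref{thm: nondegeneracy} (applied at radius $r/2$ rather than $r$, so that the hypothesis $r/2 \leq \tfrac12 d(x_0,\db_0^\epsilon)$ follows from $\db_0^\epsilon\subset\db_s(p^\epsilon)$ — your second paragraph drops this factor of two but it is inconsequential). Your caveat about the time-dependence of the nondegeneracy constant is also correct: the factor $e^{-C_0(t+1)}$ in Theorem \ref{thm: nondegeneracy} means the lemma's $C$ is only uniform on compact time intervals, an imprecision the paper's own one-line proof shares.
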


    \begin{proof}
         The first inclusion is due to the fact that $p^\e$ is non-decreasing in the time variable. To show the second inclusion, suppose $x\in \Gamma_t(p^\e)$ is such that $B_\delta(x) \cap \db_s(p^\e) = \emptyset$.  It follows that
         \begin{equation*}
              p^\epsilon(y,s) = 0\;\;\;\text{ for all } y\in \mathcal{B}^\epsilon_\delta(x)	
         \end{equation*}
         Then $|p^\epsilon(y,t)|	\leq C(t-s)$ for all $y \in \mathcal{B}^\epsilon_\delta(x)$, since $x \in \Gamma_t(p^\e)$ Lemma \ref{thm: nondegeneracy} says that $p^\epsilon(y,t)\geq C\delta^2$ for some $y \in \mathcal{B}^\epsilon_\delta(x)$ which gives $\delta^2\leq C(t-s)$ and proves the lemma.
    \end{proof}

    As a corollary, we get continuity of the droplet for $t=0$, which will be useful in Section \ref{sec: homogenization}.
    \begin{cor}\label{cor: distance from the initial phase}
         $\mathbb{P}\text{-a.s.}$ there is a $C$ such that $ \db_0^\e \subset \db_t(p^\epsilon) \subset \db_0^\e + B_{C\sqrt{t}}$ for any $t$ and $\epsilon\in (0,1)$.
    \end{cor}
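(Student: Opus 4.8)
The plan is to obtain Corollary~\ref{cor: distance from the initial phase} by letting $s\downarrow 0$ in Lemma~\ref{lem: control_FB_time}. Two things must be supplied: that the left inclusion survives the limit, i.e. $\db_0^\e\subseteq\db_t(p^\e)$ for every $t>0$, and that $\db_s(p^\e)$ shrinks down to a controlled neighborhood of $\db_0^\e$ as $s\to0^+$, so that the term $B_{C\sqrt{t-s}}$ in Lemma~\ref{lem: control_FB_time} degenerates to $B_{C\sqrt t}$ with the \emph{same} universal constant $C$. The substantive content is the second point, and the first is the one I expect to require a word of care.

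For the second point I would first show that for each $s>0$ one has $\db_s(p^\e)\subseteq\{x:\ d(x,\db_0^\e)\le\eta(s)\}$ with $\eta(s)\to0$ as $s\to0^+$. Take $x\in\db_s(p^\e)$ with $\delta:=d(x,\db_0^\e)>0$. Since $r=\delta/2\le\tfrac12 d(x,\db_0^\e)$, Theorem~\ref{thm: nondegeneracy} gives $\sup_{\mathcal{B}^\e_{\delta/2}(x)}p^\e(\cdot,s)\ge e^{-C_0(s+1)}(\delta/2)^2$. On the other hand $p^\e(\cdot,0)\equiv0$ (immediate from Definition~\ref{def: weak solution p^epsilon}, since $f^\e(\cdot,0)=-\mathbb{I}_{(\db_0^\e)^c}\le0$), so the time-Lipschitz bound \eqref{eqn: control_time} of Proposition~\ref{prop: control_time} yields $p^\e(y,s)\le Cs$ for all $y$, whence $\sup_{\mathcal{B}^\e_{\delta/2}(x)}p^\e(\cdot,s)\le Cs$. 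Comparing the two estimates, $\delta^2\le 4Cs\,e^{C_0(s+1)}=:\eta(s)^2$, and $\eta(s)\to0$ as claimed. Now apply Lemma~\ref{lem: control_FB_time} on the single interval $(s,t)$, $0<s<t$: $\db_t(p^\e)\subseteq\db_s(p^\e)+B_{C\sqrt{t-s}}\subseteq\db_0^\e+B_{\eta(s)+C\sqrt t}$, where $C$ is the universal constant of that lemma and I used $\sqrt{t-s}\le\sqrt t$. Since this holds for \emph{every} $s\in(0,t)$ and $\eta(s)\to0$, every $x\in\db_t(p^\e)$ satisfies $d(x,\db_0^\e)\le C\sqrt t$, i.e. $\db_t(p^\e)\subseteq\db_0^\e+B_{2C\sqrt t}$. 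The reason for routing through Lemma~\ref{lem: control_FB_time} rather than using the non-degeneracy estimate directly (which would only give $\db_t(p^\e)\subseteq\db_0^\e+B_{\eta(t)}$) is precisely that $\eta$ carries an exponential-in-time factor, which is harmless once $s\to0$, while $C\sqrt{t-s}\le C\sqrt t$ is never degraded.

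The first inclusion $\db_0^\e\subseteq\db_t(p^\e)$ is the delicate point, because the obstacle-problem formulation of Definition~\ref{def: weak solution p^epsilon} is also satisfied by $p^\e\equiv0$; one must use that $p^\e$ is \emph{the} solution with initial positive phase $\db_0^\e$, equivalently that $\partial_t p^\e|_{t=0^+}=u_0^\e$ is strictly positive on $\db_0^\e$, so that $\db_0^\e\subseteq\db_s(p^\e)$ for every $s>0$; the monotonicity $\db_s(p^\e)\subseteq\db_t(p^\e)$ (the first inclusion of Lemma~\ref{lem: control_FB_time}) then upgrades this to all $t>0$. Combining the two inclusions yields the corollary with constant $2C$, $C$ the universal constant of Lemma~\ref{lem: control_FB_time}; performing the whole argument on the almost-sure event on which Theorem~\ref{thm: nondegeneracy}, Proposition~\ref{prop: control_time} and Lemma~\ref{lem: control_FB_time} all hold keeps the conclusion $\mathbb{P}$-a.s.
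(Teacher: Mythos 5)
Your proof is correct and rightly flags a subtlety the paper leaves implicit: since $p^\e(\cdot,0)\equiv 0$ (the obstacle problem with $f^\e(\cdot,0)=-\mathbb{I}_{(\db_0^\e)^c}\le 0$ has only the trivial solution), one has $\db_0(p^\e)=\emptyset\neq\db_0^\e$, so Lemma~\ref{lem: control_FB_time} cannot simply be applied at $s=0$. Your $\eta(s)$/$s\to0^+$ detour handles this, but it is longer than necessary: one can rerun the argument of Lemma~\ref{lem: control_FB_time} directly with $s=0$ and $\db_0^\e$ playing the role of $\db_s(p^\e)$ --- if $x\in\overline{\db_t(p^\e)}$ with $\delta:=d(x,\db_0^\e)>0$, then Theorem~\ref{thm: nondegeneracy} at radius $r=\delta/2\le\tfrac12 d(x,\db_0^\e)$ together with $p^\e(\cdot,t)=p^\e(\cdot,t)-p^\e(\cdot,0)\le Ct$ from \eqref{eqn: control_time} gives $e^{-C_0(t+1)}(\delta/2)^2\le Ct$, which is precisely your $\eta$-bound with $s$ replaced by $t$ and no limit required. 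Two caveats. First, your claim that routing through Lemma~\ref{lem: control_FB_time} ``never degrades'' the constant (i.e.\ avoids the exponential-in-time factor) is too optimistic: the proof of that lemma itself invokes Theorem~\ref{thm: nondegeneracy}, so $e^{C_0(t+1)/2}$ is already buried inside its nominally universal $C$; neither route produces a genuinely $t$-independent constant, a looseness that belongs to the paper rather than to your argument. Second, for the inclusion $\db_0^\e\subseteq\db_t(p^\e)$ it is cleaner to invoke the iterative construction of Appendix~\ref{sec: construction of the obstacle problem}, which gives $p^\e\ge p_0$ with $p_0(\cdot,t)>0$ on $\db_0^\e$ for $t>0$, rather than the identity $\partial_t^- p^\e=u^\e$ and the positivity of $u_0^\e$, since the former facts are available at the point of Corollary~\ref{cor: distance from the initial phase} while the latter identity is only established afterward in Theorem~\ref{thm: evolution}.
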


%%%%%%%%%%%%%%%%%%%%%%%%%%%%%%%%%%%%%%%%%%%%%%%%%%%%%%%%%%%%%%%%%%%%%%%%%%%%%%%%%%%%%%%%%%%%%%%%%%%%%%%%%
%%%%%%%%%%%%%%%%%%%%%%%%%%%%%%%%%%%%%%%%%%%%%%%%%%%%%%%%%%%%%%%%%%%%%%%%%%%%%%%%%%%%%%%%%%%%%%%%%%%%%%%%%
%%%%%%%%%%%%%%%%%%%%%%%%%%%%%%%%%%%%%%%%%%%%%%%%%%%%%%%%%%%%%%%%%%%%%%%%%%%%%%%%%%%%%%%%%%%%%%%%%%%%%%%%%
%%%%%%%%%%%%%%%%%%%%%%%%%%%%%%%%%%%%%%%%%%%%%%%%%%%%%%%%%%%%%%%%%%%%%%%%%%%%%%%%%%%%%%%%%%%%%%%%%%%%%%%%%
%%%%%%%%%%%%%%%%%%%%%%%%%%%%%%%%%%%%%%%%%%%%%%%%%%%%%%%%%%%%%%%%%%%%%%%%%%%%%%%%%%%%%%%%%%%%%%%%%%%%%%%%%
%%%%%%%%%%%%%%%%%%%%%%%%%%%%%%%%%%%%%%%%%%%%%%%%%%%%%%%%%%%%%%%%%%%%%%%%%%%%%%%%%%%%%%%%%%%%%%%%%%%%%%%%%
%%%%%%%%%%%%%%%%%%%%%%%%%%%%%%%%%%%%%%%%%%%%%%%%%%%%%%%%%%%%%%%%%%%%%%%%%%%%%%%%%%%%%%%%%%%%%%%%%%%%%%%%%
%%%%%%%%%%%%%%%%%%%%%%%%%%%%%%%%%%%%%%%%%%%%%%%%%%%%%%%%%%%%%%%%%%%%%%%%%%%%%%%%%%%%%%%%%%%%%%%%%%%%%%%%%
\section{Convergence of the obstacle problems}\label{sec: convergence obstacle problem}

    Due to the theory in Section \ref{sec: elliptic regularity}, the functions $p^\epsilon$ are uniformly continuous in compact subsets of $\mathcal{O}_\epsilon \times \mathbb{R}$ and bounded in $H^1_{\text{loc}}(\mathcal{O}_\epsilon)$. Thus,$\pal$ from any sequence $\epsilon_k \to 0$ we may extract a uniformly converging subsequence. In this section we identify the limit of this sequence as the unique solution of a homogeneous free boundary problem. 

    \begin{thm}\label{thm: convergence of obstacle problems}
	     Let $p:\mathbb{R}^d\times \mathbb{R}_+\to\mathbb{R}$ be the (unique) weak solution to
         \begin{equation}\label{eqn: homogeneous obstacle evolution problem}	
              \left \{ \begin{array}{rrll}
	          -\text{div}(A \nabla  p) & = & \mu\left ( -\mathbb{I}_{\db_0^c}+\int_0^t \mathbb{I}_{\{ p(\cdot,s)>0\}}\;ds\right) & \text{ in }  \{ p>0\} \\\
	           p=|\nabla   p| & = & 0 & \text{ on } \partial \{ p >0 \}\\
              \end{array}\right.
         \end{equation}
         Then, if $\{p^\epsilon\}_\epsilon$ are weak solutions of \eqref{eqn: epsilon obstacle problem} with initial data $\db_0^\epsilon$, we have $p^\epsilon \to p\pal$ in $L^p$ norm for every $p \in [1,\infty]$ (see Definition \ref{def: notions of convergence}). Here $A$ is the effective diffusivity of $\mathcal{O}$ (see Section \ref{sec: linear homogenization revisited}, Definition \ref{def: A_0}) and $\mu \in [0,1]$ is the volume density of $\mathcal{O}$ given in Definition \ref{def: mu}.
    \end{thm}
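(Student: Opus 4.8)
The plan is to combine three ingredients already prepared in the excerpt: (a) the uniform interior regularity of $p^\epsilon$ from Theorem \ref{thm: elliptic estimates} together with the non-degeneracy from Theorem \ref{thm: nondegeneracy} and the time-regularity from Proposition \ref{prop: control_time}; (b) the linear stochastic homogenization theory for the Neumann Laplacian in $\mathcal{O}_\epsilon$ (Section \ref{sec: linear homogenization revisited}); and (c) the uniqueness of the weak solution to the limiting obstacle problem \eqref{eqn: homogeneous obstacle evolution problem}. First I would fix $t>0$ and a sequence $\epsilon_k\to 0$. Corollary \ref{cor: distance from the initial phase} and Lemma \ref{lem: control_FB_time} confine the supports $\db_s(p^{\epsilon_k})$ to a fixed compact set for $s$ in a compact time interval, so the $p^{\epsilon_k}$ are uniformly bounded and (by Theorem \ref{thm: elliptic estimates} plus Proposition \ref{prop: control_time}) equi-H\"older in $x$ and Lipschitz in $t$ on compacta of $\mathcal{O}_{\epsilon_k}\times\mathbb{R}_+$. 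Hence $\mathbb{P}$-a.s.\ a diagonal/Arzel\`a--Ascoli argument (made precise as in Proposition \ref{prop: uniform convergence of p^epsilon}) extracts a further subsequence along which $p^{\epsilon_k}$ converges uniformly on compacta (in the sense of Definition \ref{def: notions of convergence}) to some continuous $\tilde p:\mathbb{R}^d\times\mathbb{R}_+\to\mathbb{R}_+$, with $\tilde p$ non-decreasing in $t$ and vanishing outside the same fixed compact set; and the gradients converge weakly in $L^2_{\mathrm{loc}}(\bfb_{\epsilon_k})$.

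Next I would identify the equation solved by $\tilde p$ for each fixed $t$. The crucial point is the convergence of the right-hand sides: I claim
\begin{equation*}
    \mathbb{I}_{\mathcal{O}_{\epsilon_k}}\,f^{\epsilon_k}(\cdot,t)\,\mathbb{I}_{\{p^{\epsilon_k}(\cdot,t)>0\}}\;\rightharpoonup\;\mu\Big(-\mathbb{I}_{\db_0^c}+\int_0^t\mathbb{I}_{\{\tilde p(\cdot,s)>0\}}\,ds\Big)\mathbb{I}_{\{\tilde p(\cdot,t)>0\}}\quad\text{in }L^2_{\mathrm{loc}}(\mathbb{R}^d).
\end{equation*}
Granting this, the linear homogenization theory of Section \ref{sec: linear homogenization revisited} (homogenization of the Neumann-Laplacian Dirichlet problem in $\mathcal{O}_\epsilon$, with effective matrix $A$ from Definition \ref{def: A_0}) applied on large balls, combined with the uniform convergence $p^{\epsilon_k}\to\tilde p$ and the decay at infinity, forces $\tilde p(\cdot,t)$ to be the weak solution of $-\dive(A\nabla\tilde p)=\mu(-\mathbb{I}_{\db_0^c}+\int_0^t\mathbb{I}_{\{\tilde p(\cdot,s)>0\}}ds)$ in $\{\tilde p(\cdot,t)>0\}$ with $\tilde p=|\nabla\tilde p|=0$ on the free boundary; i.e.\ $\tilde p$ is a weak solution of \eqref{eqn: homogeneous obstacle evolution problem}. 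Since that problem has a unique weak solution (with initial base $\db_0$ as in the standing assumptions, using star-shapedness or $C^1$ regularity of $\partial\db_0$ as in \cite{K03}), $\tilde p=p$. As the limit is independent of the chosen subsequence, the whole family converges: $p^\epsilon\to p$ $\mathbb{P}$-a.s.\ uniformly on compacta, hence in every $L^p_{\mathrm{loc}}$, $p\in[1,\infty]$.

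The main obstacle is proving the weak convergence of the right-hand sides, and specifically the convergence of the characteristic functions $\mathbb{I}_{\{p^{\epsilon_k}(\cdot,t)>0\}}\to\mathbb{I}_{\{\tilde p(\cdot,t)>0\}}$ in $L^1_{\mathrm{loc}}$. Upper semicontinuity of supports is easy from uniform convergence: $\{\tilde p(\cdot,t)>0\}$ is open and any compactly contained subset is eventually inside $\{p^{\epsilon_k}(\cdot,t)>0\}$. The reverse inclusion---that the supports cannot collapse, i.e.\ $|\{p^{\epsilon_k}(\cdot,t)>0\}\setminus(\{\tilde p(\cdot,t)>0\}+B_\rho)|\to 0$---is exactly where the non-degeneracy estimate of Theorem \ref{thm: nondegeneracy} is indispensable: a free-boundary point of $p^{\epsilon_k}$ carries, in every ball $\mathcal{B}^{\epsilon_k}_r$ around it with $r\le\tfrac12 d(x,\db_0^{\epsilon_k})$, a value $\sup p^{\epsilon_k}\ge e^{-C_0(t+1)}r^2$, which passes to the uniform limit and shows $\tilde p>0$ there; this prevents the positivity sets from developing thin tentacles that disappear in the limit. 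This is the content of the "free sets limit" lemma (Lemma \ref{lem: free sets limit}), and once it is in hand, together with the weak-$L^2$ convergence $\mathbb{I}_{\mathcal{O}_{\epsilon_k}}\rightharpoonup\mu$ from Definition \ref{def: mu} and dominated convergence for the time integral $\int_0^t\mathbb{I}_{\{p^{\epsilon_k}(\cdot,s)>0\}}ds\to\int_0^t\mathbb{I}_{\{\tilde p(\cdot,s)>0\}}ds$, the product converges weakly as claimed (care is needed only at the free boundary at time $t$, which has measure zero by non-degeneracy plus the quadratic upper bound of Lemma \ref{lem: quadratic upper bound p^ep}, so $\mathbb{I}_{\{\tilde p(\cdot,t)>0\}}$ is the a.e.\ limit).
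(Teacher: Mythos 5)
Your proposal is correct and follows essentially the same route as the paper: extract a locally uniformly convergent subsequence via the elliptic estimates and time-Lipschitz bound (the paper's Proposition \ref{prop: uniform convergence of p^epsilon}), use the non-degeneracy estimate of Theorem \ref{thm: nondegeneracy} to control the positivity sets and obtain $L^1$-convergence of the indicators and hence the weak convergence of the right-hand sides (the paper's Lemma \ref{lem: free sets limit} and Corollaries \ref{cor: L^1 convergence of the phases}, \ref{cor: tau limit}), then identify the limit via the linear stochastic homogenization Theorem \ref{thm: stochastic homogenization linear} and conclude by uniqueness. You have correctly pinpointed the single nontrivial point, namely that the free boundaries cannot develop vanishing tentacles, and its reliance on the non-degeneracy barrier construction.
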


    \begin{rem} The notion of weak solution for \eqref{eqn: homogeneous obstacle evolution problem} is the same as that given for \eqref{eqn: epsilon obstacle problem} in Definition \ref{def: weak solution p^epsilon} when $\mathcal{O}=\mathbb{R}^d$. The existence theory for \eqref{eqn: homogeneous obstacle evolution problem} above problem is discussed in Appendix \ref{sec: construction of the obstacle problem}. Uniqueness for this problem will later follow (Corollary \ref{cor: uniqueness for aux obstacle problems})  by combining the uniqueness results for the Hele-Shaw problem in $\mathbb{R}^d$ (see \cite{K03}) together with the theory developed first in \cite{KM09} and extended in Theorem \ref{thm: evolution} which is independent of this section.
    \end{rem}

	First, let us carry out the extraction of a converging subsequence in detail.

    \begin{prop}\label{prop: uniform convergence of p^epsilon}
    	 Consider weak solutions $p^\epsilon$ as in Definition \ref{def: weak solution p^epsilon} with some fixed initial data $\db_0^\epsilon$. Then, given a sequence $\epsilon_k>0$, $\epsilon_k \to 0$ we have that for $\mathbb{P}$-almost every $\omega$ there is a subsequence $\epsilon_k'$ and a continuous function $p:\mathbb{R}^d \times \mathbb{R}_+ \to \mathbb{R}$ with compact support such that for any $x \in \mathcal{O}_\epsilon(\omega)$, and $R,T>0$ we have
         \begin{equation*}
              \lim \limits_{k \to \infty}\sup \limits_{\mathcal{B}^\epsilon_R(x)\times [0,T]} |p^{\epsilon_k'}(x,t,\omega)-p(x,t)|=0.	
         \end{equation*}
    \end{prop}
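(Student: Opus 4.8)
The plan is to derive, uniformly in $\epsilon\in(0,1)$ and $\mathbb P$-almost surely in $\omega$, boundedness and equicontinuity of $\{p^\epsilon\}$ on compact space-time sets, to extend each slice $p^\epsilon(\cdot,t)$ off the perforated domain $\mathcal O_\epsilon$ to all of $\mathbb R^d$ without losing these properties, and then to invoke Arzel\`a--Ascoli together with a diagonal argument. Fix $T>0$ and a realization $\omega$ outside the $\mathbb P$-null set where any of Theorem \ref{thm: elliptic estimates}, Lemma \ref{lem: quadratic upper bound p^ep}, Proposition \ref{prop: control_time} or Corollary \ref{cor: distance from the initial phase} fails; since all the constants in those statements are universal, this null set is fixed once and for all. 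By Corollary \ref{cor: distance from the initial phase} the positive phase satisfies $\db_t(p^\epsilon)\subset K_T:=\overline{\db_0+B_{C\sqrt T}}$ for all $t\le T$ and $\epsilon\in(0,1)$, so from \eqref{eqn: definition of f^epsilon} we get $\|f^\epsilon(\cdot,t)\|_{L^\infty}\le 1+T$, and since $\{p^\epsilon(\cdot,t)>0\}$ has diameter at most $R_T:=\diam K_T$, choosing any free boundary point $x_0$ we have $\{p^\epsilon(\cdot,t)>0\}\subset B_{R_T}(x_0)$ and Lemma \ref{lem: quadratic upper bound p^ep} yields $\sup_{\mathcal O_\epsilon}p^\epsilon(\cdot,t)\le C(1+T)R_T^2=:C_T$.

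Next I would cover $K_T$ by finitely many unit balls and apply the H\"older estimate \eqref{eqn: holder} of Theorem \ref{thm: elliptic estimates} on each of them, controlling the $L^2$ term by the $L^\infty$ bound just obtained and using $\|f^\epsilon(\cdot,t)\|_{L^\infty}\le 1+T$; this produces a universal $\alpha\in(0,1)$ and a constant $C_T'$ with $[\,p^\epsilon(\cdot,t)\,]_{C^\alpha(\mathcal O_\epsilon)}\le C_T'$ for all $t\le T$ and $\epsilon\in(0,1)$. Combined with the uniform bound $|p^\epsilon(x,t_1)-p^\epsilon(x,t_2)|\le C|t_1-t_2|$ of Proposition \ref{prop: control_time}, the $p^\epsilon$ all obey, on $(\mathcal O_\epsilon\cap K_T)\times[0,T]$, the common modulus $h\mapsto C_T''(h^\alpha+h)$ and the bound $\|p^\epsilon\|_{L^\infty}\le C_T$. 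I would then extend each slice to $\mathbb R^d$ by the infimal-convolution formula $\hat p^\epsilon(x,t):=\inf_{y\in\mathcal O_\epsilon}\bigl(p^\epsilon(y,t)+C_T'\,|x-y|^\alpha\bigr)$: since $0\le p^\epsilon\le C_T$ with $[p^\epsilon(\cdot,t)]_{C^\alpha(\mathcal O_\epsilon)}\le C_T'$, the function $\hat p^\epsilon(\cdot,t)$ is $C^\alpha$ on $\mathbb R^d$ with seminorm $\le C_T'$, agrees with $p^\epsilon(\cdot,t)$ on $\mathcal O_\epsilon$, is bounded by a universal multiple of $C_T$ (as $\mathcal O_\epsilon$ is $O(\epsilon)$-dense by Assumption \ref{hyp: regularity}), and inherits the Lipschitz-in-$t$ modulus; multiplying by a fixed cutoff equal to $1$ on $K_T$ I may also take $\hat p^\epsilon(\cdot,t)$ supported in $K_T+B_1$ without changing it on $\mathcal O_\epsilon$. (Alternatively one could use the Calder\'on--Stein extension of Theorem \ref{thm: extension for minimally smooth domains} across the perforations.)

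Thus $\{\hat p^\epsilon\}_{\epsilon\in(0,1)}$ is, for the fixed $\omega$, a uniformly bounded and equicontinuous family on $\mathbb R^d\times[0,T]$ supported in a fixed compact set. Given the sequence $\epsilon_k\to0$, Arzel\`a--Ascoli on $\overline{B_j}\times[0,j]$ together with a Cantor diagonal argument over $j\in\mathbb N$ produces a subsequence $\epsilon_k'$ and a function $p\in C(\mathbb R^d\times\mathbb R_+)$ such that $\hat p^{\epsilon_k'}\to p$ uniformly on each $\overline{B_j}\times[0,j]$; since $\hat p^{\epsilon_k'}(\cdot,t)$ is supported in $K_t+B_1$, so is $p(\cdot,t)$, hence $p$ has compact support. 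Finally, because $\hat p^{\epsilon_k'}=p^{\epsilon_k'}$ on $\mathcal O_{\epsilon_k'}$, for any $x$ and $R,T>0$ we get $\sup_{\mathcal B^{\epsilon_k'}_R(x)\times[0,T]}|p^{\epsilon_k'}-p|\to0$, which is the assertion. The main point requiring care is not a genuine obstacle---the substantive inputs (the $\epsilon$-uniform elliptic estimates, the quadratic bound, and the nondegeneracy-based support control) are already in place---but rather the bookkeeping: one must check that all constants are universal so the $\mathbb P$-null set can be frozen before extracting, and that the off-domain extension simultaneously preserves equicontinuity, the sup bound, continuity in $t$, and compact support.
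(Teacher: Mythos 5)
Your proposal follows the same route as the paper's proof: use the quadratic bound, the H\"older estimate of Theorem \ref{thm: elliptic estimates}, and Proposition \ref{prop: control_time} to get $\epsilon$-uniform bounds and equicontinuity, extend $p^\epsilon$ off $\mathcal O_\epsilon$ to a H\"older function on $\mathbb R^d\times\mathbb R_+$, then apply Arzel\`a--Ascoli with a Cantor diagonal argument. The only difference is that you spell out the extension explicitly (via infimal convolution) and track the constants and the null set carefully, where the paper states the existence of the extension more tersely; this is a matter of exposition, not of method.
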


    \begin{proof}
    	 For each $\epsilon$ and $\mathbb{P}$-almost every $\omega$ let $\tilde p^\epsilon: \mathbb{R}^d\times \mathbb{R}_+\to\mathbb{R}$ be a H\"older continuous function which agrees with $p^\epsilon(\omega)$ in $\mathcal{O}_\epsilon(\omega)$. That such a $\tilde p^\epsilon$ exists$\pal $ is a consequence of Lemma \ref{lem: quadratic upper bound p^ep} and Proposition \ref{prop: control_time}, which also tell us that the H\"older norm of $p^\epsilon$, and thus that of $\tilde p^\epsilon$, is bounded uniformly for $\epsilon\in (0,1)$.

         Then, given any sequence $\epsilon_k \to 0$ we only need to apply the Arzela-Ascoli Theorem and a Cantor diagonalization argument to get a H\"older continuous function $p(x,t)$ such that $\tilde p^{\epsilon_k} \to p$ uniformly in compact subsets of $\mathbb{R}^d\times \mathbb{R}_+$, that is
         \begin{equation*}
              \lim \limits_{k \to \infty}\sup \limits_{B_R(x)\times [0,T]} |\tilde p^{\epsilon_k'}(x,t,\omega)-p(x,t)|=0	
         \end{equation*}
         for any $R,T>0$. Since $\tilde p^{\epsilon_k} \equiv p^{\epsilon_k}$ in $\mathcal{B}^\epsilon_R(x)\times [0,T] \subset B_R(x)\times [0,T]$ by construction, we are done.
    \end{proof}

    As outlined in Section \ref{subsec: strategy}, we will use the free boundary estimates from Section \ref{sec: weak fb results} to show that
    \begin{equation*}
         f^\epsilon \mathbb{I}_{\mathcal{O}_\epsilon} \rightharpoonup \mu f\;\;\text{ in } L^2(\mathbb{R}^d),	
    \end{equation*}
    where $f(x,t)$ is given in terms of the putative limit $p(x,t)$ by 
    \begin{equation*}
         f(\cdot,t) = -\mathbb{I}_{\db_0^c}+\int_0^t\mathbb{I}_{\{p(\cdot,s)>0\} }\;ds.	
    \end{equation*}
    From here, stochastic homogenization theory (see Section \ref{sec: linear homogenization revisited}) will imply Theorem \ref{thm: convergence of obstacle problems}. The weak convergence stated above will be guaranteed by the next lemma. For convenience we let $D_t(p^\epsilon):= \{ x \mid  p^\epsilon(x,t)>0\}$ as in Section \ref{sec: weak fb results}.

%    Heuristically, if $p^\epsilon$ is to converge to a function $p$ which solves the obstacle problem in $\mathbb{R}^d$, then one expects $\partialom \{ p^\epsilon>0\}$ to have finite $n-1$ dimensional Hausdorff measure (see \cite[Corollary 5]{Ca1998}). Thus, to estimate the $L^1$ convergence of the the sets $\{ p^\epsilon>0\}$ to the set $\{ p>0\}$ we may move a bit away from $\partialom\{p^\epsilon>0\}$ and the apply the estimate from Theorem \ref{thm: nondegeneracy}. We do not use the exact Hausdorff dimension bound but it serves as motivation for the next lemma.

   \begin{lem}\label{lem: free sets limit}
    	 The following holds $\pal$ in $\omega$: fix $t>0$, and consider a sequence $\epsilon_k \to 0$ such that the functions $p^{\epsilon_k}(\cdot,\omega,t)$ converge uniformly to a function $p(x)$. Then for every $\delta>0$ there exists $k$ such that whenever $k>k_0$ we have that $\db_t(p^{\epsilon_k})$ lies entirely in a $\delta$ neighborhood of $\{ p>0\} \cap \mathcal{O}_{\epsilon_k}$ and that $\{ p>0\} \cap \mathcal{O}_{\epsilon_k}$ lies in a $\delta$ neighborhood of $\db_t(p^{\epsilon_k})$.
    \end{lem}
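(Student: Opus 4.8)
The plan is to establish the two inclusions separately, both relying crucially on the quadratic upper bound (Lemma~\ref{lem: quadratic upper bound p^ep}) and the non-degeneracy estimate (Theorem~\ref{thm: nondegeneracy}), the latter being the real workhorse. Suppose, for contradiction, that one of the two inclusions fails along some subsequence. For the first inclusion, suppose there is $\delta>0$ and points $x_k \in \db_t(p^{\epsilon_k})$ with $d(x_k, \{p>0\} \cap \mathcal{O}_{\epsilon_k}) \geq \delta$ for infinitely many $k$. After restricting attention to a bounded region (all the $\db_t(p^{\epsilon_k})$ lie in a fixed ball by Corollary~\ref{cor: distance from the initial phase}), we may pass to a further subsequence so that $x_k \to x_\infty$. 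For $k$ large, $\mathcal{B}^{\epsilon_k}_{\delta/2}(x_k) \cap \{p>0\} = \emptyset$, so $p \equiv 0$ on $B_{\delta/2}(x_\infty)$ (shrinking $\delta$ slightly and using $x_k\to x_\infty$); hence $p^{\epsilon_k} \to 0$ uniformly on $\mathcal{B}^{\epsilon_k}_{\delta/4}(x_k)$. On the other hand, since $x_k \in \db_t(p^{\epsilon_k}) = \overline{\{p^{\epsilon_k}>0\}}$, we may also arrange (using continuity of $p^{\epsilon_k}(\cdot,t)$ from Lemma~\ref{lem: continuity of obstacle solutions}) that $x_k \in \overline{\{p^{\epsilon_k}(\cdot,t)>0\}}$, so Theorem~\ref{thm: nondegeneracy} applies: provided $\delta/4 \leq \tfrac{1}{2}d(x_k,\db^{\epsilon_k}_0)$ — which holds for $k$ large since the $x_k$ are bounded away from $\db_0$ once $\delta$ is small, or else one uses Corollary~\ref{cor: distance from the initial phase} to relate the radius to $t$ — we get $\sup_{\mathcal{B}^{\epsilon_k}_{\delta/4}(x_k)} p^{\epsilon_k}(\cdot,t) \geq e^{-C_0(t+1)}(\delta/4)^2 =: c(\delta,t) > 0$, a fixed positive constant independent of $k$. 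This contradicts the uniform convergence $p^{\epsilon_k}(\cdot,t)\to p$ together with $p\equiv 0$ near $x_\infty$.

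For the second inclusion, suppose instead that $\{p>0\} \cap \mathcal{O}_{\epsilon_k}$ is \emph{not} contained in a $\delta$-neighborhood of $\db_t(p^{\epsilon_k})$ for infinitely many $k$: there are points $y_k \in \mathcal{O}_{\epsilon_k}$ with $p(y_k)>0$ but $d(y_k,\db_t(p^{\epsilon_k})) \geq \delta$, i.e. $\mathcal{B}^{\epsilon_k}_\delta(y_k) \cap \{p^{\epsilon_k}(\cdot,t)>0\} = \emptyset$, so $p^{\epsilon_k}\equiv 0$ on $\mathcal{B}^{\epsilon_k}_\delta(y_k)$. Passing to a subsequence with $y_k \to y_\infty$, uniform convergence forces $p(y_\infty) = 0$. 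But $p$ is continuous and $p(y_k)>0$ with $y_k \to y_\infty$ need not immediately give $p(y_\infty)>0$; to fix this, we instead choose $y_k$ in the interior where $p$ is bounded below: since $\{p>0\}$ is open and nonempty wherever the statement is being tested, pick a point $z$ with $p(z) > 0$ and a small ball $B_\rho(z) \subset \{p>0\}$ on which $p \geq \eta > 0$; if the containment fails, it must fail near such a point, and then $p^{\epsilon_k}(\cdot,t)$ cannot converge uniformly to something $\geq \eta$ on $\mathcal{B}^{\epsilon_k}_\rho(z)$ while vanishing identically there — contradiction. (One also needs $\{p>0\}\cap\mathcal{O}_{\epsilon_k}$ to be nonempty and to ``see'' these interior points, which follows since $\mathcal{O}_{\epsilon_k}$ becomes dense in the relevant scale.)

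The main obstacle I expect is the bookkeeping around the hypothesis of Theorem~\ref{thm: nondegeneracy}, namely the constraint $r \leq \tfrac{1}{2}d(x,\db^\epsilon_0)$: the ``bad'' points $x_k$ might in principle sit close to the initial droplet base $\db_0$, where the non-degeneracy radius is forced to be small. This is handled by splitting into two cases — points far from $\db_0$ (where the argument above runs directly with $r = \delta/4$) and points within a fixed distance of $\db_0$, where one instead uses that near $\db_0$ the set $\db_t(p^{\epsilon_k})$ already contains $\db_0^{\epsilon_k}$ (Corollary~\ref{cor: distance from the initial phase}) and $\db_0^{\epsilon_k}$ approximates $\db_0$, together with the no-cracks assumption on $\db_0^\epsilon$, so the first inclusion is essentially automatic there. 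A secondary technical point is that $\{p^{\epsilon_k}>0\}$ and $\{p^{\epsilon_k}(\cdot,t)>0\}$ coincide (established in Section~\ref{subsec: strategy} / to be shown via Theorem~\ref{thm: evolution}), which lets us move freely between the space-time positivity set and the time-slice; within this lemma we only ever need the time-slice, so this is a nonissue. The uniform (in $\epsilon$) Hölder continuity of $p^{\epsilon_k}(\cdot,t)$ from Lemma~\ref{lem: quadratic upper bound p^ep}, though not strictly needed for the contradiction, makes the passage to limits along $y_k\to y_\infty$ clean.
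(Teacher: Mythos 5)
Your treatment of the first inclusion is correct and follows the paper's strategy: the point $x_k$ is in the positivity set of $p^{\epsilon_k}$, Theorem~\ref{thm: nondegeneracy} gives a quantitative lower bound for $\sup p^{\epsilon_k}$ on a small ball there, and $p\equiv 0$ on that ball forces the $L^\infty$ distance to stay bounded below. (The detour through the limit point $x_\infty$ is unnecessary --- one can read off the contradiction directly on $\mathcal B^{\epsilon_k}_{\delta/4}(x_k)$ --- but it is harmless. Your handling of the $d(x_k,\db_0^{\epsilon_k})$ constraint is also fine: if $x_k$ is within $\delta$ of $\db_0^{\epsilon_k}\subset\{p>0\}\cap\mathcal O_{\epsilon_k}$ then the contradiction hypothesis already fails, so the constraint is satisfied automatically.)

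The second inclusion is where there is a genuine gap. You correctly diagnose the difficulty --- the bad points $y_k\in\{p>0\}\cap\mathcal O_{\epsilon_k}$ might have $p(y_k)\to 0$, so $y_k\to y_\infty\in\partial\{p>0\}$ and the naive argument collapses --- but the proposed fix does not repair it. You pick a \emph{fixed} $z$ with $p(z)>0$ and a ball $B_\rho(z)\subset\{p>0\}$ where $p\geq\eta>0$, and then assert ``if the containment fails, it must fail near such a point.'' That is not true: the failure points $y_k$ are dictated by the sets $\db_t(p^{\epsilon_k})$ and can accumulate anywhere on $\partial\{p>0\}$, a set where $p$ is precisely \emph{not} bounded below, and there is no reason they ever come close to your chosen interior ball $B_\rho(z)$. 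In effect you have only ruled out failures at interior points, but the whole difficulty was at the boundary.

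The missing ingredient is that $p$ itself inherits the non-degeneracy property of Theorem~\ref{thm: nondegeneracy}. Concretely: for any $x\in\overline{\{p>0\}}$ and suitable $r>0$, one can find points $x'\in\{p>0\}$ near $x$ and then, for $k$ large, points $x_k\in\mathcal O_{\epsilon_k}\cap B_s(x')$ with $p^{\epsilon_k}(x_k)>0$ (using uniform convergence and the density of $\mathcal O_{\epsilon_k}$); applying Theorem~\ref{thm: nondegeneracy} to $p^{\epsilon_k}$ at $x_k$ and passing to the limit gives $\sup_{B_r(x)}p\geq c\,r^2$ for a fixed $c>0$. Once this is in hand, the second inclusion becomes a mirror image of the first: from the failure points $y_k\in\{p>0\}\subset\overline{\{p>0\}}$ you get $\sup_{B_{\delta/2}(y_k)}p\geq c(\delta/2)^2>0$ uniformly in $k$, yet $p^{\epsilon_k}\equiv 0$ on $\mathcal B^{\epsilon_k}_\delta(y_k)$, which again contradicts $\|p^{\epsilon_k}-p\|_{L^\infty(\mathcal O_{\epsilon_k})}\to 0$ after locating a point of $\mathcal O_{\epsilon_k}$ near the max. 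This is exactly the paper's route (``exchange the roles of $p^{\epsilon_k}$ and $p$''); without establishing the non-degeneracy of the limit $p$, the second inclusion does not go through.
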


    \begin{proof}
         For brevity we will omit $\omega$ throughout the proof. For each $\epsilon$ we define
         \begin{equation*}
              \rho_\epsilon := \max \limits_{x \in \overline{\db}_t(p^\epsilon)}\; \min_{y \in \overline{\db}_t(p)\cap\mathcal{O}_\epsilon}|x-y|,
         \end{equation*}	     
         all we want to show is that $\rho_{\epsilon_k} \to 0$ as $k\to+\infty$, let us omit the $k$ for now and simply write $\epsilon$. Note that the maximum and minimum above are always achieved, and we may assume without loss of generality that $\rho_{\epsilon}>0$. Therefore, we may pick a point $x_\epsilon \in \overline{\db}_t(p^\epsilon)$ such that
         \begin{equation*}
         	  p(\cdot,t) \equiv 0 \text{ in } \mathcal{B}^\epsilon_{\rho_\epsilon}(x_\epsilon). 
         \end{equation*}
         By Theorem \ref{thm: nondegeneracy}, we know that as long as $\rho_\epsilon\leq \tfrac{1}{2}d(x_\epsilon,\db_0^\epsilon)$, there is a point $\tilde x_\epsilon \in \mathcal{B}^\epsilon_{\rho_\epsilon}(x_\epsilon)$ such that 
         \begin{equation*}
              p^\epsilon(\tilde x_\epsilon,t)\geq e^{-C_0(t+1)}\rho_\epsilon^2.	
         \end{equation*}
         As a parenthesis, we note that the argument when $\tfrac{1}{2}d(x_\epsilon,\db_0^\epsilon)\leq \rho_\epsilon$ is straightforward, since $\db_0^\epsilon \subset \db_t(p^\epsilon)$ for all $t>0$, so we will ignore this case. Now, by the H\"older estimate from Theorem \ref{thm: elliptic estimates}, we have 
         \begin{equation*}
              p^\epsilon(\cdot,t)\geq e^{-C_0(t+1)}\rho_\epsilon^2-Cr^\alpha \text{ in } \mathcal{B}^\epsilon_{r}(\tilde x_\epsilon).
         \end{equation*}
         Taking $r=\left [\tfrac{1}{2C}e^{-C_0(t+1)}\rho_\epsilon^2\right ]^{1/\alpha}$ we obtain
         \begin{equation*}
              p^\epsilon(\cdot,t)\geq \tfrac{1}{2}e^{-C_0(t+1)}\rho_\epsilon^2 \text{ in } \mathcal{B}^\epsilon_{r}(\tilde x_\epsilon).
         \end{equation*} 
         Since $p \equiv 0$ in $\mathcal{B}^\epsilon_{\rho_\epsilon}(x_\epsilon)$ this means that
         \begin{equation*}
              \tfrac{1}{2}e^{-C_0(t+1)}\rho_\epsilon^2 \leq \|p^\epsilon(\cdot,t)-p\|_{L^\infty(\mathcal{O}_\epsilon)}.
         \end{equation*}
         By assumption the quantity on the right is going to zero along $\epsilon_k$ as $\epsilon_k \to 0$, so the inequality above says that $\rho_{\epsilon_k} \to 0$ as well. A similar argument handles the the quantity
         \begin{equation*}
              \rho^\epsilon := \max \limits_{x \in \overline{\db}_t(p)\cap\mathcal{O}_\epsilon}\; \min_{y \in \overline{\db}_t(p^\epsilon)}|x-y|,
         \end{equation*}
         so one can also show that $\rho^{\epsilon_k} \to 0$ as $\epsilon_k \to 0$. We only need to note that $p$ inherits the non-degeneracy property of Theorem \ref{thm: nondegeneracy} from $p^\epsilon$ due to the uniform convergence, so we can exchange the roles of $p^{\epsilon_k}$ and $p$ in the previous argument.

         This proves the first part of the lemma: all we need to do for any given $\delta$ is to take  $k_0$ be such that $\max\{ \rho_{\epsilon_k},\rho^{\epsilon_k}\}<\delta$ for all $k>k_0$. To deal with the convergence of the integral, note that $\{p(\cdot,t)=0\}$ is a measurable set and thus $\partial\{p(\cdot,t)=0\}$ has Lebesgue measure zero. Combining this with the fact that the domains $\db_t(p^{\epsilon_k})$ are uniformly bounded sets (using that $\db_0$ is bounded and Corollary \ref{cor: distance from the initial phase}), we conclude that the volume of a neighborhood of $\partial \{ p(\cdot,t)=0\}$ can be made arbitrarily small. The first part of the Lemma implies that for any $\delta$ we can pick $k_0$ such that if $N_\delta$ denotes a $\delta$-neighborhood of $\partial \{ p(\cdot,t)=0\}$ then as long as $k>k_0$ we have 
         \begin{equation*}
              \int_{\mathcal{O}_{\epsilon_k}(\omega)}|\mathbb{I}_{_{\{p^{\epsilon_k}(\cdot,\omega,t)>0\}}}-\mathbb{I}_{_{\{p(\cdot)>0\}}}|\;dx =              \int_{\mathcal{O}_{\epsilon_k}(\omega) \cap N_\delta }|\mathbb{I}_{_{\{p^{\epsilon_k}(\cdot,\omega,t)>0\}}}-\mathbb{I}_{_{\{p(\cdot)>0\}}}|\;dx.
         \end{equation*}
         The quantity on the right is bounded from above by the volume of $N_\delta$, which can be made arbitrary small, and we conclude that the limit of the integral is zero, as we wanted.
    \end{proof}	

    \begin{cor}\label{cor: L^1 convergence of the phases}
    	 Under the same assumptions as the previous Lemma, we have $\mathbb{P}$-a.s. in $\omega$ that
         \begin{equation*}
              \lim \limits_{k \to \infty}\int_{\mathcal{O}_{\epsilon_k}(\omega)}|\mathbb{I}_{_{\{p^{\epsilon_k}(\cdot,\omega,t)>0\}}}-\mathbb{I}_{_{\{p(\cdot)>0\}}}|\;dx = 0.
         \end{equation*}
    \end{cor}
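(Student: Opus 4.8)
The statement is essentially the content of the final paragraph of the proof of Lemma~\ref{lem: free sets limit}, so the plan is to isolate that argument cleanly. Set $\eta_k := \|p^{\epsilon_k}(\cdot,\omega,t)-p\|_{L^\infty(\mathcal{O}_{\epsilon_k})}$, which tends to $0$ by hypothesis, and split the set where the two indicators disagree into
\[
E_k^1 := \{p>0\}\cap\{p^{\epsilon_k}(\cdot,\omega,t)=0\}\cap\mathcal{O}_{\epsilon_k},\qquad
E_k^2 := \{p=0\}\cap\{p^{\epsilon_k}(\cdot,\omega,t)>0\}\cap\mathcal{O}_{\epsilon_k},
\]
so that the integral in the statement is $|E_k^1|+|E_k^2|$. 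By Corollary~\ref{cor: distance from the initial phase} all the sets $\db_t(p^{\epsilon_k})$, as well as $\{p(\cdot,t)>0\}$, are contained in a fixed ball $B_R$, so it suffices to prove $|E_k^1|\to0$ and $|E_k^2|\to0$ along the given subsequence.

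For $E_k^1$ I would use the uniform convergence directly: on $E_k^1$ one has $0<p(x)=p(x)-p^{\epsilon_k}(x,\omega,t)\le\eta_k$, hence $E_k^1\subset\{0<p(\cdot,t)\le\eta_k\}\cap B_R$. Since $\bigcap_{\eta>0}\{0<p(\cdot,t)\le\eta\}=\emptyset$ and these are subsets of the finite-measure set $B_R$, we get $|\{0<p(\cdot,t)\le\eta\}\cap B_R|\to0$ as $\eta\to0$, and therefore $|E_k^1|\to0$. For $E_k^2$ I would invoke Lemma~\ref{lem: free sets limit}: given $\delta>0$, for $k$ large every $x\in E_k^2\subset\db_t(p^{\epsilon_k})$ lies within distance $\delta$ of a point $y\in\{p>0\}\cap\mathcal{O}_{\epsilon_k}$, and since $p(x)=0$ and $p$ is continuous the segment $[x,y]$ meets $\partial\{p(\cdot,t)>0\}$ at a point within $\delta$ of $x$; thus $E_k^2$ lies in the $\delta$-neighborhood $N_\delta$ of $\partial\{p(\cdot,t)>0\}$, giving $|E_k^2|\le|N_\delta\cap B_R|$. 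Hence everything reduces to showing $|\partial\{p(\cdot,t)>0\}|=0$.

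This last fact is the real point of the corollary, and I would establish it from non-degeneracy. First, $p(\cdot,t)$ inherits, via the uniform limit, both the quadratic upper bound of Lemma~\ref{lem: quadratic upper bound p^ep} and the non-degeneracy of Theorem~\ref{thm: nondegeneracy} (for the latter one also uses that the perforations of $\mathcal{O}_{\epsilon_k}$ shrink, so $\sup_{\mathcal{B}^{\epsilon_k}_r(x)}p^{\epsilon_k}\to\sup_{B_r(x)}p$). Then, for a fixed $x\in\partial\{p(\cdot,t)>0\}$ and small $r$, non-degeneracy gives $y\in B_r(x)$ with $p(y)\ge c r^2$; letting $\rho=d(y,\{p=0\})$, if $\rho<r/2$ the quadratic upper bound at the nearest free boundary point to $y$ forces $c r^2\le p(y)\le C\rho^2$, so $\rho\ge c_1 r$ in all cases. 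Consequently $B_{c_1 r}(y)\subset\{p>0\}\cap B_{2r}(x)$, so $\{p(\cdot,t)>0\}$ has density at least a fixed $\theta_0>0$ at $x$. Since $\partial\{p(\cdot,t)>0\}\subset\{p(\cdot,t)=0\}$, the Lebesgue density theorem rules out $|\partial\{p(\cdot,t)>0\}|>0$. Combining the three estimates yields $|E_k^1|+|E_k^2|\to0$ $\mathbb{P}$-a.s., which is the claim.

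The main obstacle is precisely this measure-zero statement: transferring the quadratic growth bound and the non-degeneracy estimate to the limit $p$ is routine from uniform convergence, but one must then convert the resulting two-sided quadratic control near the free boundary into a uniform lower bound for the density of $\{p>0\}$ at free-boundary points; everything else is bookkeeping built on Lemma~\ref{lem: free sets limit}, the uniform convergence, and the uniform boundedness of the supports.
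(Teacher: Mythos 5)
Your proposal is correct and follows the same overall scheme as the paper (reduce the symmetric difference to a neighborhood of $\partial\{p(\cdot,t)>0\}$ via Lemma~\ref{lem: free sets limit}, then invoke that the free boundary is Lebesgue-null), but it improves the paper's argument in two concrete ways. First, you handle the piece $E_k^1$ (where $p>0$ but $p^{\epsilon_k}=0$) directly from the $L^\infty$-convergence, observing $E_k^1\subset\{0<p(\cdot,t)\le\eta_k\}$, which is cleaner than the paper's treatment (the paper lumps this piece into the $N_\delta$ bound, which really also needs a uniform positive lower bound for $p$ at distance $\ge\delta$ from the free boundary, a compactness step the paper does not spell out). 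Second, and more importantly, the paper's stated reason that $|\partial\{p(\cdot,t)=0\}|=0$ — namely that ``$\{p(\cdot,t)=0\}$ is a measurable set and thus $\partial\{p(\cdot,t)=0\}$ has Lebesgue measure zero'' — is not a valid inference (a measurable, even closed, set can have a boundary of positive measure, e.g.\ a fat Cantor set). Your density argument is the correct justification and is the standard one for obstacle problems: combine the inherited non-degeneracy (Theorem~\ref{thm: nondegeneracy}) with the inherited quadratic upper bound (Lemma~\ref{lem: quadratic upper bound p^ep}) to obtain a uniform positive density of $\{p(\cdot,t)>0\}$ at every free-boundary point, then apply Lebesgue's density theorem. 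So your write-up is not merely an alternate route; it repairs a genuinely incorrect assertion in the paper's proof.

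Two minor points worth tightening in your density step: since the nearest free-boundary point $w$ satisfies $|y-w|=\rho$, you should apply the quadratic bound on $B_{2\rho}(w)$ rather than $B_\rho(w)$ (this only changes $c_1$); and the inheritance of Lemma~\ref{lem: quadratic upper bound p^ep} and Theorem~\ref{thm: nondegeneracy} to $p$ requires exhibiting free-boundary points $z_k\in\partialom\{p^{\epsilon_k}(\cdot,t)>0\}$ with $z_k\to x$, which you get from Corollary~\ref{cor: strong convergence of FB} (or, more simply, you can obtain both estimates for $p$ directly from the limiting obstacle problem \eqref{eqn: homogeneous obstacle evolution problem}, which is a constant-coefficient uniformly elliptic obstacle problem where the classical theory of Caffarelli applies verbatim).
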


    Integrating the previous convergence estimate with respect to time, we conclude that the functions $f^{\epsilon_k}$ converge in a strong enough norm to what we want.

    \begin{cor}\label{cor: tau limit}
         Let $t,\omega,\epsilon_k$, $p^{\epsilon_k}$ and $p$ be as in Lemma \ref{lem: free sets limit}. Then, for every $t\geq 0$ fixed
         \begin{equation*}
              f^{\epsilon_k}(x,\omega,t) \to f(x,t)=-\mathbb{I}_{\db_0^c}+\int_0^t\mathbb{I}_{\{p(\cdot,s)>0 \}}\;ds.	
         \end{equation*}
         This convergence takes place in the $L^2$ norm, meaning that
         \begin{equation*}
              \lim \limits_{k \to \infty}  \|f^{\epsilon_k}(x,\omega,t)-f(x,t)\|_{L^2(\mathcal{O}_{\epsilon_k})} = 0. 
         \end{equation*}
    \end{cor}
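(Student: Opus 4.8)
The plan is to reduce the statement to Corollary \ref{cor: L^1 convergence of the phases}, which already controls the positivity sets at a single time, and then to upgrade to the $L^2$ estimate by a soft dominated–convergence argument in the time variable together with an elementary interpolation inequality.

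\textbf{Reduction.} First I would note that on the set $\mathcal{O}_{\epsilon_k}$ the two ``initial data'' contributions to $f^{\epsilon_k}$ and $f$ coincide: since $\db_0^{\epsilon_k}=\db_0\cap\mathcal{O}_{\epsilon_k}$, the complement of $\db_0^{\epsilon_k}$ relative to $\mathcal{O}_{\epsilon_k}$ is $\mathcal{O}_{\epsilon_k}\setminus\db_0$, so $\mathbb{I}_{(\db_0^{\epsilon_k})^c}=\mathbb{I}_{\db_0^c}$ on $\mathcal{O}_{\epsilon_k}$. Hence, using the formula \eqref{eqn: definition of f^epsilon} for $f^{\epsilon_k}$, on $\mathcal{O}_{\epsilon_k}$ we have
\[
 f^{\epsilon_k}(\cdot,t)-f(\cdot,t)=h_k:=\int_0^t\big(\mathbb{I}_{\{p^{\epsilon_k}(\cdot,s)>0\}}-\mathbb{I}_{\{p(\cdot,s)>0\}}\big)\,ds ,
\]
so that the claim amounts to $\|h_k\|_{L^2(\mathcal{O}_{\epsilon_k})}\to 0$. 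Two a priori bounds are immediate and will be used repeatedly: since each integrand takes values in $[-1,1]$ we have $|h_k|\le t$ everywhere; and since $h_k(x)\neq 0$ forces $x\in\db_t(p^{\epsilon_k})\cup\db_t(p)$, Corollary \ref{cor: distance from the initial phase} together with the compact support of $p$ (cf. Proposition \ref{prop: uniform convergence of p^epsilon}) shows that every $h_k$ is supported in a fixed bounded set, say $\db_0+B_{C\sqrt{t}}$.

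\textbf{Main step.} Next I would run the following three observations. (i) For each fixed $s\in[0,t]$, restricting the space–time uniform convergence $p^{\epsilon_k}\to p$ of Proposition \ref{prop: uniform convergence of p^epsilon} to the time slice $\{s\}$ places us in the hypotheses of Lemma \ref{lem: free sets limit} with $t$ replaced by $s$, so Corollary \ref{cor: L^1 convergence of the phases} gives $g_k(s)\to 0$ as $k\to\infty$, where $g_k(s):=\int_{\mathcal{O}_{\epsilon_k}}\big|\mathbb{I}_{\{p^{\epsilon_k}(\cdot,s)>0\}}-\mathbb{I}_{\{p(\cdot,s)>0\}}\big|\,dx$. (ii) Because all the positivity sets $\db_s(p^{\epsilon_k})$, $\db_s(p)$ with $s\le t$ lie in the same fixed bounded set, $0\le g_k(s)\le C(t)$ uniformly in $k$ and in $s\in[0,t]$, so by dominated convergence $\int_0^t g_k(s)\,ds\to 0$. (iii) By Tonelli's theorem (the integrand is measurable and nonnegative), and since $|h_k(x)|\le\int_0^t|\mathbb{I}_{\{p^{\epsilon_k}(\cdot,s)>0\}}-\mathbb{I}_{\{p(\cdot,s)>0\}}|\,ds$,
\[
 \int_{\mathcal{O}_{\epsilon_k}}|h_k|\,dx\ \le\ \int_0^t g_k(s)\,ds\ \longrightarrow\ 0 .
\]
Finally, the interpolation bound $\|h_k\|_{L^2(\mathcal{O}_{\epsilon_k})}^2\le\|h_k\|_{L^\infty}\,\|h_k\|_{L^1(\mathcal{O}_{\epsilon_k})}\le t\int_{\mathcal{O}_{\epsilon_k}}|h_k|\,dx$ yields $\|f^{\epsilon_k}(\cdot,t)-f(\cdot,t)\|_{L^2(\mathcal{O}_{\epsilon_k})}\to 0$, which is the assertion.

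\textbf{Expected difficulty.} There is no genuine obstacle at this stage: all the substance sits in Lemma \ref{lem: free sets limit}/Corollary \ref{cor: L^1 convergence of the phases}, which rest on the non-degeneracy estimate (Theorem \ref{thm: nondegeneracy}) and the Hölder bound (Theorem \ref{thm: elliptic estimates}). The only points requiring a sentence of care are the cancellation of the initial-data indicators on the moving domain $\mathcal{O}_{\epsilon_k}$, the verification that the hypotheses of Lemma \ref{lem: free sets limit} persist on each time slice $s\le t$, and the uniform-in-$s$ bound on $g_k$ needed to pass the limit inside the time integral; each of these is routine given what has already been proved.
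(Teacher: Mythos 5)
Your proof is correct and follows essentially the same route as the paper's: both reduce to the pointwise-in-time convergence of the indicator functions from Corollary~\ref{cor: L^1 convergence of the phases}, then pass the limit through the time integral by dominated convergence with a uniform-in-$k$ bound. Your use of the $L^\infty$--$L^1$ interpolation is equivalent to the paper's Cauchy--Schwarz step, since the integrand $|\mathbb{I}_{\{p^{\epsilon_k}>0\}}-\mathbb{I}_{\{p>0\}}|$ is $\{0,1\}$-valued and hence coincides with its own square; the only additions are the explicit (and harmless) verification that the initial-data indicators agree on $\mathcal{O}_{\epsilon_k}$ and that Lemma~\ref{lem: free sets limit} applies on each time slice $s\le t$, both of which the paper treats implicitly.
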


    \begin{proof}
         Since the convergence is uniform in space for every fixed $t$, Lemma \ref{lem: free sets limit} and Corollary \ref{cor: L^1 convergence of the phases} say that
         \begin{equation}\label{eqn: tau limit 1}
              \lim\limits_{k \to \infty} \int_{\mathcal{O}_{\epsilon_k}} |\mathbb{I}_{_{\{p^{\epsilon_k}>0\}}}(x,t)-\mathbb{I}_{_{\{p>0\}}}(x,t)|^2\;dx=0\;\;\;\forall\;t\in (0,T).
         \end{equation}	
         Integrating in $t$, we obtain the following estimate
         \begin{equation*}
         \begin{array}{lll}
              \int_{\mathcal{O}_{\epsilon_k}}\left |f^{\epsilon_k}(x,t)-f(x,t) \right |^2\;dx	&\leq & \int_{\mathcal{O}_{\epsilon_k}}\left (\int_0^t\mathbb{I}_{_{\{p^{\epsilon_k}>0\}}}(x,s)-\mathbb{I}_{_{\{p>0\}}}(x,s) \;ds\right )^2\;dx\\ \\
              &\leq& \int_{\mathcal{O}_{\epsilon_k}}t\int_0^t|\mathbb{I}_{_{\{p^{\epsilon_k}>0\}}}(x,s)-\mathbb{I}_{_{\{p>0\}}}(x,s) |^2\;dsdx\\ \\
               &\leq& T\int_0^T \int_{\mathcal{O}_{\epsilon_k}}|\mathbb{I}_{_{\{p^{\epsilon_k}>0\}}}(x,s)-\mathbb{I}_{_{\{p>0\}}}(x,s) |^2\;dx\;ds.
              \end{array}
         \end{equation*}
         Then from \eqref{eqn: tau limit 1} and the dominated convergence theorem the corollary follows.
    \end{proof}

    With these lemmas at hand, we may now prove the convergence theorem.

    \begin{proof}[Proof of Theorem \ref{thm: convergence of obstacle problems}]
	
         First of all, for any vanishing sequence of $\epsilon$'s and$\pal$ we can find  a function $p$ and a subsequence $\epsilon_k$ such that  $p^{\epsilon_k}(\cdot,\omega,\cdot)$ converges to $p$ by applying Proposition \ref{prop: uniform convergence of p^epsilon}. If we show this $p$ is the unique weak solution to \eqref{eqn: homogeneous obstacle evolution problem} then this limit would be independent of the original vanishing sequence of $\epsilon$'s and $\omega \in \Omega$, meaning $p^\epsilon$ converges to $p$ and the theorem would be proven.

         Recalling that $f^{\epsilon_k}(x,t,\omega) = -\mathbb{I}_{(\db_0^\epsilon)^c}+\int_0^t\mathbb{I}_{\{p^{\epsilon_k}(\cdot,s)>0\} }\;ds $, we claim that
         \begin{equation}\label{eqn: weak convergence right hand sides for p^epsilon}
              \mathbb{I}_{\mathcal{O}_{\epsilon_k}}(x)\mathbb{I}_{\{p^{\epsilon_k}>0\}}(x,t)f^{\epsilon_k}(x,t)\rightharpoonup \mu\mathbb{I}_{\{p>0\}}(x,t) f(x,t)\;\text{ in } L^2,
         \end{equation}
         where $f = -\mathbb{I}_{\db_0^c}+\int_0^t\mathbb{I}_{\{p(\cdot,s)>0\}}\;ds$. Indeed, for any $\phi \in L^2(\mathbb{R}^d)$ we want to bound
         \begin{align*}
              I_1 = \left | \int_{\mathcal{O}_{\epsilon_k}} \mathbb{I}_{_{\{p^{\epsilon_k}>0\}}}f^{\epsilon_k}(x,t) \phi-\mathbb{I}_{_{\{p>0\}}}f(x,t) \phi \;dx\right |.
         \end{align*}
         After rearranging the terms, we see that 
         \begin{align*}
              I_1 &\leq \int_{\mathcal{O}_{\epsilon_k}} |\mathbb{I}_{_{\{p^{\epsilon_k}>0\}}}-\mathbb{I}_{_{\{p>0\}}}| |f^{\epsilon_k}(x,t) \phi| \;dx  + \int_{\mathcal{O}_{\epsilon_k}} |f^{\epsilon_k}(x,t) \phi-f(x,t)| |\mathbb{I}_{_{\{p>0\}}}\phi |\;dx\\
              &\leq \| f^{\epsilon_k}(.,t)\|_\infty \|\mathbb{I}_{_{\{p^{\epsilon_k}>0\}}}-\mathbb{I}_{_{\{p>0\}}}\|_1^{1/2}\|\phi\|_2+\|f^{\epsilon_k}(.,t)- f(.,t)\|_2\|\phi\|_2.
         \end{align*}
         The first term goes to zero thanks to Corollary \ref{cor: L^1 convergence of the phases} and the bound $\|f^{\epsilon_k}(.,t)\|_\infty\leq C(t)$ for all $\epsilon_k$. At the same time, the second term goes to zero due to Corollary \ref{cor: tau limit}. It follows that
         \begin{align*}
              \lim \limits_{\epsilon_k \to 0} \int \mathbb{I}_{_{\mathcal{O}_{\epsilon_k}}}\mathbb{I}_{_{\{p^{\epsilon_k}>0\}}}f^{\epsilon_k}(x,t) \phi \;dx & = \lim \limits_{\epsilon_k \to 0}\int \mathbb{I}_{_{\mathcal{O}_{\epsilon_k}}}\mathbb{I}_{_{\{p>0\}}}f(x,t) \phi \;dx.	
         \end{align*}
         On the other hand, Theorem \ref{thm: ergodic theorem} says that
         \begin{equation*}
              \lim\limits_{\epsilon \to 0} \int \mathbb{I}_{_{\mathcal{O}_\epsilon}}\psi\;dx = \mu \int \psi\;dx
         \end{equation*}
         for $\psi =\mathbb{I}_{_{\{p(\cdot,t)>0\}}}f(\cdot,t) \phi$, and \eqref{eqn: weak convergence right hand sides for p^epsilon} follows. In this case, we can apply Theorem \ref{thm: stochastic homogenization linear} and conclude that
         \begin{align*}
              \int \left (A \nabla p, \nabla \phi \right )\;dx & = \int \mu \left (\mathbb{I}_{\db_0^c}+\int_0^t \mathbb{I}_{\{p(\cdot,s)>0\}}\;ds\right ) \mathbb{I}_{\{p(\cdot,s)>0\}}\phi\;dx\;\;\forall\;\phi\in H^1. 
         \end{align*}
         where $A$ is the effective diffusivity defined in Section \ref{sec: linear homogenization revisited}. This shows that the limit $p(x,t)$ is the unique weak solution to \eqref{eqn: homogeneous obstacle evolution problem} and we are done.
    \end{proof}

    To finish, we remark that combining the uniqueness of the limit and Lemma \ref{lem: free sets limit} we have proved that the free boundaries $\partialom \{p^\epsilon(\cdot,t)>0\}$ converge uniformly to $\partial\{ p(\cdot,t)>0\}$ for every $t>0$.

    \begin{cor}\label{cor: strong convergence of FB}
    	 For $\mathbb{P}$-almost every $\omega$, $t>0$ and for every $\delta>0$ there exists $\epsilon_0$ such that for any $\epsilon<\epsilon_0$ we have that $\db_t(p^\epsilon)$ lies entirely in a $\delta$ neighborhood of $\db_t(p) \cap \mathcal{O}_\epsilon$ and $\db_t(p) \cap \mathcal{O}_\epsilon$ lies in a $\delta$ neighborhood of $\db_t(p^\epsilon)$.
    \end{cor}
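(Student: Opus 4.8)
The plan is to deduce the corollary from Theorem~\ref{thm: convergence of obstacle problems} and Lemma~\ref{lem: free sets limit} by upgrading the subsequential conclusion of the latter to a statement about the whole family $\{p^\epsilon\}$, using that the homogenized limit is unique. There is no new analytic input: everything has already been done in the construction of the special barrier (Lemma~\ref{lem: barrier construction}), the non-degeneracy estimate (Theorem~\ref{thm: nondegeneracy}), and the convergence theorem; the corollary only rearranges these.

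First I would fix an $\omega$ lying in the full-measure set on which the conclusion of Theorem~\ref{thm: convergence of obstacle problems} holds, and recall that (through Proposition~\ref{prop: uniform convergence of p^epsilon} together with uniqueness of the solution to \eqref{eqn: homogeneous obstacle evolution problem}) this yields $p^{\epsilon}(\cdot,\omega,\cdot)\to p$ uniformly on compact subsets of $\mathbb{R}^d\times\mathbb{R}_+$, where $p$ is the unique weak solution of \eqref{eqn: homogeneous obstacle evolution problem}. In particular, for every fixed $t>0$ and \emph{every} sequence $\epsilon_k\to 0$, the functions $p^{\epsilon_k}(\cdot,\omega,t)$ converge uniformly to $p(\cdot,t)$, so the hypothesis of Lemma~\ref{lem: free sets limit} is automatically met along any such sequence, with the specific limit $p$.

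Next I would argue by contradiction. Fix $t>0$ and $\delta>0$ and suppose there is no $\epsilon_0$ as in the statement. Then there exists a sequence $\epsilon_k\to 0$ such that for each $k$ at least one of the two inclusions fails, i.e. either $\db_t(p^{\epsilon_k})\not\subset(\db_t(p)\cap\mathcal{O}_{\epsilon_k})+B_\delta$ or $\db_t(p)\cap\mathcal{O}_{\epsilon_k}\not\subset\db_t(p^{\epsilon_k})+B_\delta$. But applying Lemma~\ref{lem: free sets limit} to this very sequence $\epsilon_k$ and to the given $\delta$ produces an index $k_0$ beyond which \emph{both} inclusions hold — a contradiction. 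Hence the required $\epsilon_0$ exists for $\mathbb{P}$-almost every $\omega$, every $t>0$ and every $\delta>0$, which is exactly the assertion of the corollary; the same reasoning also yields the uniform Hausdorff convergence of $\partialom\{p^\epsilon(\cdot,t)>0\}$ to $\partial\{p(\cdot,t)>0\}$ mentioned just before the statement.

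The only point requiring care — and thus the ``main obstacle'' — is purely logical rather than analytic: Lemma~\ref{lem: free sets limit} is phrased along a convergent subsequence, so one must first have in hand the \emph{full}-family convergence $p^\epsilon\to p$ (not merely subsequential convergence along some $\epsilon_k$), and this is precisely what Theorem~\ref{thm: convergence of obstacle problems} provides, via uniqueness of the limiting obstacle problem. Once that observation is made, the corollary is immediate from the standard subsequence argument above.
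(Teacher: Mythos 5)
Your proof is correct and takes essentially the same approach as the paper: the paper's one-sentence justification preceding the corollary ("combining the uniqueness of the limit and Lemma~\ref{lem: free sets limit}") is exactly the observation that uniqueness of the homogenized obstacle problem upgrades the subsequential Lemma~\ref{lem: free sets limit} to full-family convergence, which your contradiction argument spells out. The only quibble is that $p^\epsilon$ lives on $\mathcal{O}_\epsilon$ rather than all of $\mathbb{R}^d$, so "uniform convergence on compacts of $\mathbb{R}^d\times\mathbb{R}_+$" should be read in the sense of Definition~\ref{def: notions of convergence} (or via the extensions $\tilde p^\epsilon$ of Proposition~\ref{prop: uniform convergence of p^epsilon}), but this is cosmetic.
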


    \begin{rem}
         In general one cannot show that $\db_t$ is continuous in time with respect to the Hausdorff distance. Intuitively, what may occur is two fingers of $\db_t(p)$ contact each other at time $t=t_0$ along some contact set $\mathcal{C}$ having positive $n-1$ dimensional Hausdorff measure. Then at the next moment $\mathcal{C}$ instantly disappears and we may have a discontinuity of $\db_t(p)$ at $t=t_0$.
    \end{rem}

%%%%%%%%%%%%%%%%%%%%%%%%%%%%%%%%%%%%%%%%%%%%%%%%%%%%%%%%%%%%%%%%%%%%%%%%%%%%%%%%%%%%%%%%%%%%%%%%%%%%%%%%%
%%%%%%%%%%%%%%%%%%%%%%%%%%%%%%%%%%%%%%%%%%%%%%%%%%%%%%%%%%%%%%%%%%%%%%%%%%%%%%%%%%%%%%%%%%%%%%%%%%%%%%%%%
%%%%%%%%%%%%%%%%%%%%%%%%%%%%%%%%%%%%%%%%%%%%%%%%%%%%%%%%%%%%%%%%%%%%%%%%%%%%%%%%%%%%%%%%%%%%%%%%%%%%%%%%%
%%%%%%%%%%%%%%%%%%%%%%%%%%%%%%%%%%%%%%%%%%%%%%%%%%%%%%%%%%%%%%%%%%%%%%%%%%%%%%%%%%%%%%%%%%%%%%%%%%%%%%%%%
%%%%%%%%%%%%%%%%%%%%%%%%%%%%%%%%%%%%%%%%%%%%%%%%%%%%%%%%%%%%%%%%%%%%%%%%%%%%%%%%%%%%%%%%%%%%%%%%%%%%%%%%%
%%%%%%%%%%%%%%%%%%%%%%%%%%%%%%%%%%%%%%%%%%%%%%%%%%%%%%%%%%%%%%%%%%%%%%%%%%%%%%%%%%%%%%%%%%%%%%%%%%%%%%%%%
%%%%%%%%%%%%%%%%%%%%%%%%%%%%%%%%%%%%%%%%%%%%%%%%%%%%%%%%%%%%%%%%%%%%%%%%%%%%%%%%%%%%%%%%%%%%%%%%%%%%%%%%%
%%%%%%%%%%%%%%%%%%%%%%%%%%%%%%%%%%%%%%%%%%%%%%%%%%%%%%%%%%%%%%%%%%%%%%%%%%%%%%%%%%%%%%%%%%%%%%%%%%%%%%%%%
\section{Homogenization}\label{sec: homogenization}

    We have now established the convergence of $p^\epsilon$ in the $L^\infty$ norm (see Definition \ref{def: notions of convergence}). Using this and the relationship between $p^\epsilon$ and $u^\epsilon$, we will investigate the convergence of $u^\epsilon$ and $\db^\epsilon$.

    \subsection{The relationship between $u^\epsilon$ and $p^\epsilon$}

    The first goal is to justify rigorously the relation 
    \begin{equation*}
         p^\epsilon(x,t,\omega) = \int_0^t u^\epsilon(x,s,\omega)ds. 
    \end{equation*}
    Since $\omega$ plays no role in the proof of this relation it suffices to prove it for a deterministic domain $\mathcal{O}$ for which Assumption \ref{hyp: regularity} holds, even for a deterministic $\mathcal{O}$ we shall use the $\partial^\omega$ notation.

    \begin{thm}\label{thm: evolution}
         Let  $\mathcal{O} \subset \mathbb{R}^d$ be  such that Assumption \ref{hyp: regularity} holds, and $\db_0 \subset \mathcal{O}$ such that $\partialom \db_0 = \partialom\overline{\db_0}.$ Let $p:\mathcal{O}\times \mathbb{R}_+ \to \mathbb{R}$ be a weak solution to \eqref{eqn: epsilon obstacle problem} and \eqref{eqn: definition of f^epsilon}.

         For each $t>0$, define $u(\cdot,t)$ as the unique weak solution to the boundary value problem (see Lemma \ref{lem: from p to u via Perron's method} for the definition of $u$ for non-smooth $\partialom\{p(\cdot,t)>0\}$)
         \begin{equation}\label{eqn: u in terms of p}
              \left\{\begin{array}{rlrll}
              -\Delta u & = & 1 &\text{ in } & \{p(\cdot,t)>0\}\cap\mathcal{O},\\ 
              \partial_nu & =& 0 &\text{ on } &\partial\mathcal{O},\\ 
              u & = &0 &\text{ on }& \partialom\{p(\cdot,t)>0\}.
             \end{array}\right.
         \end{equation}
         Then the following is true:
         \begin{itemize}
              \item[(a)] $u(x,t)$ is a viscosity solution of \eqref{eqn: deterministic Cauchy problem} (see Definition \ref{def: discontinuous viscosity solutions}). Moreover $u$ is continuous at $t=0$, i.e. $u(x,t)$ converges uniformly to $u_0(x)$, corresponding to the initial phase $\db_0$.
              \item[(b)]$u$ is equal to the left hand side time derivative of $p$:
              \begin{equation*}
                   u(x,t) = \partial_t^-p(x,t) := \lim_{h\to 0} \dfrac{p(x,t)-p(x,t-h)}{h} \text{ for } t>0.
              \end{equation*}
         \end{itemize}
\end{thm}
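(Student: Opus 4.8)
The plan is to adapt the argument of \cite{KM09} for the uniformly elliptic case; the novelty is that the two analytic ingredients used there in $\mathbb{R}^d$ — the Harnack inequality and the paraboloid $\tfrac{1}{2n}|x|^2$ as a barrier — are replaced by their perforated-domain substitutes, namely the uniform estimates of Theorem~\ref{thm: elliptic estimates}, the non-degeneracy of Theorem~\ref{thm: nondegeneracy}, and the corrected paraboloid of Lemma~\ref{lem: barrier construction}. It should be stressed that, since $\mathcal{O}$ is only Lipschitz while the Hele-Shaw comparison principle (Theorem~\ref{thm: comparison for Hele-Shaw C^3 boundary}) requires a $C^3$ boundary, every comparison is carried out at the level of the elliptic obstacle problem for $p$, never directly for the free boundary problem in $\mathcal{O}$.

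\emph{Step 1 (structure of $p$ and of the profiles $u(\cdot,t)$).} By Proposition~\ref{prop: control_time} and the monotonicity of $f^\epsilon(x,\cdot)$, the map $t\mapsto p(\cdot,t)$ is Lipschitz and non-decreasing; being continuous in $t$ its positive phases satisfy $\db_t(p)=\bigcup_{s<t}\db_s(p)$. Since $-\Delta u(\cdot,t)=1>0$ in $\db_t(p)$ and $u(\cdot,t)=0$ on $\partialom\db_t(p)$, the strong maximum principle gives $\{u(\cdot,t)>0\}=\db_t(p)$, and comparison for \eqref{eqn: u in terms of p} over the increasing family $\{\db_t(p)\}_t$ — the zero extension of $u(\cdot,s)$ being a subsolution for $s<t$ — shows that $u(\cdot,t)$ is non-decreasing and left-continuous in $t$. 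Finally the proof of Corollary~\ref{cor: distance from the initial phase} is pointwise in $\omega$, so $\db_0\subset\db_t(p)\subset\db_0+B_{C\sqrt t}$; letting $t\to 0^+$ and using stability of \eqref{eqn: u in terms of p} under Hausdorff convergence of domains (where Theorem~\ref{thm: elliptic estimates} and the non-degeneracy enter, to prevent the positive phase from collapsing) gives $u(\cdot,t)\to u_0$ uniformly, which is the continuity assertion in (a). All constants are uniform in $\epsilon$ thanks to Lemma~\ref{lem: quadratic upper bound p^ep} and Theorem~\ref{thm: elliptic estimates}.

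\emph{Step 2 ($u$ is a viscosity solution).} To show $\overline u$ is a viscosity subsolution I will take a strict classical supersolution $\phi$ of \eqref{eqn: deterministic Cauchy problem} on a cylinder $E=A\times(a,b)$ with $\overline u<\phi$ on $\partial_P E$ and rule out an interior crossing. Following \cite{ElJa,KM09}, one builds from $\phi$ and from $p(\cdot,a)$ a function $\Phi$ which is a weak supersolution of the obstacle problem \eqref{eqn: epsilon obstacle problem} in $E$: the conditions $-\Delta\Phi\ge f_\Phi\mathbb{I}_{\{\Phi>0\}}$ and $\partial_n\Phi\ge 0$ follow from $-\Delta\phi(\cdot,s)\ge1$ and $\phi_n(\cdot,s)\ge 0$, while the velocity inequality $\phi_t\ge|\nabla\phi|^2$ on $\partialom\{\phi>0\}$ together with the $C^{2,1}_{x,t}$ regularity of $\phi$ up to that surface is exactly what makes the time-integral $\Phi$ meet the complementarity relation across its own free boundary. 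Comparing $\Phi$ with $p$ slice-by-slice, using uniqueness for the elliptic obstacle problem on $A$ and the strict ordering on $\partial_P E$, yields $\Phi>p$ in $E$; differentiating in $t$ from the left and invoking Step 1 gives $\phi>\overline u$ in $E$, the desired contradiction. The supersolution case is symmetric, so $u$ is a viscosity solution of \eqref{eqn: deterministic Cauchy problem}, which with Step 1 proves (a).

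\emph{Step 3 (the identity $p=\int_0^t u\,ds$ and the left derivative).} Set $q(x,t):=\int_0^t u(x,s)\,ds$. By Step 1, $q\ge 0$, $q(\cdot,t)\in H^1(\mathcal{O})$ and $\{q(\cdot,t)>0\}=\bigcup_{s<t}\{u(\cdot,s)>0\}=\db_t(p)$, so the datum $-\mathbb{I}_{\db_0^c}+\int_0^t\mathbb{I}_{\{q(\cdot,s)>0\}}\,ds$ attached to $q$ coincides with $f^\epsilon$. It remains to check $-\Delta q(\cdot,t)=f^\epsilon(\cdot,t)\mathbb{I}_{\{q>0\}}$ weakly. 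Away from $\partialom\db_t(p)$ this is routine: at a point interior to $\db_t(p)$ one has $-\Delta u(\cdot,s)=1$ near $x$ precisely for the $s\le t$ with $x\in\db_s(p)$, and counting these times against $f^\epsilon$ reproduces \eqref{eqn: definition of f^epsilon}; where $p(\cdot,t)=0$ both sides vanish. The main obstacle is to show that integrating $-\Delta u(\cdot,s)$ in $s$ leaves no singular mass on the moving front — in $\mathbb{R}^d$ this is the classical Elliott--Janovsky cancellation, but here it genuinely needs the velocity law $u_t=|\nabla u|^2$ from Step 2 together with the quadratic growth and non-degeneracy of $p$ (Lemma~\ref{lem: quadratic upper bound p^ep}, Theorem~\ref{thm: nondegeneracy}) and the uniform free-boundary control of Section~\ref{sec: weak fb results}, the naive barrier $\tfrac1{2n}|x|^2$ being unavailable in $\mathcal{O}_\epsilon$. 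Once $q$ is shown to be a weak solution of \eqref{eqn: epsilon obstacle problem}--\eqref{eqn: definition of f^epsilon} with the same initial phase $\db_0$ as $p$, uniqueness of such solutions (Appendix~\ref{sec: construction of the obstacle problem}) forces $q=p$. Finally, for fixed $x$ and $t>0$, $s\mapsto u(x,s)$ is bounded, non-decreasing and left-continuous, so $s\mapsto p(x,s)=\int_0^s u(x,r)\,dr$ is convex and
\[
\partial_t^-p(x,t)=\lim_{h\to 0^+}\frac1h\int_{t-h}^t u(x,s)\,ds=\lim_{s\uparrow t}u(x,s)=u(x,t),
\]
which is (b).
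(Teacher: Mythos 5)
Your high-level plan — prove the subsolution/supersolution property by integrating a classical test function in time and comparing at the level of the elliptic obstacle problem — is indeed the strategy of the paper, which packages the key comparison in Lemma~\ref{lem: integrating a test function}. But there are two concrete gaps in the proposal as written, and one of them is a circularity.

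\textbf{Step 2: differentiating an inequality in time is not valid.} You conclude $\Phi>p$ on $E$ and then say ``differentiating in $t$ from the left and invoking Step 1 gives $\phi>\overline u$ in $E$.'' There is no implication of the form $\Phi\ge p \Rightarrow \partial_t^-\Phi\ge\partial_t^-p$: two ordered absolutely continuous functions of $t$ whose graphs touch at the bottom of the cylinder can have derivatives crossing arbitrarily. The paper avoids any such differentiation. In the proof of Theorem~\ref{thm: evolution} it locates the \emph{first} time $t_0$ at which $\phi$ touches $\overline u$, disposes of the interior case by the strong maximum principle (for the Poisson equation with Neumann data), and for a free-boundary touching it invokes Lemma~\ref{lem: integrating a test function}, which is a strict comparison $\tilde p < \Phi$ between the \emph{time-incremented} function $\tilde p(x,t)=p(x,t)-p(x,t_0-\tau)$ and $\Phi=\int_{t_0-\tau}^{t}\phi\,ds$, established purely at the elliptic level via the complementarity conditions, $f^\epsilon$ monotonicity and the comparison principle. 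The contradiction is geometric at $(x_0,t_0)$, not obtained by passing to a derivative. You need to prove an analogue of that lemma; the slice-by-slice comparison you gesture at does not by itself give it, because $\Phi$ and $p$ have different right-hand sides and different positivity sets, and matching them requires the careful bookkeeping in \eqref{eqn: supersolution big phi} using $\{\phi>0\}=\{\Phi>0\}$ and the monotonicity of both phases.

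\textbf{Step 3: circularity, and the hard step is asserted rather than proved.} You propose to show that $q(x,t):=\int_0^t u(x,s)\,ds$ is itself a weak solution of \eqref{eqn: epsilon obstacle problem}--\eqref{eqn: definition of f^epsilon} and then conclude $q=p$ by ``uniqueness of such solutions (Appendix~\ref{sec: construction of the obstacle problem}).'' But the appendix proves only \emph{existence} (Theorem~\ref{thm: existence epsilon obstacle problem}); the only uniqueness statement there is Proposition~\ref{prop: obstacle problem solutions}, which is for the elliptic obstacle problem with a \emph{prescribed} right-hand side $f$, not for the coupled time-dependent problem where $f^\epsilon$ depends on the solution's history. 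Uniqueness for \eqref{eqn: epsilon obstacle problem} with self-consistent $f^\epsilon$ is in the paper only Corollary~\ref{cor: uniqueness for aux obstacle problems}, which is stated and proved \emph{after} Theorem~\ref{thm: evolution} and depends on it (together with the viscosity-solution comparison Theorem~\ref{thm: cp2}), so invoking it here is circular. Moreover, the Elliott--Janovsky cancellation step you name as ``the main obstacle'' is exactly that: an obstacle you have not overcome. The paper's argument for (b) sidesteps both difficulties: set $u_h^-=(p(\cdot,t)-p(\cdot,t-h))/h$, observe it is monotone decreasing in $h$ (an elementary consequence of the convexity that the obstacle-problem comparison gives $t\mapsto p(\cdot,t)$), so its pointwise limit $u^-$ exists; then $u^-\le u$ by the bound $p/t\le u$ from Lemma~\ref{lem: from p to u via Perron's method}, and $u\le u^-$ follows from the Perron characterization of $u$ (every admissible subsolution supported in a slightly earlier phase lies below $u_h^-$ for suitable $h$). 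No weak-solution property of $q$, and no uniqueness of the evolution obstacle problem, is needed.

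\end{document}
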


    The proof of this theorem is close to that of Theorem 3.1 in \cite{KM09}, but we present a detailed proof for completeness. A key idea in the proof is showing that if a supersolution touches the function $u(x,t)$ from above, then its integral with respect to time touches $p$ from above at the same point. This will in turn show that classical supersolutions starting above $u$ stay above it for all times.

    First, we carry out the construction of the function $u$ in detail.

    \begin{lem}\label{lem: from p to u via Perron's method}
    	 Let $p$ be as in the statement of Theorem \ref{thm: evolution}, then define $u(x,t)$ by
         \begin{equation*}
              u(x,t) := \sup \limits v(x).
         \end{equation*}
		 Here the supremum is taken (for each fixed $t$) over all lower semi-continuous functions $v:\mathcal{O}\to\mathbb{R}$ for which there exists some $s\in(0,t)$ such that in the viscosity sense we have
         \begin{equation*}
              -\Delta v  \geq 1\text{ in } \{p(\cdot,s)>0\}, \quad \partial_n v =0 \text{ on } \partial\mathcal{O} \quad\text{ and }\quad \overline{\{ v>0\}}\subset \{p(\cdot,s)>0\}.
         \end{equation*} 
         Then, $u(x,t)$ is a lower semi-continuous function solving in the viscosity sense
         \begin{equation*}
         	  -\Delta u = 1 \text{ in } \{ p(\cdot,t)>0\},\;\;\;\; \partial_n u = 0 \text{ on } \partial \mathcal{O}.
         \end{equation*}
         Moreover, we have the pointwise bound
         \begin{align}
	          t^{-1}p(x,t) & \leq u(x,t), \label{eqn: time}
	     \end{align}
	     and the coincidence of the positivity sets
	     \begin{align*}
              \{u>0\} = \{p>0\} \text{ and } \partialom\{u>0\} = \partialom\{p>0\}.
          \end{align*}         
    \end{lem}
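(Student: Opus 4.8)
The plan is to treat $u(\cdot,t)$ as the Perron envelope of viscosity subsolutions of the elliptic problem $-\Delta u = 1$ in the (fixed-in-time) domain $\db_t(p) := \{p(\cdot,t)>0\}$ with zero Neumann data on $\partial\mathcal{O}$. The competitor class is non-empty because for small $s$ the obstacle $p(\cdot,s)$ is supported near $\db_0$, and on any smooth subdomain compactly contained in $\db_t(p)$ we can build an explicit subsolution by solving the Dirichlet--Neumann problem there (using that such problems are solvable by Lax--Milgram, as remarked after Definition \ref{def: Dirichlet problem}). Also the class is bounded above: any admissible $v$ has $\overline{\{v>0\}}\subset\db_s(p)\subset\db_t(p)$, which is a bounded set by Corollary \ref{cor: distance from the initial phase}, and on that bounded set one compares $v$ with the solution of $-\Delta V = 1$ in $\db_t(p)$, $V=0$ on $\partialom\db_t(p)$, $\partial_n V = 0$ on $\partial\mathcal{O}$, by the maximum principle (here one uses that $v$ is a viscosity subsolution vanishing on its own free boundary which sits inside $\db_s(p)$). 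Hence the supremum defining $u$ is finite, and the standard Perron argument (bumping with solutions of the Neumann problem on small perforated balls $\mathcal{B}^\epsilon_r$, whose solvability and interior regularity come from Section \ref{sec: elliptic regularity}) shows $u(\cdot,t)$ is lower semi-continuous and solves $-\Delta u = 1$ in $\db_t(p)$, $\partial_n u = 0$ on $\partial\mathcal{O}$ in the viscosity sense.

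Next I would establish the two pointwise facts. For the lower bound \eqref{eqn: time}: fix $t$ and $s<t$; since $p(\cdot,t)$ is non-decreasing in time (the right-hand side $f^\epsilon$ is non-decreasing, as noted in Section \ref{subsec: strategy}) and $-\Delta p(\cdot,s) = f(\cdot,s)\mathbb{I}_{\{p(\cdot,s)>0\}}$ with $f(\cdot,s)\le s$ and $f(\cdot,s)\ge -1$, the function $w_s := s^{-1}p(\cdot,s)$ is (in the weak sense) a subsolution of $-\Delta w_s \le 1$ on $\{p(\cdot,s)>0\}$ — actually one wants $-\Delta w_s \ge \cdots$, so more carefully: $p(\cdot,s)$ is the obstacle-problem solution, hence $-\Delta p(\cdot,s) \le f(\cdot,s)^+ \le s$ everywhere and $= f(\cdot,s)$ on $\{p(\cdot,s)>0\}$. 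Thus $w_s$ satisfies $-\Delta w_s \ge s^{-1}f(\cdot,s) \ge -1\cdot s^{-1}\cdots$; the clean way is: on $\{p(\cdot,s)>0\}$ we have $f(\cdot,s)\le s$, so $-\Delta(s^{-1}p(\cdot,s)) = s^{-1}f(\cdot,s) \le 1$, and $s^{-1}p(\cdot,s)$ vanishes on $\partialom\{p(\cdot,s)>0\}$ with zero Neumann data; therefore $s^{-1}p(\cdot,s)$ is an admissible competitor for the definition of $u(\cdot,t)$ (it is a subsolution, not a supersolution — wait, the competitor class in the lemma asks for $-\Delta v \ge 1$). Here I need to reconcile the sign: the competitors in the statement satisfy $-\Delta v \ge 1$, i.e. they are \emph{supersolutions} of the Poisson equation, so that $u$ is an \emph{infimum}-type Perron solution in disguise; accordingly for the lower bound I would instead compare $u$ directly with $p/t$ via the comparison principle for the elliptic problem (both solve $-\Delta\,\cdot\le 1$ resp. $\ge$ appropriately on $\db_t(p)$, $p/t$ being below by the maximum principle since $p(\cdot,t)/t$ solves $-\Delta(p/t) = f(\cdot,t)/t \le 1$ on $\db_t(p)$ and vanishes on the boundary while $u$ solves $-\Delta u = 1$ with the same boundary behavior — I must double-check the direction, but this is the mechanism). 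I expect this sign-bookkeeping, together with correctly invoking the comparison/maximum principle in the perforated domain (Corollary \ref{cor: Harnack} and the Stampacchia bound Lemma \ref{lem: Stampacchia}), to be the \textbf{main technical obstacle}.

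Finally, for the coincidence of positivity sets $\{u>0\}=\{p>0\}$ and $\partialom\{u>0\}=\partialom\{p>0\}$: the inclusion $\{u>0\}\subset\{p>0\}$ is immediate because every competitor $v$ has $\overline{\{v>0\}}\subset\db_s(p)\subset\db_t(p)$, so $\{u>0\}\subset\db_t(p)$; and conversely $\{p>0\}\subset\{u>0\}$ follows from \eqref{eqn: time} since $p(x,t)>0$ forces $u(x,t)\ge t^{-1}p(x,t)>0$. The equality of boundaries then follows once we know both open sets agree and that $u$ (resp. $p$) does not vanish on interior points of $\db_t(p)$, which is exactly the non-degeneracy input: $u>0$ throughout the interior of $\db_t(p)$ because it dominates $p/t$ which is strictly positive in the interior of its own positivity set (using Theorem \ref{thm: nondegeneracy}, the non-degenerate growth of $p$, to rule out interior zeros), so $\{u>0\}$ is precisely the interior of $\db_t(p)$ and hence $\partialom\{u>0\}=\partialom\db_t(p)=\partialom\{p>0\}$. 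Throughout, the perforated-domain subtlety — that $\partialom$ ignores $\partial\mathcal{O}$ and the Neumann condition is homogeneous — is handled exactly by the barrier of Lemma \ref{lem: barrier construction} and the elliptic estimates of Theorem \ref{thm: elliptic estimates}, which is why those were developed first.
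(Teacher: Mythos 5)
The main gap is the one you flag yourself: you never settle the direction of the comparison that yields $t^{-1}p(\cdot,t)\le u(\cdot,t)$, and the route you propose — comparing $u$ and $p(\cdot,t)/t$ directly on $\db_t(p)$ — is circular. That comparison requires knowing in advance that $u$ vanishes on $\partialom\db_t(p)$ (equivalently that $\{u>0\}\subset\db_t(p)$ together with the right boundary data \emph{at time $t$}), yet the inclusion $\{p>0\}\subset\{u>0\}$ is precisely what you then want to extract from the resulting bound. The paper's argument sidesteps this: for each fixed $s<t$ one observes that $p(\cdot,s)/s$ is continuous and satisfies $-\Delta\bigl(p(\cdot,s)/s\bigr)\le 1$ in $\db_s(p)$ with zero Neumann data on $\partial\mathcal{O}$, and the comparison principle, applied against the competitors defining $u(\cdot,t)$ (whose positivity sets sit compactly inside $\db_{s'}(p)$ for various $s'<t$), yields $p(\cdot,s)/s\le u(\cdot,t)$ for every $s<t$. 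The bound at $s=t$ is then obtained by sending $s\to t^-$, using the uniform-in-$\epsilon$ time-Lipschitz continuity of $p$ (Proposition~\ref{prop: control_time}) together with the lower semicontinuity of $u$ — a limiting step that your sketch omits entirely. Without it, \eqref{eqn: time} is not established, and the inclusion $\{p>0\}\subset\{u>0\}$ you deduce from it collapses.

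A secondary point: once $\{u>0\}=\{p>0\}$ as sets ($u$ is l.s.c.\ and $p$ is continuous, so both are open), the equality of relative boundaries $\partialom\{u>0\}=\partialom\{p>0\}$ is automatic. The nondegeneracy estimate (Theorem~\ref{thm: nondegeneracy}) you invoke to ``rule out interior zeros'' plays no role in the paper's proof of this lemma and is not needed here; importing it distracts from the actual missing ingredient, which is the $s\to t^-$ limit.
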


	\begin{proof}
         That $u(x,t)$ is lower semicontinuous follows because it is essentially the limit of an increasing sequence of lower semi-continuous functions, we omit the details. Since $p(\cdot,s)/s$ is continuous in $x$ for every $s>0$ and it is a weak solution of \eqref{eqn: epsilon obstacle problem}, we conclude that it solves
         \begin{equation*}
         	  -\Delta (p(\cdot,s)/s) \leq 1 \text{ in } \{p(\cdot,s)>0\}\cap\mathcal{O}\text{ and } \partial_nu \geq 0 \text{ on } \partial \mathcal{O}.
         \end{equation*}
         Then, and application of the comparison principle says that
         \begin{equation*}
              p(x,s)/s \leq u(x,t) \text{ for all } s<t.
         \end{equation*}
         By the continuity of $p$ in time (Proposition \ref{prop: control_time}) and the semicontinuity of $u$ it follows that
         \begin{equation*}
              p(x,t)/t \leq u(x,t)\;\;\forall\;t>0.
         \end{equation*} 
         Thus $u(x,t)>0$ whenever $p(x,t)>0$. Again by lower semi-continuity, $u=0$ on $\partialom \{p>0 \}$, so
         \begin{equation*}
              \{u>0\}=\{p>0\},\quad \partialom\{u>0\} =\partialom\{p>0\}.
         \end{equation*}
%         Let $\overline{u}$ denote the upper semi-continuous envelope of $u$. Due to Lemma~\ref{lem: control_FB_time} it follows that
%         \begin{equation*}
%              \partial\{\overline{u}>0\}=\partialom\{p>0\} =\partialom\{u>0\}.
%         \end{equation*}
%         (Note that $u^*$ may be positive on $\partialom \db_t$ at singular points.) 
         Also note that, by definition,
         \begin{equation*}
              -\Delta  u(\cdot,t)= 1 \text{ in } \{p(\cdot,t)>0\}\cap\mathcal{O} \text{ and }  u_n=0 \text{ at regular points of } \partial\mathcal{O}\cap \{p(\cdot,t)>0\}.
         \end{equation*}
         Lastly, observe that $-\Delta u(\cdot,t) \leq 1$ in the interior of $\mathcal{O}$, thus  $-\Delta u^*(\cdot,t) \leq 1$ in $\mathcal{O}$ and $u^*_n \leq 0$ on $\partial\mathcal{O}$, which finishes the proof.
    \end{proof}

    Next, we prove the key lemma that will allow us to relate test functions for $u$ with those for $p$. 
%we recall the notation $u_1 \prec u_2$ introduced in Definition \ref{def: strict precedence}.

    \begin{lem}\label{lem: integrating a test function}
    	 Let $u$ be given by Lemma \ref{lem: from p to u via Perron's method} and $\phi$ a classical strict supersolution (see Definition \ref{def: classical Subsolutions} ) in some cylinder $\Sigma_0 := \mathcal{B}^1_r(x_0)\times (t_0-\tau_0,t_0)$, for some $x_0,t_0$ and with $\tau_0 \in (0,1)$.  Suppose that $u\leq \phi$ in $\Sigma_0$, $\overline{u} < \phi$ on $\{ \phi>0\}$ and $\{ u>0\}\cap \partialom_p\Sigma_0 \subset\subset \{ \phi>0\}$.

         There is a small enough  $\tau>0$ (depending on $u,\phi$) such that if $\Sigma := \mathcal{B}^1_r(x_0)\times (t_0-\tau,t_0)$ and
         \begin{align*}
 	          \tilde p(x,t) & := p(x,t)-p(x,t_0-\tau),\;\Phi(x,t) :=\int_{t_0-\tau}^t\phi(x,s)\;ds\;\;\;\forall\;(x,t)\in\Sigma,
         \end{align*}
         then we have the strict inequality $\tilde p < \Phi \text{ in } { \{ \Phi>0\}} \cap \Sigma$.
    \end{lem}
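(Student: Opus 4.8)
The plan is to show that the time--integral $\Phi$ of the strict classical supersolution $\phi$ is itself a supersolution of a localized version of the obstacle problem solved by $\tilde p$, then to deduce $\tilde p\le\Phi$ from the comparison principle for obstacle problems, and finally to upgrade this to the strict inequality using the strictness of $\phi$ together with $\overline u<\phi$ on $\{\phi>0\}$.

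First I would record the structure of $\Phi$. Since $\phi$ is a classical supersolution, $\phi_t\ge|\nabla\phi|^2\ge0$ on $\partialom\{\phi>0\}$, so the positive phase $\{\phi(\cdot,s)>0\}$ is nondecreasing in $s$; hence for $t\in(t_0-\tau,t_0)$ the set $\{\Phi(\cdot,t)>0\}$ coincides with $\{\phi(\cdot,t)>0\}$ up to its boundary. Using the $C^{2,1}_{x,t}$ regularity of $\phi$ on $\overline{\{\phi>0\}}$ and the $C^{1,1}$ regularity of $\partial\{\phi>0\}$, one checks that $\Phi(\cdot,t)$ vanishes to second order on $\partial\{\phi(\cdot,t)>0\}$ (so $\Phi=|\nabla\Phi|=0$ there) and that, in $\{\Phi(\cdot,t)>0\}$,
\[
-\Delta\Phi(x,t)=\int_{t_0-\tau}^{t}\bigl(-\Delta\phi(x,s)\bigr)\,ds\ \ge\ \int_{t_0-\tau}^{t}\mathbb{I}_{\{\phi(\cdot,s)>0\}}(x)\,ds=:h_\Phi(x,t),
\]
while $\partial_n\Phi\ge0$ on $\partial\mathcal O\cap\overline\Sigma$ (integrating condition (iii) for $\phi$). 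Thus $\Phi$ is a weak supersolution in $\Sigma$ of the obstacle problem with source $h_\Phi$. Similarly I would check that $\tilde p$ is a weak subsolution of an obstacle problem: assuming $t_0>0$ and $\tau<t_0$ (the case $t_0=0$ being vacuous), where $p(x,t_0-\tau)>0$ the equation \eqref{eqn: epsilon obstacle problem}--\eqref{eqn: definition of f^epsilon} for $p(\cdot,t)$ and $p(\cdot,t_0-\tau)$ gives $-\Delta\tilde p=\int_{t_0-\tau}^t\mathbb{I}_{\{p(\cdot,s)>0\}}\,ds$, while where $p(x,t_0-\tau)=0$, monotonicity of $p$ in $t$ together with $\db_0\subset\db_{t_0-\tau}(p)$ (Corollary \ref{cor: distance from the initial phase}, valid since $t_0-\tau>0$) forces $x\notin\db_0$, so the same right--hand side bounds $-\Delta\tilde p$ from above; together with $\partial_n\tilde p=0$ on $\partial\mathcal O$ (Lemma \ref{lem: quadratic upper bound p^ep}) this makes $\tilde p$ a weak subsolution with source $\bar h(x,t):=\int_{t_0-\tau}^t\mathbb{I}_{\{p(\cdot,s)>0\}}(x)\,ds$. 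Since $u\le\phi$ on $\overline\Sigma$ and $\{u(\cdot,s)>0\}=\{p(\cdot,s)>0\}$ by Lemma \ref{lem: from p to u via Perron's method}, we get $\{p(\cdot,s)>0\}\cap\mathcal B^1_r(x_0)\subseteq\{\phi(\cdot,s)>0\}$, hence $\bar h\le h_\Phi$ on $\mathcal B^1_r(x_0)$, so $\tilde p$ is in fact a subsolution, and $\Phi$ a supersolution, of the \emph{same} obstacle problem in $\Sigma$.

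Next I would settle the ordering on the parabolic boundary of $\Sigma$ and fix $\tau$. On $\mathcal B^1_r(x_0)\times\{t_0-\tau\}$ both $\tilde p$ and $\Phi$ vanish. On the lateral part $\partialom\mathcal B^1_r(x_0)\times(t_0-\tau,t_0)$, if $\tilde p(x,t)>0$ then $p(x,t)>0$, so $(x,t)$ lies in $\{u>0\}\cap\partialom_p\Sigma_0\subset\subset\{\phi>0\}$; hence $\phi\ge c_0>0$ on a fixed space--time neighbourhood of $(x,t)$ (in particular $\Phi(x,t)>0$), and, combining the Lipschitz--in--time bound $0\le\tilde p(x,t)\le C\,(t-(t_0-\tau))$ from Proposition \ref{prop: control_time} with a barrier/continuity argument, one obtains $\tilde p\le\Phi$ on the whole lateral boundary once $\tau$ is small enough (depending on $u$ and $\phi$ only through $c_0$ and the modulus of continuity of $\phi$). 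This is the step I expect to be the main obstacle: it is precisely where the strict separation hypotheses $\{u>0\}\cap\partialom_p\Sigma_0\subset\subset\{\phi>0\}$ and $\overline u<\phi$ on $\{\phi>0\}$, and the smallness of $\tau$, are indispensable in order to dominate $\tilde p$ near the bottom face of $\Sigma$.

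Finally, by the comparison principle for obstacle problems at each fixed $t\in(t_0-\tau,t_0)$ (the solution being monotone in the source and in the Dirichlet data on $\partialom\mathcal B^1_r(x_0)$, with zero Neumann data on $\partial\mathcal O$), the previous steps give $\tilde p\le\Phi$ throughout $\Sigma$. To upgrade to the strict inequality: if $\Phi-\tilde p$ vanished at some $(x_*,t_*)$ with $\Phi(x_*,t_*)>0$, then at time $t_*$ the nonnegative function $\Phi-\tilde p$ would attain an interior minimum there while satisfying $-\Delta(\Phi-\tilde p)\ge h_\Phi-\bar h\ge0$, forcing by the strong maximum principle $\Phi\equiv\tilde p$ on the component of $\{\Phi(\cdot,t_*)>0\}$ through $x_*$ and hence $h_\Phi\equiv\bar h$ there; but the strictness of the inequalities (iii)--(iv) for $\phi$ makes the positive phase of $\phi$ grow strictly (so $h_\Phi>\bar h$ on a nonempty open set), and $\overline u<\phi$ on $\{\phi>0\}$ rules out $\tilde p\equiv\Phi$, a contradiction. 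Hence $\tilde p<\Phi$ in $\{\Phi>0\}\cap\Sigma$, as claimed.
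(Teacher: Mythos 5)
Your overall strategy mirrors the paper's: compare $\tilde p$ with $\Phi$, establish ordering on the parabolic boundary of a thin cylinder, then propagate by a maximum/comparison principle. But there is a genuine gap precisely in the step you flag as the main obstacle, and the tool you reach for there does not close it.

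You propose to bound $\tilde p$ on the lateral boundary $\partialom\mathcal{B}^1_r(x_0)\times(t_0-\tau,t_0)$ using the uniform Lipschitz-in-time estimate $\tilde p(x,t)\le C(t-(t_0-\tau))$ from Proposition \ref{prop: control_time}, and then argue that since $\phi\ge c_0>0$ near these boundary points, $\Phi$ is also of order $c_0(t-(t_0-\tau))$ there. The problem is that the Lipschitz constant $C$ is \emph{universal}, independent of $\phi$, while the constant $c_0$ only comes from the strict separation $\overline u<\phi$ and can be arbitrarily small. Nothing prevents $C>c_0$, in which case the desired inequality $\tilde p\le\Phi$ on the lateral boundary fails at this level of approximation, no matter how small $\tau$ is taken. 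This is not a detail that can be supplied by ``a barrier/continuity argument''; the comparison really does need a quantitative link between $\tilde p$ and $\phi$, not between $\tilde p$ and a universal constant.

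The paper closes this exact gap with a sharper intermediate estimate: since $-\Delta\tilde p(\cdot,t)\le(t-t_0+\tau)\mathbb{I}_{\{\tilde p(\cdot,t)>0\}}$ and $\{\tilde p(\cdot,t)>0\}=\{u(\cdot,t)>0\}$, the rescaled function $(t-t_0+\tau)^{-1}\tilde p(\cdot,t)$ is a subsolution of the \emph{same} elliptic problem that $u(\cdot,t)$ solves on $\{u(\cdot,t)>0\}$, whence $\tilde p(x,t)\le(t-t_0+\tau)\,u(x,t)$ by the elliptic comparison principle. Combined with the compactness-derived $u+\delta\le\phi$ on the lateral boundary within $\{\phi>0\}$, this gives $\tilde p\le(t-t_0+\tau)(\phi-\delta)$ there, which is then shown to be strictly below $\Phi(x,t)=\int_{t_0-\tau}^t\phi(x,s)\,ds$ once $\tau$ is chosen small relative to $\delta$ and the modulus of continuity of $\phi$. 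This elliptic comparison against $u$ (rather than against a universal Lipschitz bound) is the missing ingredient in your write-up, and it is the crux of the lemma. The rest of your argument, including the source comparison $\bar h\le h_\Phi$ and the strong-maximum-principle upgrade to strict inequality, is reasonable and broadly parallel to the paper's final comparison step.

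Additionally, your claim that Corollary \ref{cor: distance from the initial phase} and $t_0-\tau>0$ imply $x\notin\db_0$ whenever $p(x,t_0-\tau)=0$ is correct, but note the paper does not need to split cases this way: it works directly with the identity for $-\Delta\tilde p$ in $\mathcal O$, splitting instead on $\{p(\cdot,t_0-\tau)>0\}$ versus its complement inside $\{p(\cdot,t)>0\}$, which is a cleaner decomposition and avoids tracking $\db_0$ separately.
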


    \begin{proof}
    	 First, let us compare the pointwise values of $u$ and $\tilde p$ in $\Sigma$. To this end we observe that 
         \begin{equation*}
             \{ u>0 \} =\{p >0 \} = \{ \tilde	p>0\}.
         \end{equation*}
         Moreover, we know that $\tilde{p}(\cdot,t)$ solves (for each fixed $t\in (t_0-\tau,t_0)$)
         \begin{equation*}
              \left \{ \begin{array}{rll}
              	   -\Delta \tilde{p} & = \left (-\mathbb{I}_{\db_0^c}+\int_0^t\mathbb{I}_{\{p(\cdot,s)>0\}}\;ds \right )\mathbb{I}_{\{p(\cdot,t)>0 \} \setminus \{p(\cdot,t_0-\tau)>0 \} }+\left (\int_{t_0-\tau}^t\mathbb{I}_{\{p(\cdot,s)>0\}}\;ds \right )\mathbb{I}_{\{p(\cdot,t_0-\tau)>0\} } & \text{in } \mathcal{O},\\
                   \partial_n \tilde{p} & = 0 & \text{on } \partial\mathcal{O}.
              \end{array} \right.
         \end{equation*}
         In particular, since $\tau<\tau_0<1$ we obtain  $\int_0^t\mathbb{I}_{\{p(\cdot,s)>0\}}\;ds\leq \tau_0<1$ outside $\{ p(\cdot,t_0-\tau)>0\}$. On the other hand,  $\{ p(\cdot,t)>0\}$ is monotone increasing in time, so $\mathbb{I}_{\db_0^c}\equiv 1$ in $\{ p(\cdot,t_0-\tau)>0\}^c$. From all this it follows that
         \begin{equation*}
              \left (-\mathbb{I}_{\db_0^c}+\int_0^t\mathbb{I}_{\{p(\cdot,s)>0\}}\;ds \right )\mathbb{I}_{\{p(\cdot,t)>0 \} \setminus \{p(\cdot,t_0-\tau)>0 \} }\leq 0.
         \end{equation*}
         Then, since $\{p >0 \} = \{ \tilde	p>0\}$ and $\{ p(\cdot,t)>0\}$ is monotone increasing with $t$ we conclude that% conclude for any fixed $t\in (t_0-\tau,t_0)$ that
         \begin{equation*}
             -\Delta\tilde{p} \;\leq \int_{t_0-\tau}^t\mathbb{I}_{\{p(\cdot,s)>0\}}\;ds\;\mathbb{I}_{\{\tilde{p}(\cdot,t_0-\tau_0)>0\}}\leq (t-t_0+\tau)\mathbb{I}_{\{\tilde{p}(\cdot,t)>0\}} \text{ in } \mathcal{O},\;\;\partial \tilde p = 0 \text{ on } \partial \mathcal{O}.
         \end{equation*}
         The above being not only in the $H^1$ sense but also in the viscosity sense, due to the solution $p(\cdot,t) \in H^1$ (and thus $\tilde p(\cdot,t)$) being continuous, by Theorem \ref{thm: elliptic estimates}. In this case $(t-t_0+\tau)^{-1}\tilde p(\cdot,t)$ is a (viscosity) subsolution for the same elliptic equation solved by $u$ in the domain $\{ \tilde p(\cdot,t)>0\}=\{ u(\cdot,t)>0\}$. Applying the comparison principle we obtain
         \begin{equation*}
              \tilde{p}(x,t) \leq (t-t_0+\tau)u(x,t)\;\;\forall\;(x,t)\in\mathcal{O}\times (t_0-\tau,t_0).
         \end{equation*}
         Recall that $\overline{u}<\phi$ in $\{\phi>0\}$ and $\{ u>0\}\cap \partialom_p\Sigma_0 \subset\subset \{ \phi>0\}$, then it follows there is a $\delta>0$ such that
         \begin{equation*}
              u+\delta\leq \phi \text{ on } (\partialom_p \Sigma)\cap \{ \phi>0 \}.         	
         \end{equation*}
         Therefore for all $(x,t)$ on this same set we have $\tilde{p}(x,t) \leq (t-t_0+\tau)(\phi(x,t)-\delta)$. At the same time,
         \begin{equation*}
              (t-t_0+\tau)(\phi(x,t)-\delta) = \int_{t_0-\tau}^t\phi(x,s)\;ds+\int_{t_0-\tau}^t\phi(x,t)-\phi(x,s)-\delta\;ds.
         \end{equation*}      
         so picking $\tau$ small enough (depending on $\delta$ and the continuity of $\phi$) we have
         \begin{equation*}
              (t-t_0+\tau)(\phi(x,t)-\delta) < \int_{t_0-\tau}^t\phi(x,s)\;ds=\Phi(x,t)  \text{ in } \Sigma \cap \{ \Phi>0\}.
	         \end{equation*}
         Then, using that $\tilde p\equiv 0$  outside $\{ \Phi>0\}$  we obtain
         \begin{equation}\label{eqn: boundary comparison tilde p and Phi}
              \tilde p \leq \Phi \text{ on } \partialom_p\Sigma\;,\;\;\tilde p <\Phi \text{ on } \partialom_p\Sigma \cap \{ \Phi>0\}.	
         \end{equation}
         On the other hand, as $\phi$ is a strict classical supersolution in $\Sigma_0$ and $\Sigma\subset\Sigma_0$, it can be checked that (in the $H^{-1}$ sense)
         \begin{equation*}
              -\Delta \phi > (\mathbb{I}_{\{\phi(\cdot,t)>0 \}})_t-\mathbb{I}_{\{\phi(\cdot,t)>0\} } \text{ in } \Sigma,\;\;\partial_n \phi >0	\text{ on } (\partial \mathcal{O})\cap \Sigma.
         \end{equation*}
         Integrating this with respect to $t$ from $t_0-\tau$ to $t$, it follows that
         \begin{equation}\label{eqn: supersolution big phi}
              \left \{ \begin{array}{rll}
              	   -\Delta\Phi(\cdot,t) & > -\mathbb{I}_{\{\phi(\cdot,t)>0\}}+\mathbb{I}_{\{\phi(\cdot,t_0-\tau)>0\}}+\int_{t_0-\tau}^t \mathbb{I}_{\{\phi(\cdot,s)>0 \}}\;ds & \text{in } \Sigma,\\
                   \partial_n \Phi & \geq 0 & \text{on }  (\partial \mathcal{O}) \cap \Sigma.
              \end{array}\right.            
         \end{equation}
         Also because $\phi$ is a supersolution, $\{\phi(\cdot,t)>0\}$ is monotone increasing in $t$, thus $\{\phi>0\} = \{ \Phi>0\}$ for $t>t_0-\tau$ and the inequality above may be rewritten as
         \begin{equation*}
              -\Delta\Phi(\cdot,t) > \left ( \mathbb{I}_{\{\Phi(\cdot,t_0-\tau)>0\}^c}-\int_{t_0-\tau}^t \mathbb{I}_{\{\Phi(\cdot,s)>0 \}}\;ds)\right )\mathbb{I}_{\{\Phi(\cdot,t)>0\}}.
         \end{equation*}
         Finally, in  $\Sigma$ we have $\mathbb{I}_{\{ \tilde p>0 \}}\leq \mathbb{I}_{\{ \Phi >0 \}}$, thus
         \begin{equation*}
              -\Delta \tilde p(\cdot,t) \leq (t-t_0+\tau)\mathbb{I}_{\{\Phi(\cdot,t)>0\} }.
         \end{equation*}
         On the other hand,
         \begin{equation*}
         	   \left (-\mathbb{I}_{\{\Phi(\cdot,t_0-\tau)>0\}^c}+\int_{t_0-\tau}^t \mathbb{I}_{\{\Phi(\cdot,s)>0 \}}\;ds\right )\mathbb{I}_{\{\Phi(\cdot,t)>0\}}> (t-t_0+\tau)\mathbb{I}_{\{\Phi>0\} }.
         \end{equation*}     
         From all this we conclude that
         \begin{equation*}
              \left \{ \begin{array}{rlll}
              	   -\Delta\Phi(\cdot,t) & > & -\Delta \tilde p(\cdot,t) & \text{in } \mathcal{B}^1_r(x_0)\times \{t\},\\
                   \partial_n \Phi(\cdot,t) & \geq & \partial_n \tilde p(\cdot,t) & \text{on }  (\partial \mathcal{O}) \times \{t\}.
              \end{array}\right.            
         \end{equation*}         
         Thus, by the comparison principle,
         \begin{equation*}
              \sup \limits_{\mathcal{B}^1_{r}(x_0)} (\Phi -\tilde p)(\cdot,t)<\sup \limits_{\partialom \mathcal{B}^1_{r}(x_0)} (\Phi -\tilde p)(\cdot,t)\leq 0
         \end{equation*}
         where we used \eqref{eqn: boundary comparison tilde p and Phi} to get the strict inequality. This shows that $\Phi<\tilde p$ in $\Sigma_0$ and the lemma is proved.
    \end{proof}

	\begin{proof}[Proof of Theorem \ref{thm: evolution}]

         Let us show that $u$ as defined by Lemma \ref{lem: from p to u via Perron's method} is a subsolution in the sense of Definition \ref{def: discontinuous viscosity solutions}, the argument for the supersolution property is similar. If $u$ were not a subsolution we could find some cylinder $E$ and a classical strict supersolution $\phi(x,t)$ such that $\overline{u}< \phi$ on $\partialom_p E$ and yet $\overline{u} \geq \phi$ somewhere in $E$.

         Let $t_0$ be the supremum of the times $t'$ such that  $\overline{u}\leq\phi$ within $E\cap \{t\leq t'\}$. Then $\phi$ touches $\overline{u}$ from above at some point $(x_0,t_0)\in E$. By using the strong maximum principle for the Poisson equation with Neumann data, we can easily obtain a contradiction if we have $p(x_0,t_0)>0$ or $\phi(x_0,t_0)>0$. Thus we only need to rule out the case where $x_0 \in \partialom \{p(\cdot,t_0)>0\} \cap \partialom \{\phi(\cdot,t_0)>0\}$. 

         By looking at the first time when $\overline{u}$ fails to lie strictly below $\phi$, we obtain $t_0>0$, $x_0 \in \overline{\mathcal{O}}$ and small numbers $r,\tau_0 \in (0,1)$ such that $\overline{u}\leq \phi$ in $\Sigma = \mathcal{B}^1_r(x_0)\times (t_0-\tau_0,t_0)$ and $\overline{\{ u>0 \}} \cap \Sigma\subset\{ \phi>0 \}$. Moreover, by a small perturbation argument on $\phi$ we may assume that the free boundaries of $u$ and $\phi$ stay separated on $\partial_p \Sigma$, so that $u$ and $\phi$ comply with the assumptions of Lemma \ref{lem: integrating a test function} in a cylinder $\Sigma_0$ centered at $(x_0,t_0)$. 

         Due to the Lemma we conclude then that $\tilde p< \Phi$ in a neighborhood of $(x_0,t_0)$, but this is a contradiction since $x_0 \in \partialom \{p(\cdot,t_0)>0\}\cap \partialom \{\phi(\cdot,t_0)>0\}$, which means that $p(x_0,t_0)=\Phi(x_0,t_0)$. From this contradiction it follows that $(x_0,t_0)$ cannot exist and thus that $\overline{u}$ can never go above $\phi$, proving that $u$ is a subsolution.

        % Then the only remaining possibility is that $p(x_0,t_0)=0$ and $\Phi(x_0,t_0)=0$, but $p$ is nonnegative and non-decreasing with respect to time so we would have $\tilde p(x_0,t_0)=0$ where $\tilde p$ is as in Lemma \ref{lem: integrating a test function}, which is impossible. From this contradiction we conclude that the points $(x_0,t_0)$ cannot exist and $u^*$ cannot go above $\phi$ in $E$.

%         An analogous argument (with the corresponding analogue of Lemma \ref{lem: integrating a test function}) shows that if $u$ lies above a strict classical subsolution in $\partialom_p E$, then it lies above it everywhere in $E$. 

         To prove $u$ is a viscosity solution of \eqref{eqn: Hele-Shaw O_ep} all is left is checking that $u$ and $\overline{u}$ both satisfy the initial data. Due to Corollary~\ref{cor: distance from the initial phase} we have that
         \begin{equation*}
              \db_0\subset \{p(\cdot,t)>0\} \subset \db_0+ B_{t^{1/2}}(0).
         \end{equation*}
         As we are assuming $\partial \db_0= \partial\overline{\db}_0$, it follows that $\db_t$ converges uniformly to $\db_0$ in Hausdorff distance. This proves part (a) of the theorem, it remains to show (b). To this end let us define for $t\geq h$,
         \begin{equation*}
              u_h^-(x,t):= \dfrac{p(x,t)-p(x,t-h)}{h}.
         \end{equation*}
         Since $u_h^-$ is monotone decreasing with respect to $h$, there exists a function $u^-$ such that $u_h^-(x,t)$ converges pointwise to $u^-(x,t)$ as $h\to 0$.  Due to the inequality above we see that $u^-\leq u$. Let us show that also $u^- \geq u$ to conclude that $u^-=u$.

         Consider any lower semicontinuous function $\phi$ with support in $\{p(\cdot,s)>0\}$ for some $s<t$ such that 
         \begin{equation*}
              -\Delta \phi \leq 1\;\;\text{ in }\{p(\cdot,s)>0\}\cap\mathcal{O}, \quad \partial_n \phi=0\quad \text{ on }\partial\mathcal{O}.
         \end{equation*}
         Then, from the  comparison principle we obtain $u_h \geq \phi$ for any $h> t-s$. From the definition of $u$ it follows then that $u \leq u^-$, and the theorem is proved.

    \end{proof}

    Recall that when $\partial\mathcal{O}$ is $C^3$ and $\partial \db_0$ is $C^1$ we know that uniqueness holds for \eqref{eqn: deterministic Cauchy problem}. In light of Theorem \ref{thm: evolution} as well as Theorem \ref{thm: cp2} we obtain uniqueness for \eqref{eqn: homogeneous obstacle evolution problem} too. We recall that existence for this problem is discussed in Appendix \ref{sec: construction of the obstacle problem}.
 
    \begin{cor}\label{cor: uniqueness for aux obstacle problems}
         Suppose $\mathcal{O}$, $\db_0$ and $u$ are as given in Theorem~\ref{thm: evolution}, and suppose in addition that $\partial\mathcal{O}$ is $C^3$ and $\partial \db_0$ is $C^1$, then the function $p(x,t):=\int_0^t u(x,s) ds$ is the unique weak solution of \eqref{eqn: homogeneous obstacle evolution problem}.
    \end{cor}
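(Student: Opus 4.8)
The plan is to transfer the uniqueness statement for the geometric Hele--Shaw flow \eqref{eqn: deterministic Cauchy problem} (with $\mathcal{O}$ now playing the role of $\mathbb{R}^d$, or more precisely of a smooth domain) to the auxiliary obstacle evolution problem \eqref{eqn: homogeneous obstacle evolution problem} via the bijective correspondence $p \leftrightarrow u$ established in Theorem \ref{thm: evolution}. Concretely, suppose $p_1$ and $p_2$ are two weak solutions of \eqref{eqn: homogeneous obstacle evolution problem} with the same initial droplet base $\db_0$. By Theorem \ref{thm: evolution}(a), each $u_i := \partial_t^- p_i$ is a viscosity solution of \eqref{eqn: deterministic Cauchy problem} (with the homogenized operator $A$, volume fraction $\mu$, and the corresponding source), with initial phase $\db_0$, and by Theorem \ref{thm: evolution}(b) combined with part (a) we recover $p_i(x,t) = \int_0^t u_i(x,s)\,ds$. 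So it suffices to show $u_1 = u_2$.

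The key observation is that the hypotheses we have added --- $\partial \mathcal{O}$ is $C^3$ and $\partial \db_0$ is $C^1$ --- are exactly those under which the comparison principle for \eqref{eqn: deterministic Cauchy problem} applies. First I would invoke Theorem \ref{thm: cp2} with $u = u_1$ and $v = u_2$: since $u_1(\cdot,0) = u_2(\cdot,0) = u_0$ (the common profile of $\db_0$), the regularity $\partial\{u_2(\cdot,0)>0\} = \partial\db_0 \in C^1$ lets us conclude $u_1(\cdot,t) \leq u_2^*(\cdot,t)$ for all $t$. Symmetrically, swapping the roles, $u_2(\cdot,t) \leq u_1^*(\cdot,t)$. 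Passing to semicontinuous envelopes (and using that each $u_i$ is a genuine viscosity solution, so $u_i$ lies between $\underline{u_i}$ and $\overline{u_i}$, which agree wherever $u_i$ is continuous) these two inequalities force $\overline{u_1} = \overline{u_2}$ and $\underline{u_1} = \underline{u_2}$, hence $u_1 = u_2$ as viscosity solutions; integrating in time gives $p_1 = p_2$. The existence half of the statement is not proved here: it is supplied by the construction in Appendix \ref{sec: construction of the obstacle problem}, and one checks via Theorem \ref{thm: evolution}(b) that the solution produced there is precisely $\int_0^t u(x,s)\,ds$ for $u$ the viscosity solution of \eqref{eqn: deterministic Cauchy problem}.

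The main obstacle I anticipate is the bookkeeping needed to apply the comparison theorems cleanly: Theorem \ref{thm: comparison for Hele-Shaw C^3 boundary} and Theorem \ref{thm: cp2} are stated for the original geometric problem \eqref{eqn: deterministic Cauchy problem} with a generic smooth domain $\mathcal{O}$, whereas \eqref{eqn: homogeneous obstacle evolution problem} involves the homogenized divergence-form operator $-\dive(A\nabla\cdot)$ on $\mathbb{R}^d$ with source $\mu(\cdots)$. One must check that the proof of the comparison principle from \cite{K03}, together with the reflection/parametrization argument alluded to after Definition \ref{def: viscosity solution random domain}, goes through verbatim for a constant anisotropic matrix $A$ and constant multiplicative factor $\mu$ --- this is routine, since a linear change of variables $x \mapsto A^{-1/2}x$ reduces $-\dive(A\nabla\cdot)$ to a multiple of the Laplacian, and the free boundary velocity law $u_t = Q(\nabla u) = (A\nabla u\cdot\nabla u)/\mu$ transforms consistently. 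A secondary subtlety is that \eqref{eqn: homogeneous obstacle evolution problem} is posed on all of $\mathbb{R}^d$ (no Neumann boundary), which is the easier case of the comparison theorem, so the $C^3$ hypothesis is needed only to guarantee we are in the regime where \cite{K03} gives uniqueness for the limiting flow; with $\db_0$ star-shaped or $C^1$ the required regularity propagates as noted in the Initial Conditions paragraph.
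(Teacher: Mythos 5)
Your proposal is correct and follows the same route the paper intends: the paper itself gives only a one-sentence justification, saying that uniqueness follows by combining Theorem \ref{thm: evolution} with the comparison principle (Theorem \ref{thm: cp2}), and you have simply spelled out that combination. One small point worth flagging: from the two applications of Theorem \ref{thm: cp2} one gets $\overline{u_1}=\overline{u_2}$ directly, but the claim $\underline{u_1}=\underline{u_2}$ does not literally follow by taking envelopes; what one really extracts is that the free boundaries $\Gamma(u_1)=\Gamma(u_2)$ coincide, hence the positivity sets $\db_t(p_1)=\db_t(p_2)$ agree (up to a negligible set), which via Lemma \ref{lem: from p to u via Perron's method} forces $u_1(\cdot,t)=u_2(\cdot,t)$ for a.e.\ $t$ and thus $p_1=p_2$ after integration.
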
 
    
    %%%%%%%%%%%%%%%%%%%%%%%%%%%%%%%%%%%%%%%%
    %%%%%%%%%%%%%%%%%%%%%%%%%%%%%%%%%%%%%%%%
    \subsection{Proof of Theorem \ref{thm: main}} From now on,  $u^\epsilon(x,t,\omega)$ will be the specific viscosity solution of \eqref{eqn: Hele-Shaw O_ep} given in terms of $p^\epsilon$ for $\mathbb{P}$-almost every $\omega$ via Theorem \ref{thm: evolution}. In particular, we have
    \begin{equation*}
         u^\epsilon(x,t,\omega):= \partial_t^- p^\epsilon(x,t,\omega).
    \end{equation*}
    We recall the notion of upper and lower semi-continuous limits,
    \begin{align*}
	    u^*(x,t,\omega) &: = \limsup_{(y,s)\in\overline{\mathcal{O}}_\e,\ep \to (x,t),0} u^\epsilon(y,s,\omega),\\
	    u_*(x,t,\omega) &: = \liminf_{(y,s)\in\overline{\mathcal{O}}_\e,\ep \to (x,t),0} u^\epsilon(y,s,\omega).
    \end{align*} 
    Note that $u^*$ and $u_*$ are well-defined a.e. in $\mathbb{R}^d\times [0,\infty)\times \Omega$, since Assumption \ref{hyp: regularity} guarantees that
    \begin{equation*}
	     \mathbb{R}^d = \limsup_{\epsilon\to 0} \mathcal{O}_\epsilon\pal
	\end{equation*}
    These limits, known also as half-relaxed limits in the viscosity solution literature, are convenient for two reasons: first the functions $u^\epsilon$ are defined in different spaces (so we cannot talk about the usual pointwise limit in $x$), and second, we must deal with the possible lack of regularity of $u^\epsilon$, which may even  be discontinuous at later times. As we have mentioned already, this is a problem even for a fixed $\epsilon$. The lack of regularity means we cannot simply arrive at uniform convergence by using a compactness argument. However, if we can show that $u^*=u_*$ the uniform convergence follows.

    In any case, in working towards the proof of Theorem \ref{thm: main} we wish to investigate whether $u_* =u^*$ and whether both are equal to the unique solution of \eqref{eqn: Hele-Shaw homogeneous}. To this end, let $p:\mathbb{R}^d\times \mathbb{R}_+ \to \mathbb{R}$ be the unique solution to \eqref{eqn: homogeneous obstacle evolution problem} and  let $u:\mathbb{R}^d\times \mathbb{R}_+ \to \mathbb{R}$  be the corresponding viscosity solution to \eqref{eqn: Hele-Shaw homogeneous} given by Theorem \ref{thm: evolution}. In this case we have that $u=\partial^-_t p\;$ for a.e. $x,t$ and
    \begin{equation*}
         \left\{\begin{array}{rlrl}
              -\text{div}(A\nabla u)  & = & \mu & \text{in } \; \;\;\;\{ p (\cdot,t)>0\},\\
               u & = & 0 & \text{on } \; \partial\{ p (\cdot,t)>0\}.
         \end{array}\right.
    \end{equation*}

    We already know that $p^\epsilon \to p$ (Theorem \ref{thm: convergence of obstacle problems}) uniformly, this implies at once that
    \begin{equation*}
         u^\epsilon \to u \text{ in } L^p_{\text{loc}}(\mathcal{B}^\epsilon_R(x_0)\times [0,T])\pal	
    \end{equation*}
    for any $x_0,R,T$ and any $p \in [1,+\infty)$. This does give a homogenization result of $u^\epsilon$ to $u$ in the $L^p$ topology, but it does not clarify whether $u^\epsilon$ converges to $u$ pointwise. Moreover, using  Corollary \ref{cor: strong convergence of FB} together with $\partialom\{ p^\epsilon>0\}=\partialom\{u^\epsilon>0\}$ and  $\partial\{ p>0\} = \partial\{u>0\}$ we obtain the uniform convergence of the free boundaries in the Hausdorff distance.

    All that is left to investigate is the uniform convergence of $u^\epsilon$ itself. Note that what we want to verify is, roughly speaking, whether taking the time derivative and taking the limit $\epsilon\to 0$ for $p^\epsilon$  commute.

    The plan is to exploit the comparison principle for the limiting problem to relate $u$ to $u^*$ and $u_*$. We begin with the verification of the elliptic equations that $u^*$ and $u_*$ satisfy.	
    \begin{lem}\label{lem: u_* supersol u^* subsol}
	     For every fixed $t>0$ and in the viscosity sense, we have
         \begin{align}
         	  -\text{div}\left (A \nabla u^* \right)  & \leq \mu \; \text{ in } \mathbb{R}^d,\label{eqn: subsolution}\\
              -\text{div}(A\nabla u_*)  & \geq \mu \; \text{ in } \{u_*(\cdot,t)>0\}.\label{eqn: supersolution}
         \end{align}
    \end{lem}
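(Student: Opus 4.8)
The goal is to establish that the half-relaxed limits $u^*$ and $u_*$ are respectively a viscosity sub- and supersolution of the homogenized elliptic equation $-\dive(A\nabla \cdot) = \mu$, at each fixed time $t > 0$. The natural strategy is to transfer these facts from the already-established homogenization of the obstacle problem, \emph{not} to homogenize the Hele-Shaw problem directly. The bridge is the identity $u^\epsilon = \partial_t^- p^\epsilon$ from Theorem \ref{thm: evolution}, together with $u^\epsilon(\cdot,t) \geq t^{-1} p^\epsilon(\cdot,t)$ and, more importantly, the monotonicity of $p^\epsilon$ in $t$. I would first record the two elementary pointwise facts: (i) for $h>0$, the difference quotient $u^\epsilon_h(x,t):=h^{-1}(p^\epsilon(x,t)-p^\epsilon(x,t-h))$ satisfies, for $x$ in $\{p^\epsilon(\cdot,t-h)>0\}$, the elliptic equation $-\Delta u^\epsilon_h = h^{-1}\int_{t-h}^t \mathbb{I}_{\{p^\epsilon(\cdot,s)>0\}}\,ds$ plus an error supported where $\{p^\epsilon(\cdot,t)>0\}$ just appeared; and (ii) $u^\epsilon_h \downarrow u^\epsilon$ as $h\downarrow 0$ (Theorem \ref{thm: evolution}(b)). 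Then one passes $\epsilon\to 0$ in each difference quotient using Theorem \ref{thm: convergence of obstacle problems} (the $L^\infty$ convergence $p^\epsilon\to p$ and Corollary \ref{cor: strong convergence of FB} for the positivity sets), and finally lets $h\to 0$ on the homogenized side.

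Concretely, for the subsolution statement \eqref{eqn: subsolution}: suppose a smooth function $\psi$ touches $u^*(\cdot,t_0)$ strictly from above at $x_0$ in a ball $B_\rho(x_0)$, with $u^*<\psi$ on $\partial B_\rho(x_0)$. By definition of the half-relaxed limsup, there are sequences $\epsilon_k\to 0$, $(y_k,s_k)\to(x_0,t_0)$ with $u^{\epsilon_k}(y_k,s_k)\to u^*(x_0,t_0)$. I would use the uniform H\"older estimate of Theorem \ref{thm: elliptic estimates} for $u^\epsilon$ (valid since $u^\epsilon(\cdot,t)$ solves the Poisson equation in its positivity set with bounded right-hand side and zero Neumann data, by Lemma \ref{lem: quadratic upper bound p^ep}-type reasoning applied to $u$) together with the time-continuity of $p^\epsilon$ and hence of the difference quotients to replace $u^{\epsilon_k}$ by $u^{\epsilon_k}(\cdot,t_0)$ up to a vanishing error. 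Away from the free boundary, in the region $\{u(\cdot,t_0)>0\}$, one has genuine homogenization of the linear equation: $u^{\epsilon_k}(\cdot,t_0)\to u(\cdot,t_0)$ and the right-hand sides $\mathbb{I}_{\mathcal O_{\epsilon_k}}\to \mu$ weakly (Theorem \ref{thm: ergodic theorem}), so Theorem \ref{thm: stochastic homogenization linear} gives $-\dive(A\nabla u) = \mu$ there; thus $u^* = u = u_*$ on $\{u(\cdot,t_0)>0\}$ and \eqref{eqn: subsolution} holds classically there. On the complement, $\{u(\cdot,t_0)=0\}$, one has $u^*\leq \overline u = 0$ (using the uniform convergence of the free boundaries, Corollary \ref{cor: strong convergence of FB}, and the quadratic vanishing of $p^\epsilon$, Lemma \ref{lem: quadratic upper bound p^ep}), so $u^*$ is $\leq 0$ there and $\psi$ touching it from above at a zero of $u^*$ forces $-\Delta \psi(x_0)\leq$ any nonnegative bound; combined with the fact that $u^*$ is subharmonic-type on all of $\mathbb{R}^d$ (it inherits $-\Delta u^\epsilon \leq \mathbb{I}_{\mathcal O_\epsilon}$ in the whole domain, from the $p^\epsilon$ side), one concludes \eqref{eqn: subsolution} in the viscosity sense globally.

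For the supersolution statement \eqref{eqn: supersolution}, the argument is symmetric but needs the non-degeneracy (Theorem \ref{thm: nondegeneracy}) to handle the free boundary: if $\phi$ touches $u_*(\cdot,t_0)$ from below at a point $x_0$ with $u_*(x_0,t_0)>0$, then by Corollary \ref{cor: strong convergence of FB} and the lower bound $u^\epsilon(\cdot,t)\geq t^{-1}p^\epsilon(\cdot,t)$, for $k$ large $x_0$ lies well inside $\{p^{\epsilon_k}(\cdot,t_0)>0\}$, where $u^{\epsilon_k}$ genuinely solves the linear Poisson equation with Neumann data; homogenization of that linear equation then yields $-\dive(A\nabla u_*)\geq \mu$ at $x_0$. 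The restriction to $\{u_*(\cdot,t)>0\}$ in the statement is precisely because one needs to be away from the free boundary for the Neumann-problem structure to survive the limit.

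\textbf{Main obstacle.} The delicate point is the behavior \emph{at} the free boundary and the interchange of the $h\to 0$ limit (defining $u^\epsilon$ from $p^\epsilon$) with the $\epsilon\to 0$ limit. The difference quotient $u^\epsilon_h$ solves an equation with an extra non-negative source term supported on the thin annular region $\{p^\epsilon(\cdot,t)>0\}\setminus\{p^\epsilon(\cdot,t-h)>0\}$, of width $O(\sqrt h)$ by Lemma \ref{lem: control_FB_time}, and one must argue that this term does not spoil the subsolution inequality after passing to the limit — this is where the uniform H\"older estimate of Theorem \ref{thm: elliptic estimates} and the quadratic bounds on $p^\epsilon$ do the heavy lifting, controlling the contribution of the annulus uniformly in $\epsilon$. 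A secondary subtlety is verifying the global (across the free boundary) subharmonicity-type bound for $u^*$: this follows because $p^\epsilon$ is a global subsolution of $-\Delta p^\epsilon \geq f^\epsilon$, hence so is each difference quotient up to controlled sign, and the bound survives in the limit; but it should be written out carefully since $u^*$ is only upper semicontinuous.
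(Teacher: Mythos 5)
Your proposal takes a genuinely different route from the paper's, but as written it has two gaps that are not merely cosmetic.

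\textbf{Circularity.} You invoke $u^*\leq \overline u$ (to conclude $u^*\leq 0$ on $\{u(\cdot,t_0)=0\}$) and you assert $u^*=u=u_*$ on $\{u(\cdot,t_0)>0\}$. But these facts are precisely what the rest of Section~\ref{sec: homogenization} establishes \emph{using} the present lemma: the inclusion $\{u>0\}\subset\{u_*>0\}$ and the inequality $\overline u\leq u^*$ come just after Lemma~\ref{lem: u_* supersol u^* subsol} (feeding on it), and the reverse inequality $u^*\leq \overline u$ is the content of Proposition~\ref{prop: subsolution property for u^*}, which also relies on this lemma. So these cannot be inputs here.

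\textbf{The claimed convergence of $u^\epsilon$.} You assert ``$u^{\epsilon_k}(\cdot,t_0)\to u(\cdot,t_0)$'' on $\{u(\cdot,t_0)>0\}$ as if it followed from Theorems~\ref{thm: convergence of obstacle problems} and~\ref{thm: stochastic homogenization linear}. It does not. What Theorem~\ref{thm: convergence of obstacle problems} gives is uniform convergence of $p^\epsilon$ to $p$; you cannot differentiate a uniform limit in $t$ to conclude $\partial_t^- p^\epsilon \to \partial_t^- p$ pointwise, and Theorem~\ref{thm: stochastic homogenization linear} applies to a Dirichlet problem in a \emph{fixed} domain $D$, whereas $u^\epsilon(\cdot,t_0)$ solves a Poisson problem on the $\epsilon$-dependent set $\{p^\epsilon(\cdot,t_0)>0\}$ with zero Dirichlet data on its moving boundary. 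Making this work is essentially the content of the rest of Section~\ref{sec: homogenization}, so the argument is not complete as stated. You yourself flag the interchange of the $h\to 0$ and $\epsilon\to 0$ limits as the ``main obstacle'' but leave it unresolved; that interchange is not a technicality to be deferred, it is the crux.

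The paper instead uses a local perturbed test function argument that sidesteps all of this. Assume for contradiction that a smooth $\phi$ witnesses the failure of the viscosity inequality for $u_*$ (say) at a point $(x_0,t_0)$ with $u_*(x_0,t_0)>0$; after the standard strictness perturbation, in a small ball $B_r(x_0)$ one has $\dive(A\nabla\phi)>-\mu$ and $u_*(\cdot,t_0)-\phi$ strictly separated from $0$ on $\partial B_r(x_0)$. Replace $\phi$ by the exact solution $\varphi$ of $-\dive(A\nabla\varphi)=\mu$ in $B_r(x_0)$ with $\varphi=\phi$ on $\partial B_r(x_0)$; the maximum principle gives a definite sign to $\varphi-\phi$ in the interior. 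Then introduce the $\epsilon$-corrector $\varphi^\epsilon$ solving $-\Delta\varphi^\epsilon=1$ in $\mathcal B^\epsilon_r(x_0)$ with zero Neumann data on $\partial\mathcal O_\epsilon$ and $\varphi^\epsilon=\phi$ on the outer boundary. Theorem~\ref{thm: stochastic homogenization linear} (in the fixed ball $B_r(x_0)$, where it legitimately applies, with uniform convergence supplied by Theorem~\ref{thm: elliptic estimates}) gives $\varphi^\epsilon\to\varphi$ locally uniformly. Since $u_*(x_0,t_0)>0$, by lower semicontinuity and the definition of $u_*$ one has $\mathcal B^\epsilon_r(x_0)\subset\{u^\epsilon(\cdot,t)>0\}$ near $(x_0,t_0)$ for $\epsilon$ small, so $u^\epsilon$ and $\varphi^\epsilon$ both solve $-\Delta v=1$ with zero Neumann data there; the sign on the outer boundary and the interior extremum produced by the construction of $\varphi$ contradict the strong comparison principle of Section~\ref{sec: elliptic regularity}. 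The same scheme with reversed signs proves the subsolution statement for $u^*$ on all of $\mathbb R^d$. This is local, self-contained, and does not presuppose any convergence of $u^\epsilon$ itself. I would encourage you to redo the proof along these lines; the moral is that for viscosity-sense PDE statements about half-relaxed limits the perturbed test function device is the right tool, whereas the obstacle-problem correspondence is what you use to identify the limiting free boundary and the initial data, not the interior equation.
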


    \begin{proof}
         We only prove \eqref{eqn: supersolution} explicitly, as \eqref{eqn: subsolution} is proven with the same method. Suppose \eqref{eqn: supersolution} does not hold in the viscosity sense, then there exists $(x_0,t_0)\in\{u_*>0\}$, $t_0>0$, and a smooth function $\phi$ such that $u(\cdot,t_0)-\phi$ has a maximum at $x_0$ while $\text{div}(A \nabla \phi) (x_0)>-\mu$.  By adding $c+\delta(x-x_0)^2$ to $\phi(x)$ with $c=u(x_0,t_0)-\phi(x_0)$ and with sufficiently small $\delta>0$, we may assume that the maximum of $u(\cdot,t_0)-\phi$ is zero and strict. Now, since $\phi$ is smooth there is some small $r>0$ such that 
         \begin{equation*}
              \text{div}(A\nabla\phi)>-\mu  \text{ in } B_r(x_0)\text{ and } u(\cdot,t_0)<\phi \; \text{ on } \partial B_r(x_0).
         \end{equation*}
         Next, consider the function $\varphi:B_r(x_0)\to \mathbb{R}$ determined by
         \begin{equation*}
              -\text{div}(A \nabla \varphi) = -\mu \text{ in } B_r(x_0)\text{ and } \varphi=\phi \;\text{ on } \partial B_r(x_0).
         \end{equation*}
         By the comparison principle, $\varphi>\phi$ in $B_r(x_0)$, and thus $\varphi-\phi$ has an interior maximum in $B_r(x_0)$. On the other hand, for each $\epsilon>0$  consider the function $\varphi^\epsilon: \mathcal{B}^\epsilon_r(x_0) \to \mathbb{R}$ determined by
         \begin{equation*}
              \Delta \varphi^\epsilon= -1\text{ in } \mathcal{B}^\epsilon_r(x_0), \quad \varphi^\epsilon_n=0 \text{ on } \partial\mathcal{O}_\epsilon \text{ and } \varphi^\e=\phi \text{ on } \partial \mathcal{B}^\epsilon_r(x_0).
         \end{equation*}
         Then Theorem \ref{thm: stochastic homogenization linear} (see Section \ref{sec: linear homogenization revisited}) says that$\pal$ as $\epsilon \to 0$ $\varphi^\epsilon$ converges uniformly in compact subsets of $\mathcal{B}^\epsilon_r(x_0)$ to $\varphi$.  Therefore if $\epsilon$ is small enough $u^\epsilon-\varphi^\epsilon$ has an interior maximum in $\mathcal{B}^\epsilon_r(x_0)$ while $u^\epsilon <\phi=\varphi^\epsilon$ on $\partial \mathcal{B}^\epsilon_r(x_0)$, contradicting the strong comparison principle. This contradiction proves \eqref{eqn: supersolution}.
    \end{proof}

    Next, we recall that $u^\epsilon(x,t) \geq t^{-1}p^\epsilon(x,t)$ and that
    \begin{equation*}
         \lim \limits_{\epsilon \to 0}\|p^\epsilon(\cdot,t)-p(\cdot,t)\|_{L^\infty(\mathcal{O_\epsilon})} = 0,\;\;\forall\;t>0,
    \end{equation*}
    from where it follows that $u_* (x,t) \geq t^{-1}p(x,t)$. Thus
    \begin{equation}\label{eqn: u positivity set easy inclusion}
         \{u >0\}\subset \{u_*>0\}.
    \end{equation}
    Then, applying the comparison principle for the regular Poisson equation, the above yields
    \begin{equation*}
    	 \overline{u}\leq u^*.
    \end{equation*}
    In light of this, all that is left to do to prove Theorem \ref{thm: main} is that
    \begin{equation}\label{eqn: u positivity set difficult inclusion}
         \overline{u} \geq u^* \text{ and } \Gamma(\overline{u}) = \Gamma(u).
    \end{equation}
    This is the content of the next proposition. 
    \begin{prop}\cite[Proposition 3]{KM09}\label{prop: subsolution property for u^*}
         $u^*$ is a viscosity subsolution of \eqref{eqn: Hele-Shaw homogeneous}, in particular,  $u^* \leq \overline{u}$.
    \end{prop}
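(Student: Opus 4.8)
The plan is to establish \eqref{eqn: u positivity set difficult inclusion} by a viscosity-solution comparison argument, using the half-relaxed limit machinery together with the elliptic subsolution property \eqref{eqn: subsolution} from Lemma \ref{lem: u_* supersol u^* subsol} and the stability of the positive phases coming from Corollary \ref{cor: strong convergence of FB}. Since $\overline u \le u^*$ has already been verified via the comparison principle for the Poisson equation, the content is to show the reverse inequality $u^* \le \overline u$, which is exactly the assertion of Proposition \ref{prop: subsolution property for u^*}. The strategy is standard in free-boundary homogenization: one shows that $u^*$ is a viscosity subsolution of the homogenized free boundary problem \eqref{eqn: Hele-Shaw homogeneous}, and then invokes the comparison principle for that problem (Theorems \ref{thm: comparison for Hele-Shaw C^3 boundary} and \ref{thm: cp2}, valid because $\partial\db_0$ is $C^1$ or $\db_0$ is star-shaped) against the viscosity solution $u = \partial_t^- p$ supplied by Theorem \ref{thm: evolution} to conclude $u^* \le \overline u$.

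First I would verify the interior equation: by Lemma \ref{lem: u_* supersol u^* subsol}, $u^*(\cdot,t)$ satisfies $-\dive(A\nabla u^*) \le \mu$ in $\mathbb{R}^d$ in the viscosity sense, so (ii) in the classical-subsolution sense holds wherever $u^*>0$. Next I would check the free-boundary condition $ (u^*)_t \le Q(\nabla u^*) = (A\nabla u^* \cdot \nabla u^*)/\mu$ on $\partial\{u^*>0\}$. Suppose, for contradiction, that a strict classical supersolution $\phi$ of \eqref{eqn: Hele-Shaw homogeneous} touches $u^*$ from above at a free-boundary point $(x_0,t_0)$ with $t_0>0$, with $u^* < \phi$ on the parabolic boundary of a small cylinder $E$ centered there. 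By the definition of the half-relaxed limit $u^* = \limsup^* u^\epsilon$, there are points $(x_\epsilon, t_\epsilon) \to (x_0,t_0)$ and a sequence $\epsilon \to 0$ along which $u^\epsilon - \phi$ attains a local maximum near $(x_0,t_0)$. The interior contribution is controlled by the linear homogenization Theorem \ref{thm: stochastic homogenization linear}, which forces $\phi$ to be, up to small error, a supersolution of the $\epsilon$-problem \eqref{eqn: Hele-Shaw O_ep} in the relevant region; the free-boundary condition for $u^\epsilon$ must then be violated somewhere, unless the free boundary of $u^\epsilon$ has already been driven into $\{\phi>0\}$. Here is where I would use Corollary \ref{cor: strong convergence of FB}: it guarantees that $\partialom\{u^\epsilon>0\} = \partialom\{p^\epsilon>0\}$ converges uniformly (in Hausdorff distance, for each fixed $t>0$) to $\partial\{p>0\} = \partial\{u>0\}$, so the geometry of the $\epsilon$-free boundary near $(x_0,t_0)$ is asymptotically that of the limit, ruling out spurious configurations and letting the contradiction be extracted from the comparison principle for \eqref{eqn: Hele-Shaw O_ep} (Theorem \ref{thm: comparison for Hele-Shaw C^3 boundary}) applied at scale $\epsilon$.

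Once $u^*$ is known to be a viscosity subsolution of \eqref{eqn: Hele-Shaw homogeneous}, I would compare it with $u$. We have $u^*(\cdot,0) \le u_0$ since the initial phases $\db_0^\epsilon$ all sit inside $\db_0$ and $u^\epsilon(\cdot,0) \to u_0$ uniformly (using $\partialom\db_0^\epsilon = \partialom\overline{\db}_0^\epsilon$ and Corollary \ref{cor: distance from the initial phase}); combined with $\partial\db_0$ being $C^1$ (or $\db_0$ star-shaped) we get $u^*(\cdot,0) \prec$ (resp. $\le$) the solution $u$. Theorem \ref{thm: cp2} then yields $u^*(\cdot,t) \le \overline u(\cdot,t)$ for all $t$, which with the already-established $\overline u \le u^*$ gives $u^* = \overline u$, hence also $\Gamma(u^*) = \Gamma(\overline u)$; finally $u_* \ge u$ from $u_* \ge t^{-1}p$ and \eqref{eqn: u positivity set easy inclusion} together with the lower-semicontinuous comparison shows $u_* \ge \underline u$, so $\underline u \le u_* \le u^* \le \overline u$, and when $u$ is continuous all four coincide, giving uniform convergence and completing Theorem \ref{thm: main}. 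The main obstacle, I expect, is the free-boundary step of the subsolution verification: transferring the strict supersolution $\phi$ of the homogenized problem into an approximate supersolution of the $\epsilon$-problem requires simultaneously the strong $L^\infty$ convergence of the auxiliary linear problems (Theorem \ref{thm: stochastic homogenization linear}) near the touching point and the Hausdorff convergence of the free boundaries (Corollary \ref{cor: strong convergence of FB}), and one must be careful that the velocity law $(u^\epsilon)_t = |\nabla u^\epsilon|^2$ for the perforated problem passes, in the limit, to $u_t = (A\nabla u\cdot\nabla u)/\mu$ — this last identification is exactly the point where the quadratic form $Q$ and the constant $\mu$ from Definition \ref{def: mu} enter, and it is the most delicate piece of the passage to the limit.
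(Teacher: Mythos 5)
Your proposal diverges from the paper at the key step, and the divergence leaves a real gap. You propose to verify the free-boundary subsolution property for $u^*$ directly, by taking a classical strict supersolution $\phi$ of the \emph{homogenized} problem that touches $u^*$ from above at a free boundary point, and then converting it --- via the linear homogenization theorem and a near-touching argument --- into an approximate supersolution of the $\epsilon$-problem \eqref{eqn: Hele-Shaw O_ep} so as to violate $(u^\epsilon)_t = |\nabla u^\epsilon|^2$. But Theorem \ref{thm: stochastic homogenization linear} only controls the linear elliptic Dirichlet problem with prescribed right-hand side; it does not convert a $\phi$ with $\phi_t > (A\nabla\phi\cdot\nabla\phi)/\mu$ on its interface into a function $\phi^\epsilon$ with $\phi^\epsilon_t > |\nabla\phi^\epsilon|^2$ in the perforated geometry. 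That conversion is exactly the hard part of the problem --- it would require corrector constructions up to the free boundary and a quantitative justification that $|\nabla\phi^\epsilon|^2 \to (A\nabla\phi\cdot\nabla\phi)/\mu$ along the interface --- and you leave it at the level of ``the free-boundary condition must then be violated somewhere,'' with Corollary \ref{cor: strong convergence of FB} invoked only to ``rule out spurious configurations.'' Hausdorff convergence of the positivity sets says nothing about gradients on the interface, so it cannot close this step; as you yourself note, this is ``the most delicate piece,'' and it remains unproved in your proposal.

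The paper sidesteps the whole difficulty. Via the obstacle-problem correspondence, Theorem \ref{thm: evolution} (applied with $\mathcal{O}=\mathbb{R}^d$ and the homogenized coefficients) already shows that $u=\partial^-_tp$ is a viscosity solution of \eqref{eqn: Hele-Shaw homogeneous}, so in particular $\overline{u}$ is a viscosity subsolution. The cheap inequality $\overline{u}\le u^*$ from \eqref{eqn: u positivity set easy inclusion}, together with the coincidence of positivity sets $\db(u^\epsilon)=\db(p^\epsilon)$ and Corollary \ref{cor: strong convergence of FB}, then makes the subsolution property transfer immediately: if a strict classical supersolution $\phi$ touches $u^*$ from above at $(x_0,t_0)\in\partial\{u^*>0\}$, then $\phi(x_0,t_0)=0$, and $0\le\overline{u}\le u^*\le\phi$ forces $\phi$ to touch $\overline{u}$ from above at a free boundary point of $\overline{u}$, contradicting the known subsolution property of $\overline{u}$. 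No passage of the velocity law through half-relaxed limits, no corrector near the interface, and no comparison at scale $\epsilon$ is needed for this proposition; the entire heavy lifting has been front-loaded into the obstacle problem machinery.
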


    The proof is essentially finished, we just gather the facts we have proven so far.

    \begin{proof}
           Due to Lemma~\ref{lem: u_* supersol u^* subsol}, $u^*$ satisfies $\text{div}(A\nabla u^*) \geq -\mu$ in $\mathbb{R}^d$. Therefore we only need to check that $u^*$ satisfies the desired property for subsolutions on the free boundary $\partial \{ u^*>0\}$. 

         Suppose  $u^*$ is not a subsolution of \eqref{eqn: Hele-Shaw homogeneous}. Then  there exists a point $(x_0,t_0)\in \partial\{ u^*>0 \}$ and a classical supersolution $\phi$ of \eqref{eqn: Hele-Shaw homogeneous} in a cylinder $\Sigma:=B_r(x_0)\times (t_0-\tau,t_0]$ which touches  $u^*$ from above at $(x_0,t_0)$,
         \begin{equation*}
              u^*\leq \phi \text{ in } \Sigma, \quad u^*(x_0,t_0)=\phi(x_0,t_0). 
         \end{equation*}
         Clearly this contradicts the maximum principle whenever $(x_0,t_0)$ is not on $\partial\{u*>0\}$ or $\partial\{\phi>0\}$. Thus we only treat in detail the case where $\partial\{u^*>0\}$ and $\partial \{\phi>0\}$ touch at $(x_0,t_0)$. Note that, due to \eqref{eqn: u positivity set easy inclusion}  we have $\overline{u} \leq u^* \leq \phi$ in $\Sigma$, and we conclude $\overline{u}$ crosses $\phi$ from below in $\Sigma$  at $(x_0,t_0)$, contradicting the fact that $\overline{u}$ is a viscosity subsolution of \eqref{eqn: Hele-Shaw homogeneous}.
    \end{proof}
 
    \begin{proof}[Proof of Theorem~\ref{thm: main}]
	
	     We apply Corollary \ref{cor: strong convergence of FB} and conclude that
         \begin{equation}\label{eqn: coincidence of free boundaries}
              \begin{array}{c}
	             \Gamma(u^*)=\Gamma(u_*)=\Gamma(u)=\Gamma(\overline{u}),\\
	             \db(u^*)=\db(u_*)=\db(u)=\db(\overline{u}).
              \end{array}
         \end{equation}	
         Then, thanks to \eqref{eqn: u positivity set difficult inclusion} and Proposition \ref{prop: subsolution property for u^*} we obtain
         \begin{equation*}
              u \leq u_*\leq u^*\leq \overline{u}.	
         \end{equation*}      
         In particular, if $u$ is continuous we have $u=\overline{u}$ which from the above estimate means that $u_*=u^*$, this finishes the proof of the theorem.
    \end{proof}
    Finally, let us point out a case where it is known that $u$ is continuous in time.
    \begin{lem}\label{lem: star shaped initial data}
         If $\db_0$ is star shaped with respect to a ball $B_r(0)$, then so is $\db_t(u)$. Moreover, $u$ is H\"older continuous in both space and time.
    \end{lem}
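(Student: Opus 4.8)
The plan is to treat \eqref{eqn: Hele-Shaw homogeneous} as a classical Hele--Shaw problem in $\mathbb{R}^d$ and to exploit a dilation--translation invariance of this problem together with the comparison principle of Theorem~\ref{thm: comparison for Hele-Shaw C^3 boundary}. Since $\mathcal{O}=\mathbb{R}^d$ has no boundary, that comparison principle applies directly to \eqref{eqn: Hele-Shaw homogeneous} (alternatively, after the linear change of variables $x=A^{1/2}y$ and a rescaling of time, \eqref{eqn: Hele-Shaw homogeneous} becomes exactly \eqref{eqn: deterministic Cauchy problem} with $\mathcal{O}=\mathbb{R}^d$). The key observation is that, because $A$ is a constant matrix, if $v$ is a viscosity sub- (resp.\ super-) solution of \eqref{eqn: Hele-Shaw homogeneous} then for every $\lambda\in(0,1)$ and $z\in\mathbb{R}^d$ so is
\begin{equation*}
     v^{\lambda,z}(x,t):=\lambda^{2}\,v\!\left(\tfrac{x-z}{\lambda},\,t\right),
\end{equation*}
whose positive phase at time $t$ is $\lambda\,\db_t(v)+z$. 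Indeed $-\dive(A\nabla\cdot)=\mu$ is invariant under $v\mapsto\lambda^{2}v(\cdot/\lambda)$ with \emph{no} rescaling of time, and on the free boundary both the normal velocity and $|\nabla v^{\lambda,z}|$ pick up the same factor $\lambda$, so the free boundary inequality defining a sub/supersolution is preserved. The same applies to the solution $u$.

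\emph{Step 1: propagation of star-shapedness.} Fix $\lambda\in(0,1)$ and $|z|\le(1-\lambda)r$. Since $\db_0$ is star-shaped with respect to $B_r(0)$, one has $\overline{\lambda\,\db_0+z}\subset\db_0$. The initial profile $u_0$ solves $-\dive(A\nabla u_0)=\mu$ in $\db_0$ and vanishes outside, hence is continuous and attains a positive minimum on the compact set $\overline{\lambda\,\db_0+z}\subset\db_0=\{u_0>0\}$; comparing $u_0$ on $\partial(\lambda\,\db_0+z)$, where $u_0^{\lambda,z}$ vanishes, the maximum principle for $-\dive(A\nabla\cdot)$ gives $u_0^{\lambda,z}<u_0$ throughout $\lambda\,\db_0+z$. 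Thus $u_0^{\lambda,z}\prec u_0$ in the sense of Definition~\ref{def: strict precedence}, and Theorem~\ref{thm: comparison for Hele-Shaw C^3 boundary} (with $u^{\lambda,z}$ as subsolution and $u$ as supersolution) yields $u^{\lambda,z}(\cdot,t)\prec u(\cdot,t)$, i.e.\ $\overline{\lambda\,\db_t(u)+z}\subset\db_t(u)$, for every $t>0$. Letting $\lambda\uparrow1$ with $z$ ranging over $\{|z|<(1-\lambda)r\}$ is precisely the statement that $\db_t(u)$ is star-shaped with respect to $B_r(0)$.

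\emph{Step 2: H\"older regularity.} Once star-shapedness is established, the domains $\db_t(u)$ — increasing in $t$ and contained in a fixed ball on each interval $[0,T]$ by Corollary~\ref{cor: distance from the initial phase} and finite propagation speed for \eqref{eqn: homogeneous obstacle evolution problem} — are Lipschitz domains satisfying uniform interior and exterior cone conditions, with constants depending only on $r$ and $\sup_{t\le T}\diam\db_t(u)$. For the spatial modulus I would combine interior $C^{1,\alpha}$ estimates for $-\dive(A\nabla u(\cdot,t))=\mu$ away from the free boundary with the following boundary bound: $u(\cdot,t)$ is a subsolution of $-\dive(A\nabla\cdot)=\mu$ in all of $\mathbb{R}^d$ (at a point where $u$ vanishes, a touching test function $\phi$ has $\nabla\phi=0$ and $\nabla^2\phi\ge0$, so $-\dive(A\nabla\phi)\le0\le\mu$; cf.\ also Lemma~\ref{lem: u_* supersol u^* subsol}), so comparing with an $A$-harmonic barrier homogeneous of a suitable degree $\gamma\in(0,1]$ on the exterior cone (corrected by $C|x-x_0|^2$ to absorb the source) gives $u(x,t)\le C|x-x_0|^{\gamma}$ near any free boundary point $x_0$, with $C,\gamma$ uniform in $t\le T$. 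These together give a uniform $\gamma$-H\"older modulus for $u(\cdot,t)$. For continuity in time one uses the monotone, finite-speed growth of $\db_t(u)$: for $s<t$, $w:=u(\cdot,t)-u(\cdot,s)\ge0$ is $A$-harmonic in $\db_s(u)$, is bounded on $\partial\db_s(u)$ by a power of $(t-s)$ via the boundary bound and the finite-propagation estimate (the analogue of Lemma~\ref{lem: control_FB_time}), whence the maximum principle propagates the bound to $\db_s(u)$; outside $\db_s(u)$ the same bound is immediate from $u(\cdot,s)=0$. Combining the two moduli yields joint H\"older continuity of $u$. (This last part is the content of the regularity results of \cite{K03}, see also \cite{ChKi2006}, which one may simply invoke.)

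\emph{Main obstacle.} The genuinely delicate point is Step~1: checking that the rescaled $v^{\lambda,z}$ remains a viscosity sub/supersolution — where the precise scaling of the free boundary velocity law is used — and, above all, producing the strict separation $u_0^{\lambda,z}\prec u_0$ of the initial data needed to run the comparison principle. It is exactly the star-shapedness of $\db_0$ \emph{with respect to a ball} (rather than a point) that makes this separation available, and uniform as $\lambda\uparrow1$.
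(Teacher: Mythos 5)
Your argument is correct and is essentially the same as the paper's: both propagate star-shapedness via a rescaled solution $(1+\delta)^{-2}u((1+\delta)(x-x_0)+x_0,t)$ (your $v^{\lambda,z}$ with $\lambda=1/(1+\delta)$, $z=(1-\lambda)x_0$) together with the comparison principle, and then deduce the H\"older moduli in space and time from the resulting Lipschitz free boundary, the elliptic boundary estimate, and Lemma~\ref{lem: control_FB_time}. Your write-up is somewhat more careful than the paper's in spelling out the scaling invariance of the effective free boundary law and the strict separation $u_0^{\lambda,z}\prec u_0$ needed to invoke Theorem~\ref{thm: comparison for Hele-Shaw C^3 boundary}.
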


    \begin{proof}
         First let us show that $\db_t(u)$ is star shaped with respect to $B_r(0)$. For this, for given $\delta>0$  and $x_0\in B_r(0)$ let us  consider the perturbed function
         \begin{equation*}
              \tilde{u}(x,t) = (1+\delta)^{-2}u((1+\delta)(x-x_0)+x_0,t).
         \end{equation*}
         Then $\tilde{u}$ also solves \eqref{eqn: Hele-Shaw homogeneous} with the initial data $\tilde{u}(x,0) \leq u(x,0)$, due to our assumption. Therefore Theorem~\ref{thm: comparison for Hele-Shaw C^3 boundary} yields that $\tilde{u}\leq u $ in $\mathbb{R}^d\times [0,\infty)$, and we conclude since $\delta>0$  is arbitrary.

         It follows that $\Gamma_t(u)$ is locally Lipschitz continuous at each time.  Since $u(\cdot,t)$ solves a uniformly elliptic equation in its positive set with Dirichlet boundary condition on $\Gamma_t(u)$, it follows that $u(\cdot,t)$ is H\"{o}lder continuous in $\overline{\db}_t(p)$. Moreover, due to Lemma~\ref{lem: control_FB_time} and $\db_t(u)$ being star shaped it follows that $u$ and $\Gamma(u)$ is also H\"older continuous over time.

    \end{proof}

	%%%%%%%%%%%%%%%%%%%%%%%%%%%%%%%%%%%%%%%%%%%%%%%%%%%%%%%%%%%%%%%%%%%%%%%%%%%%%%%%%%%%%%%%%%%%%%%%%%%%%%%%%
	%%%%%%%%%%%%%%%%%%%%%%%%%%%%%%%%%%%%%%%%%%%%%%%%%%%%%%%%%%%%%%%%%%%%%%%%%%%%%%%%%%%%%%%%%%%%%%%%%%%%%%%%%
	%%%%%%%%%%%%%%%%%%%%%%%%%%%%%%%%%%%%%%%%%%%%%%%%%%%%%%%%%%%%%%%%%%%%%%%%%%%%%%%%%%%%%%%%%%%%%%%%%%%%%%%%%
	%%%%%%%%%%%%%%%%%%%%%%%%%%%%%%%%%%%%%%%%%%%%%%%%%%%%%%%%%%%%%%%%%%%%%%%%%%%%%%%%%%%%%%%%%%%%%%%%%%%%%%%%%
	%%%%%%%%%%%%%%%%%%%%%%%%%%%%%%%%%%%%%%%%%%%%%%%%%%%%%%%%%%%%%%%%%%%%%%%%%%%%%%%%%%%%%%%%%%%%%%%%%%%%%%%%%
	%%%%%%%%%%%%%%%%%%%%%%%%%%%%%%%%%%%%%%%%%%%%%%%%%%%%%%%%%%%%%%%%%%%%%%%%%%%%%%%%%%%%%%%%%%%%%%%%%%%%%%%%%
	%%%%%%%%%%%%%%%%%%%%%%%%%%%%%%%%%%%%%%%%%%%%%%%%%%%%%%%%%%%%%%%%%%%%%%%%%%%%%%%%%%%%%%%%%%%%%%%%%%%%%%%%%
	%%%%%%%%%%%%%%%%%%%%%%%%%%%%%%%%%%%%%%%%%%%%%%%%%%%%%%%%%%%%%%%%%%%%%%%%%%%%%%%%%%%%%%%%%%%%%%%%%%%%%%%%%
	\section{Linear Stochastic homogenization revisited}\label{sec: linear homogenization revisited}

	In this section we discuss the homogenization of linear elliptic equations in random domains $\mathcal{O}_\epsilon$, which was used in Section \ref{sec: convergence obstacle problem} to identify the limit of $p^\epsilon$. We continue to make extensive use the notation introduced in Section \ref{sec: fbp in stationary ergodic domains}.

	Let $\mathcal{O}$ be a stationary ergodic domain and let $\text{O}$ be such that $O=\{ \omega \mid 0 \in \mathcal{O}(\omega)\} $ as  in Definition \ref{def: stationary ergodic domain}, and consider a sequence of functions $f_\epsilon \in L^2(\bfb_\epsilon)$ such  that $\mathbb{P}\text{-a.s.}$ we have $\mathbb{I}_{\mathcal{O}_\epsilon} f^\epsilon \rightharpoonup f$ to some $f\in L^2(D)$. Here we will be studying the sequence $\{v^\epsilon\}_\epsilon$ given by the Dirichlet problem
	\begin{equation*}
	     \left \{ \begin{array}{rll}
	          \Delta v^\epsilon & = f^\epsilon & \text{ in } D \cap \mathcal{O}_\epsilon,\\
	          \partial_n v^\epsilon & = 0  & \text{ on } \partial \mathcal{O}_\epsilon,\\
	          v^\epsilon & = 0  & \text{ on } \partialom D.
	     \end{array} \right.
	\end{equation*}
	We are interested in identifying a deterministic limit as $\epsilon \to 0$ as done in the uniformly elliptic but stochastic case in \cite[Section 3]{PapVar1979} and \cite{Koz1978}. The case of a stationary ergodic domain with Neumann data has been previously considered by Jikov (see \cite{JikKozOle1994}). In all cases, the main issue is dealing with corrector, once the corrector problem is solved, the method pioneered by Tartar in the periodic case \cite[Chapter 1]{BenLioPap1978} provides the proof of Theorem \ref{thm: stochastic homogenization linear}. The review of the aforementioned results is given below. 

	\begin{thm}\label{thm: stochastic homogenization linear}
		 Suppose that $\mathcal{O}$ satisfies Assumptions \ref{hyp: Ergodic} and \ref{hyp: regularity}, and let $u^\epsilon$ and $f^\epsilon$ be as above. Then $\mathbb{P}\text{-a.s.}$ as $\epsilon \to 0^+$ we have the weak limit
		 \begin{equation*}
		      \mathbb{I}_{\mathcal{O}_\epsilon}	u^\epsilon \rightharpoonup \mu u \text{ in } L^2(D).
		 \end{equation*}
		 Here $\mu = \mathbb{P}(\text{O})$ (as Definition \ref{def: mu}) and $u \in H^1_0(D)$ is the unique solution of
	     \begin{equation}\label{eqn: linear homogenized equation}
	          \text{div}(A \nabla u) = f \text{ in } D,\;\;\; u = 0 \text{ on } \partial D,
	     \end{equation}
	     whee $A$ is a positive definite matrix (the ``effective diffusivity'') that we will describe in detail below.
	\end{thm}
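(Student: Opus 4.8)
The plan is to follow the classical Tartar/Murat energy method (also known as the oscillating test function or div-curl method), adapted to the perforated setting, where the novelty over the uniformly elliptic case is the need to (i) extend functions from $\mathcal{O}_\epsilon$ to all of $D$ in order to talk about weak $L^2$ limits of gradients, and (ii) solve the corrector problem in the perforated geometry. First I would set up the correctors: for each $\xi\in\mathbb{R}^d$ one seeks a stationary function $\chi_\xi$ (equivalently a stationary, mean-zero gradient field $w_\xi\in L^2_{\mathrm{pot}}(\Omega)$ adapted to $\mathcal{O}$) such that $\xi+w_\xi$ is ``divergence free with zero Neumann data'' on $\mathcal{O}$, i.e.\ $\int_{\mathcal{O}}(\xi+\nabla\chi_\xi)\cdot\nabla\phi=0$ for all admissible $\phi$. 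Here I would invoke Lemma~\ref{lem: characterization of L^2_pot} (referenced in the introduction) to characterize the space of potential fields in the perforated domain, and then define the effective matrix by $A\xi\cdot\xi := \mathbb{E}\big[\mathbb{I}_{\mathcal{O}}|\xi+\nabla\chi_\xi|^2\big]$ (this is Definition~\ref{def: A_0}/\eqref{eqn: definity of A}). The rescaled correctors $\chi_\xi^\epsilon(x)=\epsilon\chi_\xi(x/\epsilon)$ converge to $0$ uniformly (or at least strongly in $L^2_{\mathrm{loc}}$) and $\mathbb{I}_{\mathcal{O}_\epsilon}(\xi+\nabla\chi_\xi(\cdot/\epsilon))\rightharpoonup \mu\xi$ with the flux $\mathbb{I}_{\mathcal{O}_\epsilon}(\xi+\nabla\chi_\xi(\cdot/\epsilon))\rightharpoonup A\xi$, both weakly in $L^2$ by the ergodic theorem (Theorem~\ref{thm: ergodic theorem}).

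Next I would establish compactness for the sequence $v^\epsilon$. Testing the equation with $v^\epsilon$ itself and using the Sobolev/Poincar\'e inequalities (Proposition~\ref{prop: Sobolev} together with the boundedness of $D$) gives a uniform bound $\|\nabla v^\epsilon\|_{L^2(\mathcal{O}_\epsilon)}+\|v^\epsilon\|_{L^2(\mathcal{O}_\epsilon)}\le C\|f^\epsilon\|_{L^2(\mathcal{O}_\epsilon)}\le C$, $\mathbb{P}$-a.s. Using the uniform extension operator (built exactly as in the proof of Proposition~\ref{prop: Sobolev} by patching the Calder\'on--Stein extensions $\hat E_k$ across the perforations $P_k(\omega)$, with bounds independent of $\epsilon$ and $\omega$) one obtains $\tilde v^\epsilon\in H^1_0(D')$ for a slightly larger $D'$, uniformly bounded in $H^1$. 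Hence along a subsequence $\tilde v^\epsilon\rightharpoonup v_0$ weakly in $H^1(D)$ and strongly in $L^2$, and $\mathbb{I}_{\mathcal{O}_\epsilon}v^\epsilon\rightharpoonup\mu v_0$, $\mathbb{I}_{\mathcal{O}_\epsilon}\nabla v^\epsilon\rightharpoonup \sigma$ for some flux $\sigma\in L^2(D;\mathbb{R}^d)$. The goal is to identify $\sigma=\frac{1}{\mu}A\nabla v_0$ hence $A\nabla v_0$ with the putative limiting flux, and to show $-\mathrm{div}(A\nabla v_0)=f$ with $v_0\in H^1_0(D)$; then uniqueness of the homogenized Dirichlet problem forces the whole sequence to converge and pins down $v_0=u$.

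The identification of $\sigma$ is the heart of the argument and is carried out with the oscillating test function. Fix $\xi\in\mathbb{R}^d$ and a smooth $\psi\in C_c^\infty(D)$, and use $\psi(x)(\xi\cdot x+\chi_\xi^\epsilon(x))$ (suitably restricted to $\mathcal{O}_\epsilon$) as a test function in the equation for $v^\epsilon$, and simultaneously $\psi v^\epsilon$ as a test function in the corrector equation. Subtracting the two identities, the symmetric ``bad'' term $\int \psi\,\nabla v^\epsilon\cdot(\xi+\nabla\chi_\xi^\epsilon)$ cancels, and one is left with terms of the form $\int (\xi x+\chi_\xi^\epsilon)\nabla\psi\cdot\nabla v^\epsilon$ and $\int v^\epsilon\nabla\psi\cdot(\xi+\nabla\chi_\xi^\epsilon)$, each of which is a product of a weakly converging factor and a \emph{strongly} converging factor (using $\chi_\xi^\epsilon\to0$ in $L^2_{\mathrm{loc}}$ and $v^\epsilon\to v_0$ strongly in $L^2_{\mathrm{loc}}$ via the extension), plus the right-hand side term $\int f^\epsilon\psi(\xi x+\chi_\xi^\epsilon)$ which passes to $\mu\int f\psi\,\xi\cdot x$. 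Passing to the limit and integrating by parts yields $\int \sigma\cdot\xi\,\psi + \int (\xi\cdot x)\nabla\psi\cdot\sigma = \mu\int(\xi\cdot x)f\psi + \text{(lower order)}$, and comparing with the weak form of $-\mathrm{div}(A\nabla v_0)=f$ tested against $\psi\,\xi\cdot x$ shows, since $\xi$ and $\psi$ are arbitrary, that $\sigma = A\nabla v_0$ (with the correct $\mu$-normalization) and that $v_0$ solves the homogenized equation. Finally, strict positive-definiteness of $A$ is obtained from the strong $L^2$ convergence granted by the uniform regularity theory (as indicated in the introduction): if $A\xi\cdot\xi=0$ for some $\xi\ne0$ then $\xi+\nabla\chi_\xi=0$ a.e.\ on $\mathcal{O}$, forcing $\chi_\xi$ to be an affine function incompatible with stationarity, a contradiction; this gives well-posedness of \eqref{eqn: linear homogenized equation} and completes the proof.

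The main obstacle I expect is \textbf{not} the algebra of the energy method but the construction and properties of the corrector in the perforated domain --- specifically proving that the space $L^2_{\mathrm{pot}}$ adapted to $\mathcal{O}$ is characterized as claimed (Lemma~\ref{lem: characterization of L^2_pot}), that a stationary corrector exists despite the absence of a uniform Sobolev inequality on $\mathbb{R}^d$ (this is exactly where Assumption~\ref{hyp: regularity} and the uniform extension operator are essential, replacing the role that uniform ellipticity plays in \cite{PapVar1979}), and that the rescaled correctors enjoy strong enough compactness ($\epsilon\chi_\xi(\cdot/\epsilon)\to 0$ locally uniformly, via Theorem~\ref{thm: elliptic estimates} applied to the corrector itself) to make the test-function computation and the positivity of $A$ go through. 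The uniform extension operator, already constructed in Section~\ref{sec: elliptic regularity}, is the single tool that makes all of this work and distinguishes the argument from the periodic treatments of Cioranescu--Donato.
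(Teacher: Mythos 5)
Your proposal follows essentially the same route as the paper's Section~\ref{sec: linear homogenization revisited}: the space $L^2_{\pot}$, its completeness via the uniform extension operator (Lemma~\ref{lem: characterization of L^2_pot}), correctors $w^\epsilon_\xi$ via Lax--Milgram (Theorem~\ref{thm: corrector problem}), the effective diffusivity from Definition~\ref{def: A_0}, and identification of the limiting flux by Tartar's div--curl cancellation --- your "subtract the two identities and cancel the bad term" step is exactly Lemma~\ref{lem: compensated compactness}. (Your energy formula $A\xi\cdot\xi=\mathbb{E}[\mathbb{I}_{\text{O}}|\xi+\Psi_\xi|^2]$ agrees with \eqref{eqn: definity of A} after testing \eqref{eqn: random corrector equation} against $\Psi_\xi$.)

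Two points differ in execution. For compactness of $u^\epsilon$ you extend to $H^1_0$ of a slightly larger domain and invoke Rellich, whereas the paper deduces strong convergence directly from the uniform H\"older estimate (Theorem~\ref{thm: elliptic estimates}); both work under Assumption~\ref{hyp: regularity}, and your extension route is the more classical Cioranescu--Donato one, while the paper's choice is a deliberate byproduct of the elliptic regularity theory it must build anyway. The more substantive divergence is in the positivity of $A$. As written, your argument has a gap: from $\xi+\nabla\chi_\xi=0$ a.e.\ on $\mathcal{O}$ you infer ``$\chi_\xi$ is affine, incompatible with stationarity,'' but $\chi_\xi$ is only pinned down on $\mathcal{O}$ --- on the holes $P_k$ its realization is unconstrained by this identity --- and moreover it is $\nabla\chi_\xi=\Psi_\xi(T_x\omega)$, not $\chi_\xi$ itself, that is stationary, so ``affine contradicts stationarity'' does not parse. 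The soft argument can be completed: since $\chi_\xi=-\xi\cdot x + c$ on $\partial P_k$, the divergence theorem gives $|P_k|^{-1}\int_{P_k}\nabla\chi_\xi\,dx=-\xi$ for each hole as well, so the Birkhoff average of $\nabla\chi_\xi$ over $B_R$ tends to $-\xi$ as $R\to\infty$, contradicting $\mathbb{E}[\Psi_\xi]=0$. Even with this repair, a contradiction argument yields only qualitative nondegeneracy; the paper proves the stronger quantitative bound $A\geq\lambda_0 I$ with universal $\lambda_0$ (Lemma~\ref{lem: positivity of A_0}) by integrating $\nabla w^\epsilon_\xi\cdot\xi$ along the disjoint tubes of Proposition~\ref{prop: network of tubes} and passing to the limit via the uniform convergence $w^\epsilon_\xi\to x\cdot\xi$, which is a genuinely different mechanism and is what Appendix~\ref{sec: Green's function estimates} later relies on. For the purposes of Theorem~\ref{thm: stochastic homogenization linear} alone, qualitative positivity suffices, so your plan (once the positivity step is repaired as above) does prove the theorem.
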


	\begin{DEF}\label{def: L^2_pot}
	     The space of stationary potential vector fields, $L^2_{\pot}(\Omega) \subset L^2(\Omega,\mathbb{R}^d)$, is defined as
	     \begin{equation*}
	          L^2_{\pot}(\Omega):=\{ v \in L^2(\Omega;\mathbb{R}^d) \mid \mathbb{E}[v]=0,\;\;\tilde v(x,\omega):=v(T_x\omega) \text{ is a potential vector field}\pal\}	
	     \end{equation*}
	     Given a measurable subset $\text{O}\subset \Omega$ we will denote by $L^2_{\pot}(\Omega,\text{O})$ the vector space
	     \begin{equation*}
	          L^2_{\pot}(\Omega,\text{O})=\{ v\in L^2(\text{O};\mathbb{R}^d) \mid \exists \; v' \in L^2_\pot(\Omega) \text{ s.t. } v=v' \text{ in } \text{O}\}.	
	     \end{equation*}    
	     We will also work with a space of stationary ergodic scalar functions which give $\dot{H}^1_{\text{loc}}$ functions,
	     \begin{equation*}
	          \mathcal{H}^1(\Omega):=\{ \phi \in L^2(\Omega) \mid \;\tilde \phi(x,\omega):=\phi(T_x\omega) \in H^1_{\text{loc}}(\mathbb{R}^d)\;\pal\},	
	     \end{equation*}
	     and the restriction of these functions to $\text{O}$,
	     \begin{equation*}
	     	  \mathcal{H}^1(\Omega,\text{O}) := \{ \phi \in L^2(\text{O}) \mid \exists \; \phi' \in \mathcal{H}^1(\Omega) \text{ s.t. } \phi=\phi' \text{ in } \text{O}\}.
		 \end{equation*}
	\end{DEF}

    We shall now use the extension operator from Proposition \ref{prop: Poincare} together with the harmonic extension to build an extension operator the probability space. Although this requires assuming that the domain is Lipschitz and well connected (that is, Assumption \ref{hyp: regularity}), it allows for a proof of homogenization in stationary ergodic domains that is conceptually closer to the proof of homogenization for a uniform elliptic operator in $\mathbb{R}^d$. 
    \begin{lem}\label{lem: stationary ergodic extension}
    	 If the domain $\mathcal{O}$ satisfies Assumption \ref{hyp: regularity}, then there exists an extension operator
         \begin{equation*}
         	  \Ext: \mathcal{H}^1(\Omega,\text{O})\to \mathcal{H}^1(\Omega),
         \end{equation*}
         which commutes with the ergodic action in the following sense: take any $u \in \mathcal{H}^1(\Omega)$ and $x \in \mathbb{R}^d$, then if we apply $\Ext$ to the restriction of $u\circ T_x$ to $\text{O}$ we have
         \begin{equation*}
              \Ext(u)\circ T_x=\Ext(u\circ T_x).
         \end{equation*}
    \end{lem}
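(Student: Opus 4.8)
The plan is to construct the extension operator on the probability space by transferring, via the correspondence between stationary ergodic functions and measurable functions on $\Omega$, the pointwise (in $\omega$) extension operator $\hat E$ already built in the proof of Proposition \ref{prop: Sobolev}. Recall that $\hat E$ extends functions from $\mathcal{O}(\omega)$ to all of $\mathbb{R}^d$ by leaving them unchanged on $\mathcal{O}(\omega)$ and by using, on each perforation $P_k(\omega)$, the Calder\'on--Stein extension of the boundary values on a collar $\hat P_k(\omega)\setminus P_k(\omega)$ after subtracting and re-adding the mean $(\phi)_k$. The crucial feature is that this $\hat E$ is \emph{local and geometric}: the value of $\hat E\phi$ at a point $x$ in a perforation $P_k(\omega)$ depends only on the restriction of $\phi$ to the collar $\hat P_k(\omega)\setminus P_k(\omega)$, and the perforations, collars, and the Calder\'on--Stein extension are all intrinsic to the geometry of $\mathcal{O}(\omega)$.

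First I would set up the dictionary: given $\phi\in\mathcal{H}^1(\Omega,\text{O})$, pick a representative $\phi'\in\mathcal{H}^1(\Omega)$ with $\phi=\phi'$ on $\text{O}$, and let $\tilde\phi(x,\omega):=\phi'(T_x\omega)$, which lies in $H^1_{\mathrm{loc}}(\mathcal{O}(\omega))$ for a.e. $\omega$ (being a stationary ergodic $\dot H^1_{\mathrm{loc}}$ function restricted to $\mathcal{O}(\omega)$). Then I would apply the pointwise operator: set $\psi(x,\omega):=\hat E\big(\tilde\phi(\cdot,\omega)\big)(x)$, which lies in $H^1_{\mathrm{loc}}(\mathbb{R}^d)$ a.s. and agrees with $\tilde\phi$ on $\mathcal{O}(\omega)$; the energy bound of Proposition \ref{prop: Sobolev} gives $\|\nabla\psi(\cdot,\omega)\|_{L^2(B_R)}\le C\|\nabla\tilde\phi(\cdot,\omega)\|_{L^2(B_{R+Cd_0^{-1}})}$ for every $R$. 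I would then define $\Ext(\phi)\in\mathcal{H}^1(\Omega)$ as the element of $L^2(\Omega)$ corresponding to $\psi$, i.e. via $\widetilde{\Ext(\phi)}(x,\omega)=\psi(x,\omega)$; that this genuinely defines a function on $\Omega$ (rather than just a stationary field) is exactly the content of the equivariance to be checked. Well-definedness (independence of the choice of representative $\phi'$) is immediate since $\hat E\phi$ on $\mathcal{O}(\omega)$ equals $\phi$ and on the perforations depends only on collar values in $\mathcal{O}(\omega)$.

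The equivariance $\Ext(u)\circ T_x=\Ext(u\circ T_x)$ is the heart of the matter and the main obstacle, so I would isolate it. Writing it at the level of stationary fields, with $\widetilde{(u\circ T_x)}(y,\omega)=\tilde u(y+x,\omega)$, the identity to prove becomes
\begin{equation*}
     \hat E\big(\tilde u(\cdot,\omega)\big)(y+x)=\hat E\big(\tilde u(\cdot+x,\omega)\big)(y)\qquad\text{for all }x,y\in\mathbb{R}^d,\ \text{a.e. }\omega.
\end{equation*}
This is precisely translation covariance of the pointwise extension operator, and it holds because every ingredient of $\hat E$ is built intrinsically from $\mathcal{O}(\omega)$: the stationarity of the domain, $\mathcal{O}(\omega)=x+\mathcal{O}(T_x\omega)$, means the perforations of $\mathcal{O}(\omega)$ are the translates by $x$ of those of $T_x\omega$, the collars $\hat P_k$ (a fixed $d_0/4$-neighborhood) transform the same way, the averages $(\phi)_k$ are translation invariant, and the Calder\'on--Stein extension $E_k$ can be taken to commute with the rigid motion identifying $P_k(\omega)$ with the corresponding perforation of $T_x\omega$ (one fixes, once and for all, a translation-covariant choice of local coordinates and extension on each minimally smooth piece — the only subtlety, handled by noting the extension in \cite[Ch.~VI]{St1970} is local and constructed from the boundary graph, hence commutes with translations of ambient space). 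Granting this covariance, the map $\omega\mapsto \psi(0,\omega)=\hat E(\tilde u(\cdot,\omega))(0)$ is a bona fide measurable function on $\Omega$ whose associated stationary field is $\psi$, which is what ``$\Ext(u)\in\mathcal{H}^1(\Omega)$'' means, and the stated commutation follows by evaluating at $y=0$. Measurability of $\omega\mapsto\psi(\cdot,\omega)$ as an $H^1_{\mathrm{loc}}$-valued map follows from measurability of the geometric data $P_k(\omega)$ (Assumption \ref{hyp: regularity}) together with continuity of the finitely-many local extension operators involved on each bounded ball, and the $L^2(\Omega)$ membership from the energy bound above integrated over $\Omega$ using the ergodic theorem.
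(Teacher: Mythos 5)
Your plan shares the paper's high-level structure (define a pointwise-in-$\omega$ extension, then lift it to $\Omega$ via translation covariance), but the key choice you make — using the Calder\'on--Stein extension $\hat E$ from the proof of Proposition \ref{prop: Sobolev} — creates a genuine gap that the paper deliberately avoids. The Calder\'on--Stein extension of $H^1(A)$ for a minimally smooth $A$ is \emph{not canonical}: it depends on a choice of covering $\{U_i\}$ of $\partial A$, a choice of Cartesian coordinates in each $U_i$ making the boundary a Lipschitz graph, and a subordinate partition of unity. For $\Ext(u)\circ T_x = \Ext(u\circ T_x)$ you need, simultaneously for a.e.\ $\omega$ and all $x$, that the choices made for the perforation $P_k(\omega)$ and for the translated perforation $P_{k'}(T_x\omega)=P_k(\omega)-x$ are translates of one another, \emph{and} that the whole assignment $\omega\mapsto(\text{choices for each }P_k(\omega))$ is measurable. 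Your parenthetical remark that one ``fixes, once and for all, a translation-covariant choice of local coordinates and extension on each minimally smooth piece'' is precisely the hard part, not a side remark: the perforations have no canonical labeling, the space of admissible shapes is infinite-dimensional, and you would need a measurable, translation-equivariant selection of coverings/coordinates/partitions on that space. None of this is addressed, and it is not covered by the observation that the Stein construction is ``local.''

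The paper's proof sidesteps all of this by replacing the Calder\'on--Stein extension with the \emph{harmonic (variational) extension}: on each collared perforation $\hat P_k(\omega)$ one takes the unique minimizer of $\int_{\hat P_k(\omega)}|\nabla v|^2\,dx$ among $v\in H^1(\hat P_k(\omega))$ agreeing with $\tilde u$ on the collar $\hat P_k(\omega)\setminus P_k(\omega)$. Because Dirichlet energy and the collar condition are intrinsically defined from the domain geometry and are translation-invariant, the minimizer is automatically translation-covariant — no auxiliary choices appear, so there is nothing to make equivariant or measurable. Existence and the energy bound then come from Proposition \ref{prop: Poincare}. If you rewrite your argument with the variational extension in place of $\hat E$ (the rest of your outline — the dictionary $\phi\leftrightarrow\tilde\phi$, evaluating at $y=0$, and the $L^2(\Omega)$ bound via the ergodic theorem — stands), the covariance step becomes essentially trivial and the gap disappears.
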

    \begin{proof}
         We define $\Ext$ by first defining it in $H^1(\mathcal{O}(\omega))$ for every $\omega$, and then lifting the construction back to the probability space. The problem is doing this construction in some canonical way that guarantees it will commute with $T_x$, and thus that it can be lifted at all to functions in $\Omega$.

         It is not hard to see that the harmonic extension is well suited for this. Thus, given $u \in \mathcal{H}^1(\Omega,\text{O})$ let $\tilde u(\cdot,\omega)$ denote its realization as a function in $\mathcal{O}(\omega)$, and define $\Ext^\omega(\tilde u(\cdot,\omega))$ as the unique function agreeing with $\tilde u(x,\omega)$ in $\mathcal{O}(\omega)$ such that for every $k$ it minimizes (with $\hat P_k(\omega)$ as Proposition \ref{prop: Sobolev})
         \begin{equation*}
              \int_{\hat P_k(\omega)}|\nabla v|^2\;dx	
         \end{equation*}
         among all functions $v \in H^1(\hat P_k(\omega))$ that agree with $\tilde u(x,\omega)$ in $\hat P_k(\omega) \setminus P_k(\omega)$. The existence of $\Ext^\omega(\tilde u(\cdot,\omega))$ follows by Proposition \ref{prop: Poincare}, which is how we are using  Assumption \ref{hyp: regularity}. It also implies the bound
         \begin{equation*}
              \|\Ext^\omega(\tilde u(\cdot,\omega))\|_{\dot{H}^1(B_r(0))}\leq C\|\tilde u(\cdot,\omega)\|_{\dot{H}^1(\mathcal{B}^\epsilon_r(0))}, 	
         \end{equation*}
         which holds for any $r,\epsilon>0$ with the universal constant $C$ given by Proposition \ref{prop: Poincare}.
         
    \end{proof}

	\begin{lem}\label{lem: characterization of L^2_pot}
		 If the domain $\mathcal{O}$ satisfies Assumption \ref{hyp: regularity} then the space $L^2_{\pot}$ is complete.
	\end{lem}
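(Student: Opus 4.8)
The plan is to show that $L^2_{\pot}(\Omega,\text{O})$ (which is what is really at issue — the statement asserts completeness of the space of stationary potential fields realized on $\text{O}$) is closed inside $L^2(\text{O};\mathbb{R}^d)$, hence complete. The only thing that could fail is the following: given $v_n \in L^2_{\pot}(\Omega,\text{O})$ with $v_n \to v$ in $L^2(\text{O};\mathbb{R}^d)$, we must produce a genuine stationary potential field $v' \in L^2_{\pot}(\Omega)$ with $v' = v$ on $\text{O}$. Each $v_n$ is the restriction of some $v_n' \in L^2_{\pot}(\Omega)$, i.e. $\tilde v_n'(x,\omega) := v_n'(T_x\omega)$ is a.s. a gradient, $\tilde v_n' = \nabla \tilde\phi_n$ for some $\tilde\phi_n \in H^1_{\text{loc}}(\mathbb{R}^d)$, and $\mathbb{E}[v_n']=0$. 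The difficulty is that convergence of $v_n$ only on $\text{O}$ gives us no control over the extensions $v_n'$ off $\text{O}$, so we cannot directly pass to the limit in $\Omega$; the extensions could oscillate wildly in the perforations $P_k$.

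First I would fix the extensions canonically using Lemma \ref{lem: stationary ergodic extension}. Namely, I take $\tilde\phi_n$ to be the stationary-ergodic potential whose realization in $\mathcal{O}(\omega)$ is given, and then set $\tilde\psi_n := \Ext^\omega(\tilde\phi_n)$, the harmonic extension over each perforation $\hat P_k(\omega)$. Because $\Ext$ commutes with the action $T_x$ (the key property proved in Lemma \ref{lem: stationary ergodic extension}), the gradient $\nabla \tilde\psi_n$ is again a stationary ergodic potential field, call its realization $w_n' \in L^2_{\pot}(\Omega)$ after subtracting its mean (note $w_n' = v_n'$ on $\text{O}$, so subtracting the mean of $w_n'$ and the condition $\mathbb{E}[v_n']=0$ only differ by a deterministic constant vector, which we absorb). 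The energy bound from Lemma \ref{lem: stationary ergodic extension},
\[
    \|\nabla \tilde\psi_n\|^2_{L^2(\hat P_k(\omega))} \leq C \|\nabla\tilde\phi_n\|^2_{L^2(\hat P_k(\omega)\setminus P_k(\omega))},
\]
summed over $k$ together with the separation and uniform-diameter conditions in Assumption \ref{hyp: regularity}, gives a universal $C$ with
\[
    \mathbb{E}[|w_n'|^2] \leq C\,\mathbb{E}[\,|v_n'|^2 \mathbb{I}_{\text{O}}\,] + \mathbb{E}[\,|v_n'|^2\mathbb{I}_{\text{O}}\,] = (1+C)\,\mathbb{E}[\,|v_n|^2\,]
\]
(this is exactly the computation underlying Proposition \ref{prop: Sobolev}, applied to the gradient field rather than the function). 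Hence $\{w_n'\}$ is bounded in $L^2(\Omega;\mathbb{R}^d)$, and the same estimate applied to differences $w_n' - w_m'$ shows $\{w_n'\}$ is Cauchy in $L^2(\Omega;\mathbb{R}^d)$ because $\{v_n\}$ is Cauchy in $L^2(\text{O})$. So $w_n' \to w'$ in $L^2(\Omega;\mathbb{R}^d)$ for some $w'$, with $\mathbb{E}[w']=0$ and $w' = v$ on $\text{O}$.

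It remains to check that $w' \in L^2_{\pot}(\Omega)$, i.e. that $w'$ is still a stationary potential field. This is the routine closedness part: the property of being a stationary potential field is characterized by vanishing of $\int \tilde w'(x,\omega)\cdot \xi(x)\,dx$ for all compactly supported divergence-free $\xi$ (equivalently by the "curl $=0$" condition in the sense of distributions), and these are closed conditions under $L^2(\Omega)$ convergence — pass to a subsequence so that $\tilde w_{n}'(\cdot,\omega) \to \tilde w'(\cdot,\omega)$ in $L^2_{\text{loc}}(\mathbb{R}^d)$ for a.e.\ $\omega$ by the ergodic theorem (Theorem \ref{thm: ergodic theorem}) applied to $|w_n'-w'|^2$, and the distributional curl-free condition survives the limit. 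The mean-zero condition survives trivially. Therefore $w' \in L^2_{\pot}(\Omega)$ and $v = w'|_{\text{O}} \in L^2_{\pot}(\Omega,\text{O})$, proving the space is closed and hence complete. The main obstacle, as indicated, is the off-$\text{O}$ control: everything hinges on having the \emph{canonical, action-equivariant} extension of Lemma \ref{lem: stationary ergodic extension} together with the uniform Poincaré bound from Assumption \ref{hyp: regularity}; without a perforated (Lipschitz, uniformly separated) geometry one cannot produce these extensions and the completeness genuinely may fail.
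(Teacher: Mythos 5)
Your proof is correct and hinges on precisely the same key ingredient as the paper's: the canonical, $T_x$-equivariant extension operator of Lemma \ref{lem: stationary ergodic extension} together with the uniform Poincar\'e bound from Assumption \ref{hyp: regularity}. Where you differ is only in the final step. The paper takes the extensions $u'_n = \Ext(u_n)$, observes they are \emph{bounded} in $\mathcal{H}^1(\Omega)$, extracts a weakly convergent subsequence, and identifies the weak limit of the gradients with $v_0$ on $\text{O}$ using uniqueness of weak limits. You instead exploit the linearity of the harmonic extension to show that the extended gradient fields $w'_n$ are \emph{Cauchy} in $L^2(\Omega;\mathbb{R}^d)$ whenever $v_n$ is Cauchy in $L^2(\text{O})$, obtain a strong limit $w'$, and then invoke the (standard) closedness of $L^2_{\pot}(\Omega)$ in $L^2(\Omega;\mathbb{R}^d)$ to see that $w'$ is still a stationary potential field. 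Both routes are valid; yours avoids the subsequence extraction and gives the stronger conclusion that the extensions themselves converge strongly, while the paper's version does not need linearity of $\Ext$ at all (though $\Ext$ is in fact linear, so this is not an issue here). Your final appeal to the ergodic theorem to pass to a.e.\ realizations is a bit more elaborate than necessary --- the closedness of $L^2_{\pot}(\Omega)$ as a subspace of $L^2(\Omega;\mathbb{R}^d)$ is a standard fact (it is exactly the closed subspace one projects onto in the Weyl decomposition), so you could simply cite it --- but the argument as written is sound.
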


	\begin{proof}
         Consider  a sequence $\{v'_n\}_n$ of functions belonging to $ L^2_{\pot}(\Omega)$ such that $v'_n \to  v_0$ in the $L^2(\text{O};\mathbb{R}^d)$ norm. Then, by definition, for each $n$ there is a function $u_n \in \mathcal{H}^1(\Omega)$ such that $\nabla u_n = v'_n$ in $\text{O}$, and by Lemma \ref{lem: stationary ergodic extension} we know that if we define functions $u_n':=\Ext(u_n) \in \mathcal{H}^1(\Omega)$ then
         \begin{equation*}
              \|u'_n\|_{\mathcal{H}^1(\Omega)}\leq C\|u'_n\|_{\dot{\mathcal{H}}^1(\Omega,\text{O})}\leq C\|v'_n\|_{L^2_{\pot}(\Omega,\text{O})}.	
         \end{equation*}
         It follows that the sequence $\{u'_n\}_n$ is weakly compact in $\mathcal{H}^1(\Omega)$. Let $u$ be a limit point of this sequence, then, the convergence of the $v_n'$ says that (in the $L^2$ norm)
         \begin{equation*}
         	  v'_n \to \nabla u.
         \end{equation*}
         We conclude that $\nabla u = v_0$ in $\text{O}$, which shows that $v_0 \in L^2_{\pot}$ and proves the lemma.
	\end{proof}

	Lemma \ref{lem: characterization of L^2_pot} leads to the following Theorem, which corresponds to the stochastic analogue of the ``cell problem'' in periodic homogenization with uniform ellipticity.

	\begin{thm}\label{thm: corrector problem}
	     For any $\xi \in \mathbb{R}^d$ there exists a sequence $\{ w_\xi^\epsilon\}_{\{\epsilon>0\}}$, $w_\xi^\epsilon: \mathbb{R}^d \times \Omega \to \mathbb{R}$ such that
	     \begin{enumerate}
	          \item $w_\xi^\epsilon \in H^1_{\text{loc}}(\mathbb{R}^d)$ $\mathbb{P}\text{-a.s.}$
	          \item $\nabla w_\xi^\epsilon(x,\omega) = \nabla w_\xi^1(\tfrac{x}{\epsilon},\omega)$, and $\nabla w_\xi^1$ is a stationary ergodic vector field.
	          \item For any $\epsilon$ and $\mathbb{P}\text{-a.s.}$ we have
	               \begin{equation*}
	                    \left \{ \begin{array}{rll}
		          	    \Delta w_\xi^\epsilon & = 0 & \text{ in } \mathcal{O}_\epsilon,\\
		                \partial_n w_\xi^\epsilon & = 0  & \text{ on } \partial \mathcal{O}_\epsilon.\\
                        \end{array} \right.
	              \end{equation*}
	          \item $\lim \limits_{\epsilon \to 0} w^\epsilon_\xi(x) = (x,\xi)  \text{ in } L^2_{\text{loc}} \;\;\mathbb{P}\text{-a.s.}$	
	     \end{enumerate}         
	\end{thm}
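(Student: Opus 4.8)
The plan is to construct the correctors $w^\epsilon_\xi$ by solving, for each $\epsilon>0$, an auxiliary problem on a bounded domain and passing to the limit. First I would fix $\xi\in\mathbb{R}^d$ and, for $R>0$, let $z^{R}_\xi$ solve the Dirichlet problem \eqref{eqn: Dirichlet problem in  D} in $\mathcal{B}^1_R(0)$ with $f\equiv 0$ and with $z^R_\xi=(x,\xi)$ on $\partial^\omega B_R(0)$; equivalently set $z^R_\xi=(x,\xi)+\zeta^R$ where $\zeta^R\in H_0(\mathbf{B}_1)$ minimizes $\int_{\mathbf{B}_1}|\xi+\nabla\zeta|^2$ over $\mathcal{B}^1_R(0)$. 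By Theorem~\ref{thm: elliptic estimates} the functions $z^R_\xi$ are uniformly H\"older continuous on compact subsets (after subtracting constants), and the energy bound $\int_{\mathcal{B}^1_R}|\nabla z^R_\xi|^2\le C R^d|\xi|^2$ gives local $H^1$ compactness; a Cantor diagonalization yields a limit $w^1_\xi$, harmonic in $\mathcal{O}_1$ with zero Neumann data on $\partial\mathcal{O}_1$, satisfying property (3) for $\epsilon=1$. Then $w^\epsilon_\xi(x,\omega):=\epsilon\, w^1_\xi(x/\epsilon,\omega)$ gives property (1) and the scaling relation in property (2).

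The heart of the matter is identifying the gradient $\nabla w^1_\xi$ as a stationary ergodic field of the form $\xi+v$ with $v\in L^2_{\pot}(\Omega,\mathrm{O})$, and proving property (4), the sublinear growth $\epsilon^{-1}\|w^\epsilon_\xi(\cdot,\omega)-(\cdot,\xi)\|_{L^2_{\text{loc}}}\to 0$. The plan here is the standard variational characterization adapted to the perforated setting: the corrector gradient $\xi+v_\xi$ is the $L^2(\mathrm{O};\mathbb{R}^d)$-orthogonal projection of $\xi$ onto the orthogonal complement of $L^2_{\pot}(\Omega,\mathrm{O})$, i.e. $v_\xi$ is characterized by $\mathbb{E}_{\mathrm{O}}[(\xi+v_\xi)\cdot w]=0$ for all $w\in L^2_{\pot}(\Omega,\mathrm{O})$. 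This projection is well defined precisely because Lemma~\ref{lem: characterization of L^2_pot} tells us $L^2_{\pot}$ is a closed subspace — this is where Assumption~\ref{hyp: regularity} enters decisively. One must then check that the stationary field $x\mapsto \tilde v_\xi(T_x\omega)$ is, $\mathbb{P}$-a.s., the gradient of an $H^1_{\text{loc}}$ function on $\mathcal{O}(\omega)$ solving the homogeneous Neumann problem; the orthogonality relation is exactly the weak formulation of $\Delta w=0$ in $\mathcal{O}_1$, $\partial_n w=0$ on $\partial\mathcal{O}_1$, tested against stationary potential fields, and a density/cutoff argument extends it to all test functions in $H_0(\mathbf{B}_1)$. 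One subtlety: the test functions defining $L^2_{\pot}(\Omega,\mathrm{O})$ need not vanish on $\partial\mathcal{O}$, which is consistent with the Neumann (rather than Dirichlet) condition on the perforations — this is the structural reason the construction works and must be handled carefully.

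For property (4) I would invoke the ergodic theorem (Theorem~\ref{thm: ergodic theorem}): since $\nabla w^1_\xi = \xi+\tilde v_\xi(T_x\omega)$ with $\mathbb{E}[\tilde v_\xi]=0$ on $\mathrm{O}$, the rescaled gradients $\mathbb{I}_{\mathcal{O}_\epsilon}\nabla w^\epsilon_\xi$ converge weakly in $L^2_{\text{loc}}$ to $\mu\xi$; combined with the uniform $H^1_{\text{loc}}$ bound (and the extension operator of Lemma~\ref{lem: stationary ergodic extension} to move to functions on $\mathbb{R}^d$) and the Rellich compactness, one extracts strong $L^2_{\text{loc}}$ convergence of $w^\epsilon_\xi$ to some affine function, which the weak gradient limit forces to be $(x,\xi)$. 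Ergodicity is used to rule out other limit points, so the convergence holds along the full family $\epsilon\to 0$. The main obstacle I anticipate is not the soft functional-analytic projection, but verifying rigorously that the abstract minimizer $v_\xi\in L^2_{\pot}(\Omega,\mathrm{O})^{\perp}$ corresponds $\mathbb{P}$-a.s.\ to a genuine weak solution of the Neumann problem \emph{up to and including the perforation boundary} $\partial\mathcal{O}$ — this requires the extension operator and the Poincar\'e inequality of Proposition~\ref{prop: Poincare} to make sense of traces and to pass between the "restricted to $\mathrm{O}$" and "full $\Omega$" pictures, and it is the place where the minimal-smoothness geometry of Assumption~\ref{hyp: regularity} cannot be dispensed with.
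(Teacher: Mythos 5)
Your second and third paragraphs are essentially the paper's proof: project $\xi$ onto the orthogonal complement of $L^2_{\pot}$ in $L^2(\text{O};\mathbb{R}^d)$ using Lax--Milgram and the completeness from Lemma~\ref{lem: characterization of L^2_pot}, realize the resulting stationary field as a gradient of an $H^1_{\text{loc}}$ function on $\mathcal{O}(\omega)$ (which is exactly what the completeness lemma furnishes), read off the Neumann problem from the orthogonality relation tested against $\nabla\phi$ with $\phi\in\mathcal{H}^1(\Omega)$, and deduce property~(4) from the ergodic theorem plus the zero-average normalization. That part is right, including your observation that the test fields defining $L^2_{\pot}(\Omega,\text{O})$ are \emph{not} required to vanish on $\partial\mathcal{O}$, which is precisely why the Neumann condition drops out.

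The problem is your opening paragraph, which is both unnecessary and does not work as stated. You claim the global energy bound $\int_{\mathcal{B}^1_R}|\nabla z^R_\xi|^2\le CR^d|\xi|^2$ plus Theorem~\ref{thm: elliptic estimates} gives uniform H\"older bounds on compact subsets and hence $H^1_{\text{loc}}$ compactness. But Theorem~\ref{thm: elliptic estimates} estimates $\|v\|_{C^\alpha(\mathcal{B}^\epsilon_{r/2})}$ in terms of $\|v\|_{L^2(\mathcal{B}^\epsilon_r)}$, and the Harnack inequality requires nonnegativity. A total energy of order $R^d$ spread over a ball of volume $\sim R^d$ only controls the \emph{average} gradient, not the local $L^2$ norm of $z^R_\xi$ near a fixed point as $R\to\infty$; there is nothing preventing the energy from concentrating, and Caccioppoli/Poincar\'e chains are circular here without an a priori local bound. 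More seriously: even if an exhaustion limit $w^1_\xi$ existed, there is no mechanism in your construction to make $\nabla w^1_\xi$ a stationary field — the boundary data $(x,\xi)$ on $\partial B_R$ destroys stationarity, and passing to the limit through an $\omega$-dependent subsequence does not restore it. You then implicitly assume in your second paragraph that this $w^1_\xi$ coincides with the function coming from the $L^2_{\pot}$ projection, but no uniqueness argument is offered that would make these two constructions agree. The paper avoids all of this by \emph{defining} $w^1_\xi$ (equivalently $\chi_\xi$) directly from the projected field $\Psi_\xi$, so stationarity of the gradient is built in and never has to be ``identified'' after the fact. The fix is simply to drop your first paragraph and run the projection construction all the way through, as you in fact do from your second paragraph onward.
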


	\begin{proof}
	     We note that for each $\xi$ there is a unique $\Psi_\xi \in L^2_{\text{pot}}$ such that
	     \begin{equation}\label{eqn: random corrector equation}
	          \mathbb{E}[ \mathbb{I}_{\text{O}}(\xi+\Psi_\xi,v)]=0	\;\;\forall\;v\in L^2_{\text{pot}}.
	     \end{equation}
	     Indeed, this follows since $L^2_{\text{pot}}$ is complete by Lemma \ref{lem: characterization of L^2_pot}, in which case we may apply the Lax-Milgram theorem. In particular, note that the map $\xi\to \Psi_\xi$ is clearly linear. Moreover, by Lemma \ref{lem: characterization of L^2_pot} there is a function $\chi_\xi(x,\omega)$ (not necessarily stationary) such that
	     \begin{equation*}
	          \nabla \chi_\xi(x,\omega) = \Psi_\xi(T_x\omega)	
	     \end{equation*}
	     whenever $\omega \in T_x\text{O}$, that is whenever $x \in \mathcal{O}(\omega)$, and $\chi_\xi(\cdot,\omega) \in H^1_{\text{loc}}\;$ $\mathbb{P}\text{-a.s.}$. Finally, we define
	     \begin{equation*}
	          w^\epsilon_\xi(x,\omega) = x\cdot \xi +\epsilon \chi_\xi(\tfrac{x}{\epsilon},\omega)-a^\epsilon(\omega)
	     \end{equation*}
	     where $a^\epsilon(\omega)$ is chosen so that
	     \begin{equation*}
	          \int_{\mathcal{B}^\epsilon_1(0)} w^\epsilon_\xi(x,\omega)\;dx=0 \pal\forall\;\epsilon>0.
	     \end{equation*}
	     It is clear that  $w^\epsilon_\xi$ satisfies Properties (1) and (2). On the other hand, note that
	     \begin{equation*}
	          \int_\Omega \mathbb{I}_{\text{O}}(\omega)(\nabla w^1(x,\omega),v(\omega))\text{d}\mathbb{P}(\omega) = 0\;\;\forall\;v\in L^2_{\pot}.	
	     \end{equation*}
	     In particular, this holds for $v = \nabla \phi$, when $\phi \in \mathcal{H}^1(\Omega)$. Letting $\tilde \phi(x,\omega):= \phi(T_x\omega)$, and integrating the above identity with respect to $x$ over a ball $B_R(0)$, we obtain
	     \begin{equation*}
	          \int_{B_R(0)} \int_\Omega \mathbb{I}_{\text{O}}(\omega)(\nabla w^1(x,\omega),\nabla \phi(x,\omega))\text{d}\mathbb{P}(\omega)\;dx = 0.	
	     \end{equation*}
	     Equivalently (using Fubini's theorem),
	     \begin{equation*}
	          \int_{\Omega} \int_{\mathcal{B}^1_R(0)}(\nabla w^1(x,\omega),\nabla \tilde \phi(x,\omega)) dx \;\text{d}\mathbb{P}(\omega) = 0.
	     \end{equation*}	     
	     Which holds for any $\tilde \phi$ obtained from a $\phi \in \mathcal{H}^1(\Omega)$. From here one can see that
	     \begin{equation*}
	          \int_{\mathcal{B}^1_R(0)}(\nabla w^1(x,\omega),\nabla \phi(x,\omega)) dx = 0, 
	     \end{equation*}	
	     since $R>0$ was arbitrary, this proves Property (3). To show Property (4) note that$\pal$ the sequence $\{w^\epsilon_\xi\}$ is bounded in $H^1_{\text{loc}}$. Due to the ergodicity of $\nabla w^{\epsilon}_\xi$  it follows that
	     \begin{equation*}
	          \nabla w^\epsilon_\xi \rightharpoonup \mathbb{E}[\xi+ \nabla \chi^1_\xi] = \xi \text{ in } L^2 (\Omega).
	     \end{equation*}
	     This shows that $w^\epsilon_\xi$ converges in $L^2_{\text{loc}}$ and weakly in $H^1$ to a function with gradient $\xi$. Since $w^\epsilon_\xi$ has average zero in $\mathcal{B}^\epsilon_1$ it also follows that this function must be precisely $x\cdot \xi$, and the theorem is proved.
	\end{proof}

	\begin{rem}\label{rem: test function limit}  It is helpful to think of $w^\epsilon_\xi$  as corrections to the affine functions $x \to x \cdot \xi$ for $\xi \in \mathbb{R}^d$ as to make them ``harmonic'' in the domain $\mathcal{O}_\epsilon$ with zero Neumann data. They contain enough information about the problem to allow us to identify the limiting operator, this is the essence of the method developed first by Tartar to prove homogenization for a periodic and uniformly elliptic matrix $A(x)$.
    \end{rem}

	\begin{DEF}\label{def: A_0}
	     Observe that,  given $\xi \in \mathbb{R}^d$, the vector
	     \begin{equation*}
	          \mathbb{E}\left[ \mathbb{I}_{\mathcal{O}_\epsilon}(x)\nabla w_\xi^\epsilon(x)\right ]
	     \end{equation*} 
	     does not depend on $x$ or $\epsilon>0$, and is in fact a linear function of $\xi$. This defines a matrix $A$ by setting
	     \begin{equation}\label{eqn: definity of A}
	          A\xi := \mathbb{E}\left[ \mathbb{I}_{\mathcal{O}_\epsilon}(x)\nabla w_\xi^\epsilon(x)\right ] 	
	     \end{equation}
	     This $A$ will be called the effective diffusivity. It is not hard to see that $A$ is a symmetric matrix with nonnegative eigenvalues, we shall see that Assumption \ref{hyp: regularity} guarantees $A$ is indeed positive definite. 
	\end{DEF}
    \begin{lem}\label{lem: positivity of A_0}
    	 Suppose that $\mathcal{O}$ satisfies Assumptions \ref{hyp: Ergodic} and \ref{hyp: regularity}, then there is a universal $\lambda_0>0$ such that
         \begin{equation*}
              (A\xi,\xi)\geq \lambda_0|\xi|^2\;\;\forall\;\xi\in\mathbb{R}^d.	
         \end{equation*}
    \end{lem}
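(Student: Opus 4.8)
The plan is to reduce the uniform lower bound on $A$ to the $\epsilon$--independent, energy--controlled extension operator of Proposition \ref{prop: Sobolev}, which is exactly the place where Assumption \ref{hyp: regularity} enters. First I would record the quadratic form identity $(A\xi,\xi)=\mathbb{E}[\mathbb{I}_{\text{O}}\,|\xi+\Psi_\xi|^2]$: by Definition \ref{def: A_0} and stationarity one has $A\xi=\mathbb{E}[\mathbb{I}_{\text{O}}(\xi+\Psi_\xi)]$, and testing the corrector equation \eqref{eqn: random corrector equation} against $v=\Psi_\xi\in L^2_{\pot}$ gives $\mathbb{E}[\mathbb{I}_{\text{O}}(\xi+\Psi_\xi,\Psi_\xi)]=0$, so $(A\xi,\xi)=\mathbb{E}[\mathbb{I}_{\text{O}}(\xi+\Psi_\xi,\xi+\Psi_\xi)]$. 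Since $\mathbb{I}_{\text{O}}|\xi+\Psi_\xi|^2\in L^1(\Omega)$ is stationary ergodic and $\mathbb{I}_{\mathcal{O}_\epsilon}(x)|\nabla w^\epsilon_\xi(x)|^2$ is its $\epsilon$--rescaling, the ergodic theorem (Theorem \ref{thm: ergodic theorem}) yields $\mathbb{I}_{\mathcal{O}_\epsilon}|\nabla w^\epsilon_\xi|^2\rightharpoonup (A\xi,\xi)$ in $L^1_{\text{loc}}(\mathbb{R}^d)$; in particular $\int_{\mathcal{B}^\epsilon_R(0)}|\nabla w^\epsilon_\xi|^2\,dx\to (A\xi,\xi)|B_R|$ for every $R>0$. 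I would also note that $\mu=\mathbb{P}(\text{O})$ is bounded below by a universal constant: by Proposition \ref{prop: barrier nodes} the set $\mathcal{O}$ contains a ball of radius $l$ inside every cube $Q_L(z)$, so $\mu\ge |B_l|/(2L)^d>0$.

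Next I would extend the corrector. Applying the (local version of the) extension operator $\hat E$ constructed in the proof of Proposition \ref{prop: Sobolev} perforation by perforation to $w^\epsilon_\xi$, one gets $W^\epsilon:=\hat E w^\epsilon_\xi\in H^1_{\text{loc}}(\mathbb{R}^d)$ with $W^\epsilon=w^\epsilon_\xi$ on $\mathcal{O}_\epsilon$; because $\hat E$ acts on $d_0\epsilon/4$--neighbourhoods of the $\epsilon P_k$ (of diameter $\le \epsilon d_0^{-1}$) with a universal norm, and the overlap of these neighbourhoods is bounded by $N$, one obtains the local estimate
\begin{equation*}
\int_{B_R(0)}|\nabla W^\epsilon|^2\,dx\le C\int_{\mathcal{B}^\epsilon_{R+2d_0^{-1}}(0)}|\nabla w^\epsilon_\xi|^2\,dx\qquad\text{for all }0<\epsilon<1,
\end{equation*}
with $C$ universal. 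Together with the previous paragraph, Property (4) of Theorem \ref{thm: corrector problem} (which gives $\int_{B_R}\mathbb{I}_{\mathcal{O}_\epsilon}|w^\epsilon_\xi-x\cdot\xi|^2\,dx\to 0$), a Poincar\'e inequality on $B_R$, and $\liminf_\epsilon|\mathcal{O}_\epsilon\cap B_R|=\mu|B_R|>0$, this shows $\{W^\epsilon\}$ is bounded in $H^1(B_R)$ for each $R$. Hence along a subsequence $W^\epsilon\rightharpoonup W$ weakly in $H^1_{\text{loc}}$ and strongly in $L^2_{\text{loc}}$.

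The key point is the identification $W(x)=x\cdot\xi$ a.e. On $B_R$ one has
\begin{equation*}
\mathbb{I}_{\mathcal{O}_\epsilon}|W-x\cdot\xi|^2\le 2\,\mathbb{I}_{\mathcal{O}_\epsilon}|W-W^\epsilon|^2+2\,\mathbb{I}_{\mathcal{O}_\epsilon}|w^\epsilon_\xi-x\cdot\xi|^2\longrightarrow 0\quad\text{in }L^1(B_R),
\end{equation*}
the first term because $W^\epsilon\to W$ strongly in $L^2(B_R)$ and the second by Property (4); on the other hand $\mathbb{I}_{\mathcal{O}_\epsilon}\rightharpoonup\mu$ weak--$*$ forces $\mathbb{I}_{\mathcal{O}_\epsilon}|W-x\cdot\xi|^2\rightharpoonup\mu|W-x\cdot\xi|^2$, so $\mu|W-x\cdot\xi|^2=0$ and, as $\mu>0$, $W=x\cdot\xi$ a.e.\ on every $B_R$; uniqueness of the limit then gives convergence of the whole sequence. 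Finally, by weak lower semicontinuity of the Dirichlet energy and the local energy estimate,
\begin{align*}
|\xi|^2|B_R|=\int_{B_R}|\nabla W|^2\,dx &\le\liminf_{\epsilon\to0}\int_{B_R}|\nabla W^\epsilon|^2\,dx\\
&\le C\liminf_{\epsilon\to0}\int_{\mathcal{B}^\epsilon_{R+2d_0^{-1}}}|\nabla w^\epsilon_\xi|^2\,dx=C\,(A\xi,\xi)\,|B_{R+2d_0^{-1}}|.
\end{align*}
Dividing by $|B_R|$ and letting $R\to\infty$ yields $(A\xi,\xi)\ge C^{-1}|\xi|^2$, proving the lemma with $\lambda_0:=C^{-1}$, a universal constant.

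I expect the main obstacle to be the bookkeeping in the extension step: verifying that $\hat E$ may be applied to an $H^1_{\text{loc}}$ (rather than $H^1$) function, that it is genuinely local so the energy bound holds on a fixed ball with a \emph{universal} constant uniform in $\epsilon$, and that the $W^\epsilon$ are equibounded in $H^1_{\text{loc}}$ (which is where the universal lower bound on $\mu$ is used). Once these are in place, the identification of the limit and the semicontinuity step are routine; the whole argument rests on Assumption \ref{hyp: regularity}, which is precisely what makes an extension operator with $\epsilon$--independent norm available.
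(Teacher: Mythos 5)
Your proof is correct, but it takes a genuinely different route from the paper. The paper argues geometrically: after invoking Theorem~\ref{thm: elliptic estimates} to get uniform $L^\infty$ convergence of $w^\epsilon_\xi$ to $\xi\cdot x$ on a unit cube, it integrates $\nabla w^\epsilon_\xi\cdot\xi$ along the network of tubes produced by Proposition~\ref{prop: network of tubes} to get a lower bound on $\int_{\mathcal{Q}^\epsilon_1}\nabla w^\epsilon_\xi\cdot\xi\,dx$, and then passes to the limit with the ergodic theorem. Your approach is variational: you first record the energy identity $(A\xi,\xi)=\mathbb{E}[\mathbb{I}_{\text{O}}|\xi+\Psi_\xi|^2]$ from the corrector equation, extend $w^\epsilon_\xi$ to $W^\epsilon\in H^1_{\text{loc}}(\mathbb{R}^d)$ with the perforation-by-perforation operator of Proposition~\ref{prop: Sobolev}, identify the limit $W=x\cdot\xi$ using only Property (4) of Theorem~\ref{thm: corrector problem} (i.e.\ $L^2_{\text{loc}}$ convergence, not $L^\infty$), and close via weak lower semicontinuity of the Dirichlet energy. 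Both arguments ultimately hinge on Assumption~\ref{hyp: regularity}, but they access it differently: the paper through the connectivity structure of $\mathcal{O}$ and the uniform Harnack/H\"older machinery of Section~\ref{sec: elliptic regularity}, you through the $\epsilon$-independent extension bound alone. Your version is closer in spirit to the classical proof of non-degeneracy of the effective matrix in stochastic homogenization for uniformly elliptic operators, and it has the advantage of not needing the full strength of the elliptic regularity theory; the paper's version is more self-contained within its own Appendix~\ref{sec: Green's function estimates} machinery, which it needs anyway for the Green's function bounds. One small point worth noting: since the rescaled perforations $\epsilon P_k$ have diameter $\le\epsilon d_0^{-1}$, the local energy estimate can actually be stated with $R+C\epsilon$ in place of $R+2d_0^{-1}$, in which case the final inequality becomes $|\xi|^2|B_R|\le C(A\xi,\xi)|B_R|$ directly, dispensing with the $R\to\infty$ step; your coarser radius is also fine, just less tight.
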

    \begin{proof}
         First of all, we note that Theorem \ref{thm: elliptic estimates} applies to the functions $w^\epsilon_\xi$, and conclude that in any cube  $\mathcal{Q}^\epsilon_R:= Q_R \cap \mathcal{O}_\epsilon$	we have $w^\epsilon_\xi(x) \to \xi\cdot x\pal$ in the $L^\infty$ norm, as in Definition \ref{def: notions of convergence}. In particular, if we fix $R=1$ then$\pal$ for all small enough $\epsilon$ we have 
         \begin{equation*}
	          \begin{array}{ll}
                   w^\epsilon_\xi(x) \leq - 0.5|\xi| & \text{in } \{ x \in \mathcal{Q}^\epsilon_1 \mid x\cdot \xi\leq -1\},\\
                   w^\epsilon_\xi(x) \geq \;\;\; 0.5|\xi| & \text{in } \{ x \in \mathcal{Q}^\epsilon_1 \mid x\cdot \xi\geq  1\}.		          	
	          \end{array}
         \end{equation*}
         Now, due to Assumption \ref{hyp: regularity}, there exists a network of disjoint cubes within $\mathcal{Q}_1$ going from $\mid x\cdot \xi\leq -1$ to $\mid x\cdot \xi\geq 1$ (see Proposition \ref{prop: network of tubes}). Integrating $\nabla w^\epsilon_\xi\cdot \xi$ along each of these tubes we can find a constant $c_0>0$ independent of $\epsilon$ and $\xi$ such that
         \begin{equation*}
         	  \int_{\mathcal{Q}^\epsilon_1}\nabla w^\epsilon_\xi\cdot \xi\;dx\geq c_0|\xi|^2
         \end{equation*}
         for all small enough $\epsilon$. Taking the limit as $\epsilon \to 0$, and using Theorem \ref{thm: ergodic theorem}, we conclude that
         \begin{equation*}
              \mu |Q_1|(A_0\xi,\xi)\geq c_0	|\xi|^2,
         \end{equation*}
         we conclude that all the eigenvalues of $A_0$ are strictly positive, and the lemma is proved.
    \end{proof}

	The next lemma follows Tartar \cite[Chapter 1]{BenLioPap1978}. It concerns the limit of (for any fixed $\phi \in C^\infty_c(\mathbb{R}^d)$)
	\begin{equation*}
	     \int_{\mathbb{R}^d}\mathbb{I}_{\mathcal{O}_\epsilon}\nabla u^{\epsilon}\nabla \phi\;dx	
	\end{equation*}
	as $\epsilon \to 0$. Note both $\mathbb{I}_{\mathcal{O}_\epsilon}$ and $\nabla u^\epsilon$ are at best converging weakly (separately) as $\epsilon\to 0$, so computing this limit is a priori non-trivial. However, by using the fact that $\nabla u^\epsilon$ is not just any vector field (it has a potential), we can get around the lack of strong convergence. This is where we use the ``corrected solutions'' $w^\epsilon_\xi$, in fact, the argument does not depart far from the periodic case from this point on.

	\begin{lem}\label{lem: compensated compactness} 
		 Let $A$ be as in Definition \ref{def: A_0}, and suppose that $\epsilon_k$ is a subsequence going to zero and $u^{\epsilon_k}$ converges in $L^2(D)$ to some $u \in H^1_0(D)$. Then, after passing to a subsequence (which we still call $\epsilon_k$) we have that
	     \begin{equation*}
	          \mathbb{I}_{\mathcal{O}_{\epsilon_k}} \nabla u^{\epsilon_k} \rightharpoonup 	A \nabla u\;\; \text{ in } L^2(D) 	
	     \end{equation*}
	\end{lem}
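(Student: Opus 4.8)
The plan is to use the correctors $w^\epsilon_\xi$ as oscillating test functions and exploit the div-curl (compensated compactness) structure. First I would observe that since $\{v^\epsilon\}$ is bounded in $H^1_{\mathrm{loc}}$ (uniformly in $\epsilon$ after extension, by Theorem \ref{thm: extension for minimally smooth domains} and the energy estimate) the vector field $\mathbb{I}_{\mathcal{O}_{\epsilon_k}}\nabla v^{\epsilon_k}$ is bounded in $L^2(D;\mathbb{R}^d)$, so after passing to a subsequence it converges weakly to some $\sigma\in L^2(D;\mathbb{R}^d)$. The goal is to identify $\sigma=A\nabla u$. Fix $\xi\in\mathbb{R}^d$ and let $w^{\epsilon_k}_\xi$ be the corrector from Theorem \ref{thm: corrector problem}; by property (4) we have $w^{\epsilon_k}_\xi\to x\cdot\xi$ in $L^2_{\mathrm{loc}}$ and, being bounded in $H^1_{\mathrm{loc}}$, $\mathbb{I}_{\mathcal{O}_{\epsilon_k}}\nabla w^{\epsilon_k}_\xi \rightharpoonup A\xi$ in $L^2_{\mathrm{loc}}$ by the ergodic theorem (Theorem \ref{thm: ergodic theorem}) and Definition \ref{def: A_0}.

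Next I would take a test function $\phi\in C^\infty_c(D)$ and compute in two ways the limit of $\int_{\mathcal{O}_{\epsilon_k}} \nabla v^{\epsilon_k}\cdot\nabla(\phi\, w^{\epsilon_k}_\xi)\,dx$ minus $\int_{\mathcal{O}_{\epsilon_k}} \nabla w^{\epsilon_k}_\xi\cdot\nabla(\phi\, v^{\epsilon_k})\,dx$. Using the weak equations for $v^{\epsilon_k}$ (right-hand side $f^{\epsilon_k}$, with $\mathbb{I}_{\mathcal{O}_{\epsilon_k}}f^{\epsilon_k}\rightharpoonup f$) and for $w^{\epsilon_k}_\xi$ (harmonic with zero Neumann data, so its weak form against $\phi v^{\epsilon_k}\in H_0(\bfb_{\epsilon_k})$ vanishes), both these integrals reduce, after expanding the products and cancelling the symmetric term $\int \phi\,\nabla v^{\epsilon_k}\cdot\nabla w^{\epsilon_k}_\xi$, to quantities of the form $\int_{\mathcal{O}_{\epsilon_k}} w^{\epsilon_k}_\xi\,\nabla v^{\epsilon_k}\cdot\nabla\phi\,dx$ and $\int_{\mathcal{O}_{\epsilon_k}} v^{\epsilon_k}\,\nabla w^{\epsilon_k}_\xi\cdot\nabla\phi\,dx$ (plus a term involving $f^{\epsilon_k}$). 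Now each of these can be passed to the limit: in the first, $w^{\epsilon_k}_\xi\to x\cdot\xi$ \emph{strongly} in $L^2_{\mathrm{loc}}$ while $\mathbb{I}_{\mathcal{O}_{\epsilon_k}}\nabla v^{\epsilon_k}\rightharpoonup\sigma$ weakly, so the product converges to $\int (x\cdot\xi)\,\sigma\cdot\nabla\phi\,dx$; in the second, $v^{\epsilon_k}\to u$ strongly in $L^2_{\mathrm{loc}}$ (by the uniform H\"older estimates, Theorem \ref{thm: elliptic estimates}, giving compactness) while $\mathbb{I}_{\mathcal{O}_{\epsilon_k}}\nabla w^{\epsilon_k}_\xi\rightharpoonup A\xi$ weakly, giving $\int u\,(A\xi)\cdot\nabla\phi\,dx$. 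Equating and integrating by parts (using that $x\cdot\xi$ and affine functions are smooth) yields, for all $\phi$ and all $\xi$, the identity $\int \sigma\cdot\xi\,\phi\,dx = \int (A\xi)\cdot\nabla u\,\phi\,dx = \int \xi\cdot(A^T\nabla u)\,\phi\,dx$; since $A$ is symmetric (Definition \ref{def: A_0}) and $\xi$ is arbitrary this gives $\sigma = A\nabla u$ a.e., which is the claim. Since the limit is independent of the subsequence, the full sequence converges.

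The main obstacle I expect is the careful bookkeeping of the boundary/Neumann terms when integrating by parts in the perforated domain: one must make sure that $\phi\, w^{\epsilon_k}_\xi$ and $\phi\, v^{\epsilon_k}$ are legitimate test functions in $H_0(\bfb_{\epsilon_k})$ (compact support in $D$, no constraint on $\partial\mathcal{O}_{\epsilon_k}$), so that the zero-Neumann conditions really do kill the would-be boundary contributions on $\partial\mathcal{O}_{\epsilon_k}$, and that the cross term cancels exactly rather than merely up to a vanishing error. A secondary point is justifying the strong $L^2_{\mathrm{loc}}$ convergence of the correctors and of $v^{\epsilon_k}$ — for the correctors this is property (4) of Theorem \ref{thm: corrector problem}, and for $v^{\epsilon_k}$ it follows from the uniform interior H\"older bound of Theorem \ref{thm: elliptic estimates} together with Arzel\`a--Ascoli, after using the extension operator of Theorem \ref{thm: extension for minimally smooth domains} to regard everything as defined on all of $D$. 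Once these are in place the div-curl argument is essentially the classical one of Tartar \cite[Chapter 1]{BenLioPap1978}, with the measure $\mathbb{I}_{\mathcal{O}_\epsilon}\,dx$ playing the role of Lebesgue measure and the ergodic theorem supplying the weak limit $A\xi$ in place of the periodic cell average.
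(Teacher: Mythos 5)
Your proposal is correct and follows essentially the same route as the paper: both are Tartar's oscillating-test-function (div-curl) argument, where you test the equation for $u^{\epsilon_k}$ against $\phi\,w^{\epsilon_k}_\xi$ and the harmonicity of $w^{\epsilon_k}_\xi$ against $\phi\,u^{\epsilon_k}$, cancel the symmetric cross term $\int\phi\,\nabla u^{\epsilon_k}\cdot\nabla w^{\epsilon_k}_\xi$, and pass to the limit using strong $L^2_{\mathrm{loc}}$ convergence of the scalar factors against weak $L^2$ convergence of the gradients. The paper writes the bookkeeping with $\operatorname{div}(X)$ and a final integration by parts that cancels the identical terms on both sides, but the computation and the ingredients (Theorem \ref{thm: corrector problem}, Definition \ref{def: A_0}, symmetry of $A$) are exactly as in your plan.
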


	\begin{proof}
	     Let $X$ denote the weak limit of $\mathbb{I}_{\mathcal{O}_{\epsilon_k}}\nabla u^{\epsilon_k}$. Given any $\phi \in C^\infty_c(\mathbb{R}^d)$, $\xi \in \mathbb{R}^d$ we will show that
	     \begin{equation*}
	          \int_{\mathbb{R}^d} X\cdot \xi \phi\;dx = \int_{\mathbb{R}^d} A \nabla u \cdot \xi \phi\;dx
	     \end{equation*}
	     which will imply that $X = A \nabla u$. Let $w^\epsilon_\xi$ be as above, we have
	     \begin{equation*}
	          -\int_{\mathbb{R}^d} \text{div}(\mathbb{I}_{\mathcal{O}_{\epsilon_k}} \nabla u^{\epsilon_k} ) w^{\epsilon_k}_\xi\phi \;dx = \int_{\mathcal{O}_{\epsilon_k}} \nabla u^{\epsilon_k}\cdot (\nabla \phi )w^\epsilon_\xi\;dx+\int_{\mathcal{O}_{\epsilon_k}} \nabla u^{\epsilon_k}\cdot \nabla (w^\epsilon_\xi) \phi \;dx
	     \end{equation*}
	     On the other hand, since $\text{div}(\mathbb{I}_{\mathcal{O}_{\epsilon_k}} \nabla w^{\epsilon_k}_\xi)=0$, we also have for any $k$
	     \begin{equation*}
	          0 = \int_{\mathcal{O}_{\epsilon_k}} u^{\epsilon_k} \nabla \phi \cdot \nabla w^{\epsilon_k}_\xi \;dx+\int_{\mathcal{O}_{\epsilon_k}} \phi \nabla u^{\epsilon_k}\cdot \nabla w^{\epsilon_k}_\xi \;dx
	     \end{equation*}
	     Therefore
	     \begin{equation*}
	          -\int_{\mathbb{R}^d} \text{div}(\mathbb{I}_{\mathcal{O}_{\epsilon_k}} \nabla u^{\epsilon_k} ) w^{\epsilon_k}_\xi\phi \;dx = \int_{\mathcal{O}_{\epsilon_k}} \nabla u^{\epsilon_k}\cdot (\nabla \phi )w^{\epsilon_k}_\xi\;dx-\int_{\mathcal{O}_{\epsilon_k}} u^{\epsilon_k} \nabla \phi \cdot \nabla w^{\epsilon_k}_\xi \;dx
	     \end{equation*}
	     Then, due to the convergence of the different terms, taking the limit in the last identity gives
	     \begin{equation*}
	          -\int_{\mathbb{R}^d} \text{div}(X) (x\cdot \xi)\phi\;dx = \int_{\mathbb{R}^d} X \cdot (\nabla \phi )(x\cdot \xi)\;dx-\int_{\mathbb{R}^d} u \nabla \phi \cdot A \xi \;dx. 
	     \end{equation*}
	     Integrating by parts on the left we arrive at
	     \begin{equation*}
	          -\int_{\mathbb{R}^d} \text{div}(X)(x\cdot \xi)\phi\;dx = -\int_{\mathbb{R}^d}\text{div}(X) (x\cdot \xi) \phi\;dx-\int_{\mathbb{R}^d} X\cdot \xi \phi\;dx-\int_{\mathbb{R}^d} u \nabla \phi \cdot A \xi \;dx.
	     \end{equation*}
	     It follows that
	     \begin{equation*}
	          \int_{\mathbb{R}^d} X\cdot \xi \phi\;dx = -\int_{\mathbb{R}^d} u \nabla \phi \cdot A\xi \;dx
	     \end{equation*}
	     and integrating by parts on the right and using the symmetry of $A$ we finish the proof.
	\end{proof}

	\begin{proof}[Proof of Theorem \ref{thm: stochastic homogenization linear}]
		 From the equation solved by $u^\epsilon$ we have that $\mathbb{I}_{\mathcal{O}_{\epsilon}} \nabla u^\epsilon$ is bounded in $L^2(\mathbb{R}^d)$ independently of $\epsilon$. Moreover, due to Theorem \ref{thm: elliptic estimates} we know that along a subsequence the functions $u^\epsilon$ converge to a function $u \in H^1_{\text{loc}}(D)$.

	     Then Lemma \ref{lem: compensated compactness} says (after passing to another subsequence) that $\mathbb{I}_{\mathcal{O}_{\epsilon_k}}\nabla u^{\epsilon_k} \rightharpoonup A \nabla u$. Then, since
	     \begin{equation*}
	          -\int \mathbb{I}_{\mathcal{O}_{\epsilon_k}} \nabla u^{\epsilon_k} \cdot \nabla \phi\;dx = \int f^\epsilon \phi\;dx	
	     \end{equation*} 
	     we conclude after passing to the limit that
	     \begin{equation*}
	          -\int A \nabla u \cdot \nabla \phi\;dx = \int f \phi\;dx.	
	     \end{equation*}
	     Thus $u$ is a weak solution of the homogenized Dirichlet problem. Since this problem has a unique solution we see that $u$ does not depend on the original subsequence $\epsilon_k$ so $u^{\epsilon}$ converges to $u$.
	\end{proof}

%%%%%%%%%%%%%%%%%%%%%%%%%%%%%%%%%%%%%%%%%%%%%%%%%%%%%%%%%%%%%%%%%%%%%%%%%%%%%%%%%%%%%%%%%%%%%%%%%%%%%%%%%
%%%%%%%%%%%%%%%%%%%%%%%%%%%%%%%%%%%%%%%%%%%%%%%%%%%%%%%%%%%%%%%%%%%%%%%%%%%%%%%%%%%%%%%%%%%%%%%%%%%%%%%%%
%%%%%%%%%%%%%%%%%%%%%%%%%%%%%%%%%%%%%%%%%%%%%%%%%%%%%%%%%%%%%%%%%%%%%%%%%%%%%%%%%%%%%%%%%%%%%%%%%%%%%%%%%
%%%%%%%%%%%%%%%%%%%%%%%%%%%%%%%%%%%%%%%%%%%%%%%%%%%%%%%%%%%%%%%%%%%%%%%%%%%%%%%%%%%%%%%%%%%%%%%%%%%%%%%%%
%%%%%%%%%%%%%%%%%%%%%%%%%%%%%%%%%%%%%%%%%%%%%%%%%%%%%%%%%%%%%%%%%%%%%%%%%%%%%%%%%%%%%%%%%%%%%%%%%%%%%%%%%
%%%%%%%%%%%%%%%%%%%%%%%%%%%%%%%%%%%%%%%%%%%%%%%%%%%%%%%%%%%%%%%%%%%%%%%%%%%%%%%%%%%%%%%%%%%%%%%%%%%%%%%%%
%%%%%%%%%%%%%%%%%%%%%%%%%%%%%%%%%%%%%%%%%%%%%%%%%%%%%%%%%%%%%%%%%%%%%%%%%%%%%%%%%%%%%%%%%%%%%%%%%%%%%%%%%
%%%%%%%%%%%%%%%%%%%%%%%%%%%%%%%%%%%%%%%%%%%%%%%%%%%%%%%%%%%%%%%%%%%%%%%%%%%%%%%%%%%%%%%%%%%%%%%%%%%%%%%%%
\appendix
\section{Existence of solutions to the auxiliary problem}\label{sec: construction of the obstacle problem}

    In this section we discuss the existence of solutions to the auxiliary problem \eqref{eqn: epsilon obstacle problem}. 

    \begin{thm}\label{thm: existence epsilon obstacle problem}
         Given $\db_0^\epsilon \subset \mathcal{O}_\epsilon$, there is a solution to \eqref{eqn: epsilon obstacle problem} in the sense of Definition \ref{def: weak solution p^epsilon}.
    \end{thm}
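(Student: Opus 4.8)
The plan is to construct $p^\epsilon$ by the classical variational method for obstacle-type problems, iterated in time: at each fixed time $t$ the function $p^\epsilon(\cdot,t)$ is obtained by minimizing an energy functional over the convex set $\mathcal{A}$ of Definition \ref{def: weak solution p^epsilon}, with a right-hand side $f^\epsilon(\cdot,t)$ that itself depends on $\{p^\epsilon(\cdot,s)>0\}$ for $s<t$. Since $\omega$ plays no role here (everything is done pathwise and the measurability in $\omega$ follows from uniqueness and standard arguments), I will work with a fixed deterministic domain $\mathcal{O}_\epsilon$ satisfying Assumption \ref{hyp: regularity}, and for notational brevity I will drop the superscript $\epsilon$ where convenient.

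The first step is to treat the problem for a \emph{given} right-hand side. Fix $g \in L^\infty(\mathcal{O}_\epsilon)$ with $g \le 1$ (in our application $g = f^\epsilon(\cdot,t)$, which satisfies $-\mathbb{I}_{(\db_0^\epsilon)^c} \le g \le t$, but the bound $g\le C$ suffices) and consider the functional
\begin{equation*}
    J(\phi) := \int_{\mathcal{O}_\epsilon} \tfrac{1}{2}|\nabla \phi|^2 - g\,\phi \;dx
\end{equation*}
over $\K := \{\phi \in H^1(\mathcal{O}_\epsilon) : \phi \ge 0,\ \phi \text{ has suitable decay} \}$. Because $\db_0^\epsilon$ is bounded, one can build an explicit supersolution of quadratic growth using the barrier $v$ from Lemma \ref{lem: barrier construction} (or simply a large multiple of it) to obtain a bounded minimizing set and an a priori $L^\infty$ bound via Stampacchia's estimate (Lemma \ref{lem: Stampacchia}). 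The Sobolev inequality of Proposition \ref{prop: Sobolev} gives coercivity of $J$ on $\K$ (modulo the decay condition, handled by working first in large balls $B_R \cap \mathcal{O}_\epsilon$ with zero Dirichlet data on $\partialom B_R$ and then letting $R \to \infty$, using the uniform bounds to pass to the limit). Lower semicontinuity of $J$ with respect to weak $H^1$ convergence is standard, so a minimizer $p$ exists; it is unique by strict convexity. The Euler–Lagrange inequality for the minimizer is exactly the variational inequality in Definition \ref{def: weak solution p^epsilon}, which in turn yields $-\Delta p = g$ in $\{p>0\}$ with $\partial_n p = 0$ on $\partial \mathcal{O}_\epsilon$, and $-\Delta p \ge g \wedge 0 \ge g$ (when $g\le 0$) — more precisely $-\Delta p \ge g\,\mathbb{I}_{\{p>0\}}$ holds weakly; combined with Lemma \ref{lem: continuity of obstacle solutions} and the quadratic nondegeneracy argument of Lemma \ref{lem: quadratic upper bound p^ep} one upgrades this to $-\Delta p = g\,\mathbb{I}_{\{p>0\}}$ in all of $\mathcal{O}_\epsilon$.

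The second step is to run this in time. I would define $p^\epsilon(\cdot,t)$ by a fixed-point / monotone iteration scheme: discretize time, $t_j = j h$, and set $p_h(\cdot,t_0) = 0$; given $p_h(\cdot,t_i)$ for $i \le j$, let
\begin{equation*}
    f_{h}(\cdot,t_{j+1}) := -\mathbb{I}_{(\db_0^\epsilon)^c} + h\sum_{i=0}^{j} \mathbb{I}_{\{p_h(\cdot,t_i)>0\}}
\end{equation*}
and let $p_h(\cdot,t_{j+1})$ solve the obstacle problem of Step 1 with this right-hand side. Monotonicity (the $f_h$'s are nondecreasing in $t$, hence so are the $p_h$'s by the comparison principle for the obstacle problem) and the uniform Lipschitz-in-time estimate — obtained exactly as in Proposition \ref{prop: control_time} via a barrier argument, since $|f_h(\cdot,t_{j+1}) - f_h(\cdot,t_j)| \le h$ — give equicontinuity in $t$ and uniform spatial $C^\alpha$ bounds via Theorem \ref{thm: elliptic estimates}. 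Passing $h \to 0$ along a subsequence (Arzelà–Ascoli plus diagonalization, as in Proposition \ref{prop: uniform convergence of p^epsilon}) yields a limit $p^\epsilon$; one then checks it solves the variational inequality for every fixed $t$, the only subtlety being the convergence of the term $\int_0^t \mathbb{I}_{\{p_h(\cdot,s)>0\}}\,ds$ to $\int_0^t \mathbb{I}_{\{p^\epsilon(\cdot,s)>0\}}\,ds$.

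I expect that last point — the stability of the positivity sets under the time-discretization limit — to be the main obstacle, and it is resolved by exactly the nondegeneracy estimate that is the heart of Section \ref{sec: weak fb results}: Theorem \ref{thm: nondegeneracy} (whose proof uses only the barrier of Lemma \ref{lem: barrier construction} and Step 1, not the existence of $p^\epsilon$ as a function of time, so there is no circularity if one states it for the discrete-in-time approximants) prevents the free boundaries $\partialom\{p_h(\cdot,t)>0\}$ from oscillating, so that $\mathbb{I}_{\{p_h(\cdot,t)>0\}} \to \mathbb{I}_{\{p^\epsilon(\cdot,t)>0\}}$ in $L^1_{\mathrm{loc}}$ for a.e. $t$ (the boundary $\partial\{p^\epsilon(\cdot,t)=0\}$ being Lebesgue-null), and then dominated convergence finishes the identification of $f^\epsilon$. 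Alternatively, and perhaps more cleanly, one bypasses the iteration entirely: define $u^\epsilon$ first as the viscosity solution of \eqref{eqn: Hele-Shaw O_ep} (which exists by Perron's method together with the comparison principle), set $p^\epsilon := \int_0^t u^\epsilon(\cdot,s)\,ds$, and verify directly that it satisfies Definition \ref{def: weak solution p^epsilon} — but this requires the machinery of Theorem \ref{thm: evolution}, which in the paper is proved \emph{after} this appendix is used, so the self-contained variational construction above is the appropriate route.
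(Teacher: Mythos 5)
Your proposal is correct and the first step (pathwise variational solution of the static obstacle problem for a given right-hand side, via coercivity from Proposition \ref{prop: Sobolev}, Stampacchia $L^\infty$ bounds, and exhaustion by large balls) coincides with the paper's use of the abstract variational-inequality theory (Proposition \ref{prop: obstacle problem solutions}). The iterations are, however, genuinely different. You discretize \emph{time}: fix $h$, solve the static problem forward along $t_j = jh$ with an explicit right-hand side built from the discrete past, and then send $h\to 0$. The paper instead iterates on the \emph{nonlinearity} at the continuous-time level: define $f_n(\cdot,t) = \mathbb{I}_{\db_0^c} - \int_0^t\mathbb{I}_{\{p_{n-1}(\cdot,s)>0\}}\,ds$ and $p_n := p[f_n]$, i.e.\ a Picard-type iteration in which each $p_n$ is already a function of $(x,t)$. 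The crucial payoff of the paper's scheme is \emph{monotonicity in $n$}: one checks $f_{n+1}\le f_n$ by induction (using the comparison principle for the static obstacle problem), hence $p_n\uparrow p$. Monotone convergence of $p_n$ forces $\{p_n>0\}\uparrow\{p>0\}$, and therefore $\int_0^t\mathbb{I}_{\{p_n(\cdot,s)>0\}}\,ds\to\int_0^t\mathbb{I}_{\{p(\cdot,s)>0\}}\,ds$ pointwise and in $L^2$ by dominated convergence alone --- no nondegeneracy and no compactness is needed for the fixed-point identification, though the paper also records the uniform equicontinuity. In your scheme the approximants $p_h$ for different $h$ are not nested, so you correctly observe that you must fall back on Arzel\`a--Ascoli plus the nondegeneracy estimate of Theorem \ref{thm: nondegeneracy} (restated for the discrete approximants) to control the positivity sets in the limit. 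This makes your route considerably heavier than the paper's; it is also slightly delicate because the inductive time-slicing argument in the proof of Theorem \ref{thm: nondegeneracy} uses the identity $f^\epsilon(\cdot,t)-f^\epsilon(\cdot,s)=\int_s^t\mathbb{I}_{\{p^\epsilon(\cdot,\tau)>0\}}\,d\tau$, which for your $f_h$ only holds up to a Riemann-sum discretization error, so a genuinely careful version of your argument has to track an $O(h)$ perturbation through that induction. Your observation about the potential circularity of the ``integrate $u^\epsilon$'' shortcut is accurate and matches the logical ordering of the paper.
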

    
    For the sake of simplicity we will omit the $\epsilon$ superscript through this section. The one tool we will need has to do with solutions to the obstacle problem in stationary ergodic domains, which falls within the abstract theory of variational inequalities. See \cite[Chapter II]{KiSt1980} for a proof.

    Recall the set $\bfb_\epsilon \subset \mathbb{R}^d \times \Omega$ introduced in Definition \ref{def: function spaces}, as $\epsilon$ is fixed in this section, we omit it.

    \begin{prop}\label{prop: obstacle problem solutions}
Given any function $f : \bfb  \to \mathbb{R}$ there exists a unique function
		 \begin{equation*}
		 	  p[f]:\bfb \to \mathbb{R}
		 \end{equation*}
		 such that $p[f] \geq 0$, $p[f] \in H^1(\mathcal{O}) \pal$ and
		 \begin{equation*}
		      \int_{\mathcal{O}}\nabla (p[f]-\phi)\cdot \nabla \phi + (p[f]-\phi)f\;dx\geq 0 \;\;\forall \;\phi\in H^1(\mathcal{O})\pal	
		 \end{equation*}
    \end{prop}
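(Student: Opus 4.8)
The plan is to read Proposition~\ref{prop: obstacle problem solutions} as the statement that a coercive obstacle problem has a unique solution, and to provide the Hilbert-space framework in which the classical theory of variational inequalities (\cite[Chapter II]{KiSt1980}) applies. Fix a realization $\mathcal{O}=\mathcal{O}_\epsilon(\omega)$ for which Assumption~\ref{hyp: regularity} holds, and let $V=\dot{H}^1(\mathcal{O})$ be the completion, under $\|v\|_V:=\|\nabla v\|_{L^2(\mathcal{O})}$, of the space of $H^1(\mathcal{O})$ functions with bounded support. By the Sobolev inequality of Proposition~\ref{prop: Sobolev}, $\|\cdot\|_V$ is a genuine norm, $V$ is a Hilbert space continuously embedded in $L^{2^*}(\mathcal{O})$, and the symmetric bilinear form $a(u,v):=\int_{\mathcal{O}}\nabla u\cdot\nabla v\,dx$ is continuous and coercive on $V$ since $a(v,v)=\|v\|_V^2$. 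The set $K:=\{v\in V:\ v\ge 0\ \text{a.e.}\}$ is closed and convex in $V$, because norm convergence in $V$ forces $L^{2^*}$-convergence, hence a.e.\ convergence along a subsequence, which preserves nonnegativity. Finally, under the standing hypothesis that $v\mapsto\int_{\mathcal{O}}fv\,dx$ defines a bounded linear functional $L$ on $V$ — which holds for the data $f=f^\epsilon(\cdot,t)$ of \eqref{eqn: definition of f^epsilon}, since $|f^\epsilon|\le C$ and the relevant source $f^\epsilon\mathbb{I}_{\{p^\epsilon>0\}}$ is supported in the bounded set $\db_t(p^\epsilon)$ (Corollary~\ref{cor: distance from the initial phase}) — all the ingredients of the abstract theory are present.

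With this setup I would invoke Stampacchia's theorem: there is a unique $p\in K$ with $a(p,\phi-p)\ge L(\phi-p)$ for every $\phi\in K$, and, by symmetry of $a$, this $p$ is the unique minimizer over $K$ of the strictly convex, coercive, weakly lower semicontinuous functional $J(v)=\tfrac{1}{2} a(v,v)-L(v)$; alternatively, existence follows by the direct method and uniqueness from $a(p_1-p_2,p_1-p_2)\le 0$. It then remains to identify $p$ with $p[f]$. Since $a$ is linear and monotone, Minty's lemma turns the variational inequality into its weak form $a(\phi,\phi-p)\ge L(\phi-p)$ for all $\phi\in K$, which after the elementary rearrangement afforded by the symmetry of $a$ is the inequality displayed in the statement. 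Testing against $\phi=p+\psi$ with $\psi\in C^\infty_c$, $\psi\ge 0$, and against $\phi=(1\pm t)p$ for small $t\ge 0$, then recovers the pointwise complementarity: $p\ge 0$, the Poisson equation holds in $\{p>0\}$ with zero Neumann data on $\partial\mathcal{O}$, and the corresponding one-sided differential inequality holds on $\{p=0\}$. Together with the continuity statement of Lemma~\ref{lem: continuity of obstacle solutions} this exhibits $p=p[f]$ as a solution in the sense claimed. Since the source is compactly supported, so is $p[f]$, whence $p[f]\in H^1(\mathcal{O})$, not merely $V$.

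The only point requiring genuine care — everything else being a routine application of \cite[Chapter II]{KiSt1980} — is the functional-analytic set-up on the \emph{unbounded} perforated domain $\mathcal{O}$: one must know that $\|\nabla\cdot\|_{L^2(\mathcal{O})}$ generates a Hilbert space on which $a$ is coercive, which is precisely where Assumption~\ref{hyp: regularity} is used, through Proposition~\ref{prop: Sobolev}. On a general stationary ergodic domain, where that Sobolev inequality fails (cf.\ Section~\ref{subsec: examples}), this argument breaks down; there one would instead solve the obstacle problem on a bounded exhaustion $\mathcal{O}\cap B_R$ with homogeneous Dirichlet data on $\partial B_R$, use the uniform bound $\|\nabla p_R\|_{L^2(\mathcal{O})}\le\|L\|_{V^*}$ together with the sign of $f$ off the initial phase to confine the support of $p_R$ to a fixed bounded set independent of $R$, and then pass to the limit $R\to\infty$. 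I would include a brief remark to that effect.
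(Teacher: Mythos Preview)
Your proposal is correct and matches the paper's approach: the paper gives no argument beyond citing \cite[Chapter II]{KiSt1980}, and you have supplied the details, correctly identifying that coercivity on the unbounded perforated domain via Proposition~\ref{prop: Sobolev} is the only nonroutine ingredient. One small wrinkle: your justification that $L$ is bounded on $V$ invokes the compact support of $f^\epsilon\mathbb{I}_{\{p^\epsilon>0\}}$, which is circular since $p^\epsilon$ is what is being constructed, but the bounded-exhaustion alternative you already sketch (using that $f^\epsilon\le -c<0$ off $\db_0$ for small $t$ to confine supports uniformly in $R$) resolves this cleanly.
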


    \begin{rem} We note that the property of $p$ being a solution in the sense of Definition \ref{def: weak solution p^epsilon} is equivalent to the identity
	     \begin{equation*}
	     	  p(\cdot,t) = p[f(\cdot,t)]\;\;\forall\;t\geq 0,
	     \end{equation*} 
	     where $f(\cdot,t) = \mathbb{I}_{\db_0^c}-\int_0^t \mathbb{I}_{\{p(\cdot,s)>0\} }\;ds$. Thus, the weak solution we are seeking can be thought of as a fixed point of the map above.
    	
    \end{rem}

    \begin{proof}[Proof of Theorem \ref{thm: existence epsilon obstacle problem}] 	
         We construct the solution through an iterative procedure. Fix $\db_0\subset\mathcal{O}$ and define $p_0$ by
         \begin{equation*}
              \left \{ \begin{array}{rlll}
              	   \Delta p_0 & = & -t & \text{ in } \db_0,\\
                   \partial_n p_0 & = & 0 & \text{ on } \partial \mathcal{O}(\omega),\\
                          p_0 & \equiv & 0 & \text{ in } \db_0^c.
              \end{array}\right.
         \end{equation*}
         For $n\geq 1$, define $p_n$ by $p_n(\cdot,t) := p[f_n(\cdot,t)]$ where $f_n$ is given by
         \begin{equation}\label{eqn: iteration scheme}
              f_n(\cdot,t) := \mathbb{I}_{\db_0^c}-\int_0^t \mathbb{I}_{\{p_{n-1}(\cdot,s)>0 \}}\;ds.	
         \end{equation}
         Let us show that $f_{n+1}\leq f_n$ for all $n$. Arguing by induction, we show first $f_2\leq f_1$. Since $p_0>0$ in $\db_0^\epsilon$,
         \begin{align*}
              f_1 & = \mathbb{I}_{\db_0^c}-t\;\mathbb{I}_{\db_0}	
         \end{align*}
         so that $f_2\leq f_1$ will be implied by $p_1>0$ a.e. in $\db_0$. At the same time, this is equivalent to showing that $\{ p_1>0 \}$ is dense in $\db_0$ (by the continuity of $p_1$). If $\{ p_1>0\}$ was not dense in $\db_0$, there would be a small ball contained in $\db_0$ where $p_1 \equiv 0$. Then, let $\phi_0$ solve the Dirichlet problem in that ball with right hand side equal to $-t$, zero Neumann data on $\partial \mathcal{O}$ and zero Dirichlet data on the boundary of the ball. Then, integration by parts yields that (we extend $\phi_0$ by zero)
         \begin{equation*}
			  \int_{\mathcal{O}}\nabla (p_1-\phi)\cdot \nabla \phi + (p_1-\phi)f_1\;dx< 0\;\text{ for } \phi=p_1+\phi_0,	
         \end{equation*}
         which is impossible due to the definition of $p_1$. Thus $p_1>0$ a.e. in $\db_0$ and $f_2\leq f_1$, as we wanted.

         For the inductive case, suppose that $f_{n+1} \leq f_{n}$ for some $n$. Then, the comparison principle says that
         \begin{equation*}
         	  p_{n+1}=p[f_{n+1}] \geq p[f_{n}]=p_{n}.
         \end{equation*}
         In particular
         \begin{equation*}
              \mathbb{I}_{\{ p_{n}>0\} }\leq \mathbb{I}_{ \{p_{n+1}>0\} },
         \end{equation*}
         and by integrating with respect to time this leads to $f_{n+2} \leq f_{n+1}$, as we wanted.

         We have shown not only that $f_n$ is monotone decreasing in $n$, but that $p_n$ is monotone increasing in $n$. Moreover,  $p_n$ has a bounded norm $H^1$ norm$\pal$, and $f_n$ is uniformly bounded pointwise for each $t$, so they must converge pointwise a.e. in $\bfb \times \mathbb{R}_+$ to functions $p$ and $f$.

         On the other hand, $\pal$ the functions $p_n(x,\omega,t)$ are uniformly continuous in compact subsets of $\mathcal{O}(\omega)\times \mathbb{R}_+$, so they also converge uniformly for each fixed $\omega$ to their pointwise limit $p(x,\omega,t)$. In this case, one may argue as in Section \ref{sec: convergence obstacle problem} to show that
         \begin{equation*}
              \mathbb{I}_{\{p>0\} } = \lim \limits_n \mathbb{I}_{\{p_n>0\} }\;\pal,	
         \end{equation*}
         the limit being in $L^2$ sense. This shows that the pointwise limit of $f_n$ is in fact 
         \begin{equation*}
              f = \mathbb{I}_{\db_0^c}-\int_0^t \mathbb{I}_{\{p(\cdot,s)>0 \}}\;ds,
         \end{equation*}
         and we conclude that  $p(\cdot,t)=p[f(\cdot,t)]$ for all $t\geq 0$, so $p$ is a solution and the theorem is proved.

    \end{proof}

%%%%%%%%%%%%%%%%%%%%%%%%%%%%%%%%%%%%%%%%%%%%%%%%%%%%%%%%%%%%%%%%%%%%%%%%%%%%%%%%%%%%%%%%%%%%%%%%%%%%%%%%%
%%%%%%%%%%%%%%%%%%%%%%%%%%%%%%%%%%%%%%%%%%%%%%%%%%%%%%%%%%%%%%%%%%%%%%%%%%%%%%%%%%%%%%%%%%%%%%%%%%%%%%%%%
%%%%%%%%%%%%%%%%%%%%%%%%%%%%%%%%%%%%%%%%%%%%%%%%%%%%%%%%%%%%%%%%%%%%%%%%%%%%%%%%%%%%%%%%%%%%%%%%%%%%%%%%%
%%%%%%%%%%%%%%%%%%%%%%%%%%%%%%%%%%%%%%%%%%%%%%%%%%%%%%%%%%%%%%%%%%%%%%%%%%%%%%%%%%%%%%%%%%%%%%%%%%%%%%%%%
%%%%%%%%%%%%%%%%%%%%%%%%%%%%%%%%%%%%%%%%%%%%%%%%%%%%%%%%%%%%%%%%%%%%%%%%%%%%%%%%%%%%%%%%%%%%%%%%%%%%%%%%%
%%%%%%%%%%%%%%%%%%%%%%%%%%%%%%%%%%%%%%%%%%%%%%%%%%%%%%%%%%%%%%%%%%%%%%%%%%%%%%%%%%%%%%%%%%%%%%%%%%%%%%%%%
%%%%%%%%%%%%%%%%%%%%%%%%%%%%%%%%%%%%%%%%%%%%%%%%%%%%%%%%%%%%%%%%%%%%%%%%%%%%%%%%%%%%%%%%%%%%%%%%%%%%%%%%%
%%%%%%%%%%%%%%%%%%%%%%%%%%%%%%%%%%%%%%%%%%%%%%%%%%%%%%%%%%%%%%%%%%%%%%%%%%%%%%%%%%%%%%%%%%%%%%%%%%%%%%%%%
\section{Pointwise bounds for the Green's function in $\mathcal{O}$}\label{sec: Green's function estimates}

    Here we obtain pointwise estimates for the Green's function $G^\epsilon$ of $\mathcal{O}_\epsilon(\omega)$, that is, the solution of
    \begin{equation*}
    	 \begin{array}{rll}
    	      \Delta G^\epsilon(\cdot,y,\omega) & = \delta_y(\cdot) & \text{ in } \mathcal{O}_\epsilon(\omega),\\
              \partial_n G^\epsilon(\cdot,y,\omega) & = 0 & \text{ on } \partial \mathcal{O}_\epsilon(\omega),
    	 \end{array}
    \end{equation*}
    which is a function $G^\epsilon(\cdot,\cdot,\omega):\mathcal{O}(\omega)\times \mathcal{O}(\omega) \to \mathbb{R}$. These estimates say that for $\mathbb{P}$-almost every $\omega$ the function $G^\epsilon(x,y,\omega)$ is pointwise comparable with $|x-y|^{2-d}$, the Green's function of $\mathbb{R}^d$.

    Although we do not need them elsewhere in this work, bounds of this type are of interest in percolation theory, as hinted at above (see also the discussion in \cite{Bar2004} as well as in \cite{Bis2011}). They are also crucial in the analysis of boundary behavior of harmonic functions in Lipschitz domains \cite{LiStWe1963}. Proofs of these bounds following De Giorgi's or Moser's approaches are currently not available (even in the discrete case). This motivates us to review the proof of the bounds for the Green's function in the context of a perforated domain, using Assumption \ref{hyp: regularity}.

    \medskip

    The main theorem of this section is the following.

    \begin{thm}\label{thm: green function bound} 
         There exists a universal constant $C>0$ such that
         \begin{equation*}
              C^{-1}|x-y|^{2-d}\leq G^\epsilon(x,y,\omega) \leq C|x-y|^{2-d},\;\;\;\forall\;x,y\in \mathcal{O}_\epsilon(\omega)	\pal
         \end{equation*}
         when $d\geq 3$, and replacing $|x-y|^{2-d}$ above with $\log{|x-y|}$ when $d=2$.	
    \end{thm}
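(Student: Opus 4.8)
The plan is to run the classical Littman--Stampacchia--Weinberger / Grüter--Widman argument for two--sided bounds on a Green's function, with the one twist that every analytic ingredient must be drawn from the \emph{uniform} elliptic theory of Section \ref{sec: elliptic regularity} (Theorem \ref{thm: elliptic estimates}, Proposition \ref{prop: Sobolev}, Lemma \ref{lem: Stampacchia}, Lemma \ref{lem: De Giorgi pointwise bound}) rather than from the Euclidean setting. Since the equation and all the constants involved are scale invariant (this is exactly the uniformity in $\epsilon$), it suffices to prove the estimate for $\epsilon=1$, i.e.\ for $G:=G^1$ on $\mathcal{O}=\mathcal{O}_1(\omega)$; I treat $d\geq 3$ and indicate the $d=2$ changes at the end. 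First I would construct $G(\cdot,y)$ as the monotone limit, as $R\to\infty$, of the Green's functions $G_R(\cdot,y)$ of the mixed problem in $\mathcal{B}^1_R(y)$ with zero Dirichlet data on $\partialom B_R(y)$ and zero Neumann data on $\partial\mathcal{O}$; the $G_R$ are nonnegative and increasing in $R$ by the maximum principle. Testing the $G_R$ equation with truncations of $G_R$ and invoking the Sobolev inequality \eqref{eqn: Sobolev in O_epsilon} produces, exactly as in Stampacchia's lemma, the weak--type bound $|\{x\in\mathcal{O}:G_R(x,y)>s\}|\leq C s^{-d/(d-2)}$ with $C$ universal and independent of $R$. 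Hence $G=\lim_R G_R$ is finite a.e., obeys the same weak-$L^{d/(d-2)}$ bound, and solves $-\Delta G(\cdot,y)=\delta_y$ in $\mathcal{O}$, $\partial_n G=0$ on $\partial\mathcal{O}$, with $G(\cdot,y)\to 0$ at infinity.

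For the \textbf{upper bound}, fix $x,y$ with $r=|x-y|$. On $\mathcal{B}^1_{r/2}(x)$ the function $G(\cdot,y)$ is a nonnegative solution with $f\equiv 0$ (the pole is outside), so Lemma \ref{lem: De Giorgi pointwise bound} gives $\sup_{\mathcal{B}^1_{r/4}(x)}G(\cdot,y)\leq C r^{-d}\int_{\mathcal{B}^1_{r/2}(x)}G(\cdot,y)\,dx$ (passing from the $L^2$ form of the lemma to the $L^1$ one by Cauchy--Schwarz and absorption). Splitting $\int_{\mathcal{B}^1_{r/2}(x)}G$ at a level $s$ and using the weak-$L^{d/(d-2)}$ bound gives $\int_{\mathcal{B}^1_{r/2}(x)}G\leq C(r^d s + s^{-2/(d-2)})$; optimizing at $s\simeq r^{-(d-2)}$ yields $\int_{\mathcal{B}^1_{r/2}(x)}G\leq C r^2$, hence $G(x,y)\leq C r^{2-d}$.

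For the \textbf{lower bound} I would argue by a capacity/energy estimate. For dyadic radii $\rho$ let $m(\rho)$ be a Harnack--average of $G(\cdot,y)$ over $\partialom B_\rho(y)$ (by Theorem \ref{thm: elliptic estimates} the precise average is immaterial up to universal factors). Integrating $-\Delta G=\delta_y$ over the annulus $\mathcal{B}^1_{2\rho}(y)\setminus\mathcal{B}^1_\rho(y)$, the Neumann condition kills the $\partial\mathcal{O}$ term, and the Dirichlet principle for the annular condenser gives $m(\rho)-m(2\rho)\geq 1/\mathrm{cap}(\mathcal{B}^1_\rho(y),\mathcal{B}^1_{2\rho}(y))$; feeding the cut-off $\eta=(1-\mathrm{dist}(\cdot,\mathcal{B}^1_\rho(y))/\rho)_+$ into \eqref{eqn: Sobolev in O_epsilon} bounds this capacity above by $C\rho^{d-2}$, so $m(\rho)-m(2\rho)\geq c\rho^{2-d}$. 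Summing over $\rho=2^k r$, $k\geq 0$, and using $m(\rho)\to 0$ (a consequence of the upper bound) gives $m(r)\geq c\sum_{k\geq0}(2^kr)^{2-d}=c'r^{2-d}$. Finally, a Harnack chain of universally bounded length across the connected annular region $\mathcal{B}^1_{2r}(y)\setminus\mathcal{B}^1_{r/2}(y)$ --- connectedness and the chain-length bound being consequences of Assumption \ref{hyp: regularity}, in the spirit of Proposition \ref{prop: barrier nodes} and the tube-network argument used in Lemma \ref{lem: positivity of A_0} --- upgrades $m(r)$ to a pointwise bound, $G(x,y)\geq c\,m(r)\geq c''|x-y|^{2-d}$.

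For $d=2$ the Green's function grows logarithmically and is no longer a potential vanishing at infinity: I would instead work with $G_R(\cdot,y)+\tfrac{1}{2\pi}\log R$ and repeat the truncation and energy arguments with the BMO-type inequality recorded in the remark after Proposition \ref{prop: Sobolev} in place of the Sobolev embedding, obtaining $G(x,y)\simeq\log|x-y|$. I expect the genuinely delicate point to be the lower bound, and within it the Harnack-chaining step together with the handling of $m(\rho)$: one must know that the annular regions $\mathcal{B}^1_{2\rho}(y)\setminus\mathcal{B}^1_{\rho/2}(y)$ are connected and traversable by a uniformly bounded number of overlapping balls on which Theorem \ref{thm: elliptic estimates} applies. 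This is precisely where the perforated-domain hypothesis (Assumption \ref{hyp: regularity}) is indispensable, exactly as in the proof of the strict positivity of the effective diffusivity.
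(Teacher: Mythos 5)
Your proposal is correct, but it follows a route that differs from the paper's at the crucial step. The paper's proof runs entirely through the Littman--Stampacchia--Weinberger capacity identities of Propositions \ref{prop: capacity 1} and \ref{prop: capacity 2}: one shows $\inf_{\partialom\mathcal{B}^\epsilon_r(y)}G^\epsilon\leq\bigl[\text{Cap}_{\epsilon,\omega}(\mathcal{B}^\epsilon_r(y))\bigr]^{-1}\leq\sup_{\partialom\mathcal{B}^\epsilon_r(y)}G^\epsilon$ and then controls $G^\epsilon$ by proving a \emph{two-sided} comparison of $\text{Cap}_{\epsilon,\omega}$ with the ambient capacity. The trivial direction $\text{Cap}_{\epsilon,\omega}\leq\text{Cap}$ gives the lower bound on $G^\epsilon$; the hard direction is Lemma \ref{lem: capacity lower bound}, a lower bound $\text{Cap}_{\epsilon,\omega}(\mathcal{B}^\epsilon_R)\geq c_0\text{Cap}(B_R)$, which the paper proves through the tube-network Proposition \ref{prop: network of tubes} \emph{together with} the homogenization Theorem \ref{thm: stochastic homogenization linear} (used to locate a universal radius $L_0$ on which the capacitary potential has dropped to $1/2$). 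You sidestep Lemma \ref{lem: capacity lower bound} entirely: your upper bound on $G^\epsilon$ comes from the weak-$L^{d/(d-2)}$ estimate (truncate, test, apply the Sobolev inequality of Proposition \ref{prop: Sobolev}) combined with the De Giorgi $L^\infty$ bound of Lemma \ref{lem: De Giorgi pointwise bound}, which is more elementary and in particular does not invoke the homogenization machinery. Your lower bound is a dyadic telescoping over annular condensers, which is essentially the paper's ``trivial'' direction repackaged: both amount to plugging a cutoff into the capacity and summing. Two small points worth tightening: first, the $L^2$-to-$L^1$ absorption in the De Giorgi estimate needs the standard covering/iteration argument, not just Cauchy--Schwarz; second, the concluding Harnack chain across $\mathcal{B}^1_{2r}\setminus\mathcal{B}^1_{r/2}$ is actually unnecessary --- if you set $a(\rho):=\inf_{\partialom\mathcal{B}^1_\rho(y)}G$ and $b(\rho):=\sup_{\partialom\mathcal{B}^1_\rho(y)}G$ then the condenser estimate gives $a(\rho)-b(2\rho)\geq c\rho^{2-d}$, hence $a(\rho)\geq a(2\rho)+c\rho^{2-d}$ (since $b\geq a$), and telescoping gives $a(r)\geq c'r^{2-d}$, from which $G(x,y)\geq a(|x-y|)$ is immediate. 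The Harnack inequality (Theorem \ref{thm: elliptic estimates}) is still indispensable in your proof --- e.g.\ to make the condenser estimate robust and to convert $a$ and $b$ into each other --- but it is the only truly $\mathcal{O}_\epsilon$-specific input, whereas the paper's proof additionally loads the whole weight onto Lemma \ref{lem: capacity lower bound}.
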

    The proof is along the lines of the Littman-Weinberger-Stampacchia theory \cite{LiStWe1963} for fundamental solutions of elliptic operators in divergence form. Beyond the elliptic theory in Section \ref{sec: elliptic regularity} and a capacity comparison estimate (Lemma \ref{lem: capacity lower bound}) the proof follows the classical one. In particular, the notion of capacity \cite[Chapter 4]{EvGa1992} will play a central role in what follows.

    \begin{DEF} We define the capacity of a set $E \subset \mathbb{R}^d$ by 
         \begin{equation*}
              \text{Cap}(E) = \inf \left \{ \int_{\mathbb{R}^d}|\nabla v|^2dx\;,\;\text{ where }\; v \in H^1(\mathbb{R}^d)\;\text{ and }\; E \subset \{ v\geq 1\}^0 \right \}	
         \end{equation*}	
         Moreover, for $\epsilon>0$ and $\omega\in\Omega$ we introduce the capacity of $E \subset \mathbb{R}^d$ with respect to  $\mathcal{O}_\epsilon(\omega)$,
         \begin{equation*}
              \text{Cap}_{\epsilon,\omega}(E) = \inf \left \{ \int_{\mathcal{O}_\epsilon(\omega)}|\nabla v|^2dx\;,\;\text{ where }\; v \in H^1(\mathcal{O}_\epsilon(\omega))\;\text{ and }\; E\cap \mathcal{O}_\epsilon(\omega) \subset \{ v\geq 1\}^0 \right \}	
         \end{equation*}
    \end{DEF}

    \begin{DEF} 
         Given $E$ and any of the two notions of capacity, we can find a  unique $u \in H^1$ that achieves the infimum above. In either case this $u$ will be called the \emph{capacitary potential} of $E$.	
    \end{DEF}

    \begin{rem}\label{rem: capacity properties}
         It is immediate from the definition of capacity that if $A\subset B$ then $\text{Cap}(A) \leq\text{Cap}(B)$, likewise $\text{Cap}_{\epsilon,\omega}(A)\leq \text{Cap}_{\epsilon,\omega}(B)$. Moreover, we always have $\text{Cap}(A) \geq \text{Cap}_{\epsilon,\omega}(A)$.	
    \end{rem}

    The connection between capacity and the fundamental solution is illustrated by the next propositions.
    \begin{prop}\label{prop: capacity 1}[See \cite[Section 7]{LiStWe1963}]
         Given $\epsilon>0,\omega \in \Omega$, $a>0$ and $y \in \mathbb{R}^d \setminus \mathcal{O}_\epsilon(\omega)$ let
         \begin{equation*}
              J_{\epsilon}(a,y,\omega):= \left \{ x \mid G^\epsilon(x,y,\omega) \geq a\right \}.	
         \end{equation*}	
         Then,
         \begin{equation*}
              \text{Cap}_{\epsilon,\omega}(J_{\epsilon}(a,y,\omega)) = \tfrac{1}{a}.	
         \end{equation*}
    \end{prop}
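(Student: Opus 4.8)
The plan is to identify the capacitary potential of the super-level set $J_\epsilon(a,y,\omega)$ explicitly with a truncation of the rescaled Green's function, and then compute its Dirichlet energy by integration by parts against the defining equation for $G^\epsilon$. Fix $\epsilon$, $\omega$, $a>0$ and $y$; for brevity write $G(\cdot)=G^\epsilon(\cdot,y,\omega)$ and $J=J_\epsilon(a,y,\omega)=\{G\ge a\}$. The candidate for the capacitary potential (relative to $\mathcal{O}_\epsilon(\omega)$) is
\begin{equation*}
    v(x) := \min\left\{ \tfrac{1}{a}G(x),\,1\right\},
\end{equation*}
which lies in $H^1(\mathcal{O}_\epsilon(\omega))$ because $G$ does away from its pole (here one uses the local $H^1$ regularity and the decay of $G$, the latter coming from the upper bound in Theorem \ref{thm: elliptic estimates} applied on dyadic annuli, or simply the fact that $v$ is bounded and $G$ is harmonic with zero Neumann data outside a neighborhood of $y$). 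By construction $v\equiv 1$ on the interior of $J\cap\mathcal{O}_\epsilon(\omega)$, so $v$ is admissible in the variational problem defining $\mathrm{Cap}_{\epsilon,\omega}(J)$, giving the inequality $\mathrm{Cap}_{\epsilon,\omega}(J)\le \int_{\mathcal{O}_\epsilon(\omega)}|\nabla v|^2\,dx$.

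The first main step is to show this inequality is in fact an equality, i.e. that $v$ realizes the infimum. This is the standard argument: $v$ is the unique minimizer of $\int|\nabla w|^2$ over $w\in H^1(\mathcal{O}_\epsilon(\omega))$ with $w\ge 1$ on $J$ and $w\to 0$ at infinity precisely when $v$ is harmonic (with zero Neumann data on $\partial\mathcal{O}_\epsilon$) in the ``free'' region $\{0<v<1\}=\{G<a\}$ and the contact set condition is consistent. Since $G$ itself is harmonic with zero Neumann data on $\partial\mathcal{O}_\epsilon$ away from $y$ (and $y\notin J$ because $y\notin\mathcal{O}_\epsilon(\omega)$ — this is exactly where the hypothesis $y\in\mathbb{R}^d\setminus\mathcal{O}_\epsilon(\omega)$ enters, ensuring the pole does not lie inside the level set and the truncation is genuinely harmonic on $\{G<a\}\cap\mathcal{O}_\epsilon$), the function $v$ has the required minimality; alternatively one runs the one-line projection argument: for any admissible $w$, $\int\nabla v\cdot\nabla(w-v)\ge 0$ by the weak Euler–Lagrange inequality, hence $\int|\nabla w|^2\ge\int|\nabla v|^2$.

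The second step is to compute $\int_{\mathcal{O}_\epsilon(\omega)}|\nabla v|^2\,dx$. Write $\mathcal{O}_\epsilon(\omega)=\{G<a\}\cup\{G\ge a\}$; on $\{G\ge a\}$ we have $v\equiv 1$ so $\nabla v=0$ there, while on $\{G<a\}$ we have $v=\tfrac1a G$ so $\nabla v=\tfrac1a\nabla G$. Therefore
\begin{equation*}
    \int_{\mathcal{O}_\epsilon(\omega)}|\nabla v|^2\,dx = \tfrac{1}{a^2}\int_{\{G<a\}\cap\mathcal{O}_\epsilon(\omega)}|\nabla G|^2\,dx.
\end{equation*}
Now integrate by parts on the region $\{G<a\}\cap\mathcal{O}_\epsilon(\omega)$ using $-\Delta G=0$ there (the pole $y$ is excluded since $y\notin\mathcal{O}_\epsilon(\omega)$) and the zero Neumann condition on $\partial\mathcal{O}_\epsilon$: the only surviving boundary term is on the level surface $\{G=a\}$, giving
\begin{equation*}
    \int_{\{G<a\}\cap\mathcal{O}_\epsilon(\omega)}|\nabla G|^2\,dx = a\int_{\{G=a\}}\partial_n G\,dS = a,
\end{equation*}
where the last equality is because the total flux of $G$ through any surface enclosing the pole $y$ equals $1$ (this is the meaning of $\Delta G=\delta_y$, combined with the zero Neumann data, which prevents any flux escaping through $\partial\mathcal{O}_\epsilon$). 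Combining, $\int_{\mathcal{O}_\epsilon(\omega)}|\nabla v|^2\,dx=\tfrac{1}{a^2}\cdot a=\tfrac1a$, which together with the minimality from step one gives $\mathrm{Cap}_{\epsilon,\omega}(J)=\tfrac1a$ as claimed.

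The main obstacle I expect is making the integration-by-parts / flux computation rigorous: the level set $\{G=a\}$ need not be smooth, $G$ only has the Hölder and Harnack regularity from Theorem \ref{thm: elliptic estimates} rather than $C^1$ up to $\partial\mathcal{O}_\epsilon$, and one must justify the coarea/flux identity through the irregular Neumann boundary. The clean way around this is to avoid level-set surface integrals entirely: use the weak formulation directly. Test the equation $-\Delta G=\delta_y$ (weakly, with zero Neumann data) against $v$ itself — but $v$ is not compactly supported, so instead argue by exhaustion, or better, observe that $\int_{\mathcal{O}_\epsilon}|\nabla v|^2 = \int_{\mathcal{O}_\epsilon}\nabla v\cdot\nabla(\tfrac1a G_a)$ where $G_a:=\min\{G,a\}$, and then $\int\nabla v\cdot\nabla G_a = \tfrac1a\int\nabla G_a\cdot\nabla G_a$; since $\nabla G_a=\nabla G\,\mathbf 1_{\{G<a\}}$ and $G$ is $G$-harmonic where $G<a$, a limiting/truncation argument (cutting off near $y$ with a function that is $1$ outside $B_\rho(y)$ and letting $\rho\to0$, using the local behavior $G\sim|x-y|^{2-d}$) yields $\int_{\{G<a\}}|\nabla G|^2 = a$ directly from the distributional identity $\langle -\Delta G,\phi\rangle=\phi(y)$ without ever touching $\{G=a\}$ as a surface. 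This replaces the geometric flux argument with an energy identity that only uses the already-established elliptic estimates and the definition of weak solution.
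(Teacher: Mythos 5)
Your approach is genuinely different from the paper's, and the core idea is sound. The paper invokes the classical representation of the capacitary potential as a Green potential $v=\int G^\epsilon(x,\cdot)\,d\nu(x)$ for the capacitary measure $\nu$, evaluates at the pole to get $v(y)=1$, and uses that $\nu$ is supported on $\partialom J$ where $G^\epsilon\equiv a$ and that $|\nu|=\mathrm{Cap}_{\epsilon,\omega}(J)$. You instead identify the capacitary potential explicitly as $v=\min\{\tfrac1a G^\epsilon,1\}$ and compute its Dirichlet energy directly by testing the weak equation against $G_a=\min\{G^\epsilon,a\}$. The paper's route outsources the work to standard potential theory (the Green representation and the identity $|\nu|=\mathrm{Cap}$, cited from \cite{EvGa1992}); yours is more self-contained and only uses the weak formulation and the already-proved elliptic estimates. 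Both are legitimate; you should note, however, that when you invoke the local behavior $G\sim|x-y|^{2-d}$ to control the cutoff near the pole, you must use the elementary fact that $G-\Phi_y$ (difference with the free-space fundamental solution) is harmonic near $y$ rather than the global two-sided bound of Theorem \ref{thm: green function bound}, since the latter is proved \emph{after} this proposition.

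There is one genuine confusion you need to repair. You write that the hypothesis $y\in\mathbb{R}^d\setminus\mathcal{O}_\epsilon(\omega)$ is ``exactly where [it] enters, ensuring the pole does not lie inside the level set,'' and later that ``the pole $y$ is excluded since $y\notin\mathcal{O}_\epsilon(\omega)$.'' This makes your argument internally inconsistent: two sentences later you rely on ``the total flux of $G$ through any surface enclosing the pole $y$ equals $1$,'' which requires $y\in\mathcal{O}_\epsilon$ and $y$ enclosed by $\{G=a\}$. If $y$ were genuinely outside $\mathcal{O}_\epsilon$ the Green function would have no pole in the domain, the flux through $\{G=a\}$ would be zero, and the claimed identity $\int_{\{G<a\}}|\nabla G|^2=a$ would fail. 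The stated hypothesis is in fact a typo in the paper: the Green function is defined on $\mathcal{O}_\epsilon(\omega)\times\mathcal{O}_\epsilon(\omega)$ (see the opening of Appendix B), and the paper's own proof takes for granted that $y\in\mathcal{O}_\epsilon$ (it evaluates the capacitary potential at $y$ and uses $v(y)=1$, which only makes sense if $y$ is an interior point of $J\subset\mathcal{O}_\epsilon$). The correct replacement for your reasoning is: $y\in\mathcal{O}_\epsilon$, $G(y)=+\infty$, hence $y\in J$ but $y\notin\{G<a\}$; the truncation $v$ is therefore genuinely harmonic on $\{G<a\}\cap\mathcal{O}_\epsilon$, and $G_a(y)=a$ in the testing step. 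Once you make that substitution, the rest of your argument goes through.
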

 
    \begin{proof}
    	  It is well known \cite[Chapter 4]{EvGa1992} that the infimum is achieved by a function given by
          \begin{equation*}
          	  v(\cdot) = \int_{\mathcal{O}_\epsilon}G^\epsilon(x,\cdot,\omega)d\nu(x),
          \end{equation*}
          where $\nu$ is the so called capacitary distribution of the set $J_{\epsilon}(a,y,\omega)$. In particular, one can check that $v$ is continuous at $y$ and that $v(y)=1$. From this it follows that
          \begin{equation*}
          	  1 = \int_{\mathcal{O}_\epsilon}G^\epsilon(x,y,\omega)d\nu(x).
          \end{equation*}
          On the other hand $d\nu$ is by definition supported on $\partialom J_{\epsilon}(a,y,\omega)$, and there we know that $G^\epsilon(x,y,\omega)$ is identically equal to $a$ , from the definition of $J_{\epsilon}(a,y,\omega)$ (note we are using that $G^\epsilon$ is continuous away from $y$, which follows in particular from Section \ref{sec: elliptic regularity}). We conclude that
         \begin{equation*}
              1 = a|\nu| = a \text{Cap}_{\epsilon,\omega}(J_{\epsilon}(a,y,\omega))	
         \end{equation*}
         dividing both sides by $a$ the proposition follows.
    \end{proof}

    \begin{prop}\label{prop: capacity 2}
         For any $y \in \mathbb{R}^d\setminus \mathcal{O}_\epsilon(\omega)$ and any $r>0$ we have
         \begin{equation*}
         \inf\limits_{x\in\partial B_r(y)} G^\epsilon(x,y,\omega) \leq \left [ \text{Cap}_{\epsilon,\omega}(B_r(y)) \right ]^{-1}\leq \sup\limits_{x\in\partial B_r(y)} G^\epsilon(x,y,\omega)	
         \end{equation*}
    \end{prop}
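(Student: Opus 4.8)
The plan is to prove Proposition~\ref{prop: capacity 2} by comparing the Green's function with the capacitary potential of the ball $B_r(y)$, exactly as in the classical Littman-Weinberger-Stampacchia argument. Let $u_r$ denote the capacitary potential of $B_r(y)$ with respect to $\mathcal{O}_\epsilon(\omega)$, so that $u_r \in H^1(\mathcal{O}_\epsilon(\omega))$, $u_r = 1$ on $B_r(y)\cap \mathcal{O}_\epsilon(\omega)$, $\Delta u_r = 0$ in $\mathcal{O}_\epsilon(\omega)\setminus \overline{B_r(y)}$, $\partial_n u_r = 0$ on $\partial\mathcal{O}_\epsilon(\omega)$, and $\int_{\mathcal{O}_\epsilon}|\nabla u_r|^2\,dx = \text{Cap}_{\epsilon,\omega}(B_r(y))$. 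The associated capacitary distribution $\nu_r$ is supported on $\partialom B_r(y)$ and has total mass $|\nu_r| = \text{Cap}_{\epsilon,\omega}(B_r(y))$, with the representation $u_r(\cdot) = \int_{\mathcal{O}_\epsilon} G^\epsilon(x,\cdot,\omega)\,d\nu_r(x)$.

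The key step is to evaluate this representation at the pole $y$. Since $y\in B_r(y)$ and $u_r \equiv 1$ there (in the sense that $u_r = 1$ quasi-everywhere on $B_r(y)\cap\mathcal{O}_\epsilon(\omega)$, and $u_r$ is continuous up to $y$ because it is harmonic near $y$ away from the support of $\nu_r$), we get
\begin{equation*}
     1 = u_r(y) = \int_{\partialom B_r(y)} G^\epsilon(x,y,\omega)\,d\nu_r(x).
\end{equation*}
Now I would bound the integrand on the support of $\nu_r$, which lies on $\partial B_r(y)\cap \mathcal{O}_\epsilon(\omega)$. On one hand $G^\epsilon(x,y,\omega) \leq \sup_{x\in\partial B_r(y)} G^\epsilon(x,y,\omega)$ on this set, so
\begin{equation*}
     1 \leq \left(\sup_{x\in\partial B_r(y)} G^\epsilon(x,y,\omega)\right)|\nu_r| = \left(\sup_{x\in\partial B_r(y)} G^\epsilon(x,y,\omega)\right)\text{Cap}_{\epsilon,\omega}(B_r(y)),
\end{equation*}
which after dividing by the capacity gives the right-hand inequality. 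On the other hand, replacing the supremum by the infimum yields $1 \geq \left(\inf_{x\in\partial B_r(y)} G^\epsilon(x,y,\omega)\right)\text{Cap}_{\epsilon,\omega}(B_r(y))$, which is the left-hand inequality. The continuity of $G^\epsilon(\cdot,y,\omega)$ away from $y$ — needed to make sense of the pointwise sup and inf on the sphere and to justify that $u_r$ is continuous at $y$ — is guaranteed by Theorem~\ref{thm: elliptic estimates}.

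The main obstacle I anticipate is the careful justification of the representation formula $u_r = \int G^\epsilon(\cdot,x,\omega)\,d\nu_r(x)$ and the identity $u_r(y)=1$ in the perforated setting, i.e., that classical potential theory (existence of the capacitary measure, its support on $\partialom B_r(y)$, the fact that the capacitary potential equals $1$ quasi-everywhere on $B_r(y)$ and is continuous at interior points of the ball) carries over to $\mathcal{O}_\epsilon(\omega)$ under Assumption~\ref{hyp: regularity}. This is essentially the same issue already handled for Proposition~\ref{prop: capacity 1}, so I would invoke that proposition's setup and the Sobolev and Poincaré inequalities of Section~\ref{sec: elliptic regularity}, together with the abstract variational-inequality theory of \cite[Chapter II]{KiSt1980}, to place the capacitary potential and its distribution on firm footing; once that is in hand, the two inequalities follow from the single scalar identity above by the elementary bounding step described.
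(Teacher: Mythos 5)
Your proof is correct and establishes the proposition, but it takes a different (and slightly more direct) route than the paper. The paper's proof sandwiches the ball between two superlevel sets of the Green's function, namely $J_\epsilon(b(r),y,\omega) \subset B_r(y) \subset J_\epsilon(a(r),y,\omega)$ where $a(r)$ and $b(r)$ are the infimum and supremum of $G^\epsilon(\cdot,y,\omega)$ on $\partial B_r(y)$, and then invokes the monotonicity of $\text{Cap}_{\epsilon,\omega}$ under inclusion together with Proposition~\ref{prop: capacity 1} (which gives $\text{Cap}_{\epsilon,\omega}(J_\epsilon(a,y,\omega))=1/a$). You instead apply the representation formula $u_r(\cdot) = \int G^\epsilon(x,\cdot,\omega)\,d\nu_r(x)$ directly to the capacitary potential of $B_r(y)$, evaluate at the pole to get $1 = \int_{\partialom B_r(y)} G^\epsilon(x,y,\omega)\,d\nu_r(x)$, and bound the integrand between the infimum and supremum on $\partial B_r(y)$. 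In effect you have fused Propositions~\ref{prop: capacity 1} and~\ref{prop: capacity 2} into a single step: the paper proves the representation/evaluation argument once for the level sets (where $G^\epsilon$ is constant on $\text{supp}\,\nu$, so only an identity is needed) and then transfers the conclusion to the ball by monotonicity, whereas you re-derive it for the ball and exploit the fact that a non-constant $G^\epsilon$ on the support can still be bracketed by its extreme values on $\partial B_r(y)$. Both arguments rest on exactly the same potential-theoretic ingredients (existence of the capacitary measure supported on $\partialom E$, the Green's-function representation, continuity of the potential at the pole), so neither is logically stronger; yours avoids the set-containment step at the cost of not literally citing Proposition~\ref{prop: capacity 1}. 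One small imprecision in your write-up: near the pole $y$ the capacitary potential $u_r$ is constant equal to $1$ (it is in the interior of the contact set), not merely ``harmonic away from $\text{supp}\,\nu_r$''; constancy is the cleaner reason that $u_r(y)=1$ is well-defined. Also note that the paper's hypothesis ``$y\in\mathbb{R}^d\setminus\mathcal{O}_\epsilon(\omega)$'' is evidently a typo for $y\in\mathcal{O}_\epsilon(\omega)$, as you tacitly (and correctly) assume.
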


    \begin{proof}
         We fix $\epsilon,\omega$ and $y$ for the rest of the proof. Given $r>0$, define
         \begin{equation}
              a(r) = \inf\limits_{x \in \partial B_r(y)} G^\epsilon(x,y,\omega),\;\;	b(r) = \sup\limits_{x \in \partial B_r(y)} G^\epsilon(x,y,\omega)
         \end{equation}
         Then, it is clear that
         \begin{equation*}
              J_{\epsilon}(b(r),t,\omega) \subset B_r(y) \subset J_{\epsilon}(a(r),y,\omega) 
         \end{equation*}
         and by the previous remark,
         \begin{equation*}
              \text{Cap}_{\epsilon,\omega}(J_{\epsilon}(b(r),y,\omega)) \leq \text{Cap}_{\epsilon,\omega}(B_r(y)) \leq \text{Cap}_{\epsilon,\omega}(J_{\epsilon}(a(r),y,\omega)) 
         \end{equation*}	
         we then apply Proposition \ref{prop: capacity 1} and conclude the proof.
    \end{proof}

    The last part of the proof is modeled on a well known argument in percolation theory regarding the connectivity of the infinite percolation cluster, see for instance \cite{Ke1982} or \cite{Gri1999}. In our case, Assumption \ref{hyp: regularity} guarantees that $\mathcal{O}$ is always ``connected enough'' so the proof is much simpler. We start by quantifying this connectivity.

    \begin{prop}\label{prop: network of tubes}
         Fix $L>1,\epsilon>0$ and a point $x_0$. Then$\pal$ for any $x \in \partialom \mathcal{B}^\epsilon_1(x_0) $ there is a Lipschitz path $\gamma(t) \subset \mathcal{O}_\epsilon$ such that for a universal constant $C$ and $e:= (x-x_0)/|x-x_0|$ we have
         \begin{align*}
	          \inf \limits_{t \in [0,1]} d(\gamma(t),\partial \mathcal{O}_\epsilon)\geq \epsilon d_0/3,\\
              \gamma(0)=x,\;\;\gamma(1) \in \partialom \mathcal{B}^\epsilon_L(x_0),\\
              (\dot{\gamma}, e) \geq C^{-1}|\dot{\gamma}|,\\
              \left | \gamma(t) - (\gamma(t),e)e \right | \leq C\epsilon,\\
              \int_0^1|\dot{\gamma}(t)|dt \leq CL.
         \end{align*}
         Here $d_0$ is the constant from Assumption \ref{hyp: regularity}. Moreover, for each $\epsilon$ one can find almost surely a finite family of paths $\gamma_1,...,\gamma_N$ satisfying all the properties above and such that  $d(\gamma_i,\gamma_j)\geq C\epsilon$ for $i\neq j$. Moreover, if $T_1,...,T_n$ denote their corresponding $C\epsilon$-tubular neighborhoods then
         \begin{equation*}
              T_i \subset O_\epsilon(\omega) \text{ and } \left |\union \limits_{i=1}^N T_i \right|	\geq c_0
         \end{equation*}
    \end{prop}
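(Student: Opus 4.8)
\textbf{Proof proposal for Proposition \ref{prop: network of tubes}.}

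The plan is to build a single path for each boundary point by a greedy marching argument that exploits the uniformly separated, uniformly Lipschitz perforations guaranteed by Assumption \ref{hyp: regularity}, and then to pack $\sim c_0 \epsilon^{-(d-1)}$ such paths into disjoint tubes by launching them from a maximal $C\epsilon$-separated net of starting points on a face transverse to $e$. By the usual scaling $x \mapsto \epsilon x$ it suffices to construct everything for $\epsilon = 1$; the path $\gamma$ for general $\epsilon$ is then $\epsilon \gamma(\cdot)$ and all five listed estimates scale correctly (the lower bound on $d(\gamma,\partial\mathcal O_\epsilon)$ picks up the factor $\epsilon$, the transversality estimates are scale-invariant up to the $C\epsilon$ in the fourth line, and the length bound picks up $\epsilon$ inside $L$ becoming $\epsilon L$). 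So fix $\epsilon = 1$ and a starting point $x \in \partialom \mathcal{B}^1_1(x_0)$, and let $e = (x-x_0)/|x-x_0|$.

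First I would construct the single path. Start at $x$ and attempt to move in the direction $e$. The obstruction is that the straight segment $\{x + te\}$ may run into one of the perforations $P_k(\omega)$, but by Assumption \ref{hyp: regularity} these are minimally smooth with uniform constants $(\delta,N,M)$ and pairwise separated by $d_0$ and have diameter at most $d_0^{-1}$. Thus whenever the segment hits $\partial P_k(\omega)$, we detour: since $P_k$ is a Lipschitz graph in suitable coordinates near the hit point, one can go around it staying within a corridor of width a universal constant (controlled by $M$ and $d_0$) while keeping the path at distance $\geq d_0/3$ from $\partial \mathcal{O}_1$ — here one uses that a $d_0/3$-neighborhood of $P_k$ cannot meet any other $P_j$ and that, along $\partial P_k$, the outward direction has a definite positive inner product with any fixed transverse direction over bounded pieces (this is exactly the kind of radial-Lipschitz-graph observation used in Proposition \ref{prop: paraboloid is a supersolution in a small ball}). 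Each detour has length at most $C$ (universal) and advances the $e$-coordinate by at least a universal amount, and between detours we move purely in direction $e$. Concatenating, we obtain a Lipschitz $\gamma$ with $(\dot\gamma, e) \geq C^{-1}|\dot\gamma|$, with transverse excursion $\leq C$, reaching $\partialom \mathcal{B}^1_L(x_0)$ after total length $\leq CL$ (since the $e$-progress and total length are comparable). The first claimed inequality $d(\gamma,\partial\mathcal O_1)\geq d_0/3$ is enforced at every step of the construction.

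Next I would do the packing. On the hyperplane through $x_0$ orthogonal to $e$, take a maximal $C'$-separated set of points inside the disk of radius $1/2$ (for a large universal $C'$), lift each to a point in $\mathcal{O}_1$ within distance $C'$ (possible by Proposition \ref{prop: barrier nodes}, which supplies a ball $B_l(y_z) \subset \mathcal{O}_1$ in every unit-scale cube), and run the marching construction from each. Choosing $C'$ larger than the universal corridor width of a single path guarantees that distinct paths stay $C\epsilon$-apart after rescaling, hence their $C\epsilon$-tubular neighborhoods $T_i$ are disjoint, lie in $\mathcal{O}_\epsilon(\omega)$, and number $N \gtrsim \epsilon^{-(d-1)}$; since each $T_i$ has volume $\gtrsim \epsilon^{d-1}\cdot (L\epsilon) \gtrsim \epsilon^d$, their union has volume $\geq c_0$ for a universal $c_0$. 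The main obstacle is the detour construction: making precise that one can route around a Lipschitz-graph perforation while simultaneously (i) maintaining the distance $d_0/3$ to $\partial\mathcal O_1$, (ii) keeping strict forward progress in $e$, and (iii) bounding the transverse displacement and the added length, all by constants depending only on $(d_0,\delta,N,M,d)$. This is a finite, scale-invariant geometric lemma about a single minimally smooth bounded obstacle of controlled diameter sitting in free space, and once it is stated cleanly the concatenation, the length bookkeeping, and the packing count are routine.
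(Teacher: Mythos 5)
Your marching-and-detour construction is precisely the strategy the paper uses, and your identification of the single-obstacle detour lemma as the crux matches where the paper's own proof is informal; in fact the paper's written proof only constructs the single path and leaves the ``moreover'' part (the family $\gamma_1,\dots,\gamma_N$ and the volume bound on $\bigcup T_i$) entirely implicit, so your packing discussion is a reasonable completion rather than a departure. Two small bookkeeping slips are worth flagging. First, each tube $T_i$ has cross-sectional area $\sim(C\epsilon)^{d-1}$ and length $\sim L$ (the path runs between spheres of radius $1$ and $L$, both order one), so $|T_i|\sim L\epsilon^{d-1}$, not $\sim\epsilon^d$; with $N\sim\epsilon^{-(d-1)}$ disjoint tubes the union has volume $\sim L\geq c_0$, whereas your stated $|T_i|\gtrsim\epsilon^d$ would only give total volume $\sim\epsilon$. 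Second, as stated the direction $e=(x-x_0)/|x-x_0|$ varies with the launch point on $\partialom\mathcal{B}^\epsilon_1(x_0)$, so the maximal $C\epsilon$-separated net should be taken on that sphere rather than on a transverse hyperplane (the hyperplane version, with a single fixed direction $\xi$, is the variant actually invoked later in Lemma \ref{lem: positivity of A_0}, so it is worth stating cleanly; but it is not quite what the proposition's displayed conditions describe). Neither of these affects the soundness of the overall argument, which mirrors the paper's.
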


    \begin{proof}
         Let $\omega$ be taken from the set of full measure for which $\mathcal{O}(\omega)$ satisfies the structure bounds, as explained in Assumption \ref{hyp: regularity}. Recall that in this case $P_k(\omega)$ denotes the separated components of $\mathcal{O}(\omega)$. We start by taking the segment starting from $x \in \partialom \mathcal{B}^\epsilon_1$ in the direction $e$ until we are $\epsilon d_0/3$ away from the closest obstruction $P_k(\omega)$. Consider the projection of this $\Pi_e P_k(\omega)$ on the hyperplane perpendicular to $e$. Then $\Pi_e P_k(\omega) \subset\subset B'_\epsilon(0)$ and we can pick some $\tau \in B_\epsilon(\omega) \setminus \Pi_e P_k(\omega)$.

         Next, we move along the projection of the segment moving along $\tau$ onto the boundary of the obstruction until we reach at a point where $e$ is supporting, then we move straight up until we are within $d_0/3$ of the next obstruction.

         In each case, because the perforations are uniformly Lipschitz, after a uniformly bounded time we will have left the perforation, which shows we went up by a term of order $\epsilon$. Moreover, by picking $C$ large enough, it is guaranteed that we will move again in the $e$ direction before hitting the boundary of the cylinder, which guarantees the last inequality. Since we move by a uniform amount upward in finite time, it follows that we reach the top of cylinder in a controlled time.  Lastly, the Lipschitz curve can be re-parametrized, so that $t$ ranges form $0$ to $1$.
    \end{proof}

    \begin{lem}\label{lem: capacity lower bound}
         There is a universal $c_0>0$ such that$\pal$ given $R>0$ and $x \in \mathcal{O}_\epsilon(\omega)$ we have
         \begin{equation*}
              \text{Cap}_{\epsilon,\omega}(\mathcal{B}^\epsilon_R(x_0)) \geq c_0\text{Cap}(B_R(x_0))\;\;\forall\;\epsilon>0.
         \end{equation*}
    \end{lem}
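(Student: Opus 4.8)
The plan is to compare the capacitary potential in $\mathcal{O}_\epsilon$ with the one in $\mathbb{R}^d$ by running a scaling/normalization argument and then \emph{test} the Dirichlet energy in $\mathcal{O}_\epsilon$ against a competitor built from the network of tubes produced in Proposition \ref{prop: network of tubes}. By scaling (both capacities are scale invariant in the sense that $\text{Cap}(B_R)=R^{d-2}\text{Cap}(B_1)$ and the corresponding statement for $\text{Cap}_{\epsilon,\omega}$ after the change of variables $x\to x/\epsilon$, which turns $\mathcal{O}_\epsilon$ into $\mathcal{O}_1$) it suffices to prove the inequality for $R=1$ and $\epsilon$ of the form $\epsilon = 1/L$ for large integers $L$, i.e. to show $\text{Cap}_{1,\omega}(\mathcal{B}^1_L(x_0)) \geq c_0 \,\text{Cap}(B_L(x_0))= c_0 L^{d-2}\text{Cap}(B_1)$. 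Equivalently, I must bound from below the Dirichlet energy $\int_{\mathcal{O}_1}|\nabla v|^2\,dx$ over all $v\in H^1(\mathcal{O}_1)$ with $v\geq 1$ on $\mathcal{B}^1_L(x_0)^0$, by a universal multiple of $L^{d-2}$.

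First I would invoke Proposition \ref{prop: network of tubes}: for each point $x$ on the unit sphere $\partial \mathcal{B}^1_1(x_0)$ there is a Lipschitz path $\gamma_x \subset \mathcal{O}_1$ joining $x$ to $\partialom \mathcal{B}^1_L(x_0)$, staying a fixed distance $d_0/3$ away from $\partial\mathcal{O}_1$, quasi-monotone in the radial direction $e_x$, of length at most $CL$ and contained in a $C$-tube $T_x\subset \mathcal{O}_1$; moreover one can choose a finite disjoint family $\gamma_1,\dots,\gamma_N$ with tubes $T_1,\dots, T_N$ satisfying $|\bigcup T_i|\geq c_0$. The number $N$ of such tubes is comparable to $L^{d-1}$, since each tube has cross-section of universal size and they sweep out a region of volume $\gtrsim L^{d-1}$ near the sphere $\partial\mathcal{B}^1_L$ (I would make this count precise by covering $\partial B_1$ by $\sim 1$ caps, running a tube family out of each, and noting the tubes fatten to radius $\sim 1$ but the bundle spreads over a spherical shell of radius $L$). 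Now take any admissible competitor $v$ for $\text{Cap}_{1,\omega}(\mathcal{B}^1_L(x_0))$. Since $v=1$ on (a dense subset of) $\mathcal{B}^1_1(x_0)$ and $v=0$ effectively far out, along each path $\gamma_i$ the function $v$ drops from $1$ (at $\gamma_i(0)$, which lies in or adjacent to $\mathcal{B}^1_1$) down to $0$-ish at the outer endpoint — more carefully, for a \emph{fixed fraction} of the tubes the value of $v$ at the outer endpoint is $\leq 1/2$, otherwise $v$ would be $\geq 1/2$ on a set of volume $\gtrsim L^{d-1}\cdot(\text{length }L)$, which is too big (actually $v$ need not be small far out for a general competitor, so the cleaner route is: $v\geq 1$ on $\mathcal{B}^1_1$ and we only need a \emph{lower} bound, so I would instead use the one-dimensional Cauchy–Schwarz estimate on each tube and then sum).

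The core estimate is the standard tube inequality: for each $i$,
\begin{equation*}
    \int_{T_i}|\nabla v|^2\,dx \;\geq\; c\,\frac{\big(v(\gamma_i(1))-v(\gamma_i(0))\big)^2}{L}\,,
\end{equation*}
obtained by writing the difference of $v$ along a radial slice of the tube as $\int \partial_s v\,ds$, applying Cauchy–Schwarz using $\int_0^1|\dot\gamma_i|\,dt\leq CL$, and then integrating over the (universal, $O(1)$-sized) cross-sections of $T_i$ — here the $C^\alpha$ / Harnack theory of Section \ref{sec: elliptic regularity} is \emph{not} needed because $v$ is merely $H^1$, but the positive distance $d_0/3$ from $\partial\mathcal{O}_1$ guarantees the full cross-section lies in $\mathcal{O}_1$ so no boundary effects intrude. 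Summing over the $N\sim L^{d-1}$ disjoint tubes (disjointness gives $\sum_i \int_{T_i}\leq \int_{\mathcal{O}_1}$) and using that for a definite fraction $\sim N$ of the tubes the endpoint oscillation $(v(\gamma_i(1))-1)^2$ is bounded below — which I would secure by a stopping-time / averaging argument: either most outer endpoints have $v\leq 1/2$ (then those tubes each contribute $\gtrsim 1/L$ and we get $\gtrsim N/L\sim L^{d-2}$), or a fixed fraction of outer endpoints have $v> 1/2$, but then one repeats the tube argument on the \emph{outer half} of those tubes, where $v$ must eventually come down because a minimizing competitor can be taken compactly supported (truncate $v$: replace $v$ by $\min(v,1)$ and note the capacitary potential decays, so outside a large ball it is $<1/2$; extend each tube family further out to reach that ball, whose radius is still $O(L)$) — one concludes
\begin{equation*}
    \int_{\mathcal{O}_1}|\nabla v|^2\,dx \;\geq\; c_0\,L^{d-2},
\end{equation*}
for a universal $c_0$, which after undoing the scaling is exactly $\text{Cap}_{\epsilon,\omega}(\mathcal{B}^\epsilon_R(x_0))\geq c_0\,\text{Cap}(B_R(x_0))$; the $d=2$ case is identical with $L^{d-2}$ replaced by $1/\log L$. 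The main obstacle I anticipate is the bookkeeping that makes the tube count genuinely $\sim L^{d-1}$ \emph{simultaneously} disjoint and all satisfying the Proposition's radial and length bounds out to radius $L$ — Proposition \ref{prop: network of tubes} as stated produces tubes of $C\epsilon$-width leaving $\partial\mathcal{B}^\epsilon_1$, and one must iterate/concatenate such local constructions across dyadic scales $1,2,4,\dots,L$ while keeping the family disjoint and the total length linear; this is the step where Assumption \ref{hyp: regularity} (uniform separation $d_0$ and uniform Lipschitz bounds on the perforations) is used in an essential way, and where care is needed to avoid losing a logarithmic factor.
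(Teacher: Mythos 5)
Your proposal contains a genuine gap, and the root cause is a geometric misidentification.  After your rescaling $x\mapsto x/\epsilon$ (with $R=1$, $L=1/\epsilon$), the target is to bound $\text{Cap}_{1,\omega}(\mathcal{B}^1_L(x_0))$ from below.  But an admissible competitor $v$ for this capacity must satisfy $\mathcal{B}^1_L(x_0)\subset\{v\geq 1\}^0$ — that is, $v\geq 1$ on the \emph{whole ball of radius $L$}, not merely on $\mathcal{B}^1_1(x_0)$.  You write ``$v=1$ on (a dense subset of) $\mathcal{B}^1_1(x_0)$,'' and you run tubes from $\partial\mathcal{B}^1_1$ to $\partialom\mathcal{B}^1_L$.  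Those tubes live entirely inside the set where $v\geq 1$, so $v$ need not oscillate along them at all, and your core tube inequality gives nothing.  A second problem is the tube count: Proposition \ref{prop: network of tubes} produces paths that stay within distance $C\epsilon$ of a fixed radial line (after rescaling, within distance $\sim 1$), so they do not fan out.  Tubes of $O(1)$ width leaving $\partial B_1$ and confined to radial columns number $O(1)$, not $\sim L^{d-1}$; your cap-covering count does not go through as written.  You could repair the geometry by putting the tubes in the annulus outside $B_L$ and running a dyadic-annulus Cauchy–Schwarz argument (the classical capacity lower bound), but that would be a genuinely different proof than the one you outlined and would require the concatenation/stopping-time bookkeeping you flag as an obstacle.

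The paper avoids all of this with a shorter argument whose key ingredient is precisely the one you wave away.  They do not rescale the ball to radius $L$; they keep $R=1$ and work with $\mathcal{B}^\epsilon_1(x_0)$ in $\mathcal{O}_\epsilon$ for all $\epsilon>0$.  They bound the energy of the actual capacitary potential $u^\epsilon$ (the minimizer, not an arbitrary competitor), and — here is the step you said was ``not needed'' — they invoke Theorem \ref{thm: elliptic estimates} and the homogenization Theorem \ref{thm: stochastic homogenization linear} together with a compactness argument to conclude that $u^\epsilon\leq\tfrac12$ on $\partialom\mathcal{B}^\epsilon_{L_0}(x_0)$ for a \emph{universal} $L_0$, uniformly in $\epsilon,\omega,x_0$.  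Once that decay at a fixed finite distance is in hand, Proposition \ref{prop: network of tubes} gives disjoint $C\epsilon$-tubes $T_1,\dots,T_N$ from $\partialom\mathcal{B}^\epsilon_1$ to $\partialom\mathcal{B}^\epsilon_{L_0}$ whose union has volume $\geq c_0$; along each, $u^\epsilon$ drops from $1$ to $\leq\tfrac12$ over length $\sim L_0$, so Cauchy–Schwarz gives $\int_{T_i}|\nabla u^\epsilon|^2\gtrsim |T_i|$, and summing over the disjoint tubes yields $\text{Cap}_{\epsilon,\omega}(\mathcal{B}^\epsilon_1)\gtrsim c_0 \gtrsim\text{Cap}(B_1)$, with the general $R$ obtained by scaling both $R$ and $\epsilon$ together.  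In short: the elliptic regularity and homogenization machinery of Sections \ref{sec: elliptic regularity} and \ref{sec: linear homogenization revisited} is not optional decoration here; it is exactly what supplies the uniform decay radius $L_0$ that makes the tube argument close.
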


    \begin{proof}
         Fix $\epsilon>0$ and $\omega \in \Omega$ such that the bounds in Assumption \ref{hyp: regularity} hold. First let us suppose that $R=1$. Let $u^\epsilon$ be the capacitary potential of $\mathcal{B}_1^\epsilon(x_0)$, then 
         \begin{equation*}
              \text{Cap}_{\epsilon,\omega}(\mathcal{B}^\epsilon_1(x_0)) = \int_{\mathcal{O}_\epsilon}|\nabla u^\epsilon|^2\;dx= \int_{\mathcal{O}_\epsilon \setminus \mathcal{B}^\epsilon_1(x_0)}|\nabla u^\epsilon|^2\;dx	
         \end{equation*}
         Then, the homogenization result from Theorem \ref{thm: stochastic homogenization linear} and the uniform H\"older estimate from Theorem \ref{thm: elliptic estimates} apply to $u^\epsilon$. This, together with a standard compactness argument, shows there is a universal constant $L_0>0$ such that
         \begin{equation*}
              u^\epsilon\leq \tfrac{1}{2} \text{ on } \partialom \mathcal{B}^\epsilon_{L_0}(x_0),
         \end{equation*}
         in particular, this constant does not depend on $\epsilon,x_0$ or $\omega$.

         Given this $L_0$, Proposition \ref{prop: network of tubes} says that for each $\epsilon>0$ there is a finite family of Lipschitz continuous paths $\gamma_1,...,\gamma_N$ ($N$ depending on $\epsilon$) such that if $T_1,...,T_N$ denote their $\epsilon$-tubular neighborhoods then they are pairwise disjoint, and for some $c_0>0$ independent of $\epsilon$ we have
         \begin{align*}
              \left | \gamma_i \right| \sim L_0 \;\;\;\forall\; i,\\
              T_i \subset \mathcal{O}_\epsilon(\omega)\;\;\forall\; i, \\ 
              \text{ and } \left |\union \limits_{i=1}^N T_i \right | \geq c_0.	
         \end{align*}
         Moreover, each $\gamma_i$ connects a point in $\mathcal{B}^\epsilon_1(x_0)$ with a point in $\{u\leq \tfrac{1}{2}\}$. Thus $u^\epsilon(\gamma_i(t))$ is equal to $1$ for $t=0$ and less than $1/2$ for $t=1$, which says that
         \begin{equation*}
              \int_0^1 \nabla u^\epsilon(\gamma_i(t))\cdot\dot\gamma_i(t)\;dt\geq 1/2.	
         \end{equation*}
         By applying Cauchy-Schwartz, this leads to
         \begin{equation*}
         	  CL_0\int_0^1 |\nabla u^\epsilon(\gamma_i(t))|^2 dt\geq L_0/2
         \end{equation*}
         We can repeat this argument for the volume integral over $T_i$, and conclude for each $T_i$ that
         \begin{equation*}
              \int_{T_i} |\nabla u|^2\;dx \geq C^{-1}|T_i|
         \end{equation*}
         for some larger but still universal constant $C>0$. Since the $T_i$ are all disjoint we have
         \begin{equation*}
              \int_{\mathcal{O}_\epsilon \setminus \mathcal{B}^\epsilon_1(x_0) } |\nabla u|^2\;dx \geq C^{-1}|\union \limits_i T_i|\geq C^{-1}|\mathcal{B}^\epsilon_{L_0}(x_0)|\geq C^{-1}L_0^d,
         \end{equation*}
        by making the universal constant $C$ a bit larger. Letting $c_0 = C^{-1}L_0^d\text{Cap}(B_1(0))^{-1}$, we conclude that
         \begin{equation*}
              \text{Cap}_{\epsilon,\omega}(\mathcal{B}^\epsilon_1(x_0))\geq c_0 \text{Cap}(B_1(x_0))
         \end{equation*}
         Since the constant $c_0$ is independent of $\epsilon>0$, we can reduce the case $R \neq 1$ to this one by rescaling  $u^\epsilon, \mathcal{B}^\epsilon_1(x_0)$ and $\mathcal{O}_{\epsilon}(\omega)$, and the lemma is proved.

    \end{proof}

    All that is left is to gather the previous lemmas and prove the main theorem of this section.
 
    \begin{proof}[Proof of Theorem \ref{thm: green function bound}]
         Using Remark \ref{rem: capacity properties} and Lemma \ref{lem: capacity lower bound} it follows that
         \begin{equation*}
              \text{Cap}(B_r(y))^{-1} \leq \text{Cap}_{\epsilon,\omega}(\mathcal{B}^\epsilon_r(y))^{-1} \leq c_0^{-1}\text{Cap}(B_r(y))^{-1},	
         \end{equation*}
         which becomes
         \begin{equation*}
     	     \frac{c_n}{r^{d-2}} \leq \text{Cap}_{\epsilon,\omega}(\mathcal{B}^\epsilon_r(y))^{-1} \leq c_0^{-1} \frac{c_n}{r^{d-2}}.
         \end{equation*}
         Then, in light of Proposition \ref{prop: capacity 2} we conclude that
         \begin{equation*}
              \inf\limits_{x\in\partialom \mathcal{B}^\epsilon_r(y)} G^\epsilon(x,y,\omega) \leq \left [ \text{Cap}_{\epsilon,\omega}(\mathcal{B}^\epsilon_r(y)) \right ]^{-1} \leq \sup\limits_{x\in\partialom \mathcal{B}^\epsilon_r(y)} G^\epsilon(x,y,\omega).
         \end{equation*}
         On the other hand, applying the Harnack inequality (Theorem \ref{thm: elliptic estimates}) to $G^\epsilon(.,y)$ we obtain
         \begin{equation*}
          	 \sup\limits_{x\in\partialom \mathcal{B}^\epsilon_r(y)} G^\epsilon(x,y,\omega) \leq C \inf\limits_{x\in\partialom \mathcal{B}^\epsilon_r(y)} G^\epsilon(x,y,\omega),
         \end{equation*}
         where $C$ is universal. It follows that
         \begin{equation*}
          	 C^{-1}\frac{c_d}{r^{d-2}}  \leq  \inf\limits_{x\in\partialom \mathcal{B}^\epsilon_r(y)} G^\epsilon(x,y,\omega)\leq \sup\limits_{x\in\partialom \mathcal{B}^\epsilon_r(y)} G^\epsilon(x,y,\omega) \leq Cc_0^{-1} \frac{c_d}{r^{d-2}}.
         \end{equation*}
         Equivalently, this shows there is some universal $C$ such that for all $\epsilon>0$ we have
         \begin{equation*}
	          \frac{C^{-1}}{|x-y|^{d-2}}\leq G^\epsilon(x,y,\omega) \leq \frac{C}{|x-y|^{d-2}}\;\; \pal
         \end{equation*}
    \end{proof}

%%%%%%%%%%%%%%%%%%%%%%%%%%%%%%%%%%%%%%%%%%%%%%%%%%%%%%%%%%%%%%%%%%%%%%%%%%%%%%%%%%%%%%%%%%%%%%%%%%%%%%%%%
%%%%%%%%%%%%%%%%%%%%%%%%%%%%%%%%%%%%%%%%%%%%%%%%%%%%%%%%%%%%%%%%%%%%%%%%%%%%%%%%%%%%%%%%%%%%%%%%%%%%%%%%%
%%%%%%%%%%%%%%%%%%%%%%%%%%%%%%%%%%%%%%%%%%%%%%%%%%%%%%%%%%%%%%%%%%%%%%%%%%%%%%%%%%%%%%%%%%%%%%%%%%%%%%%%%
%%%%%%%%%%%%%%%%%%%%%%%%%%%%%%%%%%%%%%%%%%%%%%%%%%%%%%%%%%%%%%%%%%%%%%%%%%%%%%%%%%%%%%%%%%%%%%%%%%%%%%%%%
%%%%%%%%%%%%%%%%%%%%%%%%%%%%%%%%%%%%%%%%%%%%%%%%%%%%%%%%%%%%%%%%%%%%%%%%%%%%%%%%%%%%%%%%%%%%%%%%%%%%%%%%%
%%%%%%%%%%%%%%%%%%%%%%%%%%%%%%%%%%%%%%%%%%%%%%%%%%%%%%%%%%%%%%%%%%%%%%%%%%%%%%%%%%%%%%%%%%%%%%%%%%%%%%%%%
%%%%%%%%%%%%%%%%%%%%%%%%%%%%%%%%%%%%%%%%%%%%%%%%%%%%%%%%%%%%%%%%%%%%%%%%%%%%%%%%%%%%%%%%%%%%%%%%%%%%%%%%%
%%%%%%%%%%%%%%%%%%%%%%%%%%%%%%%%%%%%%%%%%%%%%%%%%%%%%%%%%%%%%%%%%%%%%%%%%%%%%%%%%%%%%%%%%%%%%%%%%%%%%%%%%
\bibliography{homogenizationrefs}

\def\cprime{$'$} \def\cprime{$'$} \def\cprime{$'$}
\begin{thebibliography}{10}

\bibitem{AkKr1981}
M.~A. Akcoglu and U.~Krengel.
\newblock Ergodic theorems for superadditive processes.
\newblock {\em J. Reine Angew. Math.}, 323:53--67, 1981.

\bibitem{AlbDeS2011}
G.~Alberti and A.~DeSimone.
\newblock Quasistatic evolution of sessile drops and contact angle hysteresis.
\newblock {\em Archive for Ration. Mech. Anal.}, 202(1):295--348, 2011.

\bibitem{Bar2004}
M.T. Barlow.
\newblock Random walks on supercritical percolation clusters.
\newblock {\em The Annals of Probability}, 32(4):3024--3084, 2004.

\bibitem{BenLioPap1978}
Alain Bensoussan, Jacques-Louis Lions, and George Papanicolaou.
\newblock {\em Asymptotic analysis for periodic structures}, volume~5 of {\em
  Studies in Mathematics and its Applications}.
\newblock North-Holland Publishing Co., Amsterdam, 1978.

\bibitem{Bis2011}
Marek Biskup.
\newblock Recent progress on the random conductance model.
\newblock {\em Probab. Surv.}, 8:294--373, 2011.

\bibitem{B2001}
Ivan Blank.
\newblock Sharp results for the regularity and stability of the free boundary
  in the obstacle problem.
\newblock {\em Indiana Univ. Math. J.}, 50(3):1077--1112, 2001.

\bibitem{Ca1998}
L.~A. Caffarelli.
\newblock The obstacle problem revisited.
\newblock {\em J. Fourier Anal. Appl.}, 4(4-5):383--402, 1998.

\bibitem{CaMe09}
L.~A. Caffarelli and A.~Mellet.
\newblock Random homogenization of an obstacle problem.
\newblock {\em Ann. Inst. H. Poincar\'e Anal. Non Lin\'eaire}, 26(2):375--395,
  2009.

\bibitem{CafKiAMel07}
LA~Caffarelli, K-A Lee, and A~Mellet.
\newblock Flame propagation in one-dimensional stationary ergodic media.
\newblock {\em Mathematical Models and Methods in Applied Sciences},
  17(01):155--169, 2007.

\bibitem{CafKiAMel04}
Luis~A Caffarelli, Ki-Ahm Lee, and Antoine Mellet.
\newblock Singular limit and homogenization for flame propagation in periodic
  excitable media.
\newblock {\em Archive for rational mechanics and analysis}, 172(2):153--190,
  2004.

\bibitem{CaSoWa2005}
Luis~A. Caffarelli, Panagiotis~E. Souganidis, and L.~Wang.
\newblock Homogenization of fully nonlinear, uniformly elliptic and parabolic
  partial differential equations in stationary ergodic media.
\newblock {\em Comm. Pure Appl. Math.}, 58(3):319--361, 2005.

\bibitem{ChKi2006}
Sunhi Choi and Inwon Kim.
\newblock Waiting time phenomena of the hele-shaw and the stefan problem.
\newblock {\em Indiana University mathematics journal}, 55(2):525--552, 2006.

\bibitem{CiDo1988}
D.~Cioranescu and P.~Donato.
\newblock Homog{\'e}n{\'e}isation du probleme de neumann non homogene dans des
  ouverts perfor{\'e}s.
\newblock {\em Asymptotic Analysis}, 1(2):115--138, 1988.

\bibitem{CiPa1979}
D.~Cioranescu and J.S.J. Paulin.
\newblock Homogenization in open sets with holes.
\newblock {\em Journal of Mathematical Analysis and Applications},
  71(2):590--607, 1979.

\bibitem{DaMo1986}
Gianni Dal~Maso and Luciano Modica.
\newblock Nonlinear stochastic homogenization and ergodic theory.
\newblock {\em J. Reine Angew. Math.}, 368:28--42, 1986.

\bibitem{DeG1957}
Ennio De~Giorgi.
\newblock Sulla differenziabilit\`a e l'analiticit\`a delle estremali degli
  integrali multipli regolari.
\newblock {\em Mem. Accad. Sci. Torino. Cl. Sci. Fis. Mat. Nat. (3)}, 3:25--43,
  1957.

\bibitem{Del1999}
Thierry Delmotte.
\newblock Parabolic harnack inequality and estimates of markov chains on
  graphs.
\newblock {\em Revista Matem{\'a}tica Iberoamericana}, 15(1):181--232, 1999.

\bibitem{ElJa}
CM~Elliott and V.~Janovsky.
\newblock A variational inequality approach to hele-shaw flow with a moving
  boundary.
\newblock {\em Proc. Roy. Soc. Edinburgh Sect. A}, 88(1-2):93--107, 1981.

\bibitem{EvGa1992}
Lawrence~C. Evans and Ronald~F. Gariepy.
\newblock {\em Measure theory and fine properties of functions}.
\newblock Studies in Advanced Mathematics. CRC Press, Boca Raton, FL, 1992.

\bibitem{FabStr1986}
E.B. Fabes and D.W. Stroock.
\newblock A new proof of moser's parabolic harnack inequality using the old
  ideas of nash.
\newblock {\em Archive for Rational Mechanics and Analysis}, 96(4):327--338,
  1986.

\bibitem{Gla2005}
KB~Glasner.
\newblock A boundary integral formulation of quasi-steady fluid wetting.
\newblock {\em Journal of Computational Physics}, 207(2):529--541, 2005.

\bibitem{Gri1991}
A.~A. Grigor{\cprime}yan.
\newblock The heat equation on noncompact {R}iemannian manifolds.
\newblock {\em Mat. Sb.}, 182(1):55--87, 1991.

\bibitem{Gri1999}
Geoffrey Grimmett.
\newblock {\em Percolation}, volume 321 of {\em Grundlehren der Mathematischen
  Wissenschaften [Fundamental Principles of Mathematical Sciences]}.
\newblock Springer-Verlag, Berlin, second edition, 1999.

\bibitem{GK09}
N.~Grunewald and I.~C. Kim.
\newblock A variational approach to a quasi-static droplet model.
\newblock {\em Calculus of Variations and Partial Differential Equations},
  41(1):1--19, 2011.

\bibitem{HS1898}
HS~Hele-Shaw.
\newblock Flow of water.
\newblock {\em Nature}, 58(1509):520--520, 1898.

\bibitem{JikKozOle1994}
V.~V. Jikov, S.~M. Kozlov, and O.~A. Ole{\u\i}nik.
\newblock {\em Homogenization of differential operators and integral
  functionals}.
\newblock Springer-Verlag, Berlin, 1994.
\newblock Translated from the Russian by G. A. Yosifian [G. A.
  Iosif{\cprime}yan].

\bibitem{Ke1982}
Harry Kesten.
\newblock {\em Percolation theory for mathematicians}, volume~2 of {\em
  Progress in Probability and Statistics}.
\newblock Birkh\"auser Boston, Mass., 1982.

\bibitem{KM09}
I.~Kim and A.~Mellet.
\newblock Homogenization of hele-shaw problem in periodic and random media.
\newblock {\em Arch. Rathion. Mech. Anal.}, 194(2):507--530, 2009.

\bibitem{K03}
I.~C. Kim.
\newblock Uniqueness and existence results on the {H}ele-{S}haw and the
  {S}tefan problems.
\newblock {\em Arch. Ration. Mech. Anal.}, 168(4):299--328, 2003.

\bibitem{K05}
I.C. Kim.
\newblock A free boundary problem arising in flame propagation.
\newblock {\em Journal of Differential Equations}, 191(2):470--489, 2003.

\bibitem{K09}
I.C. Kim.
\newblock Homogenization of the free boundary velocity.
\newblock {\em Archive for rational mechanics and analysis}, 185(1):69--103,
  2007.

\bibitem{KimMel2010}
Inwon~C Kim and Antoine Mellet.
\newblock Homogenization of one-phase stefan-type problems in periodic and
  random media.
\newblock {\em Trans. Amer. Math. Soc}, 362(8):4161--4190, 2010.

\bibitem{KiSt1980}
D.~Kinderlehrer and G.~Stampacchia.
\newblock {\em An introduction to variational inequalities and their
  applications}, volume~88.
\newblock Academic Press, 1980.

\bibitem{Koz1978}
S.~M. Kozlov.
\newblock Averaging of random structures.
\newblock {\em Dokl. Akad. Nauk SSSR}, 241(5):1016--1019, 1978.

\bibitem{LeYu2010}
Lionel Levine and Yuval Peres.
\newblock Scaling limits for internal aggregation models with multiple sources.
\newblock {\em J. Anal. Math.}, 111:151--219, 2010.

\bibitem{LiStWe1963}
W.~Littman, G.~Stampacchia, and H.~F. Weinberger.
\newblock Regular points for elliptic equations with discontinuous
  coefficients.
\newblock {\em Ann. Scuola Norm. Sup. Pisa (3)}, 17:43--77, 1963.

\bibitem{MelNol2012}
Antoine Mellet and James Nolen.
\newblock Capillary drops on a rough surface.
\newblock {\em Interfaces and Free Boundaries}, 14(2):167--184, 2012.

\bibitem{Mo1960}
J.~Moser.
\newblock A new proof of de giorgi's theorem concerning the regularity problem
  for elliptic differential equations.
\newblock {\em Communications on Pure and Applied Mathematics}, 13(3):457--468,
  1960.

\bibitem{PapVar1979}
G.~C. Papanicolaou and S.~R.~S. Varadhan.
\newblock Boundary value problems with rapidly oscillating random coefficients.
\newblock In {\em Random fields, {V}ol. {I}, {II} ({E}sztergom, 1979)},
  volume~27 of {\em Colloq. Math. Soc. J\'anos Bolyai}, pages 835--873.
  North-Holland, Amsterdam, 1981.

\bibitem{Sal1992}
Laurent Saloff-Coste.
\newblock Uniformly elliptic operators on {R}iemannian manifolds.
\newblock {\em J. Differential Geom.}, 36(2):417--450, 1992.

\bibitem{St1970}
Elias~M. Stein.
\newblock {\em Singular integrals and differentiability properties of
  functions}.
\newblock Princeton Mathematical Series, No. 30. Princeton University Press,
  Princeton, N.J., 1970.

\bibitem{Zhi1993}
V.~V. Zhikov.
\newblock Averaging in punctured random domains of general type.
\newblock {\em Mat. Zametki}, 53(1):41--58, 155, 1993.

\end{thebibliography}
\bibliographystyle{plain}
%%%%%%%%%%%%%%%%%%%%%%%%%%%%%%%%%%%%%%%%%%%%%%
\end{document}